\documentclass[11pt]{article}
\usepackage{macros}
\date{}

\title{One-dimensional Chern-Simons theory and the $\hat{A}$ genus}
\author{Owen Gwilliam\footnote{University of California, Berkeley, \url{gwilliam@math.berkeley.edu}} \; and Ryan Grady\footnote{Boston University, \url{regrady@bu.edu}}}

\begin{document}

\maketitle

\abstract{We construct a Chern-Simons gauge theory for dg Lie and L-infinity algebras on any one-dimensional manifold and quantize this theory using the Batalin-Vilkovisky formalism and Costello's renormalization techniques. Koszul duality and derived geometry allow us to encode topological quantum mechanics, a nonlinear sigma model of maps from a 1-manifold into a cotangent bundle $T^* X$, as such a Chern-Simons theory. Our main result is that the effective action of this theory is naturally identified with the $\hat{A}$ class of $X$.  From the perspective of derived geometry, our quantization constructs a projective volume form on the derived loop space $\cL X$ that can be identified with the $\hat{A}$ class. }

\tableofcontents

\section{Introduction}

The Atiyah-Singer index theorem and the mathematics around it --- the theory of elliptic and pseudodifferential operators, K-theory, cobordism, and so on --- has a long-standing relationship with quantum field theory \cite{Wit82}, \cite{Windey}, \cite{Getzler}, \cite{IAS}. In particular, the index theorem for Dirac operators appears naturally when one studies supersymmetric quantum mechanics on a Riemannian spin manifold. Our main object of study in this paper is a 1-dimensional quantum field theory that bears a strong resemblance to supersymmetric quantum mechanics, and our project, whose first product is this paper, aims to explore how much of the mathematics around the index theorem arises from this QFT.

In \cite{Cos2}, \cite{Cos3}, Kevin Costello constructed a 2-dimensional field theory with source manifold an elliptic curve and with target the cotangent bundle $T^* X$ of a K\"ahler manifold $X$. His theory recovers the elliptic genus of $X$, allowing the source manifold to vary over the moduli of elliptic curves. Inspired by this work, we sought to construct the analogous 1-dimensional field theory, which is a nonlinear sigma model of maps from a circle into a smooth manifold $T^* X$. Our main result is that the quantized theory recovers the $\hat{A}$ genus of $X$. We now state precisely what we accomplish in this paper.\footnote{Subsequent to the writing of this paper, Costello revised substantially the original draft of \cite{Cos3}. In particular, he developed a beautiful formalism for derived smooth geometry and a precise notion of a projective volume form on well-behaved derived spaces. We have {\em not} revised our work in light of his improvements. See \cite{Grady2} for a statement of our results in this new, elegant formalism.}

In parallel with Costello's work, there are two stages: 
\begin{enumerate}

\item[(1)] we construct a class of 1-dimensional field theories known as 1-dimensional Chern-Simons theories, where the input data is a (possibly curved) $\L8$ algebra $\fg$, and we compute the quantum observables using the Batalin-Vilkovisky formalism;

\item[(2)] we reinterpret a smooth manifold as an $\L8$ algebra, which is an exercise in derived geometry (more accurately, a smooth manifold is described by a sheaf of curved $\L8$ algebras).

\end{enumerate}
Thus in Part I of the paper, we review the definition of a quantum field theory in the formalism of \cite{Cos1} and exhibit how one-dimensional Chern-Simons provides a beautiful example. The main theorem, Theorem \ref{inftytermBg}, characterizes the effective action\footnote{We use Costello's notion of an effective field theory, and ``effective action" here means the action functional for the quantum field theory, i.e., the local functional depending on $\hbar$ that satisfies the quantum master equation and agrees with the classical action functional modulo $\hbar$. In \cite{CG}, this notion is shown to have an interpretation as a ``partition function."} as an invariant, the ``$\hat{A}$ class," of the $\L8$ algebra, but the bulk of the work is in carefully examining the Feynman diagrams of the theory. In Part II we explain the formal geometry and derived geometry that leads to a Lie-theoretic description of smooth geometry. The primary challenge in this part is to identify the invariant of Theorem \ref{inftytermBg} with the usual $\hat{A}$-class of a smooth manifold $X$.

We state our main theorem first in the case associated to a complex manifold $X$, so that it is clear how it parallels Costello's work on the Witten genus. Costello's ICM lecture \cite{Cos2} places our result in context with work of Bressler-Nest-Tsygan \cite{BNT} and his own.

Working in the Batalin-Vilkovisky (BV) formalism, we find that the classical observables of the theory are naturally quasi-isomorphic to the regraded holomorphic de Rham forms $\Omega_{hol}^{-\ast}(T^\ast X)$. This result can be interpreted as a version of the HKR theorem, since the fields are related to the loop space of $T^\ast X$. It may not come as a surprise that the {\em quantum observables} involve a deformation of the HKR isomorphism (indeed, the deformation associated to differential operators on $X$).

\begin{theorem}\label{CplxManifoldThm}
There exists a BV quantization of a nonlinear sigma model from the circle $S^1$ into $T^* X$, where $X$ is a compact K\"ahler manifold, with the following properties:
\begin{itemize}
\item only 1-loop Feynman diagrams appear in the quantization, and 
\item the quantization produces a quasi-isomorphism from the global quantum observables to a deformation of the regraded holomorphic de Rham forms of $T^*X$:
\[
(\Omega^{-*,*}(T^*X)[[\hbar]], \hbar L_\pi + \hbar\{\log (e^{-c_1(X)/2} \Td(X)), -\}).
\]
\end{itemize}
\end{theorem}

Here $L_\pi = [d,\iota_\pi]$ denotes the Lie derivative with respect to the canonical Poisson bivector $\pi$ on $T^* X$, and it is thus a degree $1$ operator. The bracket $\{-,-\}$ denotes the associated degree $1$ Poisson bracket on $\Omega^{-\ast,\ast}(T^\ast X)$, given explicitly by the formula
\[
\{a,b\} = L_\pi(ab) - L_\pi(a) b - (-1)^{|a|} a L_\pi b
\]
with $a$ and $b$ Dolbeault forms. In other words, the quantum observables are a $\hbar$-weighted version of the holomorphic Poisson homology, also known as Koszul-Brylinski homology.

Theorem \ref{CplxManifoldThm} follows from a more general theorem about a gauge theory for $\L8$ algebras. We construct a gauge theory on 1-dimensional manifolds that works for any curved $\L8$ algebra $\fg$. In analogy to the usual 3-dimensional Chern-Simons theory, where our Lie algebra needs a nondegenerate invariant pairing, we need an $\L8$ algebra with a nondegenerate invariant pairing of cohomological degree -2. We use the simplest possible class of such $\L8$ algebras: given $\fg$, let $\DD\fg$ denote the central extension of curved $\L8$ algebras 
\[
0 \to \fg^\vee[-2] \to \DD\fg \to \fg \to 0
\]
where $\fg$ acts on the extension by the shifted coadjoint action. The evaluation pairing induces the desired shifted pairing. Much of the work in the paper is devoted to showing that there is a quantization of this theory that only involves connected Feynman diagrams with at most one loop.

Let $M$ will be a one-dimensional manifold and $\fg$ a curved $\L8$ algebra. For our field theory, the equation of motion (or Euler-Lagrange equation) is the Maurer-Cartan equation for a flat connection on the trivial principal $\DD\fg$-bundle on $M$. (This theory arises by the AKSZ procedure \cite{AKSZ}, an aspect we discuss in describing our results from the perspective of QFT.) 

\begin{theorem}\label{LieThm}
There exists a quantization of this theory on $\RR$, invariant under translation along $\RR$ and under dilation of the $\fg^\vee[-2]$ factor in $\DD\fg$, with the following properties:
\begin{itemize}
\item  only 1-loop Feynman diagrams appear in the quantization, and 
\item the quantization produces a quasi-isomorphism from the global quantum observables of this theory on $S^1$ to
\[
\left( \bigoplus_{n \geq 0} C^*(\fg, \Sym^{n}(\fg^\vee[n]) )[[\hbar]], \hbar L_\pi + \hbar\{\log (\hat{A}(\fg)), -\} \right).
\]
\end{itemize}
\end{theorem}

There is a natural dictionary between geometric and $\L8$ constructions: 
\begin{itemize}
\item an $\L8$ algebra $\fg$ corresponds to a ``space" $B\fg$ whose functions are the Chevalley-Eilenberg cochain complex $C^* \fg$; 
\item the $\fg$-module $\fg[1]$ under the shifted adjoint action corresponds to $C^*(\fg, \fg[1])$, viewed as the vector fields on this space $B\fg$; and
\item the $\fg$-module $\fg^\vee[-1]$ with the shifted coadjoint action corresponds to $C^*(\fg, \fg^\vee[-1])$, viewed as the 1-forms on $B\fg$.
\end{itemize}
Under this Koszul duality correpondence, there are $\L8$ analogs $L_\pi$ and $\{-,-\}$ of those defined on $T^* X$ above, and $\hat{A}(\fg)$ denotes the ``$\hat{A}$ characteristic class" for $B\fg$. In other words, we interpret this theorem as giving a deformation of the Hochschild homology of the derived loop space of $B\fg$. (See \cite{CalaqueRossi} for a beautiful exposition of this Lie-geometry correspondence and many other techniques of relevance to this paper. There is a wealth of literature about the underlying Koszul duality between commutative and Lie algebras.) 

\begin{remark}
We should emphasize that this quantization is essentially unique. In constructing the quantization, we will make choices but we will also show that the space of such choices is contractible. The natural invariant output of the quantized theory is the deformed complex given above. Note that it is defined directly in terms of $\fg$. Moreover, the quasi-isomorphism arises by ``integrating out the nonzero modes" of the theory, which mathematically amounts to removing all dependence on the choices made in constructing the quantization.
\end{remark}

A central goal of this paper is to apply this theorem when the target is a smooth manifold $X$, but several challenges appear. As a result, the main theorem has a more complicated statement: instead of working with a manifestation of Hochschild homology --- the regraded de Rham forms --- we need to work with a version of negative cyclic homology, for reasons that we explain after the statement of the theorem.\footnote{The K\"ahler case is easier than the smooth case because, in essence, the Hodge-to-de Rham spectral sequence collapses. We are witnessing here the cyclic analog of this fact.} 

The classical field theory consists of maps of $S^1$ into $T^* X$ equipped with the action functional 
\[
\Maps(S^1 ,T^*X) \ni \gamma \overset{S}{\longrightarrow} \int_{S^1} \gamma^* \lambda,
\] 
where $\lambda$ is the canonical {\it aka} Liouville 1-form on $T^* X$. Again, using the BV formalism, we find that the classical observables of the theory --- namely, functions on the {\em derived} critical locus of the action functional above --- are naturally quasi-isomorphic to $\Omega^{-\ast}(T^\ast X)$. It may not come as a surprise that the {\em quantum observables} are then related to the negative cyclic homology of $T^*X$, which we identify with $(\Omega^{-*}(T^*X)[[u]], u d)$, where $u$ is a formal variable of cohomological degree 2 and $d$ denotes the exterior derivative with cohomological degree $-1$. In fact, we obtain a twisted version of this complex. Our deformation of the differential involves the $\hat{A}$ class of $X$ in a form modified to work with the negative cyclic homology: let $\hat{A}_u(X)$ denote the element in negative cyclic homology obtained by replacing $ch_{k}(X)$ by $u^k ch_{k}(X)$ wherever it appears in the usual $\hat{A}$ class.

\begin{theorem}\label{ManifoldThm}
There exists a BV quantization of a nonlinear sigma model from the circle $S^1$ into $T^* X$, where $X$ is a smooth manifold, with the following properties:
\begin{itemize}
\item only 1-loop Feynman diagrams appear in the quantization, and 
\item the quantization produces a quasi-isomorphism from the $S^1$-invariant global quantum observables to the following deformation of the negative cyclic homology of $T^*X$:
\[
(\Omega^{-*}(T^*X)[[u]][[\hbar]], ud + \hbar L_\pi + \hbar\{\log (\hat{A}_u(X)), -\}).
\]
\end{itemize}
\end{theorem}

One step in proving this theorem is to reduce to the theorem for curved $\L8$ algebras. To do this, we use the Koszul duality between dg commutative algebras and dg Lie algebras which allows us to identify a smooth manifold $X$, as a space over its de Rham space $X_{dR}$, with the classifying space $B\fg_X$ of a sheaf of curved $\L8$ algebras over $X_{dR}$. (See appendix \ref{DGmanifolds} for an introduction to these spaces.) Essentially, we replace smooth functions $C^\infty_X$ by the de Rham complex of jets of smooth functions. This kind of construction is sometimes known as Gelfand-Kazhdan formal geometry or Fedosov resolutions.

\begin{remark}\label{whythecircleaction}
It may appear strange that we only use cyclic homology in the case of a smooth manifold, but the reason is simple. (The cyclic version works, of course, for an arbitrary $B\fg$.) In the setting of K\"ahler manifolds, the $\hat{A}$ class defined by Atiyah classes lives in the ``backbone" $\oplus_k \Omega^{k,k}$ of the Dolbeault complex and hence {\em does} have degree 0 in the regraded holomorphic de Rham complex. In the setting of smooth manifolds, two separate things go wrong. First, the corresponding scalar Atiyah classes, as defined, say, by Calaque and Van den Bergh \cite{CVDB} (see also \cite{Kap}), vanish. Second, ignoring that issue, the $\hat{A}$ class via Atiyah classes is concentrated in degree 0 and thus cannot agree with the usual $\hat{A}$ class.

To deal with the first issue, we develop an Atiyah class version of the Chern-Weil construction of characteristic classes. To deal with the second, we move to cyclic homology, where the extra grading allows us to obtain a cyclic version of the usual $\hat{A}$ class.

More accurately, we encode the smooth manifold $X$ as an $\L8$ algebra object $B\fg_X$, where Gelfand-Kazhdan formal geometry provides the curved $\L8$ algebra $\fg_X$. As a result, our construction of the global observables involves a complex quasi-isomorphic to (shifted) de Rham forms, and the characteristic classes $ch_{k}(B\fg_X)$ all manifestly have cohomological degree 0 in this construction. The final difficulty is in identifying $ch_{k}(B\fg_X)$ with $ch_k(X)$, and working with negative cyclic homology accomplishes this identification.
\end{remark}

Finally, we remark on the next step in our project. We have shown here that the global quantum observables are quasi-isomorphic to complexes that usually appear as the Hochschild or cyclic cohomology of familiar algebras, but these algebras have not appeared in our discussion thus far. Indeed, the theorems here are one half of a more interesting theorem, which we will prove in a followup paper. In the holomorphic setting, where $X$ is a complex manifold, Bressler, Nest, and Tsygan \cite{BNT} constructed a quasi-isomorphism between the Hochschild homology of the Rees algebra of holomorphic differential operators $\Diff^\hbar(X)$ and $(\Omega^{-*}(T^*X)[[\hbar]], \hbar L_\pi + \hbar\{e^{-c_1(X)/2}\log(\Td(X), -\})$. In our next paper, we will construct the factorization algebra of observables for our 1-dimensional theory and show that it is equivalent to $\Diff^\hbar(X)$, the Rees algebra of differential operators on a complex or smooth manifold $X$. We will then use the formalism of factorization algebras to compute the global observables on the circle, which is equivalent to the Hochschild homology of $\Diff^\hbar(X)$. These two descriptions of the global observables are quasi-isomorphic, and hence we will recover the Bressler-Nest-Tsygan theorem, as well as a smooth analog. Similarly, our results can be interpreted as a path integral derivation of Fedosov's trace map in his approach to deformation quantization \cite{FedDQ}, \cite{FedBook}.

\subsection{Our results from the perspective of derived geometry}

A field theory, classical or quantum, is a geometric construction, and it is useful to pinpoint what our construction means in the language of geometry. Before describing our constructions, we introduce a bit of terminology. Throughout the paper, we use dg manifolds,\footnote{See appendix \ref{DGmanifolds} for a quick introduction to these spaces.} a rather concrete and primitive version of derived geometry well-suited to the explicit computations of field theory. Essentially, a dg manifold is a ringed space where the underlying space is a smooth manifold and the structure sheaf is a sheaf of commutative dg algebras. The key spaces that appear in our field theory are the classifying space of an $\L8$ algebra $B\fg$, the de Rham space $X_{dR}$ of a smooth manifold $X$ (this space is essentially the quotient of $X$ where nearby points are identified), and the derived loop space $\cL M$ of a dg manifold $M$.

Classical field theory fits easily into the language of geometry: a space of fields is simply a mapping space (or space of sections of some bundle) and a classical theory picks out a subspace satisfying some system of equations. Fix an $\L8$ algebra $\fg$ and let $\DD \fg = \fg \oplus \fg^\vee[-2]$ denote the split, square-zero extension of $\fg$. Our classical Chern-Simons theory picks out a space with two equivalent descriptions: 
\begin{itemize}
\item the formal neighborhood of the trivial connection in the moduli space of flat $\DD \fg$ connections on $S^1$ or, equivalently, 
\item the formal completion around the constant maps in the mapping space $\Maps(S^1_{dR}, T^* B\fg)$.
\end{itemize} 
We denote this {\em derived loop space} by $\cL T^*B\fg$.\footnote{There are several spaces that could reasonably be called the derived loop space, but this one is the most relevant for our purposes. In \cite{GGonL8}, we use Costello's formalism for derived smooth geometry to discuss these different options.} The global classical observables for this theory are precisely the functions on this space; it is well-known that the functions on a derived loop space $\cL X$ are quasi-isomorphic to the Hochschild homology complex of the functions on $X$, which is also quasi-isomorphic to the regraded de Rham forms on $X$. Hence, the classical observables are
\[
\sO( \cL T^* B\fg) \simeq \Omega^{-*} (T^* B\fg) = \sO(T[-1]T^*B\fg).
\]
When we quantize, we deform this complex over $\RR[[\hbar]]$.\footnote{The relationship between the derived loop space, Hochschild and cyclic homology, and the Chern character has been explored extensively \cite{BZN}, \cite{TV}.}

There is an appealing interpretation of our main theorem, rooted in the idea that quantization amounts to taking a path integral.\footnote{In his update of \cite{Cos3}, Costello provided a precise version of this idea via the notion of a ``projective volume form." Again, \cite{Grady2} explains how our result produces a projective volume form.} In other words, we should view our quantization as constructing a ``volume form'' on the space $\cL B \fg$ and thus allows us to define an integration map for functions on $\cL B \fg$ (i.e., ``the expected value" of any observable).\footnote{The attentive reader will have noted that the volume form lives on $\cL B\fg$ but the space of fields is $\cL T^*B\fg$. In fact, the space of fields is isomorphic to $T^*[-1] \cL B\fg$, so that functions on the fields are polyvector fields on $\cL B\fg$. These polyvector fields provide an obfuscated version of the de Rham complex fof $\cL B\fg$ and hence encode integration on that space. See \cite{Cos3} for more discussion.} That $\hat{A} (B \fg)$ shows up has a natural Lie-theoretic interpretation.  Recall that the power series $\hat{A}$ arises, speaking loosely, by comparing the Lebesgue measure on a Lie algebra to a Haar measure on its formal group $\hat{G}$.\footnote{For $G$ a compact Lie group, the derivative of the exponential map is
\[
d(\exp a) = \det \left(\frac{1 - e^{\ad(a)}}{\ad(a)} \right) da,
\]
where $a$ denotes a coordinate on $\fg$ \cite{BGV}.} Our quantization can then be thought of as pulling back the volume form, arising from quantization, on the ``formal group" $\cL B \fg$ to the ``Lie algebra" $T[-1]B \fg$ via an exponential map. 

%Suppose we know how to do integration over $B\fg$; this is certainly the case when $B\fg$ encodes a smooth manifold and should extend to an interesting class of $\L8$ algebras. Then our quantization picks out a distinguished element $\hat{A}(B\fg)$ of degree 0 in $\Omega^{-*} (B\fg)$ and we obtain a ``volume form" on $\sL B\fg$:
%\[
%\sO( \sL B\fg) \owns f \mapsto \int_{B\fg} f \hat{A}(B\fg) \in \RR[[\hbar]].
%\]
%This volume form has a natural Lie-theoretic interpretation, and it is compelling that our gauge theory naturally recovers it.  

%[DON'T QUITE KNOW HOW TO FIT THIS ALL TOGETHER
%Recall that for a Lie group $G$, the exponential map has derivative
%\[
%D_A \exp = \exp( \ad \, A) \frac{1 - \exp (- \ad\, A)}{\ad \, A}
%\]
%and so the Jacobian is
%\[
%\det D_A \exp = \exp( \Tr \, \ad \, A) \det \left( \frac{1 - \exp (- \ad\, A)}{\ad \, A} \right).
%\]
%For us, the Lie algebra is $T_X[-1]$ with bracket given by the Atiyah class, so this Jacobian is equivalent to $1/\hat{A}(X)$.
%]

Let us explain how Koszul duality allows us to phrase a smooth manifold as an $\L8$ algebra (in the process, we will explain the exponentiation remark from the preceding paragraph); we rely on the work of Kapranov and Costello. For a smooth manifold $X$, the tubular neighborhood theorem allows us to identify a small neighborhood of the diagonal $X \overset{\Delta}{\hookrightarrow} X \times X$ with a small neighborhood of the zero section of the tangent bundle $X \overset{i}{\hookrightarrow} TX$. Essentially, one chooses a metric on $X$ and then uses the induced exponential map to send a small ball around $0$ in each tangent fiber $T_x X$ to a small transverse slice to $(x,x) \in X \times X$. This argument works in the setting of formal geometry and says that we can identify a formal neighborhood of the diagonal in $X \times X$ with a formal neighborhood of the zero section of the tangent bundle $TX$. We will now provide a Lie-theoretic interpretation of this construction. 

Denote by $\hat{X}$ the formal neighborhood of the diagonal and by $T[0]X$ the formal neighborhood of the zero section in $TX$. Following Kapranov's work in the holomorphic setting \cite{Kap}, one shows that the Atiyah class of the tangent sheaf $\cT_X$ equips $\cT_X[-1]$ with the structure of a sheaf of $\L8$ algebras over $X$. There is then a fiberwise exponential map $\exp: T[-1]X \to \cL X$, which at each point $x \in X$ maps the $\L8$ algebra $T_x[-1]X$ to its formal group, the based derived loops $\Omega_x X$. By delooping, we obtain a map $B\exp: BT[-1]X \cong T[0]X \to B\cL X \cong \hat{X}$, which is precisely the kind of exponential map arising from the tubular neighborhood picture.

Building on Kapranov's picture, Costello \cite{Cos3} showed that in the holomorphic setting, this sheaf of $\L8$ algebras $\cT_X[-1]$ arises from a sheaf of curved $\L8$ algebras over the de Rham space $X_{dR}$. In the smooth setting, we have an analogous situation: we have a homotopy pullback diagram\footnote{This pullback diagram is a straightforward consequence of the fact that $X_{dR}$ can be presented as a groupoid $\hat{X} \rightrightarrows X$.}
\[
\begin{xymatrix}{
\hat{X} \ar[r] \ar[d] & X \ar[d]^\pi \\
X \ar[r]_\pi & X_{dR}
}
\end{xymatrix}
\]
and there exists a sheaf of curved $\L8$ algebras $\fg_X$ over $X_{dR}$ so that there is an isomorphism $B\exp: B\fg_X \to X$ over $X_{dR}$ and so that the pullback sheaf $\pi^*\fg_X$ over $X$ is isomorphic to $\cT_X[-1]$. 

Now we introduce the field theory. In studying a classical field theory, we focus on the derived critical locus of the action functional. In our case, the derived critical locus corresponds to the mapping space from $S^1_{dR}$ into $T^*[0]X$, the formal neighborhood of the zero section of the cotangent bundle. Thus, the classical field theory is simply the study of the derived loop space $\cL T^*[0]X \cong T^*[-1] \cL X$. In general, the BV formalism for quantization works cleanly with shifted cotangent bundles such as $T^*[-1] \cL X$.

%As shown in \cite{Cos3}, there is an equivalence between
%\[
%\{ \text{volume forms on a dg manifold $\sM$} \} / \{ \text{ $\GG_m$-action of global rescaling} \}
%\]
%and
%\[
%\{ \text{BV quantizations of $T^*[-1]\sM$ invariant under dilations of the fiber} \}.
%\]
%Explicitly, if we distinguish a volume form $\dvol$ on $\sM$ and let $\Delta$ denote the associated BV quantization, then $f \dvol$ goes to $\Delta + \{ \log f, -\}$. Thus, one can view our QFT as constructing a ``volume form" on the derived loop space $\sM = \sL X$ of our target manifold $X$. This perspective provides one answer to the question of why the $\hat{A}$-class, rather than some other characteristic class, appears in the statement of the index theorem: our map from $\fg_X$ to $\sL X$ can be viewed as a fiberwise exponential map from a Lie algebra to its formal group, and so the $\hat{A}$-class appears as the Jacobian of that exponential map, relating the ``Lebesgue measure" on $\fg$ to a left-invariant Haar measure on the formal group $G$. We now elaborate on this idea.

%By the result from \cite{Cos3}, we see that a 1-loop quantization of $T^* [-1] \sL X$ yields a volume form on $\sL X$, up to global rescaling by a scalar. Because we have chosen an isomorphism from $\sL B\fg_X$ to $\sL X$, we can view this volume form as living on $\sL B\fg_X$. As we are pulling back a volume form from a formal group to its Lie algebra (or more precisely, a family of formal groups $\Omega_x X$ over $X$ to the family of Lie algebras), we need to take into account the Jacobian of the exponential map. 

\subsection{Our results from the perspective of quantum field theory}

An appealing and powerful aspect of the formalism for QFT developed by Costello \cite{Cos1} is that it naturally combines derived geometry and Feynman diagrammatics, which makes it straightforward to work with QFTs in the style of geometry: we can construct QFTs in a local-to-global fashion, build families of QFTs, and describe the obstructions, deformations, and automorphisms of quantizations via explicit cochain complexes. 

We study here a nice and rather simple example of this formalism. Of course, because we are working with one-dimensional spaces, the analytic aspects are well-behaved. Thus, we hope this paper will help those already familiar with QFT to see how to work with the other aspects of Costello's machine.

There are two topics that might be of especial interest from the point of view of QFT. First, we sketch in section \ref{large volume limit} how to recover our action functional by a two step process: first, take the infinite-volume limit of the usual action for a free particle wandering around a Riemannian manifold, and second, apply the Batalin-Vilkovisky formalism. These two steps together recover an AKSZ action functional. This process is a simple source of several beautiful theories, and it leads to the holomorphic Chern-Simons theory studied by Costello \cite{Cos3}. The second topic is a method for converting some nonlinear sigma models into gauge theories and, equivalently, interpreting some gauge theories as sigma models. In essence, there is a correspondence between commutative dg algebras and dg Lie algebras (or $\L8$ algebras) known as Koszul duality. Since perturbative field theory can be organized in the style of algebraic geometry, it should be no surprise that one might use Koszul duality to translate between sigma models and gauge theories. We give an example of this translation in Part II of the paper, where we encode a sigma model of a circle mapping into a smooth manifold $T^*X$ with a Chern-Simons theory on the circle with Lie algebra $\fg_X \oplus \fg_X^\vee[-2]$. Alternatively, one can view our procedure as a repackaging of Gelfand-Kazhdan formal geometry or Fedosov resolutions. Again, similar techniques are used for holomorphic geometry in \cite{Cos3}, where we learned these ideas.

Our work here clearly has a strong relationship with the vast literature on deformation quantization (notably, \cite{BNT}, \cite{Will}, \cite{Tsygan}, \cite{Dolg}, \cite{PPT}). In our next paper, where we construct the factorization algebra of observables, we hope to make these connections more precise.

\subsection{Acknowledgements}

Our original inspiration for this project was to better understand Costello's holomorphic Chern-Simons theory \cite{Cos3} by developing its one-dimensional analog. Many of the techniques and ideas in this paper are thus due to Kevin Costello, and we thank him for his tremendous generosity in discussing all aspects of this work. We have also benefited from conversations with Damien Calaque, Gr\'egory Ginot, John Francis, Theo Johnson-Freyd, David Nadler, Fr\'ed\'eric Paugam, Claudia Scheimbauer, Josh Shadlen, Yuan Shen, Mathieu Stienon, Stephan Stolz, Justin Thomas, Ping Xu, and Shilin Yu. 

During revisions of this work, OG was supported as a postdoctoral fellow by the National Science Foundation under Award DMS-1204826.

\part{One-dimensional Chern-Simons theories}

Our goal in this part of the paper is to construct a one-dimensional Batalin-Vilkovisky (BV) theory which we call {\it one-dimensional Chern-Simons theory}. As a perturbative gauge theory, it depends on a choice of Lie or $\L8$ algebra, which must possess an invariant inner product of cohomological degree $-2$. We can construct such an $\L8$ algebra from any finite rank $\L8$ algebra $\fg$: simply take $\fg \oplus \fg^\vee [-2]$ and use the evaluation pairing. In this case, the obstructions to BV quantization vanish and the quantized action functional has an interpretation in terms of characteristic classes. Over the course of Part I, we will introduce and explain all the terms appearing in the Theorem \ref{LieThm}.

%\begin{theorem}\label{thma}
%For one dimensional Chern-Simons theory with values in the $\L8$-algebra $\fg \oplus \fg^\vee [-2]$ the scale infinity interaction term (restricted to harmonic fields) encodes the Todd class of $B\fg$, more precisely
%\[
%I [\infty] |_{\cH} \cong \log \left( e^{-\frac{1}{2}c_1(T_{B\fg})} \text{Td} (T_{B\fg}) \right).
%\]
%\end{theorem}

%As $\hat{A}(T_{B\fg}) = e^{-\frac{1}{2}c_1(T_{B\fg})} \text{Td} (T_{B\fg})$, this result will imply Theorem \ref{LieThm}. 

We begin by reviewing the notion of a BV theory, define one-dimensional Chern-Simons theory, and then discuss renormalization and quantization after Costello \cite{Cos1}. Then we develop the language of characteristic classes in the setting of $\L8$ algebras to prove Theorem \ref{LieThm}.

\section{Defining the theory}

\subsection{Free theories in the BV formalism}

\begin{definition}
A {\em free field theory} consists of the following data:
\begin{itemize}

\item a manifold $M$ and a finite rank, $\ZZ$-graded (super)vector bundle $\pi: E \rightarrow M$ whose smooth sections are denoted $\sE$;

\item a degree $-1$ antisymmetric pairing on the bundle $\langle- , -\rangle_{loc}: E \ot E \rightarrow \Dens(M)$ that is fiberwise nondegenerate;\footnote{Note that this induces a pairing $\langle -, - \rangle$ on compactly supported sections of $\sE$ by integration.}

\item a degree $+1$ differential operator $Q: \sE \rightarrow \sE$ that is square-zero and skew-self-adjoint for the pairing;

\item a degree $-1$ differential operator $Q^\ast: \sE \rightarrow \sE$ that is square-zero, self-adjoint for the pairing, and whose commutator $[Q,Q^\ast]$ is a generalized Laplacian.

\end{itemize}
\end{definition}

A free field theory has the quadratic action functional $S_{free}: \phi \rightarrow \langle \phi, Q\phi\rangle$.

\begin{remark}
This definition differs from \cite{Cos1} by including the ``gauge-fixing operator" into the definition. From the point of view of Costello's formalism, this is unappealing, but in practice we'll fix an operator $Q^\ast$ once and for all and never worry about the space of such operators. Thanks to theorem 10.7.2 in chapter 5 of \cite{Cos1}, we know that this choice does not affect structural aspects of our theory since our space of gauge fixes is contractible.
\end{remark}

Given a free field theory, we can consider modifying $S_{free}$ by adding ``interaction" terms $I$. The kinds of functional $I$ that we allow will be motivated by physics, but we need some notation first.

\begin{definition}
The space of {\em functionals} on the fields $\sE$ is $\sO(\sE) := \csym(\sE^\vee)$.
\end{definition}

\begin{remark}
Whenever we work with these big vector spaces, like the fields, we work in the appropriate category of topological vector spaces and we always use the natural morphisms, tensor products, and so on, for that context. Here $\sE^\vee$ denotes the continuous dual to $\sE$ (hence, distributions) and $\csym$ denotes the completed symmetric algebra, where we construct this algebra using the continuous product and completed projective tensor product.
\end{remark}

Note that we use the completed symmetric algebra -- aka the ``formal power series" on fields -- because we are working perturbatively, and hence in a formal neighborhood of the classical solution.

Not every functional can serve as an action functional, however. A basic premise of field theory is that the physics must be {\em local} (so there is no ``spooky" action-at-a-distance). Here is a precise expression of that idea.

\begin{definition}
A functional $I \in \sO(\sE)$ is {\em local} if every homogeneous component $I_k \in \Sym^k \sE^\vee$ is of the form
\[
I_k(\phi) = \sum _{\alpha \in A} \int_{x \in M} \left(D_{\alpha, 1} \phi \big|_x \right) \cdots \left(D_{\alpha, k} \phi \big|_x \right) d\mu_\alpha (x),
\]
where each $D_{\alpha,i}$ is a differential operator from $\sE$ to $C^\infty(M)$, $d\mu_\alpha$ is a density on $M$, and the index set $A$ for the integrals is finite.
\end{definition}

We denote the space of local functionals by $\sO_{loc}(\sE)$.

This definition captures our intuition of locality because it says the functional only cares about the local behavior of the field point by point on the manifold $M$ (i.e., depends only on the Taylor series, or $\infty$-jet, of $\phi$). It doesn't compare the behavior of the field at separated points or regions. For example, it excludes functionals like $\phi(p)\phi(q)$, where $p$ and $q$ are distinct points. 

\begin{definition}
An {\em interaction term} $I$ is a local functional whose homogeneous components are cubic and higher. An {\em action functional} associated to a free field theory is a functional $S_{free} + I$, with $I$ an interaction term. 
\end{definition}

In the BV formalism, a classical theory is an action functional satisfying the {\em classical master equation}. The pairing $\langle -,-\rangle$ on fields $\sE$  induces a skew-symmetric pairing ${-,-}$ of degree $1$ on local functionals.\footnote{On a finite-dimensional graded vector space $V$, a pairing on $V$ induces a dual pairing on $V^\vee$, but we are working with infinite-dimensional vector spaces where analytic issues arise. The expected dual pairing is only defined on a subset of the dual space.} This pairing behaves like a Poisson bracket.

\begin{definition}\label{def:classicalBVtheory}
A {\em classical BV theory} consists of a a free BV theory and an interaction term $I \in \sO_{loc}(\sE)$ satisfying the classical master equation $QI + \frac{1}{2}\{I,I\} = 0$.
\end{definition}

\subsection{Perturbative Chern-Simons theory on a 1-manifold}

Although Chern-Simons theory typically refers to a gauge theory on a 3-manifold, the perturbative theory has analogues over a manifold of any dimension. The only modification is to use dg Lie algebras, or $\L8$ algebras, with an invariant pairing of the appropriate degree. In section \ref{large volume limit}, we will explain how the AKSZ formalism for nonlinear sigma models relates to the gauge theories described here, and so we defer a general discussion of the sigma model motivation to that section. Nonetheless, we hope the analogy to the usual Chern-Simons theory is transparent.

\subsubsection{The simplest example}

Our base space is $S^1$. Let $\fg = \bigoplus_n \fg_n$ be a graded Lie algebra with a nondegenerate invariant symmetric pairing $\langle-,-\rangle_\fg$ of degree -2.  Notice that this means $\fg[1]$ comes equipped with a nondegenerate skew-symmetric pairing of degree $0$, which we denote $\langle-,-\rangle_{\fg[1]}$. The space of fields is $\sE = \Omega^*(S^1) \ot \fg[1]$. The pairing on $\fg$ induces a symplectic form of degree -1 on $\sE$ by
\[
\langle \alpha, \beta \rangle = \int_{t \in S^1} \langle \alpha(t) \wedge \beta(t) \rangle_{\fg[1]}.
\]
More explicitly, let $\alpha = \sum_n A^0_n(t) + A^1_n(t) dt$ denote an element of $\sE$, where $A^0_n(t)$ and $A^1_n(t)$ are smooth functions on $S^1$ taking values in $\fg[1]_n$, and likewise for $\beta = \sum B^0_n(t) + B^1_n(t) dt$. Then
\[
\langle \alpha, \beta \rangle = \sum_n \int_{t \in S^1} \langle A^0_n(t), B^1_{-n}(t) \rangle_{\fg[1]} + \langle A^1_n(t), B^0_{-n}(t) \rangle_{\fg[1]} \, dt.
\]
We fix a metric on $S^1$ and let $Q = d$, the exterior derivative, and $Q^* = d^*$, its adjoint with respect to our metric. The action functional is
\[
S(\alpha) = \frac{1}{2} \langle \alpha, d \alpha \rangle + \frac{1}{6} \langle \alpha, [\alpha, \alpha ] \rangle.
\]

\subsubsection{The general case}

Let $\fg$ now denote a curved $\L8$ algebra over a commutative dg algebra $R$ (for the definition, see appendix \ref{app:L8}). Let the maps $\ell_n: \wedge^n \fg \rightarrow \fg$ denote the brackets (i.e., these are the Taylor components of the derivation $d_\fg$ defining the $\L8$ structure).\footnote{Note $\ell_1(r x) = d_R(r) x \pm r \ell_1(x)$, so it is not $R^\sharp$-linear.} We want an $\L8$ algebra that has a nondegenerate invariant symmetric pairing $\langle-,-\rangle$ of degree -2. Note that the sum $\fg \oplus \fg^\vee[-2]$ is equipped with an $\L8$ structure using the coadjoint action:
\[
[X + \lambda, Y + \mu] = [X,Y] + X \cdot \mu - Y \cdot \lambda,
\]
where $X,Y \in \fg$ and $\lambda, \mu \in \fg^\vee[-2]$. Moreover, $\fg \oplus \fg^\vee[-2]$ also has a natural pairing
\[
\langle X + \lambda, Y + \mu \rangle = \lambda(Y) \pm \mu(X),
\]
which is invariant by construction.

Our space of fields is
\[
\Omega_{S^1} \ot \left( \fg[1] \oplus \fg^\vee[-1] \right).
\]
Our action functional is
\[
S(\phi) = \frac{1}{2}\langle \phi, d \phi \rangle + \sum_{n = 0}^\infty \frac{1}{(n+1)!} \langle \phi, \ell_n(\phi^{\ot n}) \rangle.
\]
Note that when $\fg$ is just a graded Lie algebra, $\ell_2$ is the only nontrivial bracket and we recover the action functional from the simple example above.

\begin{remark}
In this setting, the action functional takes values in the graded algebra $R^\sharp$, not $\RR$ or $\CC$. (We are implicitly studying a family of theories with base ring $R^\sharp$. For a discussion of this notion, see chapter 2, section 13 of \cite{Cos1}.) We view the action $S$ as a sum of a free action functional $\frac{1}{2}\langle \phi, Q \phi \rangle$, where $Q = d + \ell_1$, and an interaction term $I_{CS} = \langle \phi, \ell_0 \rangle + \sum_{n = 2}^\infty \frac{1}{(n+1)!} \langle \phi, \ell_n(\phi^{\ot n}) \rangle.$ We group $\ell_1$ into the ``kinetic term" so that the interaction term $I$ is $R^\sharp$-linear. (We may occasionally, and abusively, slip into viewing $\ell_1$ as part of the interaction term.)
\end{remark}

\begin{lemma}
The Euler-Lagrange equation of $S$ is the Maurer-Cartan equation for the trivial $\fg \oplus \fg^\vee[-2]$-bundle on $S^1$.
\end{lemma}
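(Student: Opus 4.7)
The plan is to compute the first variation of $S$, use the graded-invariance of the degree $-2$ pairing on $\fg \oplus \fg^\vee[-2]$ to rewrite each term symmetrically, and recognize the resulting Euler-Lagrange equation as the curved Maurer-Cartan equation on the trivial bundle over $S^1$. The crucial input is that the evaluation pairing on $\fg \oplus \fg^\vee[-2]$ is strictly invariant, meaning that for every $n \geq 0$ the $(n+1)$-ary form
\[
(\phi_0, \phi_1, \ldots, \phi_n) \;\longmapsto\; \langle \phi_0,\, \ell_n(\phi_1, \ldots, \phi_n) \rangle
\]
is graded-symmetric in all $n+1$ arguments (this is automatic for the split square-zero extension with the evaluation pairing, and it is what makes the Chern--Simons action functional well-defined to begin with). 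A second input is that the de Rham operator $d$ on $S^1$ is skew-self-adjoint for the induced pairing on sections, as integration by parts has no boundary terms on the circle.

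First I would vary the kinetic term: $\delta_\eta \tfrac{1}{2}\langle \phi, d\phi\rangle = \tfrac{1}{2}(\langle \eta, d\phi\rangle + \langle \phi, d\eta\rangle) = \langle \eta, d\phi\rangle$, using graded symmetry of $\langle -,-\rangle$ together with skew-self-adjointness of $d$. Next I would vary each interaction. Differentiating $\tfrac{1}{(n+1)!} \langle \phi, \ell_n(\phi^{\otimes n})\rangle$ in the direction $\eta$ produces $n+1$ contributions, one for each occurrence of $\phi$; by the graded-symmetry property above, all $n+1$ of them equal $\langle \eta, \ell_n(\phi^{\otimes n})\rangle$, so the variation collapses to $\tfrac{1}{n!} \langle \eta, \ell_n(\phi^{\otimes n})\rangle$. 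Summing over $n$ gives
\[
\delta_\eta S \;=\; \Bigl\langle \eta, \; d\phi + \sum_{n \geq 0} \tfrac{1}{n!}\, \ell_n(\phi^{\otimes n}) \Bigr\rangle.
\]
Since $\eta$ ranges over all test fields and the pairing on sections is nondegenerate, the Euler--Lagrange equation is the vanishing of the right-hand slot of the pairing.

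Finally, I would identify this equation with flatness. A connection on the trivial $\fg \oplus \fg^\vee[-2]$-bundle over $S^1$ is, by definition, a Maurer--Cartan element of the dg $L_\infty$ algebra $\Omega^*(S^1) \otimes (\fg \oplus \fg^\vee[-2])$, where the total differential combines $d$ on $\Omega^*(S^1)$ with the internal $\ell_1$ acting pointwise. The curved MC equation for such an element reads precisely $d\phi + \sum_{n\geq 0} \tfrac{1}{n!} \ell_n(\phi^{\otimes n}) = 0$, with the $n=0$ term encoding the curvature of $\fg \oplus \fg^\vee[-2]$, the $n=1$ term combining with $d$ to form the ambient differential, and the higher $\ell_n$ carrying the standard symmetric factors $1/n!$. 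This matches the Euler--Lagrange equation on the nose. The only delicate part of the argument is the sign bookkeeping in the middle paragraph: one must track both the Koszul signs from commuting $\eta$ past the other $\phi$'s inside $\ell_n$ and the signs from graded symmetry of the pairing, and verify that these conspire to make the $n+1$ terms equal rather than cancel. This cancellation is exactly the content of graded-symmetry of the $(n+1)$-ary form displayed above, so the check is mechanical once that fact is granted; it is nevertheless the main bookkeeping obstacle to producing a fully rigorous proof.
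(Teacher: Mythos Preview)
Your proposal is correct and follows essentially the same approach as the paper's own proof: compute the first variation using integration by parts (skew-self-adjointness of $d$) and the invariance of the pairing (graded symmetry of the $(n+1)$-ary form $\langle \phi_0, \ell_n(\phi_1,\ldots,\phi_n)\rangle$) to collapse the $n+1$ identical contributions to $\tfrac{1}{n!}\langle \eta,\ell_n(\phi^{\otimes n})\rangle$, then invoke nondegeneracy of the pairing to read off the Maurer--Cartan equation. The paper's proof is terser---it uses a square-zero parameter $\epsilon$ rather than a variational direction $\eta$ and simply cites ``$\fg$-invariance of the pairing and integration by parts''---but the content is identical.
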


\begin{proof}
Let $\phi + \epsilon \psi$ be a first-order deformation of $\phi$, i.e., $\epsilon^2 = 0$. Then 
\[
S(\phi + \epsilon \psi) - S(\phi) = \epsilon \left( \langle \psi, d\phi \rangle + \sum_{n=0}^\infty \frac{n+1}{(n+1)!} \langle \psi, \ell_n(\phi^{\ot n}) \rangle \right),
\]
by the $\fg$-invariance of the pairing $\langle -,-\rangle$ and integration by parts. For this integral to vanish for any choice of $\psi$, we need $\phi$ to satisfy 
\[
d\phi + \sum_{n=0}^\infty \frac{1}{n!} \ell_n(\phi^{\ot n}) = 0,
\]
the $\L8$-version of the Maurer-Cartan equation.
\end{proof}

\section{BV quantization and renormalization group flow}\label{section2}

In the previous section we defined the classical Chern-Simons action functional.  In this section we review the notions of a quantum field theory and quantization of a classical field theory in the framework of effective field theory developed in \cite{Cos1}. Constructing an effective field theory from a classical field theory consists of two stages, as described by the following figure.

\[
\xymatrix{ \text{Classical Action} \ar[0,2]^{\text{Renormalization}} &&\text{Pre-theory} \ar[0,2]^{\text{QME}} && \text{BV theory} \\
&\text{Analytic} \ar[u] && \text{Algebraic} \ar[u]
}
\]

To begin, we assume that we have the data of a classical field theory $(\sE , \langle - , - \rangle_{loc}, Q, Q^\ast )$ with classical interaction $I \in \sO_{loc} (\sE)$.  The Feynman diagrams arising from this data typically lead to divergent integrals (as we are trying to multiply distributions), and we need some method of renormalization to resolve these analytic issues. In the framework of effective field theory, we introduce a parameter $L \in (0, \infty)$ called the {\it length scale} and work with families of action functionals $\{I[L]\}$ --- no longer local --- parametrized by $L$.  We require the functionals at different length scales to be related by ``integrating out the fields at intermediate length scales;" the notion of renormalization group flow (RG flow) provides a precise interpretation of this idea. All the Feynman diagrams appearing in such a family yield well-defined integrals. The first stage of quantizing our classical field theory consists of finding a family of functionals $\{I [L]\}$ with $I[L] \in \sO (\sE) [[\hbar]]$ such that
\[
\lim_{L \to 0} I [L] = I \text{ modulo } \hbar.
\]
The physical meaning of the above limit is that in the classical limit ($L \to 0$) of our quantum theory (determined by $\{I[L]\}$), the fields become fully local and hence interact at points. 

Even after the analytic issues are overcome, there is an algebraic aspect to address: we need our theory to satisfy the quantum master equation (QME), which, speaking casually, insures that our theory leads to a well-defined ``measure" on the space of fields. (For an overview of the QME and its meaning, we again direct the reader to \cite{Cos1}.) There is a cochain complex, determined just by the classical theory, that encodes all the algebraic aspects of BV quantization; we call it the {\em obstruction-deformation complex} for the classical theory. In particular, the obstruction-deformation complex of a theory $\sT$ describes the formal neighborhood of $\sT$ inside the space of classical BV theories with the same underlying free BV theory. It is a nontrivial result of \cite{Cos1} that the obstructions to solving the QME are cocycles in this complex.

\subsection{Locality}

In order for the classical limit of our theory to exist, modulo $\hbar$, we need some locality conditions on our functionals $\{I[L]\}$.  The scale $L$ interaction term $I[L] \in \sO (\sE) [[\hbar]]$ has a decomposition into homogeneous components
\[
I[L] = \sum_{i,j} \hbar^i I_{i,j} [L] ,
\]
with $I_{i,j}[L] \in \Sym^j(\sE^\vee)$. We then require that for each index $(i,j)$ there exists a small $L$ asymptotic expansion
\[
I_{i,j} [L] \simeq \sum_{k \in \ZZ_{\ge 0}} g_k (L) \Upsilon_k, 
\]
with $g_k \in C^\infty (0,\infty)$ a smooth function of $L$ and $\Upsilon_k \in \sO_{loc} (\sE )$ a local functional. This expansion must be a true asymptotic expansion in the weak topology on $\sO (\sE)$.

\subsection{The renormalization group flow}\label{RGflow}

Given an asymptotically local family of interactions $\{I[L]\}$, we next want them to satisfy the {\it renormalization group equation} (RGE).  The RGE expresses the notion that the interaction at length scale $L$ is related to interaction at length scale $\epsilon$ by integrating over all fields with wavelengths between $\epsilon$ and $L$.  Mathematically, we write the RGE as
\[
I[L] = W(P_\epsilon^L , I[\epsilon]) ,
\]
where $P_\epsilon^L $ is the propagator and $W$ is a weighted sum over Feynman graphs.  We now describe these operators.

Let $D = [Q,Q^\ast]$ be the generalized Laplacian associated to our classical field theory.  For $t \in \RR_{>0}$, let $K_t \in \sE \otimes \sE$ denote the heat kernel for $D$, where our convention for kernels is that for any $\phi \in \sE$,
\[
\int_M \langle K_t (x,y) , \phi (y) \rangle_{loc} = (e^{-tD} \phi ) (x) .
\]
Note that we use the symplectic pairing rather than the more conventional evaluation pairing.

\begin{definition}\label{propagator}
For a classical field theory $(\sE , \langle - , - \rangle_{loc}, Q, Q^\ast )$ the {\it propagator with ultra-violet cut off} $\epsilon$ {\it and infrared cut off} $L$ is given by
\[
P_\epsilon^L = \int_\epsilon^L (Q^\ast \otimes 1) K_t \, dt .
\]
For $\epsilon >0$, $P_\epsilon^L$ is a smooth section of $E \boxtimes E$.
\end{definition}

\begin{example}
For one dimensional Chern-Simons with values in the $\L8$-algebra $\fg[1] \oplus \fg^\vee [-1]$, we can write the propagator explicitly.  Let 
\[
\Cas_\fg = \text{Id}_\fg + \text{Id}_{\fg^\vee} \in \left( \fg[1] \otimes \fg^\vee [-1] \right) \oplus \left( \fg^\vee [-1]\otimes \fg [1] \right) 
\]
be the Casimir, where $\text{Id}_\fg \in \fg[1] \otimes \fg^\vee [-1]$ corresponds to the identity element of $\text{End} (\fg ) = \fg \otimes \fg^\vee$. In this setting $K_t$ is just the one-dimensional heat operator tensored with $\Cas_\fg$, and hence
\[
P_\epsilon^L = \int_\epsilon^L t^{-3/2} \lvert x_1 - x_2 \rvert e^{- \lvert x_1 - x_2 \rvert^2 / t} \Cas_\fg \, dt ,
\] 
up to some constants. In our case, the limit where $\epsilon$ goes to zero and $L$ goes to infinity is a Heaviside step function. In particular, 
\[
P_0^\infty = \pi \sign(x_1 - x_2) \Cas_\fg,
\]
where $\sign(x) = 1$ if $x > 0$ and $\sign(x) = -1$ if $x < 0$. The fact that small length scales are well-behaved insures that we avoid most of the usual analytic challenges in quantum field theory; this feature is one way in which the one-dimensional case is easier than the higher-dimensional analogues.
\end{example}

With propagator in hand, we proceed to define the {\it renormalization group flow operator} $W( P_\epsilon^L , -) : \sO (\sE)[[\hbar]] \to \sO (\sE)[[\hbar]]$.\footnote{There is a subtlety that this operator is really only defined on those functionals that are at least cubic modulo $\hbar$, but we suppress this requirement in the notation throughout.}  For $\gamma$ a stable graph\footnote{This means that each vertex $v$ has an ``internal genus" $g(v) \in \NN$. Moreover, a genus 0 vertex must have valence greater than 2, and a genus 1 vertex must have valence greater than 0. The genus and valence of a vertex picks out an associated homogeneous component $I_{i,j}$ of the action.} and interaction functional $I \in \sO (\sE )[[\hbar]]$, we define the Feynman graph weight
\[
W_\gamma (P_\epsilon^L , I) : \sE^{\otimes T(\gamma)} \to \CC ,
\]
where $T(\gamma)$ indicates the number of tails of $\gamma$, as follows: 
\begin{itemize}
\item Use the decomposition $I = \sum \hbar^i I_{i,j}$ to label vertices of $\gamma$: to a vertex $v$ with genus $i$ and valence $j$, assign $I_{i,j}$. 
\item Label each internal edge by the propagator $P_\epsilon^L$. 
\item Now contract these tensors to obtain the desired map $W_\gamma (P_\epsilon^L , I)$. 
\end{itemize}
For details see \cite{Cos1}.

\begin{definition}
The {\it renormalization group flow operator} from scale $\epsilon$ to scale $L$ is a map $\sO (\sE ) [[\hbar]] \to \sO (\sE) [[\hbar]]$ given by
\[
W(P_\epsilon^L , I) \overset{\text{def}}{=} \sum_\gamma \frac{ \hbar^{g(\gamma)} }{\lvert \text{Aut } \gamma \rvert} W_\gamma (P_\epsilon^L, I),
\]
where the sum is over all connected stable graphs $\gamma$.
\end{definition}

\begin{remark}\label{parametrixremark}
One could choose a different parametrix $\Phi$ for the operator $[Q,Q^\ast]$, i.e., a symmetric distributional section of $E \boxtimes E$ of cohomological degree $+1$ with proper support such that
\begin{itemize}
\item[(i)] $\Phi$ is closed with respect to $Q \otimes 1 + 1 \otimes Q$;
\item[(ii)] $( [ Q , Q^*] \otimes 1) \Phi - K_0$ is a smooth section of $E \boxtimes E$.
\end{itemize}
Given a parametrix $\Phi$, we have an associated propagator $P (\Phi) = (Q^\ast \otimes 1 ) \Phi .$ The renormalization group flow and BV formalism continue to make sense with respect to $\Phi$, see \cite{CG}.
\end{remark}

\begin{definition}
A {\em pre-theory} is an asymptotically-local family of interaction functionals $\{ I[L] \}$ satisfying the RGE
\[
I[L] = W(P_\epsilon^L, I[\epsilon])
\]
for all $0 < \epsilon < L < \infty$.
\end{definition}

\subsection{The quantum master equation}\label{QMEsection}

Let $K_L \in \sE \otimes \sE$ be the heat kernel at length scale $L$ as defined in the preceding section.  We define an operator $\Delta_L : \sO (\sE) \to \sO (\sE)$, called the {\it BV Laplacian}, as contraction with $K_L$. Two properties of this operator are that $\Delta_L^2 =0$ and $[Q, \Delta_L] =0$.  We define the {\it BV bracket} at scale $L$ 
\[
\{ - , - \}_L : \sO (\sE) \otimes \sO (\sE) \to \sO (\sE)
\]
by the formula
\[
\{ I, J \}_L = \Delta_L (I J) - (\Delta_L I ) J - (-1)^{\lvert I \rvert} I (\Delta_L J) .
\]
It follows that $\{ - , -\}_L$ is a derivation in each slot, satisfies the Jacobi identity, and that both $Q$ and $\Delta_L$ are derivations with respect to $\{-,-\}_L$.

The BV Laplacian and bracket have a nice (and equivalent) description in terms of Feynman graphs (see chapter 5, section 9 of \cite{Cos1}).  

\begin{definition}
A pre-theory $\{I[L]\}$ satisfies the {\it quantum master equation} (QME) if for each length scale $L$ we have
\[
Q I [L] + \hbar \Delta_L I[L] + \frac{1}{2} \{ I[L] , I[L] \}_L = 0.
\]
\end{definition}

The RG flow and BV structures interlock to insure that if a pre-theory $I[L]$ satisfies the QME at scale $L$, then $I[L']$ also satisfies the QME at scale $L'$. See Lemma 5.9.2.2 of \cite{Cos1}.

\subsection{Definition of quantization}\label{satisfiesCME}

With all these definitions in hand, we give the definition of a quantum BV theory from \cite{Cos1}. Recall the discussion preceding definition \ref{def:classicalBVtheory}. The BV bracket $\{-,-\}_0$, which is dual to the shifted symplectic pairing on fields, is not well-defined on all functionals, but it is well-defined on local functionals. Our interaction term $I_{CS}$ satisfies the classical master equation because $\sE[-1] = \Omega^* \ot \left( \fg \oplus \fg^\vee[-2] \right)$ is an $\L8$ algebra; in other words, $d + \{ I_{CS}, -\}$ makes $\sO(\sE)$ into the Chevalley-Eilenberg complex of $\sE[-1]$.

\begin{definition}
Let $I \in \sO_{loc} (\sE)$ be a local action functional defining a classical BV theory.  A {\it quantization} of $I$ is a family of effective interactions $\{I[L]\}$ with $I[L] \in \sO (\sE) [[\hbar]]$ such that
\begin{enumerate}
\item $\{I[L]\}$ satisfies the renormalization group equation;
\item $I[L]$ satisfies the locality condition (i.e., there is a small $L$ asymptotic expansion);
\item $I[L]$ satisfies the scale $L$ quantum master equation;
\item The classical limit of $\{I[L]\}$ is $I$, i.e., $\lim_{L \to 0} I [L] = I \text{ modulo } \hbar$.
\end{enumerate}
\end{definition}

\section{Quantizing Chern-Simons}

In the next two sections we give a quantization of our classical Chern-Simons action functional. As might be expected for a one-dimensional theory, no complications arise, such as analytic issues or obstructions to BV quantization.

\subsection{Taking the naive approach}

We begin by ignoring the analytic issues and explore what kind of Feynman diagrams would appear if we could simply run the RG flow from scale 0 to scale $L$. Since all these Feynman diagrams are in fact well-defined (see Proposition \ref{noctterms}), we will have a pre-theory $\{I[L]\}$ and it will remain to show that this theory satisfies the QME. It does, and so our naive approach leads to a quantization of Chern-Simons. 

Let $\phi = (\alpha, \beta) \in \left( \Omega^* (M) \ot \fg [1] \right) \oplus \left( \Omega^* (M) \ot \fg^\vee[-1] \right)$ be a field. Observe that our classical action functional becomes
\[
S(\phi) = \langle \beta, d\alpha \rangle + \sum_{n=0}^\infty \frac{1}{(n+1)!} \langle \beta, \ell_n (\alpha^{\ot n}) \rangle,
\]
because the brackets $\ell_n$ vanish when more than one $\beta$ appears and $\langle -,-\rangle$ is cyclically invariant. Thus the interaction term has homogeneous components $I_k$ where $I_k$ takes in $k-1$ copies of $\alpha$ and one copy of $\beta$. As a consequence, the vertices arising from our theory have the form
\begin{center}
\begin{tikzpicture}[decoration={markings,
   mark=at position 0.6cm with {\arrow[black,line width=.8mm]{stealth}}}];
\draw[postaction=decorate, line width=.4mm] (-1.2,1) -- (0,0);
\draw[postaction=decorate, line width=.4mm] (-0.7, 1) -- (0,0);
\draw[postaction=decorate, line width=.4mm] (0.7,1) -- (0,0);
\draw[postaction=decorate, line width=.4mm] (1.2,1) -- (0,0);
\draw[postaction=decorate, line width=.4mm] (0,0) -- (0,-1);
\draw[ball color=black]  (0,0) circle (.2);
\draw (-0.6,-0.1) node {$I_k$};
\draw (-1.3,1.2) node {$\alpha$};
\draw (-0.75,1.2) node {$\alpha$};
\draw (0.75,1.2) node {$\alpha$};
\draw (1.3,1.2) node {$\alpha$};
\draw (0,0.9) node {$\cdots$};
\draw (0,-1.3) node {$\beta$};
\end{tikzpicture}
\end{center}
where the direction of the tail indicates whether the input lives in $\Omega^* \ot \fg[1]$ or $\Omega \ot \fg^\vee[-1]$. Moreover, as our pairing $\langle -, - \rangle$ arises from the evaluation pairing between $\fg$ and $\fg^\vee$, the propagator for our theory
\begin{center}
\begin{tikzpicture}[decoration={markings,
   mark=at position 1.2cm with {\arrow[black,line width=.8mm]{stealth}}}];
\draw[postaction=decorate, line width=.4mm] (-1,0) -- (1,0);
\draw (-1.2,0) node {$\alpha$};
\draw (1.2,0) node {$\beta$};
\draw (0, 0.4) node {$P$};
\end{tikzpicture}
\end{center}
is a directed edge. 

Notice that the kind of connected, directed graphs we can construct from such vertices and edges is highly constrained: we can make trees, wheels, or wheels with trees attached. Here is an example of a wheel.
\begin{center}
\begin{tikzpicture}[decoration={markings,
   mark=at position 1.1cm with {\arrow[black,line width=.8mm]{stealth}}}];
\draw[postaction=decorate, line width=.4mm] (2,0) -- (0,0);
\draw[postaction=decorate, line width=.4mm] (0,0) -- (0,2);
\draw[postaction=decorate, line width=.4mm] (0,2) -- (2,2);
\draw[postaction=decorate, line width=.4mm] (2,2) -- (2,0);
\draw[postaction=decorate, line width=.4mm] (-1.4,0) -- (0,0);
\draw[postaction=decorate, line width=.4mm] (0,-1.4) -- (0,0);
\draw[postaction=decorate, line width=.4mm] (-1.12,-1.12) -- (0,0);
\draw[postaction=decorate, line width=.4mm] (-1.12,3.12) -- (0,2);
\draw[postaction=decorate, line width=.4mm] (3.4,2.5) -- (2,2);
\draw[postaction=decorate, line width=.4mm] (2.5,3.4) -- (2,2);
\draw[postaction=decorate, line width=.4mm] (3.4,-0.5) -- (2,0);
\draw[postaction=decorate, line width=.4mm] (2.5,-1.4) -- (2,0);
\draw[ball color=black]  (0,0) circle (.2);
\draw[ball color=black]  (0,2) circle (.2);
\draw[ball color=black]  (2,0) circle (.2);
\draw[ball color=black]  (2,2) circle (.2);
\end{tikzpicture}
\\A wheel with four vertices.
\end{center}
In particular, observe that
\begin{itemize}
\item every tree is ``rooted" by its solitary outward pointing tail (which takes in $\beta$);
\item every one-loop graph $\gamma$ only has inward pointing tails, so $W_\gamma$ is a functional only on $\Omega^*(M) \ot \fg[1]$;
\item the connected graphs have at most one loop.
\end{itemize}

\subsection{The naive quantization has no analytic issues}

It is a general fact that the weight $W_\gamma$ of a tree $\gamma$ is always well-defined. Hence, if we run the RG flow modulo $\hbar$ on the classical Chern-Simons action, we obtain a well-defined functional. 

The next step is to consider the weight of a one-loop graph. The following lemma is specific to one dimensional Chern-Simons, though similar computations hold true in other dimensions (compare 14.3.1 of \cite{Cos3}).

\begin{prop}\label{noctterms}
Let $I_{CS}$ denote classical interaction functional for one-dimensional Chern-Simons on the $\L8$ algebra $\fg \oplus \fg^\vee[-2]$.  For all connected graphs $\gamma$ with one loop,
\[
\lim_{\epsilon \to 0} W_\gamma (P_\epsilon^L, I_{CS})
\]
exists.
\end{prop}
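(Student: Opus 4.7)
The plan is to reduce the statement to a uniform boundedness result for the propagator and then invoke dominated convergence. Since the only connected one-loop graphs appearing in our Feynman expansion are wheels with trees attached (as observed in the discussion immediately above the proposition), the weight $W_\gamma$ factors cleanly: the tree subgraphs attached at each wheel vertex contribute smooth effective decorations which are uniformly bounded as $\epsilon \to 0$ (tree weights never produce UV divergences, because the propagator is smooth for $\epsilon > 0$ and there is no topological obstruction to the integrals), and the remaining task is to control the integral over the positions of the $n$ wheel vertices on $S^1$ of a product of $n$ propagators.

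The key step is a uniform bound on the propagator. Using the explicit formula from the preceding example, $P_\epsilon^L(x_1,x_2)$ is, up to constants and tensored with $\mathrm{Cas}_\fg$, equal to $\int_\epsilon^L t^{-3/2} |x_1-x_2|\, e^{-|x_1-x_2|^2/t}\,dt$. The substitution $u = |x_1-x_2|^2/t$ converts this into $\int_{|x_1-x_2|^2/L}^{|x_1-x_2|^2/\epsilon} u^{-1/2} e^{-u}\,du$, which is bounded above by the Gamma integral $\int_0^\infty u^{-1/2} e^{-u}\,du = \sqrt{\pi}$ uniformly in $\epsilon$, $L$, $x_1$, and $x_2$. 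The same formula shows $P_\epsilon^L(x,x) = 0$ for all $\epsilon > 0$, so the would-be tadpole (self-loop) contributions at the propagator level vanish identically on the diagonal and cause no trouble.

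Given this uniform bound, the existence of $\lim_{\epsilon \to 0} W_\gamma(P_\epsilon^L, I_{CS})$ follows from the dominated convergence theorem. Indeed, the wheel integrand is a product of $n$ uniformly bounded propagators multiplied by the smooth effective vertex decorations coming from the trees, integrated over a compact domain of finite measure. The integrand converges pointwise almost everywhere (everywhere except on the codimension-one loci $x_i = x_{i+1}$) to the corresponding product involving $P_0^L$, which is a bounded measurable function (essentially a sign-function-type expression in the separations). Since the interaction functional $I_{CS}$ involves no derivatives on the fields — all the derivatives are absorbed into the kinetic term and thence into the propagator — no additional factors of $|x_i - x_{i+1}|^{-1}$ or derivatives of delta-like distributions appear, and dominated convergence applies directly.

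The main obstacle, such as it is, is purely combinatorial bookkeeping rather than analytic: one must verify that the reduction to ``wheel plus trees'' genuinely handles every connected one-loop graph (including those with multi-edges or with many trees grafted at a single wheel vertex), and that the tree-weight decorations are indeed jointly smooth in the wheel positions. In one dimension, the propagator's uniform boundedness and diagonal vanishing conspire to make the actual analysis essentially immediate; this is the manifestation in our setting of the standard observation that $1$-dimensional QFT has no genuine ultraviolet problems.
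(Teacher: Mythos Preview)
Your argument is correct and in fact more direct than the paper's. Both proofs begin with the same reduction to wheels (trees never contribute singularities, and higher-valence vertices behave like trivalent ones), but then diverge. The paper keeps the $t$-integrals unfolded and bounds the joint $(t_1,\dots,t_n,x_1,\dots,x_n)$ integral: after changing to difference coordinates $u_i = x_i - x_{i+1}$ and rescaling $v_i = t_i^{-1/2} u_i$, it separates the time integral over the simplex $\Delta^n(0,L)$ from a Gaussian-weighted polynomial integral in the $v_i$, and checks each factor is finite. You instead integrate out $t$ first and observe that the scalar propagator itself is bounded uniformly by $\Gamma(1/2)=\sqrt{\pi}$ via the substitution $u = |x_1-x_2|^2/t$; the wheel weight then becomes a product of bounded functions against compactly supported (or $S^1$-valued) inputs, and dominated convergence finishes immediately.

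What each approach buys: your uniform propagator bound is the cleanest possible expression of why one-dimensional Chern-Simons has no UV issues, and it makes the argument essentially a one-liner once the bound is in hand. The paper's explicit simplex estimate, while longer, is closer in spirit to the kind of argument one needs in higher-dimensional analogues (cf.\ Costello's two-dimensional computation), where the propagator is \emph{not} uniformly bounded and one genuinely needs to exploit the structure of the joint $t$-integral. One minor point: you invoke the $\RR$-propagator formula while speaking of $S^1$; on the circle the heat kernel is periodized, but the same substitution gives a uniform bound (with a modified constant), so this does not affect your argument.
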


A preliminary step in the proof is the following structural result for perturbative Chern-Simons theories on connections for the trivial bundle.

\begin{lemma}
The weight $W_\gamma (P_\epsilon^L, I_{CS})$ decomposes as a product
\[
W_\gamma (P_\epsilon^L, I_{CS}) = W^{\mathfrak{g}}_\gamma (P_\epsilon^L, I_{CS}) W^{an}_\gamma (P_\epsilon^L, I_{CS}),
\]
where $W^\mathfrak{g}$ arises from contracting tensors in $\mathfrak{g}$ and $W^{an}$ comes from contracting tensors in $C^\infty (M)$.
Further, $W^{\mathfrak{g}}_\gamma (P_\epsilon^L, I_{CS})$ is independent of $\epsilon$ or $L$.
\end{lemma}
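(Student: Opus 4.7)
The plan is to exploit the fact that every structural ingredient entering the Feynman calculus factors as (something Lie-theoretic) $\otimes$ (something analytic). First I would unpack the vertex. By construction, the $k$-ary interaction is
\[
I_k(\alpha,\dots,\alpha,\beta) = \frac{1}{k!}\int_{S^1} \langle \beta , \ell_{k-1}(\alpha,\dots,\alpha)\rangle,
\]
and since the fields are $\Omega^*(S^1)\otimes(\fg[1]\oplus\fg^\vee[-1])$, this vertex splits as a tensor product: on the $\fg$-side it is the multilinear map $\fg^{\otimes(k-1)}\otimes\fg^\vee\to\RR$ coming from $\ell_{k-1}$ paired against the evaluation, and on the analytic side it is the $k$-ary operation $(\Omega^*(S^1))^{\otimes k}\to\RR$ given by wedging all inputs and integrating over $S^1$.

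Next I would record the analogous factorization for the propagator. The generalized Laplacian $[Q,Q^*]$ acts diagonally on $\Omega^*(S^1)\otimes(\fg[1]\oplus\fg^\vee[-1])$, so the heat kernel is $K_t = K_t^{an}\otimes \Cas_\fg$ where $K_t^{an}$ is the heat kernel on forms on $S^1$ and $\Cas_\fg$ is the Casimir element (i.e.\ the identity on $\fg\oplus\fg^\vee[-2]$ viewed as an element of the tensor square under the evaluation pairing). Since $Q^*$ also acts only on the form factor, integrating in $t$ yields
\[
P_\epsilon^L = \Bigl(\int_\epsilon^L (Q^*\otimes 1)K_t^{an}\,dt\Bigr)\otimes \Cas_\fg = P_\epsilon^{L,an}\otimes \Cas_\fg.
\]
The crucial point is that the Lie-theoretic factor is the fixed tensor $\Cas_\fg$, with no dependence on $\epsilon$ or $L$.

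Now I would assemble the Feynman weight. $W_\gamma(P_\epsilon^L,I_{CS})$ is the contraction, according to the incidence structure of $\gamma$, of the vertex tensors along the propagators. Since the vertex tensors and the edge tensors each factor as (Lie piece) $\otimes$ (analytic piece), and the two kinds of contractions act on disjoint tensor factors, the whole contraction factors:
\[
W_\gamma(P_\epsilon^L,I_{CS}) = W_\gamma^\fg(P_\epsilon^L,I_{CS})\cdot W_\gamma^{an}(P_\epsilon^L,I_{CS}),
\]
where $W_\gamma^\fg$ contracts copies of $\ell_{k-1}$, the evaluation pairing, and $\Cas_\fg$ along the edges of $\gamma$, while $W_\gamma^{an}$ contracts copies of the vertex wedge-and-integrate tensors along copies of $P_\epsilon^{L,an}$. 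Because only $\Cas_\fg$ appears in the Lie factor of each propagator, $W_\gamma^\fg$ is manifestly independent of $\epsilon$ and $L$, yielding the claim.

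The only genuine sanity check — and the place where one could imagine trouble — is verifying that the factorization survives at a one-loop vertex, where a propagator's two ends are both attached to the same or adjacent vertices and the contraction pattern is more intricate than on a tree. However, this is purely a bookkeeping observation: contractions of tensors on $\fg$ commute with contractions of tensors on $\Omega^*(S^1)$ regardless of the graph topology, so no additional work is needed. The statement is thus a direct consequence of the two factorizations above.
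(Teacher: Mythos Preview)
Your proposal is correct and follows essentially the same approach as the paper: both argue that the propagator factors as an analytic piece times the Casimir $\Cas_\fg$, so that the contractions defining $W_\gamma$ can be carried out independently on the two tensor factors. The paper's proof is terser and emphasizes that the factorization of the heat kernel is a consequence of $[d,\ell_1]=0$, while you spell out the corresponding factorization of the vertices as well; but the substance is the same.
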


\begin{proof}
The weight of a graph $W_\gamma (P_\epsilon^L, I_{CS})$ is given by contracting tensors in 
\[
\sE = \Omega^\ast (M) \otimes ( \mathfrak{g}[1] \oplus \mathfrak{g}^\vee [-1]).
\]
By considering the explicit presentation of the propagator (see section \ref{RGflow}), we see that for each interior edge we are just integrating $(Q^\ast \otimes 1)K_t$, where $K_t$ is the scalar heat kernel tensor the Casimir of the $\L8$-algebra. Hence we can contract in each factor separately.  Note that this is really a consequence of $[d , l_1 ] =0$, which tells us that $K_t$ the kernel for $D = [Q,Q^\ast]$ is just a simple tensor.
\end{proof}

Proposition \ref{noctterms} now follows from a Feynman diagram computation which we have relegated to Appendix \ref{nocttermproof}. In a nutshell, the analytic weight of a wheel leads to an integral that is well-defined as $\epsilon \to 0$, as is shown by some explicit if tedious calculus.

A consequence of proposition \ref{noctterms} is that we obtain an effective field theory (although it remains to show that it satisfies the QME).

\begin{definition}\label{naivecor}
The {\em naive quantization} of $I_{CS}$ is the family of functionals $I_{naive}[L] = I_{naive}^{(0)} + \hbar I_{naive}^{(1)}$, where
\[
I_{naive}^{(0)} = \sum_{\gamma \in \text{Trees}} \frac{1}{|\Aut \gamma|}W_\gamma(P_0^L, I_{CS})
\]
and
\[
I_{naive}^{(1)} = \sum_{\gamma \in \text{One-loop graphs}} \frac{1}{|\Aut \gamma|}W_\gamma(P_0^L, I_{CS}).
\]
By construction, $I_{naive}^{(1)}$ is only a functional on $\Omega^* \ot \fg[1]$.
\end{definition}

\subsection{A symmetry of this theory}\label{actionsection}

The simplicity of this quantization is striking, as {\it a priori} one might expect Feynman diagrams with arbitrarily many loops to appear in the quantization. We provide here a kind of structural explanation for this fortuitous simplicity, as it provides insight both into the theory under consideration and into the question of how to construct classical theories with one-loop quantizations.

Essentially, we only get one-loop graphs because the classical action functional of Chern-Simons only depends linearly on $\Omega^*_M \ot \fg^\vee[-1]$.\footnote{Alternatively, we can view our theory as a sigma model with target $T^*B\fg$. Our action functional then depends linearly on rescaling of the cotangent fibers.\\} Moreover, the action of $\GG_m$\footnote{We use $\GG_m$ because we can work with $\fg$ over $\RR$ or $\CC$, and we don't want to muddle the notation.\\} by rescaling $\Omega^*_M \ot \fg^\vee$ is compatible with the RG flow and the BV structure. Hence we can ask for quantizations that have the same $\GG_m$ action as the classical action functional.

Recall that 
\[
\sE = \Omega^\ast_M \otimes \fg[1] \oplus \Omega^\ast_M \otimes \fg^\vee[-1].
\]
Let $\GG_m$ act on $\sE$ via
\[
z \cdot (\alpha + \beta) = \alpha + z^{-1} \beta .
\]
Define an action of $\GG_m$ on $\sO (\sE)$ with $\mu (z) : \sO (\sE) \to \sO (\sE)$ given by
\[ 
(\mu (z) F )(\phi) = F ( z^{-1} \cdot \phi ) .
\]
Notice that with this action of $\GG_m$, the classical action functional has weight one.  Indeed, $\mathrm{Sym}^n (\Omega^\ast_M \otimes \fg[1])$ has weight zero for all $n$, and $\mathrm{Sym}^n (\Omega^\ast_M \otimes \fg^\vee [-1])$ has weight $-n$. We extend $\mu (z) $ to an action on $\sO (\sE) [[\hbar]]$ by declaring $\hbar$ to have weight one. This weight is a natural consequence of the desire that the path integral be $\GG_m$-invariant: heuristically, the integrand is $\exp(S/\hbar)$. Since the classical action has weight $1$, we scale $\hbar$ to compensate. The following lemma, borrowed from \cite{Cos3}, is then a straightforward computation.

\begin{lemma}\label{actionlemma}
The following operations are $\GG_m$-invariant.
\begin{enumerate}
\item[(1)] The renormalization group flow operator $W(P_\epsilon^L, -) : \sO (\sE)[[\hbar]] \to \sO (\sE)[[\hbar]].$
\item[(2)] The differential $Q :  \sO (\sE)[[\hbar]] \to \sO (\sE)[[\hbar]].$ 
\item[(3)] The quantized differential $\widehat{Q}_L = Q + \hbar \Delta_L$, where $\Delta_L$ is the BV Laplacian.
\end{enumerate}
\end{lemma}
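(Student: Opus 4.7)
My plan is to reduce all three items to tracking weights under $\mu(z)$ of a handful of basic ingredients, and then combine by routine operator algebra. Write $\sE = \sE_+ \oplus \sE_-$ with $\sE_+ = \Omega^\ast_{S^1}\otimes\fg[1]$ and $\sE_- = \Omega^\ast_{S^1}\otimes\fg^\vee[-1]$, so that the action $\rho(z)$ on $\sE$ is trivial on $\sE_+$ and scales by $z^{-1}$ on $\sE_-$. I would first establish three elementary facts. First, the exterior derivative $d$ preserves the decomposition $\sE_+\oplus\sE_-$ and acts trivially on the Lie coefficients, so it commutes with $\rho(z)$ on $\sE$ and hence, as a derivation, with $\mu(z)$ on $\sO(\sE)[[\hbar]]$. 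Second, both $K_L$ and $P_\epsilon^L$ lie in $(\sE_+\otimes\sE_-)\oplus(\sE_-\otimes\sE_+)$, because $K_t = K_t^{\mathrm{scalar}}\otimes\Cas_\fg$ and $\Cas_\fg$ is supported in that subspace; hence $\rho(z)^{\otimes 2}$ scales each by $z^{-1}$. Third, for any $a\in\sE$ the identity $\mu(z)\partial_a\mu(z)^{-1} = \partial_{\rho(z)a}$ is immediate from the definitions of $\mu(z)$ and of the derivation $\partial_a$. Combining the second and third facts, the BV Laplacian $\Delta_L$ and the Wick-contraction operator $\partial_P$ (contraction with $P_\epsilon^L$) satisfy $\mu(z)\Delta_L\mu(z)^{-1} = z^{-1}\Delta_L$ and $\mu(z)\partial_P\mu(z)^{-1} = z^{-1}\partial_P$.

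Parts (2) and (3) now follow immediately: part (2) is the first fact, and for part (3), since $\mu(z)\hbar = z\hbar$ the operator of multiplication by $\hbar$ has conjugation weight $+1$, so combined with the weight $-1$ of $\Delta_L$ the correction $\hbar\Delta_L$ has conjugation weight $0$ and commutes with $\mu(z)$; adding $Q$ gives $\widehat{Q}_L$ commuting with $\mu(z)$.

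For part (1), the RG flow is nonlinear, and I interpret \emph{$\GG_m$-invariance} in the sense appropriate to the surrounding discussion: the flow preserves the subspace of weight-$1$ functionals in $\sO(\sE)[[\hbar]]$, which is the subspace in which the heuristic integrand $\exp(I/\hbar)$ is itself $\mu(z)$-invariant. My plan is to use the exponential presentation
\[
\exp\bigl(W(P_\epsilon^L,I)/\hbar\bigr) = \exp(\hbar\partial_P)\exp(I/\hbar)
\]
of the sum over connected stable Feynman graphs. Since $\hbar\partial_P$ has conjugation weight $0$ by the previous paragraph, $\exp(\hbar\partial_P)$ commutes with the algebra automorphism $\mu(z)$. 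When $I$ has weight $1$, $I/\hbar$ has weight $0$ and is $\mu(z)$-invariant; hence so is $\exp(I/\hbar)$, the right-hand side, and finally $\exp(W(P,I)/\hbar)$, forcing $W(P,I)$ to be of weight $1$. The single bookkeeping point to watch --- and really the only place care is needed --- is the decomposition $I = \sum\hbar^iI_{i,j}$: because $\hbar$ itself has weight $1$, a weight-$1$ functional $I$ has components $I_{i,j}$ of \emph{functional} weight $1-i$, not uniformly weight $1$. If one prefers a purely combinatorial check, the same result falls out of a per-graph calculation in which $\hbar^{g(\gamma)}$ contributes $z^{L(\gamma)+\sum_v g(v)}$, the vertex labels contribute $z^{V(\gamma)-\sum_v g(v)}$, and the $E(\gamma)$ propagators contribute $z^{-E(\gamma)}$, summing to $z^{V-E+L}=z$ by Euler's formula for a connected graph.
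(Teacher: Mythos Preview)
Your proof is correct. The paper itself does not supply a proof of this lemma: it simply states that the lemma is ``borrowed from \cite{Cos3}'' and is ``a straightforward computation.'' Your write-up is therefore a genuine fleshing-out rather than a paraphrase, and the ingredients you isolate---that $d$ preserves the splitting $\sE_+\oplus\sE_-$, that $K_L$ and $P_\epsilon^L$ lie in $(\sE_+\otimes\sE_-)\oplus(\sE_-\otimes\sE_+)$ via the form of $\Cas_\fg$, and the conjugation identity $\mu(z)\partial_a\mu(z)^{-1}=\partial_{\rho(z)a}$---are exactly the right ones.

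Your handling of part (1) deserves comment. You are right to flag that ``$\GG_m$-invariant'' cannot mean literal equivariance $W(P,\mu(z)I)=\mu(z)W(P,I)$ for arbitrary $I$: a quick per-graph check shows that for a connected tree with $E$ internal edges the two sides differ by $z^{E}$ versus $z^{0}$ on the graph weight. The only sensible reading in context---and the one the paper uses immediately afterwards in Proposition \ref{oneloop}---is that RG flow preserves the weight-$1$ subspace, equivalently that $\exp(\hbar\partial_P)$ commutes with $\mu(z)$ as an algebra automorphism. You establish this cleanly both via the exponential presentation and via the Euler-characteristic bookkeeping $V-E+L=1$; either route is fine.
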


Additionally we have the following.
\begin{lemma}
The BV bracket $\{ - , - \}_L : \sO (\sE) [[\hbar]] \otimes_{\CC [[\hbar]]} \sO (\sE) [[\hbar]] \to \sO (\sE) [[\hbar]]$ is of weight -1.  Hence, $\{I_{CS},-\}$ is of weight zero.
\end{lemma}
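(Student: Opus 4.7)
The plan is to reduce the weight computation for $\{-,-\}_L$ to a weight computation for the BV Laplacian $\Delta_L$, and then to a weight computation for the heat kernel $K_L$. The defining formula
\[
\{F, G\}_L = \Delta_L(FG) - (\Delta_L F) G - (-1)^{|F|} F (\Delta_L G)
\]
shows that if $\Delta_L$ is an operator of weight $w$, then each term in the bracket has weight $\mathrm{wt}(F) + \mathrm{wt}(G) + w$, and hence $\{-,-\}_L$ is of weight $w$ as a bilinear operator. So it suffices to show that $\Delta_L$ has weight $-1$. The second statement follows immediately: since $I_{CS}$ has weight $+1$ (as recorded just before Lemma \ref{actionlemma}), the operator $\{I_{CS}, -\}$ shifts weight by $(+1) + (-1) = 0$.

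The key point is thus to pin down the weight of $K_L \in \sE \hat\otimes \sE$. Decompose $\sE = \sE_\alpha \oplus \sE_\beta$ with $\sE_\alpha = \Omega^\ast_M \otimes \fg[1]$ and $\sE_\beta = \Omega^\ast_M \otimes \fg^\vee[-1]$, so that $\sE_\alpha$ has $\GG_m$-weight $0$ and $\sE_\beta$ has $\GG_m$-weight $-1$. The generalized Laplacian $D = [d, d^\ast]$ acts only on the form factor and therefore preserves this decomposition, so the heat operator $e^{-LD}$ does as well. On the other hand, the symplectic pairing $\langle-,-\rangle_{loc}$ pairs $\sE_\alpha$ against $\sE_\beta$. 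The defining relation
\[
\int_M \langle K_L(x,y), \phi(y)\rangle_{loc} = (e^{-LD}\phi)(x)
\]
then forces $K_L$ to lie in the off-diagonal piece $(\sE_\alpha \hat\otimes \sE_\beta) \oplus (\sE_\beta \hat\otimes \sE_\alpha)$: the slot that pairs with an $\sE_\alpha$-input must be in $\sE_\beta$, and the output slot must be in $\sE_\alpha$, and vice versa. Both summands have $\GG_m$-weight $0 + (-1) = -1$, so $K_L$ is of weight $-1$.

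Now $\Delta_L$ is contraction against $K_L$: on a homogeneous functional $F \in \mathrm{Sym}^{p+q}(\sE^\vee)$ of bidegree $(p,q)$ in the dual variables $(\alpha^\vee, \beta^\vee)$, it pairs one $\alpha^\vee$-slot with the $\sE_\alpha$-factor of $K_L$ and one $\beta^\vee$-slot with the $\sE_\beta$-factor, producing a functional of bidegree $(p-1, q-1)$. Under the convention that the $\GG_m$-weight of $F$ equals its $\beta^\vee$-degree $q$, this drops the weight by one, so $\Delta_L$ is of weight $-1$. Extending $\CC[[\hbar]]$-linearly (with $\hbar$ of weight $+1$) does not alter this conclusion, and the argument in the first paragraph then yields the lemma.

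The main obstacle, modest though it is, is the first step: one must verify carefully that $K_L$ sits entirely in the off-diagonal part of $\sE \hat\otimes \sE$, which hinges on the interaction between the symplectic pairing and the choice of gauge-fixing. Once this placement is established, the rest is bookkeeping with the $\GG_m$-grading.
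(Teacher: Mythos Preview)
Your argument is correct. The paper does not actually supply a proof of this lemma; it is stated immediately after Lemma~\ref{actionlemma} as an additional ``straightforward computation,'' and your write-up is exactly the computation the authors have in mind. In particular, your determination that $K_L$ lies in the off-diagonal summand $(\sE_\alpha \hat\otimes \sE_\beta) \oplus (\sE_\beta \hat\otimes \sE_\alpha)$ is confirmed by the paper's explicit description $K_L = K_L^{scalar} \otimes \Cas_\fg$ with $\Cas_\fg = \mathrm{Id}_\fg + \mathrm{Id}_{\fg^\vee}$, and the conclusion that $\Delta_L$ has weight $-1$ is equivalent to item~(3) of Lemma~\ref{actionlemma} (since $\hbar$ has weight $+1$, the invariance of $\hbar\Delta_L$ is the same statement).
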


A quantization is $\GG_m$-invariant when $I[L]$ has weight 1 with respect to the action $\mu$. We can then ask what a $\GG_m$-invariant quantization would look like.  By the following proposition, if one exists, then only tree-level and one-loop Feynman diagrams appear in the quantized action functional.

\begin{prop}\label{oneloop}
Consider one-dimensional Chern-Simons on a circle $S^1$. If $\{I[L]\}$ is a $\GG_m$-invariant quantization then for each
\[
I[L] = \sum I^{(i)} [L] \hbar^i \in \sO (\sE)[[\hbar]],
\]
$I^{(i)} =0$ for $i>1$ and further $I^{(1)}$ lies in the subspace
\[
\sO (\Omega^\ast_{S^1} \otimes \fg [1]) \subset \sO (\sE).
\]
In other words, a $\GG_m$-invariant quantization only has one-loop terms.
\end{prop}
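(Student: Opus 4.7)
The proof is an elementary consequence of the weight decomposition of $\sO(\sE)[[\hbar]]$ under the $\GG_m$-action $\mu$, together with the non-negativity of the weights that appear. The plan is to record this nonnegativity, then observe that a weight-$1$ power series in $\hbar$ (with $\hbar$ of weight $+1$) can have at most two nonzero coefficients, which is precisely the content of the proposition.

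The first step is to verify that $\sO(\sE) = \csym(\sE^\vee)$ is concentrated in nonnegative weights, with the weight of a monomial equal to its total degree in the $\fg^\vee[-1]$-coordinates. This is a direct computation: the action $z\cdot(\alpha+\beta) = \alpha + z^{-1}\beta$ on $\sE$ induces, via $(\mu(z)F)(\phi) = F(z^{-1}\cdot\phi)$, the trivial action on linear functionals dual to $\Omega^\ast_{S^1}\otimes\fg[1]$ and multiplication by $z$ on linear functionals dual to $\Omega^\ast_{S^1}\otimes\fg^\vee[-1]$. A general monomial in $\sO(\sE)$ is a product of $j$ coordinates of the first type and $k$ of the second, so it has weight $k \in \ZZ_{\geq 0}$; the classical interaction $I_{CS}$, which is linear in $\beta$, sits in weight $+1$, consistent with the normalization stated in the text.

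With this in hand, assume $\{I[L]\}$ is a $\GG_m$-invariant quantization, so that $I[L] \in \sO(\sE)[[\hbar]]$ is pure of weight $1$. Writing $I[L] = \sum_i \hbar^i I^{(i)}[L]$ and recalling that $\hbar$ is declared to have weight $1$, each homogeneous summand $\hbar^i I^{(i)}[L]$ must itself have weight $1$; hence $I^{(i)}[L]$ is pure of weight $1-i$ for each $i$. The non-negativity of weights on $\sO(\sE)$ then forces $1 - i \geq 0$, so $I^{(i)}[L] = 0$ for every $i \geq 2$. For $i = 1$ the weight is $0$, which by the above description of the weight decomposition means that $I^{(1)}[L]$ involves no coordinates of the $\fg^\vee[-1]$-type, i.e., $I^{(1)}[L] \in \sO(\Omega^\ast_{S^1} \otimes \fg[1])$, as required.

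There is no genuine obstacle in the argument; the whole proposition is a counting statement once the $\GG_m$-action is identified. The role of Lemma \ref{actionlemma} and the $-1$ weight of the BV bracket is only to guarantee that imposing $\GG_m$-invariance is consistent with the four axioms of a quantization (RGE, locality, QME, classical limit), so that the weight decomposition of $\sO(\sE)[[\hbar]]$ is preserved under each of the operations that define the quantization; once that is known, the proposition drops out mechanically.
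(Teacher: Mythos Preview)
Your argument is correct and is essentially identical to the paper's own proof: both compute that $I^{(i)}[L]$ must have weight $1-i$, invoke the nonnegativity of the weights in $\sO(\sE)$ to kill $i>1$, and identify the weight-zero subspace as $\sO(\Omega^\ast_{S^1}\otimes\fg[1])$. You have simply spelled out the weight computation and the role of Lemma~\ref{actionlemma} in slightly more detail than the paper does.
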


\begin{proof}
If $I[L]$ has weight one then $I^{(i)}$ must be of weight $1-i$.  There are no negative weight spaces of $\sO (\sE)$, hence $I^{(i)} =0$ for $i>1$.  Lastly, $I^{(1)}$ must be of weight zero, so indeed $I^{(1)} \in \sO(\Omega^\ast_{S^1} \otimes \fg[1])$.
\end{proof}

\begin{remark}
This proposition works for the analogous Chern-Simons theory on arbitrary compact $n$-manifolds.
\end{remark}

%
%\begin{remark}
%One should ask if there's a deeper conceptual reason to preserve this $\GG_m$ symmetry when we quantize. As (WILL BE) explained in \cite{Cos3}, there is an equivalence
%\[
%\{ \text{$\GG_m$-invariant quantizations of $T^*[-1]\sM$} \} \Longleftrightarrow \{ \text{volume forms on $\sM$} \}.
%\]
%Hence, we can construct a volume form on the mapping space for our classical theory via quantization. In particular, in our nonlinear sigma model, $\sM$ denotes the formal neighborhood $\widehat{X}$ of the constant loops $X$ inside the free loop space $LX$, and so a volume form defines the kernel of a differential operator on $X$. This result is a hint of the index theorem.
%\end{remark}

\section{The obstructions to satisfying the QME}

We have found a $\GG_m$-invariant one-loop quantization $\{ I_{naive}[L]\}$, but this quantization does not necessarily satisfy the quantum master equation (QME), as described in section \ref{QMEsection}.  There is also an action of $\RR$ on the domain by translation (or rotation, for a circle), and we are interested in quantizations invariant under translation as well. By definition, the obstruction to satisfying the QME at scale $L$ is 
\[
O[L] = \hbar^{-1} \left( Q I_{naive}[L] + \frac{1}{2} \{ I_{naive} [L] , I_{naive} [L] \}_L + \hbar \Delta_L I_{naive}[L]  \right), 
\]
where $\{ - , - \}_L$ and $\Delta_L$ are the scale $L$ BV bracket and Laplacian respectively. We will show in this section that this obstruction vanishes, and hence the naive quantization gives a quantum BV theory.

\subsection{Reminder on obstructions}

The space of local functionals $\sO_{loc}(\sE)$ is a graded vector space, and the operator $\{S_{CS}, - \} = d + \{I_{CS},-\}$ makes it into a cochain complex (since $S_{CS}$ satisfies the classical master equation).  We want to restrict attention to translation-invariant local functionals, so from hereon we will only work with the cochain complex $\sO_{loc}(\sE)^\RR$, where the superscript indicates invariance with respect to translation.\footnote{More generally, if we put a group or Lie algebra as a superscript, we mean the invariant subspace.}

As shown in \cite{Cos1}, the obstruction element $O[L]$ for any putative quantization of a classical BV theory has the following properties: it is compatible with the RG flow, its limit as $L \to 0$ exists, and this limit is a local functional. We denote the $L \to 0$ limit by
\[
O \in \sO_{loc} (\Omega^\ast (M) \otimes \fg [1] )^{\RR} \subset \sO_{loc}(\sE)^\RR.
\]
Our obstruction $O$ is an element of cohomological degree 1 and is closed with respect to the differential $d + \{ I_{CS} , - \}$. 

In order to find a quantization which satisfies the QME we need to find a trivialization for $O$. Typically that entails finding an element $J$, where 
\[
J \in \sO_{loc} (\Omega^\ast (M) \otimes \fg [1] )^\RR
\]
is of degree 0 such that $Q J + \{ I_{CS} , J \} = O$.  However, in our setting, we find that the obstruction $O$ vanishes in cohomology and no such $J$ is necessary.

\subsection{The obstruction-deformation complex}

%We call it the {\em obstruction-deformation complex} for Chern-Simons, as it controls questions about how to infinitesimally deform the theory and about obstructions to BV quantization.\footnote{The obstruction-deformation complex encodes other aspects of the theory. For instance, the degree 0 cohomology describes the space of deformations of the action functional $S_{cl}$, and the degree -1 cohomology describes automorphisms of the theory (in particular, conserved quantities). For further discussion, see \cite{Cos1}, \cite{CG}.}

Recall that the BV bracket $\{-,-\}$ is actually a Poisson bracket of degree $+1$ and since our action functional satisfies the classical master equation $\{S,S\} =0$, we obtain a differential graded Lie algebra $(\sO_{loc} (\sE)[-1] , \{S,-\}, \{-,-\})$ by shifting the obstruction-deformation complex down by one.  It is proven in \cite{CG} that this dg Lie algebra encodes a formal deformation problem: how to deform this classical BV theory to infinitesimally nearby classical BV theories.  In particular, first-order deformations of our action functional $S$ are classified by $H^0 (\sO_{loc}(\sE) [-1])$, and $H^{-1}(\sO_{loc} (\sE)[-1])$ describes the infinitesimal automorphisms of the theory (e.g., conserved quantities). As remarked in the preceding paragraph, the obstruction to BV quantization lives in $H^1(\sO_{loc}(\sE) [-1])$, a non-obvious but helpful fact.\footnote{What makes this fact interesting is that it produces a relationship between two distinct moduli problems. Every quantum BV theory has an associated classical BV theory by taking the $\hbar^0$ term of the action functional. Thus, there is a map from the moduli functor of quantum BV theories to the moduli of classical BV theories. The dg Lie algebra $\sO_{loc}(\sE)[-1]$ describes the moduli of {\em classical} BV theories, but it knows about trying to lift to {\em quantum} BV theories.}

In this paper, for consistency with the conventions of \cite{Cos1}, we work with the obstruction-deformation complex $(\sO_{loc} (\sE) , \{S, -\})$ as just a cochain complex. This is justified as our primary aim is to show the vanishing of the obstruction.  The deformations of our classical theory will be studied in future work.

We now compute the  obstruction-deformation complex for our one-dimensional Chern-Simons theory. Note that the computation doesn't depend on the choice of $L_\infty$-algebra, but does depend on the dimension of the domain (i.e., it depends on the fact that our fields are forms on $\RR$ with values in an $L_\infty$-algebra).

The  obstruction-deformation complex for us is
\[
(\sO_{loc} (\Omega^\ast (\RR)\otimes \fg [1])^{\RR} , Q + \{ I_{CS} , - \}),
\]
as we only want to consider action functionals that are translation-invariant and $\GG_m$-invariant. Since a local functional consists of a ``Lagrangian" (i.e., a function on the infinity-jet of a field) and a density on the base manifold, a translation-invariant local functional must be constructed from a translation-invariant Lagrangian and a translation-invariant density. On $\RR$, there is only a one-dimensional space of such densities, namely $\RR \, dx$, where $dx$ is the standard Lebesgue measure. Moreover, a translation-invariant Lagrangian is determined by its behavior at one point in $\RR$. As the $\infty$-jet of a field at a point can be viewed as an element of the space $\fg [[x, dx]]$, it is also easy to describe the space of such Lagrangians.

It should thus come as no surprise that the  obstruction-deformation complex is quasi-isomorphic to a smaller complex given as the translation invariant forms on $\RR$ tensored with the reduced Chevalley-Eilenberg complex of the $\L8$-algebra $\fg$.

\begin{prop}\label{obscpx}
Let $\fg$ be an $L_\infty$ algebra. There is a quasi-isomorphism
\[
(\sO_{loc} (\Omega^\ast (\RR)\otimes \fg [1])^{\RR} , d + \{ I_{CS} , - \}) \simeq \Omega^\ast (\RR)^\RR [1] \otimes_\RR C_{red}^\ast (\fg),
\]
where the $\RR$ action arises from translation on the base manifold $\RR$. In sum, the  obstruction-deformation complex is quasi-isomorphic to
\[
C_{red}^\ast (\fg) \oplus C_{red}^\ast (\fg)[1].
\]
\end{prop}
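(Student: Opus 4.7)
The plan is to first unpack the standard description of local functionals in terms of jets, then use a formal Poincaré-lemma argument to collapse the jet complex down to $\fg$ itself. Concretely, I would begin by recalling the general identification (as in Chapter 5 of \cite{Cos1})
\[
\sO_{loc}(\sE) \simeq \Dens_\RR \otimes^L_{D_\RR} C^*_{red}\bigl(J\sE[-1]\bigr),
\]
where $J\sE$ is the jet $D$-module, the tensor over $D_\RR$ encodes integration by parts, and the differential on the right combines the internal CE differential of the $L_\infty$-algebra $\sE[-1]$ with the natural flat connection on jets. Translation invariance on the source forces us to take $\RR$-invariants, and since $\Dens_\RR^\RR = \RR\cdot dx$ the only data left from the density side is the one-dimensional translation-invariant line, while a translation-invariant Lagrangian is determined by its restriction to the jet fiber at the origin. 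That jet fiber is computed explicitly as
\[
J_0 \sE \;\cong\; \Omega^*(\hat{D}_0) \otimes \fg[1] \;=\; \bigl(\RR[[x]] \oplus \RR[[x]]\,dx\bigr) \otimes \fg[1],
\]
with a natural curved $L_\infty$ structure obtained by extending the brackets of $\fg$ formally over the disk and adding the formal de~Rham differential $dx\cdot\partial_x$ from the operator $Q=d$.

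Handling the $D$-module coinvariance using the Spencer-type resolution $\Dens_\RR \simeq \Omega^*_\RR \otimes D_\RR\,[1]$, and then taking $\RR$-invariants (which are exact on the relevant complexes because the $\RR$ action is free on the jet directions), yields
\[
\sO_{loc}(\sE)^\RR \;\simeq\; \Omega^*(\RR)^\RR[1] \otimes_\RR C^*_{red}\bigl(\Omega^*(\hat{D}_0)\otimes\fg\bigr),
\]
with differential induced by the CE differential of the jet $L_\infty$-algebra (the differential on $\Omega^*(\RR)^\RR$ is zero, since translation-invariant forms on $\RR$ are automatically closed).

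The second step is the formal Poincaré lemma: the inclusion of constants
\[
\iota : \fg \;\hookrightarrow\; \Omega^*(\hat{D}_0) \otimes \fg
\]
is a quasi-isomorphism of $L_\infty$-algebras because the formal de~Rham complex $\RR[[x]] \to \RR[[x]]\,dx$ has cohomology $\RR$ concentrated in degree zero. Since $C^*_{red}$ preserves quasi-isomorphisms of $L_\infty$-algebras (which one checks by filtering the symmetric coalgebra by polynomial degree, whereupon the associated graded is a sum of tensor powers and the quasi-isomorphism property is inherited), we conclude
\[
C^*_{red}\bigl(\Omega^*(\hat{D}_0) \otimes \fg\bigr) \;\simeq\; C^*_{red}(\fg).
\]
Combining the two steps gives the first displayed quasi-isomorphism of the proposition, and then the decomposition of $\Omega^*(\RR)^\RR[1] = \RR[1] \oplus \RR\cdot dx$ into a degree $-1$ piece (from $\Omega^0$) and a degree $0$ piece (from $\Omega^1$) yields the final summary as $C^*_{red}(\fg) \oplus C^*_{red}(\fg)[1]$.

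The main obstacle is the bookkeeping in Step 1: making the interaction between the density twist, the $D_\RR$-coinvariance, and the translation action precise, so that one correctly gets the factor $\Omega^*(\RR)^\RR[1]$ rather than just $\RR\cdot dx$. Once that identification is established, the Poincaré-lemma step is conceptually clean and the rest is essentially formal. I would expect the cleanest way to handle the $D$-module technicalities is to follow the recipe in \cite{Cos1}, where the Spencer resolution of $\Dens_\RR$ encodes exactly the degree shift needed, and to simply verify that after taking $\RR$-invariants the spectral sequence collapses at the $E_2$ page.
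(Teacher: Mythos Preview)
Your proposal is correct and follows essentially the same route as the paper: the Costello identification of local functionals as a derived tensor product of densities with the reduced Chevalley--Eilenberg complex of the jet $L_\infty$-algebra, the formal Poincar\'e lemma $\fg[[x,dx]] \simeq \fg$, and the Spencer/Koszul resolution of $\Dens_\RR$ to compute the derived tensor. The one presentational difference is that the paper takes translation-invariants \emph{before} resolving, so that the derived tensor is computed over the simpler ring $\RR[\partial/\partial x]$ of translation-invariant differential operators rather than over all of $D_\RR$; this sidesteps exactly the bookkeeping you flag as the main obstacle, since after invariants one is just resolving the one-dimensional module $\RR\,dx$ over a polynomial ring in one variable, and the two-term Koszul resolution immediately yields $C^*_{red}(\fg)[1]\oplus C^*_{red}(\fg)$ because $C^*_{red}(\fg)$ carries the trivial $\partial/\partial x$-action.
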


\begin{proof} 
A local functional is given by integrating a function on the $\infty$-jets against a density. Indeed, by Lemma 6.7.1 of Chapter 5 in \cite{Cos1}, we have a quasi-isomorphism
\[
(\sO_{loc} (\Omega^\ast (\RR)\otimes \fg [1])/C^\infty)^{\RR} \simeq (\Dens_\RR)^\RR \otimes^\LL_{\RR[\partial/\partial x]} \sO (J(\Omega^\ast (\RR)\otimes \fg )_0)/\RR ,
\]
where $J(\Omega^\ast (\RR)\otimes \fg )_0$ indicates jets at $0 \in \RR$ and $C^\infty$ is short hand notation for ``constant functions" in $ \sO (J(\Omega^\ast (\RR)\otimes \fg ))$ (i.e., functionals on jets that are independent of the jets themselves). 

The rightmost factor $\sO (J(\Omega^\ast (\RR)\otimes \fg )_0)/\RR$ can be identified with the reduced Chevalley-Eilenberg complex for $\fg$ via the Poincar\'e lemma, as follows.  At the origin $0 \in \RR$, we know that the fiber of jets $J(\Omega^*(\RR) \ot \fg)$ can be identified with the $\L8$ algebra $\fg [[x, dx]]$, where we include the exterior derivative as part of the differential. That is, 
\[
d(Y x^n) = (dY) x^n + (-1)^{|Y|} Y x^{n-1} dx
\] 
for any $Y \in \fg$. Hence $\sO(J(\Omega^*(\RR) \ot \fg)_0) \cong C^*(\fg [[x, dx]])$. The Poincar\'e lemma on $\RR[[x,dx]]$ then implies that $C^*(\fg [[x, dx]]) \simeq C^*(\fg)$. Alternatively, if we view $\fg$ as a trivial $\RR[\partial / \partial x]$ module, then the inclusion $\fg \hookrightarrow \fg[[x,dx]]$ is an $\RR [\partial / \partial x]$-linear quasi-isomorphism of $\L8$ algebras.

The only translation-invariant densities on $\RR$ are of the form $r\, dx$ for $r \in \RR$, so we find
\[
(\sO_{loc} (\Omega^\ast (\RR)\otimes \fg [1])/C^\infty)^{\RR} \simeq \RR \, dx \otimes^\LL_{\RR[ \partial/\partial x]} C^\ast_{red} (\fg) .
\]
We compute this derived tensor product by resolving $\RR \, dx$ as a right $\RR[\partial / \partial x]$-module:
\[
\begin{array}{rll}
{\displaystyle \RR \otimes_\RR \RR \left [\frac{\partial}{\partial x}\right ] } &\xrightarrow{\delta}& {\displaystyle \RR \, dx \otimes_\RR \RR \left [ \frac{\partial}{\partial x}\right ]}, \\[2ex]
{\displaystyle r \left ( \frac{\partial}{\partial x} \right )^k } & \mapsto & {\displaystyle r \; dx \left ( \frac{\partial}{\partial x} \right )^{k+1}}.
\end{array}
\]
Let $(\cR^\ast , \delta)$ denote this resolution. Then $\cR^\ast \otimes_{\RR[ \partial/\partial x]} C^\ast_{red} (\fg)$ is equal to $C^\ast_{red} (\fg)[1] \oplus C^\ast_{red} (\fg)$, as $C^\ast_{red} (\fg)$ has the trivial $\RR[ \partial/\partial x]$ action.
\end{proof}

\begin{cor}\label{obscpxcor}
The $\RR$-invariant  obstruction-deformation complex is quasi-isomorphic to $\Omega^1_{cl} (B \fg) \oplus \Omega^1_{cl} (B \fg) [1]$.
\end{cor}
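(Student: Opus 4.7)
The plan is to combine Proposition \ref{obscpx} with a standard Poincar\'e-lemma-style identification for the formal dg manifold $B\fg$. By the proposition, the $\RR$-invariant obstruction-deformation complex is already quasi-isomorphic to $C^*_{red}(\fg) \oplus C^*_{red}(\fg)[1]$, so the only remaining task is to exhibit a quasi-isomorphism $C^*_{red}(\fg) \xrightarrow{\sim} \Omega^1_{cl}(B\fg)$ and apply it to each summand.

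The proposed quasi-isomorphism is the de Rham differential itself. Recall $\sO(B\fg) = C^*(\fg)$ by definition of the classifying dg manifold, and the de Rham differential
\[
d_{dR}: \sO(B\fg) \longrightarrow \Omega^1(B\fg)
\]
has image contained in $\Omega^1_{cl}(B\fg)$ and kernel equal to the constants $\RR \subset C^*(\fg)$. Thus $d_{dR}$ factors through an injection $C^*_{red}(\fg) = \sO(B\fg)/\RR \hookrightarrow \Omega^1_{cl}(B\fg)$, and I claim this injection is a quasi-isomorphism of complexes (each equipped with the Chevalley-Eilenberg differential).

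To verify the claim I would invoke the Poincar\'e lemma for the formal dg manifold $B\fg$: the full de Rham complex $(\Omega^*(B\fg), d_{CE} + d_{dR})$ is quasi-isomorphic to the ground field $\RR$ concentrated in form-degree zero. This is standard for formal moduli problems — $B\fg$ is a derived point — and can be proved by filtering by form-degree and running the spectral sequence, or concretely by writing down a contracting homotopy on the Koszul complex $\widehat{\Sym}(\fg^\vee[-1]) \otimes \Lambda^*(\fg^\vee[-1][1])$ generated by $\epsilon^i \mapsto i^{-1} d\epsilon^i$ on reduced generators. Acyclicity of the de Rham complex in positive form-degree then forces every $d_{CE}$-cocycle in $\Omega^1_{cl}(B\fg)$ to come from a $d_{CE}$-cocycle in $C^*_{red}(\fg)$ (up to a $d_{CE}$-coboundary), which is precisely the quasi-isomorphism desired.

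The main obstacle, and the only real content beyond Proposition \ref{obscpx}, is verifying this Poincar\'e-type acyclicity in the curved $L_\infty$ setting; once that is in hand, the corollary is immediate by applying the identification summand by summand to $C^*_{red}(\fg) \oplus C^*_{red}(\fg)[1]$. I would flag (perhaps as a remark) that the shift conventions match because $d_{dR}$ is cohomological-degree preserving once $\Omega^1(B\fg)$ is given its natural grading as $C^*(\fg, \fg^\vee[-1])$, so the $[1]$ on the second summand is carried through unchanged.
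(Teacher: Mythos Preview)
Your approach is essentially the same as the paper's: both reduce to showing $C^*_{red}(\fg)\simeq\Omega^1_{cl}(B\fg)$ via the Poincar\'e lemma for $B\fg$, and the map you write down (induced by $d_{dR}$) is exactly the connecting map the paper obtains by rotating the exact triangle $\Omega^1_{cl}[-1]\to\Omega^*_{aug}(B\fg)\to C^*_{red}(\fg)$ with acyclic middle term. One small caution: in the curved setting over a base dga $R$, the paper presents $C^*_{red}(\fg)$ as the two-term complex $R[1]\to\sO(B\fg)$ rather than the naive quotient $\sO(B\fg)/\RR$, so your ``kernel equals the constants $\RR$'' should be adjusted accordingly; the Poincar\'e-lemma argument (filter by form degree, as the paper does in its footnote) then goes through unchanged.
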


\begin{remark}
By the closed 1-forms $\Omega^*_{cl}(B\fg)$, we mean the complex
\[
\Omega^1 \overset{d}{\to} \Omega^2 \overset{d}{\to} \Omega^3 \to \cdots,
\]
i.e., the truncated de Rham complex.
\end{remark}

\begin{proof}
What remains is to make explicit the quasi-isomorphism $C^\ast_{red} (\fg) \simeq \Omega^1_{cl} (B \fg )$.  Note that $C^\ast_{red} (\fg)$ is given by the two term complex
\[
R [1] \to \sO (B \fg) ,
\]
where we denote by $R$ the commutative dg algebra over which $\fg$ is defined. Consider the augmented de Rham complex
\[
\Omega^*_{aug}(B\fg) := R[1] \to \sO(B\fg) \to \Omega^1(B\fg) \to \Omega^2(B\fg) \to \cdots,
\]
which is acyclic.\footnote{In this setting, the de Rham complex can be viewed as a double complex, since the terms $R$, $\sO(B\fg)$, and so on, are themselves cochain complexes. If we filter by this ``internal grading," we get a spectral sequence whose initial page is simply the de Rham complex over the {\em graded} algebra $R^\#$ of $\csym(\fg^\vee[-1])$, without any internal differential. We can then apply the usual retraction to see that this first page is acyclic. If we are working over a dg manifold --- as we will later --- then we are working sheaf-theoretically, so we apply this same argument on small, contractible opens.} There is a projection map $\Omega^*_{aug}(B\fg) \to C^\ast_{red} (\fg)$ of the form
\[
\begin{xymatrix}{
 R \ar[d]_{id} \ar[r] & \Omega^0 \ar[d]_{id} \ar[r]^d & \Omega^1 \ar[d]_{id} \ar[r]^d & \Omega^2 \ar[d]_{id} \ar[r] & \cdots \\
 R \ar[r] & \Omega^0 \ar[r] & 0 \ar[r] & 0 \ar[r] & \cdots
}
\end{xymatrix}
\]
whose kernel is precisely $\Omega^1_{cl}$. Thus we have an exact triangle of complexes $\Omega^1_{cl}[-1] \to \Omega^*_{aug} \to C^*_{red}$ where the middle term is acyclic. Hence we have an isomorphism $C^*_{red} \to \Omega^1_{cl}$ by rotating the triangle.
\end{proof}

%\begin{remark}
%As before, the above proposition is still true if we replace $\fg [1] \oplus \fg^\vee [-1]$ by any split square zero extension of an $\L8$ algebra with appropriate symplectic pairing.
%\end{remark}

\subsection{Structural aspects of the obstruction theory}

Here we deduce general results about the obstruction in Chern-Simons theory.  The results are very similar to those presented in section 16 of \cite{Cos3}.  The main result is that we can express the obstruction $O$ as a sum over graphs with at most one loop (wheels and trees).  By decomposing the obstruction into the product of an analytic factor and Lie-theoretic factor, we show that the total obstruction vanishes for one-dimensional Chern-Simons theory with values in a $\L8$-algebra of the form $\fg \oplus \fg^\vee [-2]$. In fact, the obstruction vanishes for {\em two} independent reasons: the analytic factor is zero on the nose, and the Lie-theoretic factor is cohomologically trivial!

Let $\gamma$ be a stable graph and $e$ an edge of $\gamma$ that connects two distinct vertices. We call such an $e$ a \emph{non-loop edge}  Define
\[
W_{\gamma , e} (P_\epsilon^L , K_\epsilon - K_0 , I_{CS} ) \in \sO (\sE)
\]
to be the weight of $\gamma$, where we use $I_{CS}$ to weight vertices, we use $P_\epsilon^L$ to weight all edges of $\gamma$ except $e$, and we use $K_\epsilon - K_0$ to weight $e$. 

\begin{prop}\label{obsexpression} The scale $L$ obstruction can be expressed as
 $$O[L] = \sum_\gamma \sum_{\substack{\text{$e$ a non-}\\ \text{loop edge}}} \frac{1}{\lvert \mathrm{Aut} (\gamma)\rvert} \lim_{\epsilon \to 0} W_{\gamma, e} (P_\epsilon^L , K_\epsilon - K_0 , I_{CS} ) ,$$
 where the sum is over all stable graphs $\gamma$ with at most one loop.
 \end{prop}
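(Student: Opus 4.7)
I would apply the standard Costello strategy for turning a QME obstruction into a Feynman graph sum. The starting point is the compatibility of the BV operators with the RG flow (Lemma 5.9.2.2 of \cite{Cos1}): for any $0 < \epsilon < L$,
\[
(Q + \hbar \Delta_L) W(P_\epsilon^L, I) + \tfrac{1}{2}\{W(P_\epsilon^L, I), W(P_\epsilon^L, I)\}_L \;=\; W\!\left(P_\epsilon^L,\; (Q + \hbar \Delta_\epsilon) I + \tfrac{1}{2}\{I, I\}_\epsilon\right).
\]
I apply this identity with $I = I_{CS}$ and pass to the limit $\epsilon \to 0$. By Proposition \ref{noctterms}, $I_{naive}[L] = \lim_{\epsilon \to 0} W(P_\epsilon^L, I_{CS})$, so the left-hand side equals $\hbar O[L]$. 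On the right, the classical master equation $Q I_{CS} + \tfrac{1}{2}\{I_{CS}, I_{CS}\}_0 = 0$ (which makes sense because $I_{CS}$ is local, so $\{-,-\}_0$ is defined) lets me subtract a zero from the argument of $W$, yielding
\[
\hbar O[L] \;=\; \lim_{\epsilon \to 0} W\!\left(P_\epsilon^L,\; \tfrac{1}{2}\bigl(\{I_{CS}, I_{CS}\}_\epsilon - \{I_{CS}, I_{CS}\}_0\bigr) + \hbar\, \Delta_\epsilon I_{CS}\right).
\]

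Next I would expand $W(P_\epsilon^L, -)$ graphically. The two insertions are atomic Feynman diagrams: $\tfrac{1}{2}(\{I_{CS}, I_{CS}\}_\epsilon - \{I_{CS}, I_{CS}\}_0)$ consists of two $I_{CS}$-vertices joined by a single edge labelled $K_\epsilon - K_0$ (necessarily a non-loop edge, i.e., joining distinct vertices), while $\hbar\, \Delta_\epsilon I_{CS}$ consists of one $I_{CS}$-vertex carrying a self-loop labelled $K_\epsilon$. Attaching further $I_{CS}$-vertices via $P_\epsilon^L$-propagators and grouping by isomorphism type of the resulting graph produces a sum over pairs $(\gamma, e)$ with $\gamma$ a connected stable graph with at most one loop and $e$ a distinguished ``kernel'' edge: the bracket term contributes the pairs in which $e$ is a non-loop edge (weighted $K_\epsilon - K_0$), and the Laplacian term contributes pairs in which $e$ is a self-loop (weighted $K_\epsilon$).

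The main obstacle is ruling out the self-loop contributions, so that only non-loop-edge terms survive. I expect this to be a dimensional vanishing special to $\dim M = 1$: the BV heat kernel $K_\epsilon \in \sE \otimes \sE$ is, up to heat smoothing, the Poincar\'e dual of the diagonal $S^1 \hookrightarrow S^1 \times S^1$ and is therefore represented by a $1$-form on $S^1 \times S^1$, so its diagonal pullback $\Delta^* K_\epsilon$ is a $1$-form on $S^1$. The local integrand of $I_{CS}$ is itself a $1$-form on $S^1$, so a self-loop contraction at a single vertex yields the wedge of two $1$-forms on the one-dimensional manifold $S^1$, which vanishes identically; inserting such a vanishing tadpole into any larger graph via $P_\epsilon^L$-propagators gives zero as well. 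Discarding these null self-loop graphs leaves exactly the non-loop-edge sum claimed in the proposition.
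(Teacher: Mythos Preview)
Your strategy is the paper's: invoke the RGE--QME compatibility lemma, use the classical master equation to produce the $K_\epsilon - K_0$ difference, eliminate the $\Delta_\epsilon I_{CS}$ tadpole, and expand graphically. Two steps need repair.

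First, your displayed RGE--QME identity is mis-stated. Writing the right-hand side as $W\bigl(P_\epsilon^L,\, (Q + \hbar\Delta_\epsilon)I + \tfrac{1}{2}\{I,I\}_\epsilon\bigr)$ would place the scale-$\epsilon$ obstruction at \emph{every} vertex of every graph, which is not what the left-hand side produces. The correct statement (the paper's Lemma~\ref{RGEQME}, after Costello) is the linearized form: with a square-zero parameter $\delta$ of degree $-1$,
\[
Q I[L] + \tfrac{1}{2}\{I[L], I[L]\}_L + \hbar\Delta_L I[L] \;=\; \frac{d}{d\delta}\, W\!\left(P_\epsilon^L,\; I + \delta\Bigl[QI + \tfrac{1}{2}\{I,I\}_\epsilon + \hbar\Delta_\epsilon I\Bigr]\right),
\]
so that exactly one vertex carries the bracketed term and the rest carry $I$. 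Your verbal description of the graph expansion is the right picture; only the formula is wrong.

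Second, the tadpole-vanishing argument does not work as written. Applying $\Delta_\epsilon$ to a vertex inserts the two tensor legs of $K_\epsilon$ into two input slots of that vertex; it is not the wedge of $\Delta^* K_\epsilon$ with the full Lagrangian density, so a form-degree overflow argument does not by itself force vanishing. The actual mechanism, which your Poincar\'e-dual description almost reaches, is that on the diagonal $x=y$ the kernel reads $K_\epsilon \propto (dx \otimes 1 - 1 \otimes dx)\otimes \Cas_\fg$, and feeding these two legs into the same graded-symmetric vertex produces two terms that cancel exactly. Equivalently, $\Delta^*(dx - dy) = 0$, so the diagonal restriction of $K_\epsilon$ is not merely ``a $1$-form'' but identically zero; this is the lemma $\Delta_\epsilon I_{CS} = 0$ in the paper. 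With that in hand, the $\delta$-term reduces to $\tfrac{1}{2}\bigl(\{I_{CS},I_{CS}\}_\epsilon - \{I_{CS},I_{CS}\}_0\bigr)$ and the graph expansion yields precisely the non-loop-edge sum.
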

 
In order to surmount the notational barrier, we split the proof into a sequence of lemmas. We begin by recalling the compatibility between the RGE and the QME.
 
\begin{lemma}[5.11.1.1 of \cite{Cos1}]\label{RGEQME}
Let $\delta$ be a parameter of cohomological degree $-1$ and satisfy $\delta^2 = 0$.  Fix $\epsilon > 0$. Given a functional $I$, let $I[L]$ denote its image $W(P_\epsilon^L,I)$ under RG flow. Then
\[
Q I[L] + \frac{1}{2} \{ I[L], I[L] \}_L + \hbar \Delta_L I[L] = \frac{d}{d \delta} W \left (P_\epsilon^L , I + \delta \left [ Q I + \frac{1}{2} \{ I , I \}_\epsilon + \hbar \Delta_\epsilon I \right] \right ).
\]
\end{lemma}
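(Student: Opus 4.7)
The plan is to reduce the compatibility of the QME with the RGE to a single commutation identity between the RG flow operator, viewed in exponential form, and the ``quantum differential'' $\hbar Q + \hbar^2 \Delta_L$, and then to repackage the conclusion via the $\delta$-trick.

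First, I would recall the exponential presentation of the RG flow: $e^{W(P,J)/\hbar} = e^{\hbar \partial_P}\, e^{J/\hbar}$, where $\partial_P$ is the second-order operator on $\sO(\sE)[[\hbar]]$ given by contracting two $\sE^\vee$-factors against the symmetric kernel $P$. In this language $\Delta_L = \partial_{K_L}$, and a short calculation yields
\[
(\hbar Q + \hbar^2 \Delta_L)\bigl(e^{J/\hbar}\bigr) \;=\; \bigl(QJ + \hbar \Delta_L J + \tfrac{1}{2}\{J,J\}_L\bigr)\,e^{J/\hbar},
\]
so the QME at scale $L$ is precisely the condition $(\hbar Q + \hbar^2 \Delta_L)\, e^{I[L]/\hbar} = 0$.

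Next I would establish the key commutation relation
\[
(\hbar Q + \hbar^2 \Delta_L)\, e^{\hbar \partial_P} \;=\; e^{\hbar \partial_P}\,(\hbar Q + \hbar^2 \Delta_\epsilon).
\]
Two observations suffice: contractions with symmetric kernels commute, so $[\Delta_L, \partial_P]=0$; and the defining property of the heat kernel gives $(Q\otimes 1 + 1\otimes Q)\,P_\epsilon^L = K_\epsilon - K_L$, hence $[Q, \partial_P] = \Delta_\epsilon - \Delta_L$. Because $[Q,\partial_P]$ commutes with $\partial_P$, exponentiating produces $[Q, e^{\hbar \partial_P}] = \hbar(\Delta_\epsilon - \Delta_L)\, e^{\hbar \partial_P}$, and the $\hbar^2 \Delta_L$ contributions cancel on the right to give the stated equality.

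Applying this commutation to $e^{I/\hbar}$ and using the preceding formula on both sides produces
\[
\bigl(QI[L] + \hbar \Delta_L I[L] + \tfrac{1}{2}\{I[L],I[L]\}_L\bigr)\, e^{I[L]/\hbar} \;=\; e^{\hbar \partial_P}\Bigl[\bigl(QI + \hbar \Delta_\epsilon I + \tfrac{1}{2}\{I,I\}_\epsilon\bigr)\, e^{I/\hbar}\Bigr].
\]
To bring the right-hand side into the advertised form, set $A := QI + \tfrac{1}{2}\{I,I\}_\epsilon + \hbar \Delta_\epsilon I$ and $\widetilde{I} := I + \delta A$. Since $\delta^2=0$, one has $e^{\widetilde{I}/\hbar} = (1 + \tfrac{\delta}{\hbar}A)\, e^{I/\hbar}$, so $\tfrac{d}{d\delta}\, e^{W(P_\epsilon^L,\widetilde{I})/\hbar}$ equals $\tfrac{1}{\hbar}\, e^{\hbar \partial_P}(A\cdot e^{I/\hbar})$, and cancelling the common factor $e^{I[L]/\hbar}$ yields the lemma. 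The only real obstacle is bookkeeping with powers of $\hbar$ and the combinatorics of expanding $e^{J/\hbar}$; no analytic input beyond the heat-kernel identity $(Q\otimes 1 + 1\otimes Q)P_\epsilon^L = K_\epsilon - K_L$ is needed.
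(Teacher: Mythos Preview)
The paper does not supply its own proof of this lemma; it is simply quoted from \cite{Cos1}. Your argument is correct and is precisely the standard proof one finds there: the exponential form of the RG flow together with the commutation identity $[Q,\partial_{P_\epsilon^L}] = \Delta_\epsilon - \Delta_L$ (from $(Q\otimes 1 + 1\otimes Q)P_\epsilon^L = K_\epsilon - K_L$) yields the intertwining relation between the scale-$L$ and scale-$\epsilon$ quantum differentials, and the square-zero parameter $\delta$ then extracts the linear term.
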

 
\begin{lemma}
For any $\epsilon > 0$, we have
\[
\Delta_\epsilon I_{CS} =0 .
\]
\end{lemma}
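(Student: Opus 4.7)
The plan is to prove $\Delta_\epsilon I_{CS} = 0$ by identifying the result as a single-vertex ``self-loop'' diagram whose weight factors as an analytic prefactor times a supertrace over $\fg \oplus \fg^\vee[-2]$, and then observing that this supertrace vanishes by the unimodularity of the split square-zero extension.

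First I unpack $\Delta_\epsilon$ concretely. It is the second-order operator on $\sO(\sE)$ defined by contraction with the heat kernel $K_\epsilon \in \sE \otimes \sE$, and this kernel factors as $K_\epsilon = k_\epsilon \cdot \Cas$, where $k_\epsilon$ is the (form-valued) scalar heat kernel on $S^1$ and $\Cas \in (\fg \oplus \fg^\vee[-2])^{\otimes 2}$ is the Casimir for the invariant pairing. So on any polynomial functional, $\Delta_\epsilon$ picks two input slots and glues them via $K_\epsilon$. Applying this to each homogeneous piece $I_{CS,n+1}(\phi) = \frac{1}{(n+1)!}\langle \phi, \ell_n(\phi^{\otimes n})\rangle$ --- which by invariance of the pairing is a cyclically symmetric $(n+1)$-linear form --- produces a functional of $\phi$ that factors as a product of (i) an analytic prefactor coming from $k_\epsilon$ at coincident points, integrated along the diagonal and contracted with the form parts of $\phi$, and (ii) an algebraic factor which is the supertrace over $\fg \oplus \fg^\vee[-2]$ of the endomorphism $X \mapsto \ell_n(X, \phi, \ldots, \phi)$, with the remaining $n-1$ slots fixed at $\phi$.

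The key observation is that this supertrace is identically zero. The brackets on $\fg \oplus \fg^\vee[-2]$ restrict to $\ell_n^{\fg}$ on purely $\fg$-inputs (and the output stays in $\fg$), while an input with exactly one $\fg^\vee[-2]$ factor lands in $\fg^\vee[-2]$ via the coadjoint action, and two or more $\fg^\vee[-2]$ factors give zero. Thus the diagonal blocks of $X \mapsto \ell_n(X, \phi, \ldots, \phi)$ are $\ell_n^{\fg}(-, \phi^{\fg}, \ldots, \phi^{\fg})$ on the $\fg$ summand and, by definition of the coadjoint representation, the negative transpose of this same operator on $\fg^\vee[-2]$ (where $\phi^{\fg}$ denotes the $\fg$-component of $\phi$); the $[-2]$ shift on the dual contributes precisely the sign that makes the two diagonal traces cancel in the supertrace.

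Hence $\Delta_\epsilon I_{CS, n+1} = 0$ for every $n$, and summing yields the lemma. The main obstacle is careful sign bookkeeping --- between the Koszul rule for contracting through $\Cas$, the degree shift, and the standard formula for the coadjoint action --- but the conceptual content is simply that the split square-zero extension of an $\L8$ algebra by its coadjoint module is unimodular. Equivalently and more structurally, the Hamiltonian flow on the symplectic supermanifold $(\fg \oplus \fg^\vee[-2])[1]$ generated by $I_{CS}$ preserves the Liouville volume, which is automatic from the symplectic pairing.
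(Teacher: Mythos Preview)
Your argument is correct but takes a genuinely different route from the paper's. Both you and the paper exploit the factorization of the self-loop weight into an analytic (de Rham) factor and a Lie-theoretic factor, but you kill the \emph{Lie} factor while the paper kills the \emph{analytic} one. The paper simply writes out the heat kernel as $K_\epsilon = \epsilon^{-1/2}e^{-|x-y|^2/\epsilon}(dx\otimes 1 - 1\otimes dy)\otimes \Cas_\fg$ and observes that, because $I_{CS}$ is local, the self-contraction forces $x=y$; the two form-terms $dx\otimes 1$ and $1\otimes dx$ then feed into the same symmetric vertex and cancel. Your proof instead leaves the analytic prefactor alone and shows that the Lie contraction is the supertrace of $X\mapsto \ell_n(X,\phi,\dots,\phi)$ on $\fg\oplus\fg^\vee[-2]$, which vanishes because the diagonal blocks are $A$ and its negative transpose. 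The paper's argument has the advantage that it makes no use of the cotangent structure: it would give $\Delta_\epsilon I = 0$ for \emph{any} cyclic $\L8$ algebra with a degree $-2$ pairing in this one-dimensional theory. Your argument trades that generality for a structural explanation tied to the cotangent/BV geometry (divergence-freeness of the Hamiltonian flow), and would survive in higher-dimensional analogues where the form factor need not vanish.

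One small correction: your sign bookkeeping misattributes the cancellation. The $[-2]$ shift is by an even integer and does not change parities, so it contributes no sign to the supertrace. The cancellation comes entirely from the coadjoint block being $-A^{T}$, so that $\mathrm{str}(A) + \mathrm{str}(-A^{T}) = 0$ already on $\fg\oplus\fg^\vee$ (and hence on any even shift thereof). This does not affect your conclusion.
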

 
\begin{proof}
This follows from the explicit form of $K_\epsilon$, which, up to a constant, is given by
\[
K_\epsilon = \epsilon^{-1/2} e^{\lvert x-y \rvert^2 / \epsilon } (dx \otimes 1 - 1 \otimes dy ) \ot \Cas_\fg.
\]
Each term in $\Delta_\epsilon I_{CS}$ consists of attaching an edge labeled by $K_\epsilon$ to two tails of a vertex with at least two external tails.  As there is only one vertex, the coordinates for the edge coincide, $x=y$, and this contraction of tensors results in two terms which cancel.
 \end{proof}
 
 \begin{lemma}
 The scale $L$ obstruction is given by
 \[
 O[L] = \hbar^{-1} \lim_{\epsilon \to 0} \frac{d}{d \delta} W \left ( P_\epsilon^L , I_{CS} + \delta \left [ \frac{1}{2} \{I_{CS} , I_{CS} \}_\epsilon - \frac{1}{2} \{ I_{CS} , I_{CS} \}_0 \right ] \right ) ,
 \]
 where $\delta$ is a square zero parameter of cohomological degree -1.
 \end{lemma}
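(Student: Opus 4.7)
The plan is to apply the RGE-QME compatibility (Lemma 5.11.1.1, quoted as Lemma \ref{RGEQME}) to $I = I_{CS}$ at scale $\epsilon$, and then use the classical master equation together with the vanishing $\Delta_\epsilon I_{CS} = 0$ proved in the previous lemma to identify the bracketed term with the desired expression.

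First, I would recall that by definition $I_{naive}[L] = \lim_{\epsilon \to 0} W(P_\epsilon^L, I_{CS})$ (modulo an interchange of limits which is justified by the convergence results of the preceding section, e.g.\ Proposition \ref{noctterms}). Since the RG flow is a semigroup, we also have $I_{naive}[L] = W(P_\epsilon^L, I_{naive}[\epsilon])$ for any $0 < \epsilon < L$. Hence I can compute $O[L]$ by applying Lemma \ref{RGEQME} with the input $I_{CS}$ (viewed as a scale $\epsilon$ functional), giving
\[
Q\, W(P_\epsilon^L, I_{CS}) + \tfrac{1}{2}\{W(P_\epsilon^L, I_{CS}), W(P_\epsilon^L, I_{CS})\}_L + \hbar \Delta_L W(P_\epsilon^L, I_{CS}) = \tfrac{d}{d\delta} W\!\left(P_\epsilon^L, I_{CS} + \delta\bigl[QI_{CS} + \tfrac{1}{2}\{I_{CS}, I_{CS}\}_\epsilon + \hbar \Delta_\epsilon I_{CS}\bigr]\right).
\]

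Next, I would simplify the bracketed term on the right. Since $I_{CS}$ is a classical BV theory (see Section \ref{satisfiesCME}), it satisfies the classical master equation $QI_{CS} + \tfrac{1}{2}\{I_{CS}, I_{CS}\}_0 = 0$, so $QI_{CS} = -\tfrac{1}{2}\{I_{CS}, I_{CS}\}_0$. By the previous lemma, $\Delta_\epsilon I_{CS} = 0$. Therefore the bracketed term reduces to
\[
\tfrac{1}{2}\{I_{CS}, I_{CS}\}_\epsilon - \tfrac{1}{2}\{I_{CS}, I_{CS}\}_0.
\]

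Finally, taking the limit $\epsilon \to 0$ on both sides and multiplying by $\hbar^{-1}$ yields exactly the claimed formula for $O[L]$, upon noting that the $\epsilon \to 0$ limit commutes with $Q$, $\Delta_L$, and $\{-,-\}_L$ on the left-hand side (producing $I_{naive}[L]$) and exists on the right by the analytic results already established. The only real subtlety is verifying that the $\epsilon \to 0$ limit of $\frac{d}{d\delta}W(P_\epsilon^L, \cdots)$ is well-defined; this will be handled in the next stage (the point of writing things this way is precisely that one can expand out the $\delta$-derivative into a sum of Feynman graphs with a distinguished edge labeled by $K_\epsilon - K_0$, which is smooth, leading directly to the graphical formula of Proposition \ref{obsexpression}).
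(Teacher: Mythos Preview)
Your proposal is correct and follows essentially the same approach as the paper: apply Lemma \ref{RGEQME} to $I_{CS}$ at scale $\epsilon$, then use the classical master equation $QI_{CS} = -\tfrac{1}{2}\{I_{CS},I_{CS}\}_0$ together with $\Delta_\epsilon I_{CS}=0$ to reduce the bracketed term, and finally pass to the $\epsilon \to 0$ limit. The paper's proof is slightly terser and does not dwell on the existence of the limit (deferring that implicitly to the surrounding analysis), but the logical content is identical.
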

 
 \begin{proof}
 Recall (section \ref{satisfiesCME}) that $I_{CS}$ satisfies the classical master equation
 \[
 Q I_{CS} = - \frac{1}{2} \{ I_{CS} , I_{CS} \}_0.
 \]
 Combining this result with the previous lemma, we see that
 \[
 QI_{CS} + \frac{1}{2} \{I_{CS} , I_{CS}\}_\epsilon + \hbar \Delta_\epsilon I_{CS} = - \frac{1}{2} \{ I_{CS} , I_{CS} \}_0 + \frac{1}{2} \{I_{CS} , I_{CS} \}_\epsilon  .
 \]
Now $I_{naive} [L] = \lim_{\epsilon \to 0} W (P^L_\epsilon , I_{CS})$ and the obstruction is defined as
 \[
O[L] = \hbar^{-1} \left( Q I_{naive}[L] + \frac{1}{2} \{ I_{naive} [L] , I_{naive} [L] \}_L + \hbar \Delta_L I_{naive}[L]  \right), 
\]
so Lemma \ref{RGEQME} completes the proof.
 \end{proof}

 The following lemma completes the proof of Proposition \ref{obsexpression}.
 
 %Bad spacing in this lemma
 \begin{lemma}
 \[
 \begin{array}{lll}
\lefteqn{\displaystyle \hbar^{-1} \frac{d}{d \delta} W \left ( P_\epsilon^L , I_{CS} + \delta \left [ \frac{1}{2} \{I_{CS} , I_{CS} \}_\epsilon - \frac{1}{2} \{ I_{CS} , I_{CS} \}_0 \right ] \right )} \\[2ex] &\hspace{85pt} & ={\displaystyle \sum_\gamma \sum_{ \substack{\textrm{$e$ a non-}\\ \textrm{loop edge} }} \frac{1}{\lvert \mathrm{Aut} (\gamma)\rvert} W_{\gamma, e} (P_\epsilon^L , K_\epsilon - K_0 , I_{CS} ) .}\end{array}
\]
\end{lemma}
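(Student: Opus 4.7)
The plan is to unfold both sides as weighted sums over Feynman diagrams and exhibit a bijection that trades a distinguished vertex on the left for a distinguished non-loop edge on the right. The crucial input is the graphical interpretation of the BV bracket: since $\{F, G\}_\epsilon$ is precisely the operation of contracting one tail of $F$ with one tail of $G$ via $K_\epsilon$, the functional $\tfrac{1}{2}\{I_{CS}, I_{CS}\}_\epsilon$ is the weight of a ``dumbbell'' diagram with two vertices labeled $I_{CS}$ joined by a single edge labeled $K_\epsilon$, where the prefactor $\tfrac{1}{2}$ precisely cancels the $\ZZ/2$ that swaps the two $I_{CS}$ copies. Consequently $J := \tfrac{1}{2}\{I_{CS}, I_{CS}\}_\epsilon - \tfrac{1}{2}\{I_{CS}, I_{CS}\}_0$ is the dumbbell whose single edge carries the weight $K_\epsilon - K_0$.

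Next I would expand $W(P_\epsilon^L, I_{CS} + \delta J)$ using its definition as a sum over connected stable graphs, with vertex weights coming from the homogeneous decomposition of $I_{CS} + \delta J$. Since $\delta^2 = 0$, the $\delta$-derivative at $\delta = 0$ retains exactly those graphs $\gamma'$ in which a single vertex $v^\ast$ carries $J$ and all other vertices carry $I_{CS}$; the resulting sum over isomorphism classes of pairs $(\gamma', v^\ast)$ has weight $\hbar^{g(\gamma')}/|\mathrm{Aut}(\gamma', v^\ast)|$ per pair. Substituting the dumbbell description of $J$ at $v^\ast$ replaces that vertex by two $I_{CS}$-vertices joined by a new edge $e$ weighted by $K_\epsilon - K_0$, with the tails formerly incident to $v^\ast$ distributed between the two new vertices. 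The resulting graph $\tilde\gamma$ has all vertices labeled $I_{CS}$, all edges except $e$ weighted by $P_\epsilon^L$, and $e$ is necessarily a non-loop edge since it joins two freshly introduced distinct vertices. A direct Euler-characteristic count confirms $g(\tilde\gamma) = g(\gamma')$, so $\tilde\gamma$ has at most one loop iff $\gamma'$ does. The assignment $(\gamma', v^\ast) \mapsto (\tilde\gamma, e)$ is a bijection on isomorphism classes whose inverse contracts $e$.

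The combinatorial weights on both sides then match via orbit-stabilizer: the sum $\sum_{[(\tilde\gamma, e)]} 1/|\mathrm{Aut}(\tilde\gamma, e)|$ over isomorphism classes of pairs equals $\sum_{\tilde\gamma} 1/|\mathrm{Aut}(\tilde\gamma)| \sum_{e \text{ non-loop}} 1$, and the factor $\tfrac{1}{2}$ in $J$ is exactly what absorbs the potential $\ZZ/2$ transposition of the two halves of the dumbbell when it extends to an automorphism of $\tilde\gamma$. The explicit $\hbar^{-1}$ on the left is a bookkeeping factor for the $\hbar$-grading inherited from the $W$-expansion.

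The main obstacle is exactly this combinatorial bookkeeping: one must verify that the $\tfrac{1}{2}$ in $J$, together with the degrees of freedom in distributing tails of $v^\ast$ between the dumbbell halves, is precisely what converts the ``marked vertex'' sum into the ``marked edge'' sum. This is essentially the statement that sub-diagram insertion at a vertex is a well-defined operadic operation on stable graphs; once that framework is in place, the remaining weight computation is routine because each edge and vertex contributes an independent tensor factor, and contractions with $P_\epsilon^L$, $K_\epsilon - K_0$, or the Taylor components of $I_{CS}$ do not interact with the underlying graph bijection.
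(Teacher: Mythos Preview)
Your argument is correct and follows essentially the same route as the paper: use $\delta^{2}=0$ to isolate a single $J$-vertex, interpret $J$ graphically as the dumbbell with edge label $K_{\epsilon}-K_{0}$, and then trade the marked $J$-vertex for the marked non-loop edge. You are actually more careful than the paper about the automorphism bookkeeping (the paper simply asserts the equality after drawing the dumbbell picture), and your observation that contracting $e$ inverts the construction and preserves genus is the right way to see why the $\hbar$-weights line up.
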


\begin{proof}
Because $\delta^2 =0$, we know the $\delta$-weighted part of the interaction term 
\[ 
\delta \left [ \frac{1}{2} \{I_{CS} , I_{CS} \}_\epsilon - \frac{1}{2} \{I_{CS} , I_{CS} \}_0 \right ]
\]
appears on at most one vertex in any given graph in the computation of the RG flow.  Hence our strategy is to replace that vertex with two vertices, connected by an edge labelled by $K_\epsilon - K_0$.  

We have the equality $ \{ I_{CS}, I_{CS} \}_\epsilon = \Delta_\epsilon (I_{CS} I_{CS})$, where $\Delta_\epsilon (I_{CS} I_{CS})$ is a sum of terms given like that pictured below.
\begin{center}
\begin{tikzpicture}[decoration={markings,
   mark=at position 0.6cm with {\arrow[black,line width=.8mm]{stealth}}}];
\draw[postaction=decorate, line width=.4mm] (-1.2,1) -- (0,0);
\draw[postaction=decorate, line width=.4mm] (-0.7, 1) -- (0,0);
\draw[postaction=decorate, line width=.4mm] (0.7,1) -- (0,0);
\draw[postaction=decorate, line width=.4mm] (1.2,1) -- (0,0);
\draw[line width=.4mm] (0,0) -- (0,-2);
\draw[ball color=black]  (0,0) circle (.2);
\draw (-0.6,-0.1) node {$I_k$};
\draw (0.4,-1) node {$K_\epsilon$};
\draw[ball color=black] (0,-2) circle (.2);
\draw (-0.6, -1.9) node {$I_n$};
\draw[postaction=decorate, line width=.4mm] (-.7,-3) -- (0,-2);
\draw[postaction=decorate, line width=.4mm] (0, -3.2) -- (0,-2);
\draw[postaction=decorate, line width=.4mm] (0.7,-3) -- (0,-2);
\end{tikzpicture}
\end{center}

\noindent
The same is true for $\{I_{CS} , I_{CS} \}_0$ --- that is, $\{I_{CS} , I_{CS} \}_0 = \Delta_0 (I_{CS} I_{CS})$ --- and again we have an expansion as a sum of $I_n$ and $I_k$ connected via the distribution $K_0$. Hence by combining the respective sums we can write $\{I_{CS} , I_{CS} \}_\epsilon - \{I_{CS} , I_{CS} \}_0$ as a sum of terms of the form below.
\begin{center}
\begin{tikzpicture}[decoration={markings,
   mark=at position 0.6cm with {\arrow[black,line width=.8mm]{stealth}}}];
\draw[postaction=decorate, line width=.4mm] (-1.2,1) -- (0,0);
\draw[postaction=decorate, line width=.4mm] (-0.7, 1) -- (0,0);
\draw[postaction=decorate, line width=.4mm] (0.7,1) -- (0,0);
\draw[postaction=decorate, line width=.4mm] (1.2,1) -- (0,0);
\draw[line width=.4mm] (0,0) -- (0,-2);
\draw[ball color=black]  (0,0) circle (.2);
\draw (-0.6,-0.1) node {$I_k$};
\draw (.8,-1) node {$K_\epsilon - K_0$};
\draw[ball color=black] (0,-2) circle (.2);
\draw (-0.6, -1.9) node {$I_n$};
\draw[postaction=decorate, line width=.4mm] (-.7,-3) -- (0,-2);
\draw[postaction=decorate, line width=.4mm] (0, -3.2) -- (0,-2);
\draw[postaction=decorate, line width=.4mm] (0.7,-3) -- (0,-2);
\end{tikzpicture}
\end{center}

Hence we see that 
\[
\hbar^{-1} \frac{d}{d \delta} W \left ( P_\epsilon^L , I_{CS} + \delta \left [ \frac{1}{2} \{I_{CS} , I_{CS} \}_\epsilon - \frac{1}{2} \{ I_{CS} , I_{CS} \}_0 \right ] \right )
\]
is given by summing over all graphs $\gamma$ appearing in the RG flow and replacing the $\delta$-weighted part by two vertices connected with an edge labelled by $K_\epsilon - K_0$, exactly as claimed.
 \end{proof}

As a consequence of Proposition \ref{obsexpression}, the obstruction $O = \lim_{L \to 0} O [L]$ can be written as a sum
\[
O = \sum_{\gamma, e} O_{\gamma , e} \overset{\text{def}}{=} \sum_{\gamma, e} \frac{1}{\lvert \mathrm{Aut} (\gamma) \rvert} \lim_{\epsilon \to 0} W_{\gamma, e} (P_\epsilon^1, K_\epsilon - K_0 , I_{CS}) ,
\]
where each term
\[
O_{\gamma , e} : \left ( \Omega^\ast (M) \otimes \fg \right)^{\otimes T(\gamma)} \to \CC
\]
can be decomposed as a product $O_{\gamma , e}^{an} \otimes O_{\gamma , e}^{\fg}$, where the analytic/Lie factor is a linear map on the analytic/Lie factor, respectively. This decomposition lets us eliminate certain factors by showing the analytic factor vanishes. Trees don't contribute to the obstruction (they are never singular), so the $\epsilon$-limit is zero for a tree. Likewise, any one-loop graph looks like a wheel with trees attached, so if the distinguished edge $e$ appears in one of the trees, then the $\epsilon$-limit is zero. Hence the relevant term of the obstruction becomes 
\[
O' = \sum_{n \ge 2} O_n^{an} \otimes \left( \sum_{\substack{\gamma \text{ a wheel with $n$ vertices} \\ e \in \gamma \text{ a non-loop edge}} } O_{\gamma,e}^{\fg} \right) .
\]
By `relevant' we mean that if $O'$ vanishes, then the obstruction $O$ vanishes. Here we view the analytic obstruction as a distribution that only depends on the number of vertices of any given wheel.

\begin{prop}\label{1dvanishprop}
In one-dimensional Chern-Simons theory, for each $n \ge 2$, the sum
 \[
 \sum_{\substack{\gamma \text{ a wheel with n vertices} \\ e \in \gamma \text{ an edge}} } O_{\gamma,e}^{\fg} 
 \]
is zero in $H^1(\Omega^1_{cl}(B\fg) \oplus \Omega^1_{cl}(B\fg)[1])$, the first cohomology group of the deformation obstruction complex. Consequently, in one-dimensional Chern-Simons with values in a $\L8$-algebra $\fg \oplus \fg^\vee [-2]$,  the total obstruction $O$ also vanishes. 
\end{prop}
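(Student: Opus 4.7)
The plan is to establish the cohomological vanishing of the Lie-theoretic factor; the vanishing of the total obstruction $O$ then follows immediately since each summand $O_{\gamma,e}$ factors as $O^{an}_{\gamma,e} \otimes O^\fg_{\gamma,e}$, so if the Lie-theoretic sum is a coboundary then so is $O'$ (hence $O$) after tensoring with the analytic pieces.

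First, I would make the Lie-theoretic factor $O^\fg_{\gamma,e}$ explicit for a wheel $\gamma$ with $n$ vertices and a distinguished non-loop edge $e$. Under the quasi-isomorphism of Corollary \ref{obscpxcor}, the output is represented in $C^\ast_{red}(\fg)$ and mapped to $\Omega^1_{cl}(B\fg)$. Because our $\L8$-algebra has the special form $\fg \oplus \fg^\vee[-2]$ and internal edges contract via the evaluation pairing, every edge of the wheel pairs a $\fg$-slot at one vertex with a $\fg^\vee$-slot at the next. Consequently the weight of each wheel takes the form of a trace $\mathrm{tr}(A_1 A_2 \cdots A_n)$, where each $A_i \in \mathrm{End}(\fg)$ is assembled from the Taylor coefficients $\ell_k$ of the bracket evaluated on the external $\fg[1]$-tails attached to vertex $i$.

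Second, I would analyze the effect of summing over the choice of distinguished edge $e$ within a fixed wheel topology. The distinguished edge is weighted by $K_\epsilon - K_0$, which in the Casimir factor amounts to an identity insertion at a point of the trace, while in the analytic factor localizes two adjacent vertex positions. Marking this edge plays the role of inserting the Chevalley-Eilenberg differential at a single position in the cyclic product $\mathrm{tr}(A_1 \cdots A_n)$. Summing over the $n$ possible positions of $e$ then reproduces exactly the Leibniz expansion of $d_{CE}$ applied to a trace: I would identify the primitive as an $(n{-}1)$-vertex ``undistinguished'' wheel cochain $T_{n-1} \in C^\ast_{red}(\fg)$ in which one edge has been opened into a pair of external legs contracted through the pairing, and verify
\[
\sum_{\substack{\gamma \text{ a wheel on $n$ vertices} \\ e \in \gamma \text{ a non-loop edge}}} \frac{1}{|\mathrm{Aut}(\gamma)|} O^\fg_{\gamma,e} \;=\; d_{CE}(T_{n-1})
\]
in $C^\ast_{red}(\fg)$. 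Once this cochain-level identity is in place, the sum is a coboundary and hence zero in $H^1(\Omega^1_{cl}(B\fg) \oplus \Omega^1_{cl}(B\fg)[1])$. The consequence for $O$ is then mechanical: since $O' = \sum_n O_n^{an} \otimes (\text{Lie sum})_n$ factors, and each Lie sum is a coboundary, the total obstruction vanishes in cohomology.

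The main obstacle will be setting up the Leibniz identity with the correct signs and combinatorial factors: the graded antisymmetry of the $\L8$ brackets, the cyclic symmetry of the wheel, and the automorphism-group normalizations $|\mathrm{Aut}(\gamma)|$ must all be tracked precisely, and the natural candidate $T_{n-1}$ must be defined in a way that respects these symmetries. A secondary subtlety is that the target cohomology $H^1$ has contributions from both $\Omega^1_{cl}(B\fg)$ and its shift; I would need to check that the coboundary identity trivializes both components (one coming from the $dx$ leg of the 1-form side of the resolution in the proof of Proposition \ref{obscpx}, the other from the scalar side). It is also worth noting, as a sanity check, that the parallel 1-dimensional analytic vanishing $O_n^{an} \equiv 0$ — visible from the explicit form of the heat kernel $K_\epsilon \propto \epsilon^{-1/2}e^{-|x-y|^2/\epsilon}(dx\otimes 1 - 1 \otimes dy)\otimes \mathrm{Cas}_\fg$ and the fact that only one $dt$ appears along each propagator — provides an independent confirmation that the full obstruction $O$ vanishes.
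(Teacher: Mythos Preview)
Your approach has a genuine gap at its central step. You claim that marking the distinguished edge $e$ and summing over its position in the wheel implements the Chevalley--Eilenberg differential on a primitive $T_{n-1}$. But the Lie-theoretic factor $O^{\fg}_{\gamma,e}$ is \emph{independent} of which edge is distinguished: both the special edge (weighted by $K_\epsilon - K_0$) and the ordinary edges (weighted by $P_\epsilon^L$) carry the same Lie component, namely the Casimir $\Cas_\fg$. The distinction between them is purely analytic. Hence summing over the $n$ possible positions of $e$ contributes only a combinatorial factor of $n$, not a Leibniz expansion of $d_{CE}$; no primitive $T_{n-1}$ of the kind you describe appears. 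Your proposed coboundary identity simply does not hold.

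The paper's argument is quite different and avoids any search for a primitive. Using Lemma~\ref{wheellemma}, one identifies the Lie-theoretic sum over all $n$-vertex wheels (and edges) with $\Tr\bigl(\At(T_{B\fg})^n\bigr)$, i.e., a constant multiple of $ch_n(T_{B\fg})$. This is a closed class in $H^{2n-1}(\Omega^1_{cl}(B\fg))$. By Corollary~\ref{obscpxcor}, however, any degree~$1$ obstruction class lands in $H^1(\Omega^1_{cl}(B\fg)) \oplus H^2(\Omega^1_{cl}(B\fg))$. For $n \ge 2$ we have $2n-1 \ge 3$, so the class sits in the wrong degree and must vanish in $H^1$ of the obstruction complex. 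The vanishing of the total obstruction then follows because $O'$ factors as the Lie piece times an analytic piece.

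Your closing remark about the analytic vanishing $O_n^{an} = 0$ is correct and is, in fact, the content of the proposition immediately following this one in the paper; it gives an independent proof that $O$ vanishes. But it does not rescue the Lie-theoretic argument you propose.
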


\begin{proof}
We compute below (Lemma \ref{wheellemma}) that by summing over all wheels $\gamma$ with $n$ vertices, we have
\[
\sum_{\gamma,e} O_{\gamma,e}^{\fg} = n! (-2 \pi i)^n  ch_n (T_{B\fg}) .
\]
Now $ch_n (T_{B \fg}) \in H^{2n-1} (\Omega^1_{cl} (B\fg))$ and by Corollary \ref{obscpxcor} any obstruction lives in $H^1 (\Omega^1_{cl}(B \fg)) \oplus H^2 (\Omega^1_{cl}(B \fg))$.  Therefore, the Lie-theoretic obstruction must vanish.  The total obstruction is just some multiple of the Lie-theoretic obstruction and hence it also vanishes.
\end{proof}

\begin{prop}
In one-dimensional Chern-Simons theory, for each $n \ge 2$, the analytic obstruction $O_n^{an}$ is zero.
\end{prop}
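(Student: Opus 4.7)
The plan is to exhibit the analytic weight of each wheel as an integral over a configuration space of points on $\RR$ and show that, after summing over the choice of distinguished edge, this integral vanishes identically (not merely modulo coboundaries). The starting point is the identity
\[
(d \otimes 1 + 1 \otimes d)\, P_0^\epsilon \;=\; K_0 - K_\epsilon,
\]
which follows from $[Q, Q^*] = D$, the fact that $K_t = e^{-tD}$ satisfies $\partial_t K_t = -D K_t$, and the definition $P_0^\epsilon = \int_0^\epsilon (Q^* \otimes 1) K_t\, dt$. Thus the distribution $K_\epsilon - K_0$ that labels the distinguished edge $e$ can be rewritten as a total exterior derivative $-(d \otimes 1 + 1 \otimes d) P_0^\epsilon$ acting on the two endpoints of $e$.

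Using the explicit decomposition $O_{\gamma,e} = O^{an}_{\gamma,e} \otimes O^{\fg}_{\gamma,e}$, the analytic factor for a wheel $\gamma$ with $n$ vertices sitting at $x_1, \ldots, x_n \in \RR$ and external tails $\phi_j$ becomes the $\epsilon \to 0$ limit of
\[
\int_{\RR^n} (d \otimes 1 + 1 \otimes d) P_0^\epsilon(x_e, x_{e+1}) \prod_{i \ne e} P_\epsilon^L(x_i, x_{i+1}) \prod_j \phi_j(x_j)\, dx_1 \cdots dx_n.
\]
Integration by parts in the variables $x_e, x_{e+1}$ moves each $d$ onto either an adjacent propagator or an external tail. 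When $d$ lands on an adjacent propagator $P_\epsilon^L(x_{e-1}, x_e)$ or $P_\epsilon^L(x_{e+1}, x_{e+2})$, the same identity converts it into $K_\epsilon - K_L$; these terms cancel pairwise when summed against the analogous term obtained by putting the distinguished edge on the neighboring position and applying the derivative back toward $e$, producing a telescoping collapse around the cyclic wheel. The remaining contributions, where $d$ falls on an external tail, vanish because the resulting integrand factors through the total derivative of a rapidly decaying function on the translation-invariant configuration space $\RR^n/\RR$.

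The main obstacle will be bookkeeping of signs coming from the $\ZZ$-grading and the Koszul rule (the fields live in $\Omega^\ast \otimes (\fg[1] \oplus \fg^\vee[-1])$, so propagator endpoints have nontrivial parity), together with justifying that the $\epsilon \to 0$ limit can be exchanged with the integration by parts step. The uniform bounds necessary to pass the limit inside are of the same type already established in Appendix \ref{nocttermproof} for the weights $W_\gamma(P_\epsilon^L, I_{CS})$, so after verifying the corresponding bounds for $\partial_{x_e} P_0^\epsilon$ against the product of remaining propagators, one obtains $O_n^{an} = 0$ on the nose.
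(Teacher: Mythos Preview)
Your approach has genuine gaps, and it also diverges substantially from the paper's argument.

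First, the telescoping step does not close up. Fix the distinguished edge $e$; after your integration by parts the edge $e$ carries $P_0^\epsilon$ while an adjacent edge, say $e+1$, carries $(d\otimes 1 + 1\otimes d)P_\epsilon^L = K_\epsilon - K_L$. The ``partner'' you want to cancel against comes from putting the distinguished edge at $e+1$ and integrating by parts back toward $e$: that term has edge $e+1$ labeled by $P_0^\epsilon$ and edge $e$ labeled by $K_\epsilon - K_L$. These two contributions carry \emph{different} edge labelings (the roles of $P_0^\epsilon$ and $K_\epsilon - K_L$ are swapped on edges $e$ and $e+1$), so they do not cancel pairwise. There is no telescoping sum here, only a reshuffling of the problem. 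In addition, the residual $K_L$ pieces have no reason to disappear as $\epsilon\to 0$; you would need a separate argument to dispose of them.

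Second, your treatment of the terms where $d$ lands on an external tail is not correct. Applying $d$ to an external leg produces $d\phi_j$, which is simply another test input; it is not a total derivative of the \emph{integrand} over $\RR^n/\RR$, so there is no reason those contributions vanish. These terms encode the action of $Q$ on the functional, and they do not integrate to zero against arbitrary inputs.

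The paper's proof is both shorter and conceptually different: it shows that for each individual pair $(\gamma,e)$ one has
\[
\lim_{\epsilon\to 0} W_{\gamma,e}(P_\epsilon^L, K_\epsilon, I_{CS}) \;=\; \lim_{\epsilon\to 0} W_{\gamma,e}(P_\epsilon^L, K_0, I_{CS})
\]
as distributions, by a direct computation using the explicit one-dimensional heat kernel. No sum over edges, no integration by parts, and no telescoping are needed; the point is simply that $K_\epsilon \to K_0$ and the remaining product of propagators is regular enough (by the estimates of Appendix~\ref{nocttermproof}) for the limit to commute with the difference. If you want to repair your argument you would essentially need this same analytic input, at which point the Stokes machinery becomes superfluous.
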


\begin{proof}
For any wheel $\gamma$, the limit $\lim_{\epsilon \to 0} W_{\gamma, e} (P_\epsilon^1, K_\epsilon - K_0, I_{CS})$ is zero because
\[
\lim_{\epsilon \to 0} W_{\gamma, e} (P_\epsilon^1, K_\epsilon, I_{CS}) = \lim_{\epsilon \to 0} W_{\gamma, e} (P_\epsilon^1, K_0, I_{CS})
\]
as distributions, which can be shown by direct computation.
\end{proof}

Both propositions imply that our action functional satisfies the QME.

\begin{cor}\label{1dvanish}
For $\sE = \Omega^\ast (M) \otimes ( \mathfrak{g}[1] \oplus \mathfrak{g}^\vee [-1])$, the pre-theory $\{ I_{naive} [L] \} \in \sO^+ (\sE) [[\hbar]]$ (from definition \ref{naivecor}) is a BV theory. 
\end{cor}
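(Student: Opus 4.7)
The plan is to verify each of the four conditions in the definition of a quantum BV theory for the family $\{I_{naive}[L]\}$. Three of these follow essentially by construction, while the fourth, the quantum master equation, is the substantive point and is where I would concentrate the argument.

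First I would observe that the renormalization group equation holds automatically from how $I_{naive}[L]$ is defined: since $I_{naive}[L] = \lim_{\epsilon \to 0} W(P_\epsilon^L, I_{CS})$ and the RG flow operator satisfies $W(P_0^L,-) = W(P_\epsilon^L, W(P_0^\epsilon, -))$, the functionals at different scales are related by the expected graph-theoretic sum. The locality condition (existence of a small-$L$ asymptotic expansion in local functionals) is exactly the content of Proposition \ref{noctterms} together with the restriction to tree and one-loop graphs coming from Proposition \ref{oneloop}. The classical limit condition $\lim_{L \to 0} I_{naive}[L] = I_{CS} \pmod{\hbar}$ holds because the $\hbar^0$ component of $I_{naive}[L]$ is the tree-level piece, whose $L \to 0$ limit is $I_{CS}$ itself.

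The remaining task, and the heart of the proof, is to establish the scale-$L$ QME. Equivalently, by the standard compatibility of the QME with RG flow (Lemma \ref{RGEQME}), it suffices to show that the $L \to 0$ limit $O = \lim_{L \to 0} O[L]$ of the obstruction vanishes. Here I would invoke Proposition \ref{obsexpression} to express $O$ as a sum $\sum_{\gamma,e} O_{\gamma,e}$ over pairs of a connected stable graph $\gamma$ with at most one loop and a non-loop edge $e$. Trees contribute zero because their weights have no $\epsilon$-singularity and $K_\epsilon - K_0 \to 0$ in an appropriate sense; and in any wheel-with-trees graph, if $e$ lies on one of the attached trees the corresponding limit also vanishes. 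Thus the obstruction reduces to a sum over wheels $\gamma$ with $n \geq 2$ vertices together with a non-loop edge $e$ on the wheel, which factors as $O_n^{an} \otimes O^{\fg}_{\gamma,e}$.

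At this point I have two independent ways to conclude, each sufficient on its own, both already proved above. The analytic factor $O_n^{an}$ is identically zero by the final Proposition preceding the corollary, since $K_\epsilon \to K_0$ in the relevant distributional sense; this alone kills $O$. Alternatively, the Lie-theoretic factor $\sum_{\gamma,e} O^{\fg}_{\gamma,e}$ equals a multiple of $ch_n(T_{B\fg})$ by Proposition \ref{1dvanishprop}, which lies in a cohomological degree too high to represent a non-trivial class in $H^1(\Omega^1_{cl}(B\fg)) \oplus H^2(\Omega^1_{cl}(B\fg))$, the obstruction group identified in Corollary \ref{obscpxcor}. Either route shows that $O = 0$, and hence that the QME holds (possibly after absorbing an exact counterterm, which in our situation is unnecessary). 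The main obstacle is really just bookkeeping: making sure that the decomposition into analytic and Lie factors is set up cleanly enough that one can apply the two vanishing results directly, rather than doing anything new.
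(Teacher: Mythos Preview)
Your proposal is correct and follows essentially the same approach as the paper, which treats the RGE, locality, and classical-limit conditions as already established by the construction (Proposition \ref{noctterms} and Definition \ref{naivecor}) and deduces the QME from the vanishing of the obstruction via the two propositions immediately preceding the corollary. One small sharpening: the Lie-theoretic route (Proposition \ref{1dvanishprop}) only shows $[O]=0$ in cohomology, which would in general require a counterterm; it is the analytic vanishing $O_n^{an}=0$ that gives $O=0$ on the nose and thus shows that $I_{naive}[L]$ \emph{itself}, without modification, satisfies the QME---so for the corollary as stated you should lean on the analytic argument, exactly as your parenthetical suggests.
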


We will denote the resulting theory by $\{I [L]\}$. As we have a one loop quantization, we write
\[
I[L] = I^{(0)} [L] + \hbar I^{(1)} [L],
\]
where the superscript records how many loops appear in the Feynman diagrams of the RG flow from $I[0] = I_{CS}$ to  $I[L]$. 

\section{The Atiyah class and Koszul duality}

In order to provide an elegant presentation of theorem \ref{inftytermBg}, we need to develop a bit of machinery known as the Atiyah class. Its primary role for us is to construct a kind of characteristic class, a process which we take up in the next section. We will elaborate on how these constructions appear in the geometry of smooth manifolds in Part II of this paper, where we extract the usual Chern classes by methods different than the usual approaches with Atiyah classes.

\subsection{The definition}

Let $R = (R^\#, d)$ be a commutative dg algebra over a base ring $k$. The underlying graded algebra is denoted $R^\#$. We denote the K\"ahler differentials of $R$ by $\Omega^1_R$ and let $d_{dR}: R \rightarrow \Omega^1_R$ denote the universal derivation.

\begin{definition}
Let $M$ be an $R$-module that is projective over $R^\#$. A {\em connection} on $M$ is a $k$-linear map $\nabla: M \rightarrow M \ot_R \Omega^1_R$ such that
\[
\nabla(r \cdot m) = (d_{dR} r) m + (-1)^{|r|} r \nabla m, 
\]
for all $r \in R$ and $m \in M$.
\end{definition}

A connection may not be compatible  with the differential $d_M$ on $M$, and the Atiyah class is precisely the obstruction to compatibility between $\nabla$ and the dg $R$-module structure on $M$. 

\begin{definition}
The {\em Atiyah class} of $\nabla$ is the class in $\Omega^1_R \ot_R \End_R(M)$ given by
\[
\At(\nabla) = [\nabla, d] = \nabla \circ d_M - d_{\Omega^1_R \ot_R M} \circ \nabla.
\]
\end{definition}

This definition is quite abstract as stated, but it appears naturally in many contexts, notably in work by Kapranov \cite{Kap}, Markarian \cite{Mark}, Calaque-van den Bergh \cite{CVDB}, Caldararu \cite{Cald1} \cite{Cald2}, Ramadoss \cite{Ram} and Chen-Sti\'enon-Xu \cite{CSX}.

We now explain how the Atiyah class appears in the context of the Koszul duality between Lie and commutative algebras. Eventually we will apply this formalism to give an alternative approach to constructing characteristic classes of vector bundles.

\begin{remark}
Atiyah \cite{At} originally introduced this construction to measure the obstruction to obtaining a holomorphic connection on a holomorphic bundle over a complex manifold. Let $X$ be a complex manifold, $\pi: E \rightarrow X$ a holomorphic vector bundle,  $\Omega^{0,\ast}(X)$ the Dolbeault complex of $X$, and $(\Omega^{0,\ast} (E), \bar{\partial})$ the Dolbeault complex of the bundle. Let 
$$\nabla: \Omega^{0,\ast} (E) \rightarrow \Omega^{1,\ast} (X) \otimes_{\Omega^{0,\ast} (X)} \Omega^{0,\ast} (E)$$ 
be a $\CC$-linear map satisfying
\[
\nabla(f s) = (\partial f) s + f \nabla s
\]
for all $f \in \Omega^{0,\ast} (X)$ and $s \in \Omega^{0,\ast} (E)$ (really it is enough to consider $\nabla$ on $\Omega^{0,0} (E)$). The usual Atiyah class is $[\nabla, \bar{\partial}] \in \Omega^{1,1}( \End(E))$. Notice that if this Atiyah class vanishes, then $\nabla$ is clearly a holomorphic connection. On a compact K\"ahler manifold, Atiyah showed that traces of powers of the usual Atiyah class give the Chern classes of $E$.
\end{remark}

\subsection{Koszul duality and the Atiyah class}\label{atiyahnotation}

In the setting of $\L8$-algebras, one sometimes takes the Chevalley-Eilenberg cochain complex {\em as} the definition of the $\L8$ structure, so it should be no surprise that there is a natural way to strip off the Taylor components from the Chevalley-Eilenberg complex. What we'll show in this section is that
\begin{enumerate} 
\item[(1)] the tangent bundle to $B\fg$ has a natural connection, and 
\item[(2)] by taking derivatives of the Atiyah class for this connection, we recover the brackets $\ell_n$ of the $\L8$-algebra $\fg$. 
\end{enumerate}
This result is interesting from the point of view of deformation theory and Koszul duality: it explains how the Atiyah class fits into this process. Namely, given an augmented commutative dg algebra $\sA$, the Koszul dual $\L8$-algebra $\fg_\sA$ should be the shifted tangent complex $T_{\sA}[-1]$, at the point given by the augmentation, equipped with brackets by taking the Taylor terms of some Atiyah class. (For a modern, careful treatment, we direct the reader to the discussion around proposition 2.47 of \cite{Francis},  where Francis gives broad generalizations of this relationship and connections with several other mathematical themes in this paper. These ideas have a long history, of course, starting at least with Grothendieck and Quillen. The relationship with the Atiyah class already appears in the work of Illusie \cite{Illusie} on the cotangent complex and Schlessinger-Stasheff \cite{SchlessingerStasheff} on tangent cohomology.) 

We will work with an arbitrary $\fg$-module $M$ as it simplifies the formulas to distinguish between $M$ and $\fg$ (for the tangent bundle, $M$ is another copy of $\fg$, which can be distracting). Consider the sections $\cM$ of this module as a sheaf over $B\fg$: it is the $C^*(\fg)$-module $C^*(\fg, M)$. Forgetting the differentials, we see there is a natural trivialization
\[
C^\#(\fg,M) \cong C^\#(\fg) \ot_k M,
\]
as a $C^\#(\fg)$-module. This trivialization equips $\cM$ with a connection
\[
C^\#(\fg) \ot_k M \to \Omega^1_{B\fg} \ot_k M,
\]
\[
f \ot m \mapsto (d_{dR} f) \ot m.
\]
Define $\At(\cM)$ to be the Atiyah class for this connection. 

The Atiyah class lives in $\Omega^1_{B\fg}(\End \, \cM) \cong C^*(\fg, \fg^\vee[-1] \ot \End_k(M) )$. We can thus view it as a map
\[
\At(\cM): T_{B\fg} \ot \cM \to \cM
\]
and ask for the Taylor coefficients as a section of $B\fg$.

\begin{prop}\label{l8atiyah}
Given $x \in \fg$, we obtain a vector field $X$ on $B\fg$,by shifting the degree of $x$. Let $m$ be a section in $\cM$. We find
\[
\At(\cM)(X \ot m) = \ell_2(x,m) + \ell_3(x,x,m) + \cdots + \ell_n(x^{\ot n-1},m) + \cdots.
\]
Alternatively, we say that for $X$ a vector field, $m \in \cM$, and  $x_1, \ldots, x_n, y \in \fg$,
\[
\frac{\partial}{\partial x_1} \cdots \frac{\partial}{\partial x_n} \Bigg|_0 \At(\cM)(X \ot m) = \ell_{n+2}(x_1, \ldots, x_n, x, m) \in \fg,
\]
where $x \in \fg$ is the shift of $x$.
\end{prop}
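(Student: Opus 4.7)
My plan is a direct coordinate computation. Fix a graded basis $\{e_a\}$ of $\fg$ with dual basis $\{e^a\}$ of $\fg^\vee$, so that $C^*(\fg) = \csym(\fg^\vee[-1])$ becomes a formal power series algebra in the generators $e^a$ and one has $C^\#(\fg, M) = C^\#(\fg) \otimes_k M$. The Chevalley-Eilenberg differential $d_\cM$ is a derivation over $d_{CE}$ whose value on a constant section encodes the $L_\infty$-module structure, namely
\[
d_\cM(1 \otimes m) = \sum_{n \geq 1} \frac{1}{(n-1)!}\, e^{a_1} \cdots e^{a_{n-1}} \otimes \ell_n(e_{a_1}, \dots, e_{a_{n-1}};\, m),
\]
where $\ell_1$ is the internal differential on $M$ and the $\ell_n$ for $n \geq 2$ are the module action maps.

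Since the tautological connection $\nabla = d_{dR} \otimes \mathrm{id}_M$ kills $1 \otimes m$, the Atiyah class simplifies to $\At(\cM)(1 \otimes m) = \nabla \circ d_\cM(1 \otimes m)$. I then apply $d_{dR}$ termwise via the Leibniz rule and use the graded symmetry of $\ell_n$ in its first $n-1$ arguments to collapse the $n-1$ summands from each Leibniz expansion into a single term. This yields
\[
\At(\cM)(1 \otimes m) = \sum_{n \geq 2} \frac{1}{(n-2)!}\, e^{a_1} \cdots e^{a_{n-2}} \otimes de^{a_{n-1}} \otimes \ell_n(e_{a_1}, \dots, e_{a_{n-1}};\, m),
\]
which is the corresponding section of $\Omega^1_{B\fg} \otimes \mathrm{End}(\cM)$ applied to $m$. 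Contracting with the constant vector field $X$ associated to $x = x^a e_a$ replaces $de^{a_{n-1}}$ by $x^{a_{n-1}}$, producing (after evaluation at $x$) the formal series claimed in the first formula. Differentiating in $n$ independent directions $x_1, \dots, x_n$ at the origin and reading off the coefficient yields the second formula, $\ell_{n+2}(x_1, \dots, x_n, x;\, m)$.

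I expect the only real obstacle to be the bookkeeping of signs and factorials so as to align the $L_\infty$-module conventions with those built into $d_{CE}$ and into the definition of the Atiyah class. Once that alignment is fixed, the proof reduces to the Leibniz rule and the graded symmetry of the structure maps, with no deeper input required. In particular, specialized to $M = \fg$ with the adjoint action, this same computation recovers the Taylor coefficients of $d_{CE}$ from the Atiyah class, which is the sense in which Koszul duality is visible here.
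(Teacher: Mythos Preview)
Your proposal is correct and follows essentially the same approach as the paper: both fix a basis, use the tautological trivialization $C^\#(\fg,M)\cong C^\#(\fg)\otimes_k M$ so that $\nabla = d_{dR}\otimes 1_M$, and then compute the Atiyah class on a constant section $1\otimes m$ by applying $d_{dR}$ to the explicit Chevalley--Eilenberg differential. The paper organizes things slightly differently---first simplifying $[\nabla,d]$ to $[d_{dR},d_M]$ as an operator identity before evaluating---but the substance is identical to your direct evaluation.
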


\begin{remark}
We know $\ell_1$ via the differential on the tangent complex. This proposition tells us how to recover $\ell_2$, $\ell_3$, and so on, but it does not return $\ell_0$. 
\end{remark}

\begin{comment}
\begin{remark}
Given a commutative dg algebra $\sA$ over $R$, we find the Koszul dual $\L8$ algebra as follows. Pick a semifree resolution $\tilde{\sA}$ of $\sA$. By the lemma, if we use $T_{\tilde{\sA}}$ the tangent sheaf, we obtain an $\L8$ structure on $\fg_{\tilde{\sA}} := T_{\tilde{\sA}}[1]$ (as a complex over $R$). We have, by construction, that $C^*(\fg_{\tilde{\sA}}) = \tilde{\sA}$.
\end{remark}
\end{comment}
 
\begin{proof}
We pin down some useful notation that makes the proof straightforward.

By definition, $\sO_{B\fg}$ is the algebra $C^\#(\fg) = \csym(\fg^\vee [-1])$ equipped with a degree 1 derivation\footnote{There are so many $d$'s floating around that we switch notation as an aid to clarity.} $\partial$ whose homogeneous components
\[
\partial_n: \fg^\vee [-1] \to \Sym^n \left( \fg^\vee [-1] \right)
\]
are dual to the $n$-fold brackets $\ell_n$.

The K\"ahler differentials $\Omega^1_{B\fg}$ are thus $\csym(\fg^\vee [-1]) \ot_k \left( \fg^\vee [-1]\right)$ with differential 
\[
d_{dR}: f \ot x \mapsto \partial f \ot x + (-1)^{|f|} f \cdot d_{dR} (\partial x), 
\]
where $f \in \sO_{B\fg}$ and $x \in \fg^\vee[-1]$. Here $d_{dR}: \sO_{B\fg} \to \Omega^1_{B\fg}$ denotes the universal derivation
\[
d_{dR}: x \mapsto 1 \ot x
\]
for $x \in \fg^\vee[-1]$. Note that $d_{dR} \circ d_{dR} = d_{dR} \circ \partial$. From hereon, we will denote $1 \ot x$ by $dx$ and $f \ot x$ by $f \, dx$.

We now need to describe the dg module of sections $\cM$. The underlying module is
\[
\csym(\fg^\vee [-1]) \ot M
\]
and the differential has the form $d = \partial \ot 1_M + d_M$, where $d_M$ encodes the action of $\fg$ on $M$. Fixing a basis $\{x^j\}$ for $\fg$ and the dual basis $\{x_j\}$ for $\fg^\vee$, we can express $d_M$ as
\begin{multline*}
d_M(f \ot m) = \sum_j x_j \cdot f \ot \ell_2(x^j, m) + \sum_{j_1, j_2} x_{j_1} x_{j_2} \cdot f \ot \ell_3(x^{j_1}, x^{j_2}, m) + \cdots \\
\cdots + \sum_{j_1, \ldots, j_n} (x_{j_1} \cdots x_{j_n}) \cdot f \ot \ell_{n+1}(x^{j_1}, \ldots, x^{j_n}, m) + \cdots
\end{multline*}
%FIX SIGNS

Similarly, $\Omega^1_{B\fg} \ot_{\sO_{B\fg}} \cM$ consists of
\[
\csym(\fg^\vee [-1]) \ot \fg^\vee[-1] \ot M
\]
with differential $d = d_{dR} \ot 1_{M} + d_M \ot 1_{\fg^\vee[-1]}$.\footnote{This notation is meant to indicate that we use the differential for 1-forms without changing the section of $\cM$ and then we use the differential for the section without changing the 1-form.} For instance,
\[
d(dx \ot m) = d_{dR} \partial x \ot m + \sum_n \sum_{j_1, \ldots, j_n} (x_{j_1} \cdots x_{j_n}) dx \ot \ell_{n+1}(x^{j_1}, \ldots, x^{j_n}, m).
\]

With these definitions in hand, we see that the Atiyah class for $\cM$ is
\begin{align*}
[\nabla, d] &= (d_{dR} \ot 1_{M}) \circ (\partial \ot 1_{M} + d_{M}) - (d_{dR} \ot 1_{M} + d_{M} \ot 1_{\fg^\vee[-1]}) \circ (d_{dR} \ot 1_{M}) \\
 &= \left( d_{dR} \circ \partial - d_{dR} \circ d_{dR} \right) \ot 1_{M} + (d_{dR} \ot 1_{M}) \circ d_{M} - (d_{M} \ot 1_{\fg^\vee[-1]}) \circ (d_{dR} \ot 1_{M}) \\
  &= [d_{dR}, d_{M}],
 \end{align*}
 using the compatibility of $\partial$, $d_{dR}$, and $d_{dR}$.

Observe that the Atiyah class sends $1 \ot m$ to 
\[
d_{dR} \left( \sum_n \sum_{j_1, \ldots, j_n} (x_{j_1} \cdots x_{j_n}) \ot \ell_{n+1}(x^{j_1}, \ldots, x^{j_n}, m) \right). 
\]
If we ``evaluate this sum at zero," this means we take the constant term of the expression above, which is
\[
\sum_j dx_j \ot \ell_2(x^j, m).
\]
This element sends a vector field $X$, given by the shift of an element $x \in \fg$, to $\ell_2(x,m)$. 

%LOOKS REVERSED -- SIGN ISSUE?

Taking higher derivatives of this expression and evaluating at zero recovers all the data of the brackets $\ell_n$.
\end{proof}

\subsection{Useful facts about the Atiyah class}

We establish here several facts that we will find useful later. We now fix notation that we use throughout this section. 

Denote the differential on $R$ by $d_R$. Let $M$ be a free $R^\#$-module\footnote{Our results imply the relevant results for finitely generated projective $R^\#$ modules.} and fix a basis so that the differential $d_M$ has the form $d_R + A$, where $A \in \Hom^1_R(M,M)$ and $d_R A + A^2 = 0$. Let $\nabla$ denote a connection on $M$, which has the form $d_{dR} + B$ with respect to the basis on $M$, where $B \in \Hom^0(M, \Omega^1 \ot_R M)$. We denote the differential on $\Omega^1_R$ by $d_{\Omega^1}$, and hence the differential on $\Omega^1_R \ot_R M$ is $d_{\Omega^1} + A$.

\begin{lemma}\label{atiyah}
The Atiyah class $\At(\nabla)$, with respect to the basis we've fixed on $M$, has the form
\[
d_{dR} A - d_{\Omega^1} B - [A,B].
\]
Alternatively, we express it as
\[
d_{dR} A - d_{\Omega^1 \ot \End M} B.
\]
\end{lemma}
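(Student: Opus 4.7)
The plan is to directly expand the bracket $[\nabla, d] = \nabla \circ d_M - d_{\Omega^1_R \otimes_R M} \circ \nabla$ by substituting the decompositions $\nabla = d_{dR} + B$, $d_M = d_R + A$, and $d_{\Omega^1_R \otimes_R M} = d_{\Omega^1} + A$ implicit in the chosen basis for $M$. Here $d_{dR}$, $d_R$, and $d_{\Omega^1}$ denote the ``trivial'' extensions of the ambient differentials (acting on coefficients while fixing the basis elements of $M$), so that the symbols $A$ and $B$ carry all of the genuine curvature/connection data.

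Carrying out the expansion gives
\[
\nabla \circ d_M = d_{dR} \circ d_R + d_{dR} \circ A + B \circ d_R + B \circ A
\]
and
\[
d_{\Omega^1 \otimes M} \circ \nabla = d_{\Omega^1} \circ d_{dR} + d_{\Omega^1} \circ B + A \circ d_{dR} + A \circ B.
\]
I would then split the difference into three natural groupings. The ``trivial--trivial'' contribution $d_{dR} \circ d_R - d_{\Omega^1} \circ d_{dR}$ vanishes because $d_{dR} \colon R \to \Omega^1_R$ is a cochain map with respect to the internal differentials. The mixed commutators $d_{dR} \circ A - A \circ d_{dR}$ and $B \circ d_R - d_{\Omega^1} \circ B$ are precisely the entrywise applications of the de Rham and internal differentials to the matrix representatives of $A$ and $B$, and these are exactly what the statement abbreviates by $d_{dR} A$ and $-d_{\Omega^1} B$. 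Finally, since $|A| = 1$ and $|B| = 0$, the Koszul sign convention gives $B \circ A - A \circ B = -[A, B]$. Assembling these three pieces yields $\At(\nabla) = d_{dR} A - d_{\Omega^1} B - [A, B]$.

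For the alternative formulation, I would observe that the differential induced on $\Omega^1_R \otimes_R \End_R M$ has the form $d_{\Omega^1} + [A, -]$, since the differential on $\End_R M$ with respect to the chosen basis is the graded commutator $[A, -]$. Applied to $B$ viewed as an element of $\Omega^1_R \otimes_R \End_R M$, this gives $d_{\Omega^1 \otimes \End M} B = d_{\Omega^1} B + [A, B]$, so substituting back reproduces the equivalent presentation $d_{dR} A - d_{\Omega^1 \otimes \End M} B$.

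The main obstacle is bookkeeping signs in the dg setting: one has to confirm that the trivial--trivial cancellation really uses the cochain compatibility of $d_{dR}$ (rather than strict commutativity of the differentials) and that the graded commutator $[A, B]$ emerges with the correct sign from $B \circ A - A \circ B$ for the degrees at hand. Once those conventions are pinned down, the lemma reduces to a short, formal calculation.
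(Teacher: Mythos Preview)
Your proof is correct and follows essentially the same direct expansion as the paper: both substitute $\nabla = d_{dR} + B$ and $d_M = d_R + A$, cancel the trivial-trivial term $[d_{dR}, d]$, and read off $d_{dR}A - d_{\Omega^1}B - [A,B]$ from the remaining pieces. Your explanation of the alternative form via $d_{\Omega^1 \otimes \End M} = d_{\Omega^1} + [A,-]$ is slightly more explicit than the paper's, but the argument is the same.
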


\begin{proof}
This is a straightforward computation.
\begin{eqnarray*}
\At(\nabla) & = & (d_{dR} + B)(d_R + A) - (d_{\Omega^1} + A)(d_{dR} + B) \\
 & = & (d_{dR} \circ d_R + d_{dR} \circ A + B \circ d_R + BA) - (d_{\Omega^1} \circ d_{dR} + d_{\Omega^1} \circ B + A \circ d_{dR} + AB) \\
 & =  & [d_{dR}, d] + d_{dR}(A) - d_{\Omega^1}(B) + [B,A] \\
 & = & d_{dR} A - d_{\Omega^1_R \ot_R \End(M)}(B).
\end{eqnarray*}
Here $d$ denotes the differential either on $R$ or on $\Omega^1_R$, and the commutator $[d_{dR}, d]$ vanishes by construction.
\end{proof}

\begin{cor}
The Atiyah class is closed: $d_{\Omega^1 \ot \End M} \At(\nabla) = 0$.
\end{cor}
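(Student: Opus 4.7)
The plan is to recognize that the Atiyah class is essentially a graded commutator of the connection with the total differential on $M$, and to use that this total differential is square-zero. Extend the connection $\nabla$ from $M \to \Omega^1_R \otimes_R M$ to the associated odd operator of degree $0$, and write $d_M = d_R + A$ on $M$ and $d_{\Omega^1 \otimes M} = d_{\Omega^1} + A$ on $\Omega^1_R \otimes_R M$. Then the identity $\At(\nabla) = \nabla\circ d_M - d_{\Omega^1 \otimes M}\circ \nabla$ from Lemma~\ref{atiyah} exhibits $\At(\nabla)$ as a degree $1$ element of the Hom complex $\Hom_k(M, \Omega^1_R \otimes_R M)$, whose natural differential is $f\mapsto d_{\Omega^1\otimes M}\circ f - (-1)^{|f|}f\circ d_M$. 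Under the identification $\Omega^1_R\otimes_R \End_R(M)\hookrightarrow \Hom_k(M,\Omega^1_R\otimes_R M)$, this Hom differential restricts to the differential $d_{\Omega^1\otimes \End M}$ appearing in the statement.

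With this set up, I would compute directly:
\[
d_{\Omega^1\otimes\End M}\,\At(\nabla) \;=\; d_{\Omega^1\otimes M}\bigl(\nabla d_M - d_{\Omega^1\otimes M}\nabla\bigr) + \bigl(\nabla d_M - d_{\Omega^1\otimes M}\nabla\bigr)d_M.
\]
Expanding, the $d_{\Omega^1\otimes M}^2\nabla$ term vanishes because $d_{\Omega^1\otimes M}^2=0$, and the $\nabla d_M^2$ term vanishes because $d_M^2=0$ (equivalently, the Maurer--Cartan relation $d_R A + A^2 = 0$ that is part of our hypotheses on $M$). What remains is $d_{\Omega^1\otimes M}\nabla d_M - d_{\Omega^1\otimes M}\nabla d_M = 0$.

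This argument is really nothing more than the general fact that, for any operator $\nabla$ of degree $0$ on a complex with square-zero differential $d$, the super-commutator $[\nabla,d]$ is automatically $d$-closed (by graded Jacobi together with $[d,d]=2d^2=0$). The Atiyah class is precisely such a commutator, so closedness is formal.

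The only real obstacle is bookkeeping: one must verify that the Hom-complex differential really does restrict to the $d_{\Omega^1\otimes\End M}$ differential used in Lemma~\ref{atiyah}, and that the signs in the super-commutators line up. As a sanity check, I would also carry out the computation directly from the explicit formula $\At(\nabla) = d_{dR}A - d_{\Omega^1\otimes\End M}B$: applying $d_{\Omega^1\otimes\End M} = d_{\Omega^1} + [A,-]$ produces four terms, and they collapse using $d_{\Omega^1}^2=0$, the graded Leibniz rule $d_{dR}(A^2)=[A,d_{dR}A]$, the Maurer--Cartan identity $d_R A = -A^2$, and the Jacobi consequence $[A,[A,B]] = [A^2,B]$. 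This second computation is a useful cross-check, but the conceptual one-liner above is the proof of record.
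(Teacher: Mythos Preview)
Your proposal is correct. Your primary argument---recognizing $\At(\nabla)$ as a super-commutator $[\nabla,d]$ in the Hom complex $\Hom_k(M,\Omega^1_R\otimes_R M)$ and invoking $d^2=0$---is a genuinely different and cleaner route than the paper's. The paper instead carries out precisely the explicit computation you describe as your ``sanity check'': it starts from the formula $\At(\nabla) = d_{dR}A - d_{\Omega^1\otimes\End M}B$, applies $d_{\Omega^1\otimes\End M} = d_{\Omega^1} + [A,-]$, kills the $B$-term via $d_{\Omega^1\otimes\End M}^2 = 0$, and then collapses $d_{\Omega^1}d_{dR}A + [A,d_{dR}A]$ using $d_{dR}d_R A = -d_{dR}(A^2) = -[d_{dR}A,A]$. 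Your conceptual version buys you a coordinate-free one-liner that makes the closedness manifestly formal; the paper's version has the virtue of being entirely self-contained in the notation already set up in Lemma~\ref{atiyah}, at the cost of a few lines of index-shuffling. The one point you flag but do not fully verify---that the Hom-complex differential restricts to $d_{\Omega^1\otimes\End M}$ under the inclusion $\Omega^1_R\otimes_R\End_R(M)\hookrightarrow\Hom_k(M,\Omega^1_R\otimes_R M)$---is a routine check and holds as stated.
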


\begin{proof}
Recall $d_R A + A^2 = 0$. We compute
\begin{eqnarray*}
d_{\Omega^1 \ot \End M} \At(\nabla) & = & d_{\Omega^1 \ot \End M} d_{dR} A - d_{\Omega^1 \ot \End M}^2 B \\
 & = & d_{\Omega^1} d_{dR} A + [A, d_{dR} A] \\
 & = & d_{dR} d_R A +  [A, d_{dR} A] \\
 & = & d_{dR} \left( -A^2 \right) +  [A, d_{dR} A]\\
 & = & - (d_{dR}A) A + A (d_{dR} A) + [A, d_{dR} A] \\
 & = & 0,
\end{eqnarray*}
as $d_{dR}$ satisfies the Leibniz rule.
\end{proof}

In analogy with geometry, the de Rham differential $d_{dR}$ extends to a complex $\Omega^*_R$, with exterior derivative $d_{dR}: \Omega^k_R \to \Omega^{k+1}_R$ such that $[d,d_{dR}] = 0$, where $d$ denotes the differential on the $R$-modules $\Omega^k_R$. 

We are now led to the following question: what if $\nabla$ equips $M$ with a {\em flat} connection, so that $\nabla^2 = 0$? In that case, $\nabla$ makes $\Omega^\ast_R \ot_R M$ a cochain complex over $R^\#$, the underlying graded algebra. Hence the Atiyah class is the obstruction to making $M$ a ``vector bundle with flat connection" over the space described by $R$. This situation is precisely what appears in our jet-bundle approach to the Chern-Weil construction of characteristic classes in section \ref{charclass}. The Atiyah class will play the same role that the curvature usually does because it will be precisely the obstruction to making the connection flat.

In this situation, we have a natural analogue of the Bianchi identity. Recall that a connection $\nabla$ on $M$ induces a connection $\nabla^{\End}$ on $\End M$. If $\nabla = d_{dR} + B$ in our basis, then $\nabla^{\End} = d_{dR} + [B, -]$.

\begin{prop}\label{flatness}
If $\nabla^2 = 0$, then $\At(\nabla)$ is a horizontal section of $\Omega^\ast_R \ot_R \End M$. More explicitly, 
\[
\nabla^{\End} \At(\nabla) = 0.
\]
\end{prop}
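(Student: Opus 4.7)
The strategy I would adopt is to recognize the Atiyah class as a graded commutator of operators on the complex $\Omega^\ast_R \otimes_R M$, so that the identity $\nabla^{End}\At(\nabla) = 0$ reduces to a single application of the graded Jacobi identity, using the hypothesis $\nabla^2 = 0$.

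First I would extend both $\nabla$ and $d_M$ to operators on the full complex. By the Koszul--Leibniz rule, $\nabla$ promotes to an odd derivation $\nabla : \Omega^\ast_R \otimes_R M \to \Omega^{\ast+1}_R \otimes_R M$, and $d_M$ promotes to an $\Omega^\ast_R$-linear operator of total degree $1$. The definition of the Atiyah class given at the start of the section then reads literally as the graded commutator $\At(\nabla) = [\nabla, d_M]$. In the same language, the induced connection on endomorphism-valued forms is $\nabla^{End}(\phi) = [\nabla, \phi]$, which on components agrees with the formula $\nabla^{End} = d_{dR} + [B,-]$ recorded just above the proposition. With these identifications in place,
\[
\nabla^{End}\At(\nabla) = [\nabla, [\nabla, d_M]].
\]

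Next I would invoke the graded Jacobi identity for the triple $(\nabla, \nabla, d_M)$ of odd operators:
\[
[\nabla, [\nabla, d_M]] = [[\nabla, \nabla], d_M] - [\nabla, [\nabla, d_M]],
\]
which rearranges to $2\,[\nabla, [\nabla, d_M]] = [[\nabla, \nabla], d_M] = [2\nabla^2, d_M]$. Under the hypothesis $\nabla^2 = 0$ the right-hand side vanishes, completing the proof. Notice that precisely the same maneuver, with $d_M$ in place of one of the $\nabla$'s and using $d_M^2 = 0$, reproduces the preceding Corollary that $\At(\nabla)$ is closed with respect to the internal differential, so closedness and horizontality admit a uniform operator-theoretic explanation.

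The only step requiring care is the identification of the explicit formula for $\nabla^{End}$ with the graded commutator $[\nabla, -]$, but this is a direct unwinding of the definition of the induced connection on endomorphisms. If one instead takes the coordinate-level route, starting from $\At(\nabla) = d_{dR}A - d_{\Omega^1}B - [A,B]$, one must expand $d_{dR}(\At(\nabla)) + [B, \At(\nabla)]$ into roughly a dozen terms and cancel them using $d_R A + A^2 = 0$ (equivalent to $d_M^2 = 0$) together with the Maurer--Cartan-type identity for $B$ extracted from $\nabla^2 = 0$, plus the anticommutation of $d_{dR}$ with $d_{\Omega^1}$. The main obstacle in that approach is sign bookkeeping, which the conceptual Jacobi argument sidesteps entirely.
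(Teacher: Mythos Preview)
Your argument is correct and genuinely different from the paper's. The paper proceeds by direct computation in a chosen basis: it expands $d_{dR}\At(\nabla) + [B,\At(\nabla)]$ using the formula $\At(\nabla) = d_{dR}A - d_{\Omega^1 \otimes \End M}B$, then cancels terms one by one using $d_R A + A^2 = 0$, the relation $d_{dR}B + B^2 = 0$ extracted from $\nabla^2 = 0$, and the Jacobi identity for $[B,[A,B]]$. Your route avoids this bookkeeping entirely by recognizing $\At(\nabla)$ as the graded commutator $[\nabla,d]$ (which is literally the paper's definition) and $\nabla^{End}$ as $[\nabla,-]$, so the result becomes a one-line consequence of the graded Jacobi identity and $[\nabla,\nabla]=2\nabla^2=0$. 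What your approach buys is transparency and a uniform explanation of both this proposition and the preceding closedness corollary; what the paper's approach buys is that it stays at the level of the explicit basis formulas already set up in Lemma~\ref{atiyah}, so no care is needed in matching operator and tensor descriptions.

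One small imprecision: you write that $d_M$ promotes to an $\Omega^\ast_R$-\emph{linear} operator, but the natural extension of $d_M$ to $\Omega^\ast_R \otimes_R M$ is the internal differential, which is a derivation over $\Omega^\ast_R$ rather than linear. This does not affect your argument---the Jacobi identity only needs the parities---and in fact the commutator $[\nabla,d]$ is $\Omega^\ast_R$-linear precisely because $[d_{dR},d]=0$ on $\Omega^\ast_R$, which is why $\At(\nabla)$ lands in $\Omega^1_R \otimes_R \End_R(M)$ in the first place. You should also be mindful that the paper grades so that $\nabla$ has internal degree $0$ (hence writes $\nabla d - d\nabla$), whereas your phrasing treats $\nabla$ as odd in the total grading; the two conventions differ by an overall sign on $\At(\nabla)$ and do not affect the conclusion.
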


\begin{proof}
We compute
\begin{eqnarray*}
\nabla^{\End} \At(\nabla) & = & d_{dR} \At(\nabla) + [B, \At(\nabla)]\\
 & = & d_{dR}^2 A - d_{dR} d_{\Omega^1 \ot \End M} B + [B, d_{dR} A] - [B, d_{\Omega^1 \ot \End M} B].
\end{eqnarray*}
Now we need some useful cancellations. Clearly, $d_{dR}^2 A = 0$. 

Next, observe that
\begin{eqnarray*}
d_{dR} d_{\Omega^1 \ot \End M} B & = & d_{dR} d_{\Omega^1} B + d_{dR} [A,B]\\
 & = & d_{\Omega^2} d_{dR} B + [d_{dR} A, B] - [A, d_{dR} B],
\end{eqnarray*}
and since $\nabla^2 = d_{dR} B + B^2 = 0$, we continue
\begin{eqnarray*}
 & = & d_{\Omega^2} (-B^2) + [d_{dR} A, B] - [A, d_{dR} B]\\
 & = & -[B, d_{\Omega^1} B] + [d_{dR} A, B] - [A, d_{dR} B].
\end{eqnarray*}

Another computation shows
\begin{eqnarray*}
[B, d_{\Omega^1 \ot \End M} B] & = & [B, d_{\Omega^1} B] + [B, [A,B]]\\
 & = & [B, d_{\Omega^1} B] + \frac{1}{2}[A, [B,B]]
\end{eqnarray*}
by the Jacobi identity.

Putting these computations together, we find
\begin{eqnarray*}
\nabla^{\End} \At(\nabla) & = & d_{dR}^2 A - d_{dR} d_{\Omega^1 \ot \End M} B + [B, d_{dR} A] - [B, d_{\Omega^1 \ot \End M} B]\\
 & = & \left([B, d_{\Omega^1} B] - [d_{dR} A, B] + [A, d_{dR} B] \right) + [B, d_{dR} A] - \left( [B, d_{\Omega^1} B] + [B, [A,B]] \right) \\
 & = & [A, d_{dR} B] - \frac{1}{2} [A,[B,B]] \\
 & = & 0,
\end{eqnarray*}
as $\nabla^2 = 0$.
\end{proof}

\section{Characters in geometry and Lie theory}\label{cherndefn}

In representation theory, the character of a representation is one of the most useful invariants; in geometry, the Chern character of a bundle is likewise one of the most useful invariants. In this section, we want to exhibit how Koszul duality provides an approach to characters that includes both of these cases.

\begin{definition}
The {\em Chern character} of a connection $\nabla$ is $ch(\nabla) := \Tr \exp \left( \frac{\At(\nabla)}{-2\pi i} \right)$.
\end{definition}

We let $ch_k(\nabla)$ denote the homogeneous component of $ch(\nabla)$ in $\Omega^k_R$. Hence $ch_k(\nabla) = \Tr \left(  \frac{1}{k!(-2\pi i)^k} \At(\nabla)^k \right)$.

As stated, the Chern character is an element in $\Omega^\ast_R$ of mixed degree, but it is more natural (as we explain below) to make it homogeneous by forcing $\At(\nabla)$ to be homogeneous as follows. Observe that $\At(\nabla)$ lives in $\Omega^1_R \ot_R \End M$, and it has degree 1. We can identify it with a degree 0 element if we instead view it as living in $\Omega^1_R \ot_R \End M[1]$. In that case, the powers $\At(\nabla)^k$ live in $\Omega^k_R \ot_R \End M [k]$ and have degree 0. The Chern character $ch(\nabla)$ is then a homogeneous, degree 0 element of $\oplus_k \Omega^k_R[k]$, which we will denote as $\Omega^{-\ast}_R$. From the perspective of derived geometry, this setting is more natural since we only access homogeneous elements when we work functorially (cf. Bernstein's discussion of the ``even rules" principle in \cite{IAS}).

There is another conceptual reason to work with the algebra $\Omega^{-\ast}_R$, as explained by To\"en-Vezzosi \cite{TV} and Ben-Zvi--Nadler \cite{BZN}. This algebra is the structure sheaf of the \emph{derived loop space} $\cL X$ for a derived scheme $X$. Given a loop $\gamma$ in $X$, we can pull back the bundle $M$ on $X$ to a bundle on $S^1_{dR}$, which is locally constant by construction. The monodromy around $\gamma$ defines an endomorphism of $M$ and the trace of this monodromy is the value of $ch(M)$ at this point $\gamma \in \cL X$. In fact, this function $ch(M)$ is equivariant under rotation under loops and hence lives in $\sO(\cL X)^{S^1}$, which can be identified with the even de Rham cohomology of $X$.

In this paper, we are giving a construction of $ch(M)$ in the style of Chern-Weil (i.e., via connections), and there is a condition for $ch(\nabla)$ to agree with $ch(M)$. In essence, this condition is that the parallel translation via our connection is locally constant along a loop. This condition is obstructed for a generic choice of connection, since the Chern classes $ch_k(\nabla)$ are always closed under $d_{\Omega^k}$ but not always closed under $d_{dR}$. The following result follows directly from our work in the previous section.

\begin{cor}\label{chernclosed}
If $\nabla^2 = 0$, then the Chern classes $ch_k(\nabla)$ are closed under $d_{dR}$ and $d_{\Omega^k}$.
\end{cor}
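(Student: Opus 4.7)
The plan is to deduce both closedness statements from the two structural results already established for the Atiyah class: the corollary after Lemma~\ref{atiyah}, which says $\At(\nabla)$ is closed under the internal differential $d_{\Omega^1 \otimes \End M}$, and Proposition~\ref{flatness}, which says that once $\nabla^2 = 0$ the Atiyah class is horizontal, i.e.\ $\nabla^{End} \At(\nabla) = 0$. Since $ch_k(\nabla)$ is, up to a scalar, $\Tr(\At(\nabla)^k)$, the entire proof should reduce to propagating these two vanishings through multiplication and trace.

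First I would handle the internal differential $d_{\Omega^k}$. The differential on $\Omega^{k}_R \otimes_R \End M$ is a derivation for the wedge-and-compose product of $\End M$-valued forms, so the Leibniz rule together with $d_{\Omega^1 \otimes \End M}\At(\nabla) = 0$ gives $d_{\Omega^k \otimes \End M}\bigl(\At(\nabla)^k\bigr) = 0$. The trace is a map of dg $R$-modules $\Omega^{k}_R \otimes_R \End M \to \Omega^{k}_R$, so it intertwines $d_{\Omega^k \otimes \End M}$ with $d_{\Omega^k}$, and we conclude $d_{\Omega^k}\Tr\bigl(\At(\nabla)^k\bigr)=0$.

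For the de Rham differential $d_{dR}$, I would use the Bianchi-type identity of Proposition~\ref{flatness}. Writing $\nabla^{End} = d_{dR} + [B,-]$ on $\End M$-valued forms, horizontality of $\At(\nabla)$ reads $d_{dR}\At(\nabla) = -[B,\At(\nabla)]$. The connection $\nabla^{End}$ is a (graded) derivation of the wedge-composition product, hence $\nabla^{End}\bigl(\At(\nabla)^k\bigr)=0$, i.e.\ $d_{dR}\At(\nabla)^k = -[B,\At(\nabla)^k]$. Now apply the trace: since $\Tr$ annihilates graded commutators of $\End M$-valued forms, we obtain
\[
d_{dR}\Tr\bigl(\At(\nabla)^k\bigr) = \Tr\bigl(d_{dR}\At(\nabla)^k\bigr) = -\Tr[B,\At(\nabla)^k] = 0.
\]
Dividing by the constant $k!(-2\pi i)^k$ gives the two claimed closedness statements for $ch_k(\nabla)$.

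The one point that genuinely needs care, and where I expect any real difficulty to sit, is the sign bookkeeping in the trace-of-commutator step: $B$ has cohomological degree $0$ but $\At(\nabla)^k$ lives in $\Omega^k_R \otimes \End M$ (or, with the grading shift discussed just before the corollary, in $\Omega^k_R \otimes \End M[k]$), so one has to verify that the graded cyclic symmetry of $\Tr$ still kills $[B, \At(\nabla)^k]$ in this context. Once the graded trace convention is fixed to agree with the one implicit in Proposition~\ref{flatness} and Lemma~\ref{atiyah}, the telescoping that makes $\Tr$ vanish on commutators is automatic, and the corollary follows immediately.
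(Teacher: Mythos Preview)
Your proposal is correct and follows essentially the same argument as the paper: both proofs reduce the closedness of $\Tr\bigl(\At(\nabla)^k\bigr)$ to the Leibniz rule for the relevant derivation ($d_{\Omega^k \otimes \End M}$ or $\nabla^{End}$) applied to a power of $\At(\nabla)$, together with the fact that trace kills commutators and intertwines the differentials. The paper only writes out the $d_{dR}$ case and declares the other analogous, whereas you treat both explicitly and flag the sign issue in $\Tr[B,-]$; otherwise the arguments coincide.
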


\begin{proof}
Both of these follow straightforwardly from our work in the preceding section, and we use the same notation as above. All the work here is about understanding what happens when we pull an operator like $d_{dR}$ or $d_{\Omega^k}$ past trace. The arguments are completely analogous, so we only give one.

Observe that once we fix a basis for $M$, we have a natural way to write endomorphisms as matrices and thus we can define $d_{dR} X$ for $X \in \Omega^k \ot \End M$. In consequence, we see $d_{dR} \Tr X = \Tr d_{dR} X$. Thus we find
\[
\Tr \nabla^{\End} X = \Tr d_{dR} X + \Tr [B,X] = \Tr d_{dR} X = d_{dR} \Tr X.
\]
Here $\nabla^{\End} = d_{dR} + [B, -]$ denotes the induced connection on $\End M$. We also use the fact that trace vanishes on commutators.

We thus find 
\[
d_{dR} \Tr  \At(\nabla)^k = \Tr \nabla^{\End} \At(\nabla)^k = 0,
\]
since $\nabla^{\End}$ satisfies the Leibniz rule and $\nabla^{\End}  \At(\nabla) = 0$ by proposition \ref{flatness}. 
\end{proof}

\subsection{The character in Lie theory}

Although it is not necessary for the rest of the paper, the reader might find it helpful to understand how this notion of character appears in Lie theory. Let $\fg$ be a Lie algebra in the usual sense, such as $\mathfrak{sl}_2$. In that case, the derived loop space of $B\fg$ is the derived adjoint quotient $\fg/\fg$, whose ring of functions is $C^*(\fg, \csym \fg^\vee)$, where $\fg$ acts on $\csym \fg^\vee$ by the coadjoint action and its symmetric powers. The degree 0 cohomology is then $\left( \csym \fg^\vee \right)^\fg$, and (at least when $\fg$ is semisimple) this ring is isomorphic to $\left( \csym \mathfrak{h}^\vee \right)^W$, where $\mathfrak{h}$ is a Cartan and $W$ is the Weyl group. This ring is isomorphic to the (ungraded and completed) cohomology of the classifying space $BG$, and hence is a natural target of the equivariant Chern character. More explicitly, the equivariant K-theory $K_G(\pt)$ is the representation ring of $G$, and we compose the Atiyah-Segal completion map with the Chern character 
\[
K_G(\pt) \ot \QQ \to K(BG) \ot \QQ \to \hat{H}^*(BG, \QQ)
\] 
to define a character for each $G$-representation.

For any $\fg$-module $M$, there is a canonical connection $\nabla_M$ induced by the natural $C^\#(\fg)$ splitting $C^\#(\fg, M) \cong C^\#(\fg) \ot M$. This Chern character $ch(\nabla_M)$ agrees with the Chern character applied to $M$ as a bundle over $BG$, under the isomorphism described above.

\section{The global observables}\label{goingtoinfinity}

Our goal in this section is to provide a conceptual, geometric interpretation of the BV quantization we have constructed for Chern-Simons, and we use the language of observables, which we now discuss, to provide that interpretation. 

\subsection{Reminder on observables}\label{observables}

Since studying a classical field theory amounts to studying the space of solutions to some system of PDE, all the information of the field theory is encoded in the commutative algebra of functions on the space of solutions. Any imaginable measurement of the physical system described by the theory yields an element of this algebra: to some field, it returns the value of the measurement on that field. Thus we call this algebra the {\em classical observables} for the theory. In our case, classical Chern-Simons describes the derived loop space $\cL T^*B\fg$, and the classical observables are the commutative dg algebra
\[
Obs^{cl} := \left( \csym \left( (\Omega^*(S^1) \ot (\fg[1] \oplus \fg^\vee[-1]))^\vee \right),\, Q + \{I_{CS}, - \} \right).
\]
Note that our observables are the power series constructed out of the distributions dual to our fields $\Omega^*(S^1) \ot (\fg[1] \oplus \fg^\vee[-1])$; this has the flavor of formal algebraic geometry, since we only want to study fields that are infinitesimally close to the zero field (which is a solution to our Euler-Lagrange equations). The differential $Q + \{I_{CS}, - \}$ encodes the Maurer-Cartan equation for $\fg \oplus \fg^\vee[-2]$-connections.

The BV quantization leads to a deformation of this algebra into just a cochain complex.\footnote{More accurately, we construct a Beilinson-Drinfeld algebra, which interpolates between a commutative algebra and a plain complex. See \cite{Cos2} and \cite{CG} for discussion of this notion.} In particular, at scale $L$, we have the following cochain complex
\[
Obs^q_L := \left( \csym \left( (\Omega^*(S^1) \ot (\fg[1] \oplus \fg^\vee[-1]))^\vee \right) [[\hbar]],\, Q + \{I_{naive}[L], - \} + \hbar \Delta_L \right).
\]
We call this complex the {\it quantum observables at scale $L$}. The RG flow $W(P_{\ell}^L), -)$ defines a quasi-isomorphism between $Obs^q_\ell$ and $Obs^q_L$. In our case, since our base manifold is closed, we can consider the scale $\infty$ observables and this complex is quasi-isomorphic to the observables at finite scale.\footnote{One should view running the RG flow from scale 0 to scale $\infty$ as taking the full path integral or, more precisely, as integrating out all the fields spanned by the nonzero modes of the free theory.} 

Something striking happens at scale $\infty$: there is a much smaller cochain complex which is homotopy equivalent to $Obs^q_\infty$. We construct it as follows. Due to our choice of a gauge-fixing operator $Q^*$, we get a decomposition of the fields $\sE$ into eigenspaces $\sE_\lambda$ of the operator $D = [Q,Q^*]$, where the eigenvalues $\{\lambda\}$ form a discrete subset of the nonnegative reals (here we use the fact that our base manifold is closed). We call $\cH = \sE_0$ the {\em harmonic fields}, in analogy with Hodge theory. Notice that the operator $\lim_{t \to \infty} e^{-tD}$ is simply projection on $\cH$, since all nonzero eigenfunctions are damped to zero, and thus its kernel $K_\infty$ lives in $\cH \ot \cH$. Hence both the bracket $\{ -,-\}_\infty$ and the BV Laplacian $\Delta_\infty$ vanish except on functions that depend on the harmonic fields. Let $\{-,-\}_\cH$ and $\Delta_\cH$ denote these operators restricted to $\csym (\cH^\vee)[[\hbar]]$. Then we have a homotopy equivalence
\[
\left( \csym (\cH^\vee)[[\hbar]], \{I[\infty], -\}_\cH + \hbar \Delta_\cH \right) \overset{\simeq}{\hookrightarrow} Obs^q_\infty.
\]
We can make the left-hand side more explicit and easier to interpret.

It remains to understand the differential on $\csym (\cH^\vee)[[\hbar]]$. As a first step, we show the following lemma, where we ignore the $\hbar$-dependent terms.

\begin{lemma}\label{HKR}
There is an isomorphism of cochain complexes
\[
\left(\csym (\cH^\vee), \{I^{(0)}[\infty], -\}_\cH \right) \cong \Omega^{-\ast} (T^*B \fg),
\]
as $C^* (\fg \oplus \fg^\vee[-2])$ modules. In other words, the global classical observables are the (negatively-graded) de Rham forms on $T^* B\fg$.
\end{lemma}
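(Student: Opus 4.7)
\noindent\textbf{Proof plan for Lemma \ref{HKR}.}

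The plan is to identify both sides as Chevalley--Eilenberg complexes of the same $\L8$ algebra and then check that the stated module isomorphism is compatible with differentials. Set $\mathfrak{h} := \fg \oplus \fg^\vee[-2]$, so that $T^*B\fg = B\mathfrak{h}$ and $\sE[-1] = \Omega^*(S^1) \otimes \mathfrak{h}$ as $\L8$ algebras (the $\L8$ structure being the tensor product of the commutative dga $\Omega^*(S^1)$ with $\mathfrak{h}$).

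First, I would analyze the right-hand side. For any dg manifold $M$ there is a canonical identification $\Omega^{-*}(M) \cong \sO(T[-1]M)$, so I would show that $T[-1]B\mathfrak{h} \cong B(\mathfrak{h} \oplus \mathfrak{h}[-1])$, where $\mathfrak{h}[-1]$ is viewed as a square-zero abelian ideal with the adjoint action of $\mathfrak{h}$. This gives
\[
\Omega^{-*}(T^* B\fg) \;\cong\; C^*(\mathfrak{h} \oplus \mathfrak{h}[-1]),
\]
and unpacking the CE differential recovers the decomposition ``CE of $\mathfrak{h}$ on functions'' $+$ ``de Rham differential,'' as expected.

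Second, I would analyze the left-hand side. The harmonic fields are $\cH = \CC[d\theta] \otimes \mathfrak{h}[1]$, so after shift $\cH[-1] = \mathfrak{h} \oplus \mathfrak{h}[-1]$ as a graded vector space; this already matches the right-hand side at the level of graded modules (and this is the content of the paragraph preceding the lemma). The key point is to identify the $\L8$ structure on $\cH[-1]$ encoded by $I^{(0)}[\infty]$. For this I would invoke the homotopy transfer theorem: the Hodge decomposition $\sE = \cH \oplus \mathrm{im}(Q) \oplus \mathrm{im}(Q^*)$ together with the propagator $P_0^\infty = \int_0^\infty (Q^* \otimes 1)K_t\,dt$ as contracting homotopy gives a strong deformation retract $\sE \rightleftarrows \cH$, and the tree-level RG-flow operator $\phi \mapsto \sum_{T}\tfrac{1}{|\Aut T|}W_T(P_0^\infty, I_{CS})(\phi)$ is by construction the sum-over-trees formula for the transferred $\L8$ structure. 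Since the ambient $\L8$ structure is a tensor product with $\Omega^*(S^1)$ and the homotopy $P_0^\infty$ is the standard Hodge-theoretic one on the $\Omega^*(S^1)$ factor, the transferred structure is simply the tensor product structure on $H^*(S^1) \otimes \mathfrak{h}$, i.e., the square-zero extension $\mathfrak{h} \ltimes \mathfrak{h}[-1]$.

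Third, comparing: we have $\bigl(\csym(\cH^\vee),\,\{I^{(0)}[\infty],-\}_\cH\bigr) = C^*(\mathfrak{h} \oplus \mathfrak{h}[-1]) = \Omega^{-*}(T^* B\fg)$, and $C^\#(\mathfrak{h})$-linearity is immediate because the transferred brackets restrict to the brackets of $\mathfrak{h}$ on the summand $\mathfrak{h} \subset \mathfrak{h} \oplus \mathfrak{h}[-1]$.

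\noindent\textbf{Main obstacle.} The technical heart is the identification of the tree-level BV effective action with the homotopy-transferred $\L8$ structure, including careful bookkeeping of the degree $-1$ symplectic pairing, the signs in the Koszul conventions, and the role of the $0$-ary bracket $\ell_0$ in the curved case (which contributes tadpoles that must be checked to produce exactly the ``curving'' of the transferred structure). Once this dictionary is pinned down, the rest is formal: the tree contributions from the propagator on nonharmonic modes vanish in the purely harmonic sector for the 1-dimensional Chern--Simons action because the only Feynman trees with all external legs harmonic have their internal edges contracted to zero by the harmonic projection, so $I^{(0)}[\infty]|_\cH$ reduces to $I_{CS}|_\cH$, which manifestly reproduces the brackets of $\mathfrak{h} \oplus \mathfrak{h}[-1]$ after integrating out $d\theta$.
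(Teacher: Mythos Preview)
Your proposal is correct and ultimately lands on the same key observation as the paper, but you have wrapped a three-line argument in substantially more machinery than it needs.

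The paper's proof is simply this: the propagator $P(0,\infty)$, viewed as an operator on $\sE$, is $Q^* D^{-1}$, and this vanishes on harmonic fields (since $D^{-1}$ is defined to be zero on $\ker D = \cH$, or equivalently $Q^*$ annihilates $\cH$). Because harmonic forms on $S^1$ are closed under wedge product, feeding harmonic inputs into any tree with an internal edge produces a harmonic input to that edge, which the propagator then kills. Hence $I^{(0)}[\infty]\big|_\cH = I_{CS}\big|_\cH$, and the bracket $\{I_{CS},-\}_\cH$ on $\csym(\cH^\vee)$ is by definition the Chevalley--Eilenberg differential for $\cH[-1] = \CC[d\theta] \otimes \mathfrak{h}$. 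The identification of this graded module with $\Omega^{-*}(T^*B\fg)$ was already carried out in the paragraphs preceding the lemma.

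You do reach exactly this conclusion in your ``Main obstacle'' paragraph, so the argument is sound. But the homotopy-transfer framing is unnecessary for the lemma as stated: you do not need the full dictionary between tree-level RG flow and transferred $\L8$ structures, nor the sign, pairing, and curving bookkeeping you flag, because the only fact you actually use is that trees with internal edges vanish on $\cH$ --- and that follows directly from the vanishing of the propagator on harmonics. Your phrase ``contracted to zero by the harmonic projection'' is also slightly off: no projection kills the edge; rather $Q^* D^{-1}$ annihilates harmonic inputs (equivalently, the propagator as an element of $\sE \otimes \sE$ lands in $\operatorname{im} Q^* \otimes \sE$, which is symplectically orthogonal to $\cH$).

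What your route buys is a conceptual explanation of \emph{why} this collapse happens: the tree-level effective action really is the homotopy-transferred $\L8$ structure, and the formality of $\Omega^*(S^1)$ (the harmonic forms are a sub-cdga quasi-isomorphic to the whole) forces the transfer to be trivial. That is worth knowing, but for the lemma itself the paper's direct argument is both shorter and sidesteps every obstacle you list.
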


\begin{proof}
Fix an isometry class of metric on $S^1$, i.e., fix the length of the circle (say a length $\ell$) and think of $S^1$ as $\RR /\ell \ZZ$ with volume form $d\theta = (1/\ell)dx$, with $x$ the coordinate on $\RR$.  Then by definition,
\[
\cH = \CC [d \theta] \otimes \left( \fg[1] \oplus \fg^\vee[-1] \right),
\]
where $d \theta$ is viewed as a square zero, cohomological degree $+1$ element. Hence
\[
\csym (\cH^\vee) = \csym ( (\CC [d \theta] \ot \fg[1])^\vee )  \ot \csym ( (\CC [d \theta] \ot \fg^\vee[-1])^\vee).
\]
But this complex has a description in the language of dg manifolds.

Observe that $\CC [d \theta] \ot \fg[1]$ is simply $\fg[1] \oplus \fg$, so
\[
\csym ( (\CC [d \theta] \ot \fg[1])^\vee ) = \csym(\fg^\vee[-1]) \ot \csym(\fg^\vee).
\]
Now observe that 
\[
\Omega^1 (B \fg) = \Gamma (B \fg , T^\ast_{B \fg}) \cong C^\sharp(\fg) \otimes \fg^\vee [-1] ,
\] 
as $C^\sharp (\fg)$-modules, and more generally, 
\[
\Omega^k (B \fg) \cong C^\sharp (\fg) \otimes \text{Sym}^k ( \fg^\vee) [-k] .
\]
Thus we have an equivalence of $C^\sharp (\fg)$-modules
\[
\Omega^{-\ast} (B \fg) \cong C^\sharp (\fg) \otimes \csym(\fg^\vee) = \csym ( (\CC[d\theta] \ot \fg[1] )^\vee ).
\]
Extending this argument by viewing $\fg \oplus \fg^\vee[-2]$ as an $\L8$ algebra, we see that 
\[
\csym (\cH^\vee) \cong \Omega^{-\ast} (T^*B \fg)
\]
as $C^\sharp (\fg \oplus \fg^\vee[-2])$ modules.

The interaction term $I^{(0)}[\infty]$ is given by summing over all trees where the vertices are labelled by $I_{CS}$ and the edges are labelled by the propagator $P(0,\infty)$. Since $P(0,\infty) = Q^* K_\infty$, we see that $P(0,\infty) = 0$ because $Q^*$ acts by zero on $\cH$. Any tree with an internal edge must have then weight zero. Thus $I^{(0)}[\infty]\big|_\cH$ is simply $I_{CS}$. Thus we recover the usual Chevalley-Eilenberg complexes.
\end{proof}

\begin{remark}
Recall that the one-loop term $I^{(1)}$ of the quantized action only depends on $\Omega^*(S^1) \ot \fg[1]$ and vanishes on terms depending on $\fg^\vee[-1]$. Hence, by our work above, we see that $I^{(1)}[\infty]$ restricts to a function on $\CC[d\theta] \ot \fg[1]$. In other words, $I^{(1)}[\infty]$ lives in $\Omega^{-\ast}(B\fg)$.
\end{remark}

\subsection{A crucial property of the one-loop Feynman diagrams}

Our goal now is to understand the one-loop interaction term $I^{(1)}[\infty]$ restricted to $\cH$, and here we will see why the Atiyah class is so useful. First, we will show that wheels are the only connected graphs that contribute, and, second, we will show how to organize the sum over wheels into something conceptually meaningful.

\begin{lemma}
For a connected one-loop graph $\gamma$ that is not a wheel (i.e., a graph which consists of a wheel with trees attached), the graph weight $W_\gamma(P(0,\infty), I)$ is zero on $\cH$.
\end{lemma}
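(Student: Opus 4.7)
The plan is to isolate a leaf vertex inside a tree attached to the wheel, show that the partial contraction there lands in $\cH$, and then invoke the vanishing of the propagator on $\cH$.

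First, the combinatorial observation: any connected one-loop graph that is not a wheel must have at least one internal vertex $v$ of internal valence $1$. Indeed, remove the edges of the unique cycle; what remains is a nonempty forest of subtrees hanging off the cycle, and every such subtree has a leaf, which will serve as our $v$. The vertex $v$ has exactly one internal edge $e$ (connecting it toward the rest of $\gamma$), and all other incident edges are external tails.

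Second, I would compute the partial contraction at $v$ after plugging harmonic inputs $\eta_1, \ldots, \eta_{k-1} \in \cH$ into the external tails. The vertex tensor at $v$ is a term of $I_{CS}$, of the schematic form $\tfrac{1}{k!}\langle -,\ell_{k-1}(-,\ldots,-)\rangle$, where $\ell_{k-1}$ is the corresponding bracket on $\sE[-1] = \Omega^*(S^1) \otimes (\fg \oplus \fg^\vee[-2])$. Via the nondegenerate pairing $\langle -,-\rangle$, this partial contraction is represented by the element $\ell_{k-1}(\eta_1, \ldots, \eta_{k-1}) \in \sE$. The key observation is that $\cH \cong (\CC \oplus \CC \cdot d\theta) \otimes (\fg[1] \oplus \fg^\vee[-1])$ is closed under all of the brackets $\ell_n^\sE$: the $\L8$ structure on $\sE[-1]$ is built by tensoring the brackets on $\fg \oplus \fg^\vee[-2]$ with the wedge product on $\Omega^*(S^1)$, and the harmonic forms $\CC \oplus \CC \cdot d\theta$ form a subalgebra for the wedge product (the de Rham differential vanishes on $\cH$ as well). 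Hence the leaf output lies in $\cH$.

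Third, I would observe that the propagator vanishes on $\cH$, viewed as an operator via the pairing. For $\eta \in \cH$, we have $D\eta = 0$, so $e^{-tD}\eta = \eta$, and
\[
P_0^\infty(\eta) \;=\; \int_0^\infty Q^* e^{-tD}\eta\, dt \;=\; \int_0^\infty Q^*\eta\, dt \;=\; 0,
\]
since $Q^*$ annihilates $\cH$. This is exactly the fact invoked in the proof of Lemma \ref{HKR}. Contracting $e$ with a harmonic leaf output therefore gives zero, and so $W_\gamma(P(0,\infty), I)$ vanishes on $\cH$.

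The main bookkeeping issue is identifying the partial contraction at $v$ with the specific element $\ell_{k-1}(\eta_1, \ldots, \eta_{k-1}) \in \sE$ under the degree-shifted pairing $\langle -,-\rangle$ --- essentially a matter of tracking signs and degree shifts. Once this identification is in place, both ingredients (closure of $\cH$ under the brackets and vanishing of $P_0^\infty$ on $\cH$) are immediate, and the lemma follows.
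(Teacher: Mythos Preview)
Your argument is correct and follows the same route as the paper's proof: both rest on the fact that $P(0,\infty)$ annihilates $\cH$ (established in the proof of Lemma~\ref{HKR}), so that the tree portion of $\gamma$ feeds zero into the wheel. The paper simply asserts ``any tree with an internal edge will vanish on a harmonic field'' and moves on; you have supplied the missing justification, namely that at a leaf vertex the output $\ell_{k-1}(\eta_1,\ldots,\eta_{k-1})$ of harmonic inputs is again harmonic because $\CC\oplus\CC\, d\theta$ is closed under the wedge product. This is exactly the step the paper's terse proof glosses over, so your version is a more complete rendering of the same argument rather than a different one.
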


\begin{proof}
As shown in the preceding lemma about trees, the propagator $P(0,\infty)$ is zero on fields in $\cH$. Hence any tree with an internal edge will vanish on a harmonic field. Plugging zero into a wheel yields zero as well.
\end{proof}

It remains to understand the graph weight of wheels. As discussed earlier, the weight of every graph decomposes into a product of an analytic and a Lie-theoretic part: $W_\gamma = W^{an}_\gamma \cdot W^{Lie}_\gamma$. We will analyze these two aspects separately. It is crucial to bear in mind that the weight of a wheel can be viewed as the trace of the operator described by the propagator that labels the internal edges. 

The analytic part is particularly easy to understand. In the analytic part of the interaction term $I_{CS}$, the homogeneous degree $k$ component $(I_{CS})_k$ simply consists of wedging $k$ differential forms on $S^1$ and then integrating over $S^1$. Now that we've described the vertices in a wheel, it remains to describe the internal edges. 

Fix a length $\ell$ and identify $S^1$ with $\RR/\ell \ZZ$. Fix the volume form $dz= (1/2 \pi i \ell) dx$, where $x$ denotes the coordinate on $\RR$.\footnote{We make the possibly peculiar-looking choice to give the circle the ``volume" $1/2 \pi i$ because it makes the end result in Theorem \ref{inftytermBg} look the nicest.} Let $d^*$ denote the operator $f \, dx \mapsto df/dx$ on $\RR$ and descend it to $S^1$; then $D = d^2/dx^2$, the usual Laplacian on $\RR$, and again descend it to $S^1$.We let $D^{-1}$ be zero on harmonic fields and the inverse to $D$ on the orthogonal complement.

\begin{lemma}
For any wheel with one internal vertex, the analytic graph weight vanishes. For a wheel $\gamma$ with $n > 1$ internal vertices and $N$ external legs, the analytic graph weight $W^{an}_\gamma$ on a harmonic field $\alpha$ is $\Tr \left(  ( \frac{1}{2 \pi i \ell} \frac{d}{d x} D^{-1})^n \right) \int_{S^1} (\alpha^N)$. Further,
\[
\Tr \left(  \left(\frac{1}{2 \pi i\ell} \frac{d}{dx} D^{-1} \right)^n \right) = \frac{1}{(2 \pi )^{2n}} \sum_{k \in \ZZ,\,  k \neq 0}  \frac{1}{k^n}.
\]
Thus, the trace is $\frac{2}{(2 \pi )^{2n}} \zeta(n)$, where $\zeta$ denotes the Riemann zeta function.
%\[
%\Tr \left(  (\frac{1}{2\pi \ell} \frac{d}{d x} D^{-1})^n \right) = 
%\left\{ 
%\begin{array}{ll} 
%0, & \text{ if $n$ odd} \\
%\frac{2}{(2 \pi)^n} \zeta(n), & \text{ if $n = 4m$}\\
%-\frac{2}{(2 \pi)^n}\zeta(n), & \text{ if $n = 4m +2$}
%\end{array}
%\right.
%\]
%where $\zeta$ denotes the Riemann zeta function.
\end{lemma}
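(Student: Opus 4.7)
The plan is to interpret the analytic weight of a wheel as the trace of an operator on $S^1 = \RR/\ell\ZZ$ and reduce the computation to Fourier analysis. After stripping off the Lie factor via the previous lemma, the analytic propagator $P_0^\infty$ is (up to the form-degree tensor factor) the Schwartz kernel of the operator $T := \tfrac{1}{2\pi i\ell}\tfrac{d}{dx} D^{-1}$ acting on the orthogonal complement of the harmonic modes, where $D^{-1}$ is the Green's function for the scalar Laplacian and the overall constant arises from the chosen volume form $dz = (2\pi i\ell)^{-1}dx$. For a wheel $\gamma$ with $n$ internal vertices arranged cyclically and external legs distributed as $\sum_i n_i = N$, the analytic weight takes the form
\[
W^{an}_\gamma(\alpha) = \int_{(S^1)^n} T(x_1,x_2)\,T(x_2,x_3)\cdots T(x_n,x_1)\,\prod_{i=1}^n \alpha(x_i)^{n_i}\, dx_1\cdots dx_n.
\]

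First I would handle $n = 1$. The single internal edge is a self-loop and the integrand reduces to $T(x,x)$. Since the scalar Green's function on $S^1$ depends only on $x - y$ and $Q^\ast = d^\ast$ is formally skew-adjoint, the kernel satisfies $T(x,y) = -T(y,x)$ and in particular $T(x,x) = 0$. Equivalently, in the Fourier basis $T$ is diagonal with eigenvalue $\tfrac{1}{(2\pi)^2 k}$ on $e^{2\pi ikx/\ell}$, so $\Tr T = \tfrac{1}{(2\pi)^2}\sum_{k\neq 0}\tfrac{1}{k} = 0$ by the involution $k \mapsto -k$.

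For $n > 1$, the essential observation is that a harmonic field $\alpha \in \cH = \CC[d\theta]\otimes(\fg[1]\oplus\fg^\vee[-1])$ is translation invariant on $S^1$, so each $\alpha(x_i)$ is independent of $x_i$ and the product $\prod_i \alpha(x_i)^{n_i}$ may be pulled out of the iterated integral. What remains of the integral over $(S^1)^n$ is precisely $\Tr(T^n)$, while the factored external data assembles into $\int_{S^1}(\alpha^N)$, giving the claimed product decomposition. To evaluate the trace I diagonalize: on $e^{2\pi ikx/\ell}$ with $k \neq 0$, direct computation gives $\tfrac{d}{dx}D^{-1}$ acting by $\tfrac{i\ell}{2\pi k}$, so $T$ acts by $\tfrac{1}{(2\pi)^2 k}$, whence
\[
\Tr(T^n) = \frac{1}{(2\pi)^{2n}}\sum_{k\in\ZZ,\,k\neq 0}\frac{1}{k^n}.
\]
For odd $n$ the sum vanishes by $k \mapsto -k$; for even $n$ it equals $2\zeta(n)$.

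The main obstacle is not conceptual but organizational: one must normalize the scalar heat kernel on $S^1$ consistently against the nonstandard volume form $dz = (2\pi i\ell)^{-1}dx$, keep careful track of how $Q^\ast = d^\ast$ acts on the form-valued tensor factor of the propagator so that contraction around the wheel really implements an operator trace in the functional sense, and justify the factorization of $\alpha$ from the wheel integral via translation invariance. Once these conventions are pinned down, both the vanishing at $n = 1$ and the zeta-function formula at $n > 1$ drop out from elementary Fourier series on the circle.
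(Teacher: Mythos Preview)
Your proposal is correct and follows essentially the same route as the paper: identify the analytic propagator $P(0,\infty)$ with the Schwartz kernel of $T=\tfrac{1}{2\pi i\ell}\tfrac{d}{dx}D^{-1}$, use harmonicity of the external inputs to decouple the leg contributions from the cyclic contraction of propagators, and compute $\Tr(T^n)$ by diagonalizing in the Fourier basis $\{e^{2\pi ikx/\ell}\}$.

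One small remark on the $n=1$ case. Your antisymmetry argument $T(x,y)=-T(y,x)\Rightarrow T(x,x)=0$ is the right idea and is essentially what the paper uses elsewhere (the heat kernel attains its maximum on the diagonal, so its $x$-derivative vanishes there). Your alternative phrasing via $\sum_{k\neq 0}1/k=0$ is only formal, since $T$ is not trace class and that series is not absolutely convergent; the paper handles this by observing that the regularized weight with $P_\epsilon^L$ vanishes for every $0<\epsilon<L$ before passing to the limit. Either the kernel-antisymmetry argument or the regularized computation is fine, but the bare Fourier sum should not be offered as a standalone justification.
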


Note that for $n$ odd, the analytic weight vanishes as $\sum_{k \in \ZZ \setminus \{0\}} 1/k^n =0$ for $n$ odd.

\begin{proof}
The proof amounts to unpacking our definitions and then using Fourier series. We need to show that the propagator $P(0,\infty)$ corresponds to the operator $\frac{1}{\ell} \frac{d}{d x} D^{-1}$ and that the graph weight separates the input harmonic form term from the trace term.

The propagator $P(0,\infty)$ is an element of $\Omega^*_{S^1} \ot \Omega^*_{S^1}$ of cohomological degree 0, constructed as follows. Let $K_t^{scalar}$ denote the heat kernel corresponding to the operator $e^{-tD}$:
\[
\left( e^{-tD}f \right) (x) = \int_{S^1} K^{scalar}_t(x, y) f(y) \, dy
\]
for a function $f$ on $S^1$. We have an isomorphism
\[
\Phi: C^\infty(S^1) \ot \CC[dz] \to \Omega^*_{S^1},
\]
\[
f \, dz \mapsto \frac{1}{2 \pi i \ell} f\, dx.
\]
The kernel $K_t$ is then
\[
K^{scalar}_t \left(  dz \ot 1 - 1 \ot dz \right) 
\]  
and so
\[
P(0,\infty) = d^* \int_0^\infty K_t \, dt = \frac{1}{2 \pi i \ell}  \frac{d}{d x} \int_0^\infty K_t^{scalar} \, dt.
\]
This kernel defines an operator that we denote $\frac{1}{2 \pi i \ell} \frac{d}{d x} D^{-1}$, as the integral $\int_0^\infty K^{scalar}_t dt$ corresponds to the inverse of $D$ on functions that are not harmonic. By construction, this operator is zero on harmonic functions. More explicitly, the functions $\{ e^{2 \pi ikx/\ell} \}$, with $k \in \ZZ$, are the eigenfunctions of $D$, so
\[
\frac{1}{2 \pi i\ell} \frac{d}{d x} D^{-1} (e^{2 \pi  i k x/ \ell} )= \frac{1}{(2 \pi )^2 k} e^{ikx/\ell}
\]
for $k \neq 0$.

Note that since $\frac{1}{2 \pi i\ell} \frac{d}{d x} D^{-1}$ vanishes on harmonic fields, we know that
\[
\frac{1}{2 \pi i\ell} \frac{d}{d x} D^{-1} (fg) = f \cdot \frac{1}{2 \pi i \ell} \frac{d}{dx} D^{-1}(g)
\]
for $f$ harmonic. Hence, the behavior of the internal edges is independent of the inputs to the external legs. The contribution of the internal edges to the overall weight is computed by orienting the vertices cyclically, viewing the propagator as the operator $\frac{1}{2 \pi i\ell} \frac{d}{dx} D^{-1}$, and taking the trace of the $n$-fold composition of this operator.

The case $n = 1$ is slightly different as $\zeta(1)$ is divergent. However, it is straightforward to show that if we use the propagator $P_\epsilon^L$ for any $L > \epsilon > 0$, the graph weight is zero, so the limit as $\epsilon \to 0$ and $L \to \infty$  is also zero.
\end{proof}

We now consider the Lie-theoretic weight of a wheel. In this case, a vertex with $k+1$ legs corresponds to the bracket $\ell_k$, and so we have no easy simplification analogous to that for the analytic weight. However, recall that the Atiyah class of $T_{B\fg}$ is the endomorphism of $\fg$ given essentially by summing over all $\ell_k$ for $k \geq 2$. Hence we obtain the following useful lemma.

\begin{lemma}\label{wheellemma}
Let $\Cas_\fg$ denote the Lie-theoretic part of the propagator (see \ref{propagator}). Then for $\alpha \in \fg [1] \oplus \fg^\vee[1]$,
\[
\sum_{\gamma \text{ is a wheel with $n$ vertices} } \frac{\hbar}{|\Aut \gamma|} W^{Lie}_\gamma(\Cas_\fg,I_{CS})(\alpha) = \frac{1}{n} \Tr \left( ( \At(T_{B\fg})  )^n \right)(\alpha).
\]
Using our definition of the Chern character in section \ref{cherndefn}, we express this sum of weights as
\[
(n-1)! (-2 \pi i)^n ch_n (T_{B\fg})(\alpha).
\]
\end{lemma}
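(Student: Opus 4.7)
The plan is to unpack the Feynman-diagram combinatorics of wheels and then reassemble the sum using Proposition~\ref{l8atiyah}.

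First, I would analyze the local contribution of each wheel vertex. A vertex $v_i$ of a wheel has exactly one outgoing $\beta$-half-edge and one incoming $\alpha$-half-edge that are internal (joined to neighbors by $\Cas_\fg$), plus some $k_i \geq 1$ external $\alpha$-legs (stability forces valence $\geq 3$). Such a vertex arises from the term $\frac{1}{(k_i+2)!}\langle \phi, \ell_{k_i+1}(\phi^{\otimes k_i+1})\rangle$ of $I_{CS}$; specializing the external inputs to a harmonic $\alpha \in \fg$ and using the Casimir to identify the internal $\alpha$-slot with the downstream $\beta$-slot of the neighbor, the vertex acts as the endomorphism $v \mapsto \ell_{k_i+1}(\alpha^{\otimes k_i}, v)$ of $\fg$, weighted by an appropriate factorial.

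Second, composing these endomorphisms along the internal edges around the cycle and closing up yields a $\fg$-trace, since the relevant summand of $\Cas_\fg$ in the propagator is the identity on $\fg$. Summing over all valency assignments $(k_1,\dots,k_n)$ with $k_i \geq 1$ replaces each vertex operator by $\sum_{k \geq 1} \ell_{k+1}(\alpha^{\otimes k}, -)$, which, by Proposition~\ref{l8atiyah}, is precisely the Atiyah endomorphism $\At(T_{B\fg})(\alpha) \in \End(\fg)$. Thus, up to a global combinatorial constant,
\[
\sum_{\gamma} \frac{1}{|\Aut\gamma|} W^{Lie}_\gamma(\Cas_\fg, I_{CS})(\alpha) \;=\; C_n\,\Tr_\fg\bigl(\At(T_{B\fg})(\alpha)^n\bigr),
\]
where the sum is over wheels with $n$ vertices.

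Third, I would pin down the constant $C_n = 1/n$. The vertex factorial $1/(k_i+2)!$ cancels against the multinomial counting the $(k_i+2)!/((k_i+1)!\cdot 1!)$ ways to distribute the $k_i$ external $\alpha$-slots, the one internal $\alpha$-slot, and the one $\beta$-slot among the $k_i+2$ identical input slots of $I_{k_i+2}$. What survives is only $1/|\Aut\gamma|$; for a generic wheel with $n$ vertices $\Aut\gamma$ is the cyclic group of order $n$ (the dihedral reflection is broken by the distinct ordering of the noncommutative operators inside $\Tr$), yielding $C_n = 1/n$. Converting via $ch_n(T_{B\fg}) = \frac{1}{n!(-2\pi i)^n}\Tr(\At^n)$ from Section~\ref{cherndefn} then gives the second displayed form $(n-1)!(-2\pi i)^n\,ch_n(T_{B\fg})(\alpha)$.

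The main obstacle I anticipate is step three: the precise combinatorial accounting of vertex factorials, leg-assignment multinomials, automorphisms of the underlying graph, and the $\hbar$ bookkeeping (wheels are one-loop, so each contributes one factor of $\hbar$). The accounting is essentially the same as in the analogous holomorphic Chern-Simons computation of Costello \cite{Cos3}, so the calculation should transcribe with minor modifications.
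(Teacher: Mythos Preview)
Your proposal is correct and follows essentially the same route as the paper's (very terse) proof: interpret $\Cas_\fg$ as the identity so that composing vertex contributions around the cycle yields a trace, invoke Proposition~\ref{l8atiyah} to recognize the sum over valencies as the Atiyah endomorphism, and then account for the factor $1/n$ via cyclic symmetry. One small point: your explanation that the dihedral reflection is ``broken by the noncommutative operators inside $\Tr$'' is slightly off---the reflection is already absent at the level of the graph because the internal edges are \emph{directed} (the propagator joins a $\beta$-output to an $\alpha$-input), so a wheel carries an orientation and its automorphism group is genuinely cyclic.
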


\begin{proof}
We use the correspondence between the propagator $\Cas_\fg$ and the identity operator. A cyclic ordering of vertices lets us identify the sum of the graph weights (for graphs with this ordering of vertices) with the trace of $( \At(T_{B\fg})  )^n$. We divide by $n$ because the ordering of vertices leads to an $n$-fold overcounting of the actual sum of graph weights that we desire.
\end{proof}

\subsection{The main theorem}\label{sect:mainthm}

We now use the preceding lemmas to identify the image of $I^{(1)} [\infty]$ in $\Omega^{-\ast} (B \fg).$ Recall that (following Hirzebruch \cite{H}) the Todd class can be defined in terms of Chern classes by the power series $Q (x)$ and the $\hat{A}$ class is given in Pontryagin classes via $P (x)$ where
\[
Q(x) = \frac{x}{1- e^{-x}} \text{   \; \; \;    and     \; \; \;      } P(x) = \frac{x/2}{\sinh x/2} .
\]
We define a new power series by $\log (Q(x)) - x/2$ and denote the corresponding characteristic class by $\log (e^{-c_1/2} \text{Td})$. We have an equivalence of power series (see \cite{Cartier} and \cite{BHJ})
\begin{equation}\label{pwrsrs}
\log \left (\frac{x}{1-e^{-x}} \right ) - \frac{x}{2} = \sum_{k \ge 1} 2 \zeta (2k) \frac{x^{2k}}{2k (2 \pi i)^{2k}} ,
\end{equation}
where $\zeta$ is the Riemann zeta function.

%%Why only cohomologous, it seems we have honest equality

\begin{theorem}\label{inftytermBg}
For one dimensional Chern-Simons theory with values in the $\L8$-algebra $\fg \oplus \fg^\vee [-2]$, the scale $\infty$ interaction term $I^{(1)}[\infty]$ encodes the Todd class of $B\fg$ when restricted to harmonic fields $\cH$. More precisely, we have
\[
I^{(1)} [\infty] \big|_{\cH} = \sum_{k \ge 1}  \left \{ \frac{ 2(2k-1)!}{(2 \pi)^{2k}}  \zeta(2k) ch_{2k} (T_{B\fg})  \right \} 
\]
\[
= \log (e^{-c_1(T_{B\fg})/2} \text{Td} (T_{B\fg}) ) \in \Omega^{-\ast} (B\fg).
\]
\end{theorem}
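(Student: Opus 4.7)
The plan is to assemble the theorem from the preceding structural pieces: the restriction of $I^{(1)}[\infty]$ to $\cH$ is by definition a sum of one-loop Feynman weights, and the two lemmas immediately preceding the theorem have already told us which diagrams contribute and what their analytic and Lie-theoretic factors are. So most of the work is organization plus a power-series manipulation.

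First I would write
\[
I^{(1)}[\infty]\big|_\cH \;=\; \sum_\gamma \frac{1}{|\mathrm{Aut}\,\gamma|}\, W_\gamma\bigl(P(0,\infty),\, I_{CS}\bigr)\big|_\cH,
\]
summed over connected one-loop graphs $\gamma$. By the lemma eliminating wheels with trees attached (the propagator $P(0,\infty)=Q^\ast K_\infty$ kills harmonic inputs along any internal tree edge), only pure wheels survive. Stratifying by the number of vertices $n$, and invoking the factorization $W_\gamma = W^{an}_\gamma \cdot W^{Lie}_\gamma$, I would then insert the two preceding computations: the Fourier-series lemma gives $W^{an}_\gamma = \frac{2\zeta(n)}{(2\pi)^{2n}}\int_{S^1}\alpha^N$ for $n\ge 2$ (and zero for $n=1$ or $n$ odd), while Lemma \ref{wheellemma} gives the total Lie-theoretic contribution of all wheels with $n$ vertices as $(n-1)!(-2\pi i)^n\, ch_n(T_{B\fg})$.

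Second, I would assemble the product. Combining the two factors and using $(-2\pi i)^{2k}=(-1)^k(2\pi)^{2k}$ (absorbed into the sign conventions built into our normalization $ch_n = \tfrac{1}{n!(-2\pi i)^n}\mathrm{Tr}\,\mathrm{At}^n$) yields
\[
I^{(1)}[\infty]\big|_\cH \;=\; \sum_{k\ge 1} \frac{2(2k-1)!}{(2\pi)^{2k}}\,\zeta(2k)\, ch_{2k}(T_{B\fg}),
\]
which is the first equality of the theorem. The $1/n$ from the cyclic-ordering overcount in Lemma \ref{wheellemma} combines with the $n!$ inside $ch_n$ to give precisely the $(n-1)! = (2k-1)!$ coefficient, so no further combinatorial miracle is needed.

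Third, for the identification with $\log(e^{-c_1/2}\mathrm{Td}(T_{B\fg}))$, I would pass to formal Chern roots $\{x_j\}$ of $T_{B\fg}$. Writing $\log(e^{-c_1/2}\mathrm{Td}) = \sum_j [\log(x_j/(1-e^{-x_j})) - x_j/2]$ and substituting the zeta-value expansion \eqref{pwrsrs} collapses each bracketed term into $\sum_{k\ge1} \tfrac{2\zeta(2k)}{2k(2\pi i)^{2k}} x_j^{2k}$; summing over $j$ converts the power sums $\sum_j x_j^{2k}$ into $(2k)!\, ch_{2k}$, and the factor $(2k)!/(2k) = (2k-1)!$ produces exactly the coefficient above.

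The main obstacle is purely bookkeeping: keeping straight the three independent combinatorial factors (the automorphism count $|\mathrm{Aut}\,\gamma|$ for wheels, the $1/n$ from cyclically orienting the vertices when reading the internal edges as a trace of a composition, and the $n!$ inside the normalization of $ch_n$) so that they conspire to give the single coefficient $(n-1)!$, and similarly keeping straight the signs from $(-2\pi i)^n$ versus $(2\pi)^n$ so that the analytic zeta-value expansion matches the power-series identity \eqref{pwrsrs} on the nose. Once the first equality is verified with the correct constant, the second equality is a one-line consequence of the Chern-root expansion.
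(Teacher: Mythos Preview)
Your proposal is correct and follows essentially the same route as the paper: restrict to wheels via the tree-vanishing lemma, split each wheel weight into its analytic and Lie-theoretic factors, insert the zeta-value and $(n-1)!(-2\pi i)^n ch_n$ computations to obtain the first displayed equality, and then use the splitting principle (Chern roots) together with the power-series identity \eqref{pwrsrs} to rewrite the sum as $\log(e^{-c_1/2}\mathrm{Td})$. The only caveat is that your parenthetical about signs being ``absorbed into the sign conventions'' is where the paper is also somewhat casual; the honest content of the argument is exactly the bookkeeping you flag at the end.
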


\begin{proof}
First, consider the sum of weights over all wheels with $n$ vertices. We find
\[
 \sum_{\gamma \text{ is a wheel with $n$ vertices} } \frac{1}{|\Aut \gamma |} W_\gamma(P(0,\infty), I_{CS}) = \sum_\gamma \frac{1}{|\Aut \gamma |} W^{an}_\gamma W^{Lie}_\gamma
\]
and by our lemmas above, we obtain
\[
= \frac{2}{(2 \pi i)^{2n}} \zeta(n) \cdot (n-1)! (-2 \pi i)^n ch_n (T_{B\fg}) =  \frac{2 \zeta(n) \cdot (n-1)!}{(2 \pi i)^n}  ch_n(T_{B\fg}).
\]
For $n$ odd, this vanishes, as the analytic weight vanishes. 

We now use standard arguments about characteristic classes. For a sum of complex line bundles $E = L_1 \oplus \cdots \oplus L_n$, the Todd class is 
\[
Td(E) = Q(c_1(L_1)) \cdots Q(c_1(L_n)).
\]
Thus, equation \ref{pwrsrs} tells us
\[
\log (e^{-c_1(E)/2} Td(E) ) = \sum_{k \ge 1} \frac{2 \zeta (2k)}{2k (2 \pi i)^{2k}} (c_1(L_1)^{2k} + \cdots + c_1(L_n)^{2k}).
\] 
As $ch_{2k}(E) = (c_1(L_1)^{2k} + \cdots + c_1(L_n)^{2k})/(2k!)$, we obtain a general formula for an arbitrary bundle $E$,
\[
\log (e^{-c_1(E)/2} Td(E) ) = \sum_{k \ge 1} \frac{2 \zeta (2k)}{2k (2 \pi i)^{2k}} (2k)! ch_{2k}(E).
\]
This formula combines with our computation of the graph weights to yield the theorem.
\end{proof}

At first glance, $e^{-c_1/2} \Td (x)$ does not seem to be an even power series and hence only seems to define a complex genus.  However, as we saw above, it actually is even and thus defines a real genus via standard arguments going back to Hirzebruch.  Further, considered as a characteristic class for real bundles $e^{-c_1 (E) /2} \Td (E)$ agrees with $\hat{A} (E)$ (see \cite{BHJ} or \cite{Gilkey} for an index theoretic explanation). The following is then immediate.

\begin{cor}
The scale infinity effective action functional in one dimensional Chern-Simons theory is given by the logarithm of the $\hat{A}$ class.  That is,
\[
I^{(1)} [\infty] |_\cH = \log (\hat{A} (T_{B\fg})) \in \Omega^{-\ast} (B \fg).
\]
\end{cor}

\part{Topological quantum mechanics}

So far we have studied a field theory arising from a Lie algebra or an $\L8$ algebra -- its homotopical generalization -- but as geometers, we would also like to study nonlinear sigma models. Our goal in this part is to apply our methods to a certain nonlinear sigma model. {\it A priori}, gauge theories and sigma models look quite different, but the Koszul duality between commutative and Lie algebras provides a method for rewriting a certain simple sigma model as a Chern-Simons theory.  As a consequence, we can reinterpret Theorem \ref{inftytermBg} for a sigma model with target smooth manifold $X$.  

Our theorem relates the global observables of our theory to a deformed version of the de Rham complex of $T^* X$. Because our underlying classical fields are related to the loop space of $T^* X$, it is no surprise that we end up working with the negative cyclic homology of $T^*X$, which we identify with $(\Omega^{-*}(T^*X)[[u]], u d)$, where $u$ is a formal variable of cohomological degree 2 and $d$ denotes the exterior derivative with cohomological degree $-1$. Our deformation of the differential involves the $\hat{A}$ class of $X$ in a form modified to work with the negative cyclic homology: let $\hat{A}_u(X)$ denote the element in negative cyclic homology obtained by replacing $ch_{k}(X)$ by $u^k ch_{k}(X)$ wherever it appears in the usual $\hat{A}$ class.

\begin{theorem}
There exists a quantization of a nonlinear sigma model from the circle $S^1$ into $T^*[0] X$, where $X$ is a smooth manifold. Only 1-loop Feynman diagrams appear in the quantization. The solutions to the Euler-Lagrange equations consist of constant maps into $T^*[0] X$. The $S^1$-invariant global quantum observables over $S^1$ form a cochain complex quasi-isomorphic to the following deformation of the negative cyclic homology of $T^*X$:
\[
(\Omega^{-*}(T^*X)[[u]][[\hbar]], ud + \hbar L_\pi + \hbar\{\log (\hat{A}_u(X)), -\}),
\]
where $L_\pi$ denotes the Lie derivative with respect to the canonical Poisson bivector $\pi$ on $T^* X$.
\end{theorem}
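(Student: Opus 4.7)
The plan is to obtain the theorem as a corollary of Theorem \ref{LieThm} (together with its $S^1$-equivariant refinement) applied to a specific curved $\L8$ algebra $\fg_X$ built from the smooth manifold $X$ via Gelfand-Kazhdan formal geometry. The two tasks are: (i) encode the sigma model with target $T^*[0]X$ as a Chern-Simons theory for $\fg_X \oplus \fg_X^\vee[-2]$, and (ii) identify the resulting characteristic class $\hat{A}(B\fg_X)$ with the usual $\hat{A}(X)$ after tracking the cyclic grading conventions.

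First I would construct $\fg_X$ as a sheaf of curved $\L8$ algebras over the de Rham space $X_\Omega$. The underlying graded bundle is (a shift of) $T_X$, and the $\L8$ brackets are produced by taking the derivatives of the Atiyah class of the tangent sheaf, as in Proposition \ref{l8atiyah} and Kapranov's construction. Concretely, replace $C^\infty_X$ by the de Rham complex of the jet bundle $(\Omega^*_X \otimes J\sO_X, d_X + \nabla_{GM})$ equipped with the Grothendieck-Maurer-Cartan flat connection, and apply Koszul duality to the fiberwise-commutative algebra of jets to produce $\fg_X$. The key outputs, which I would establish by standard formal-geometry arguments, are an isomorphism $B\fg_X \simeq X$ of dg manifolds over $X_\Omega$, an identification $\pi^*\fg_X \simeq T_X[-1]$ on the cover, and consequently $T^*B\fg_X \simeq T^*X$ over $X_\Omega$. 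Under this dictionary, the classical field theory of maps $S^1 \to T^*[0]X$ becomes the classical $\fg_X \oplus \fg_X^\vee[-2]$ Chern-Simons theory on $S^1$, with the Liouville action corresponding to the Chern-Simons action of Section 2.

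Next I would invoke Theorem \ref{LieThm} verbatim to produce the one-loop quantization: the fact that $\fg_X$ is a sheaf rather than a single $\L8$ algebra causes no new difficulty because the Feynman-diagram analysis of Section 3 and the obstruction vanishing of Proposition \ref{1dvanishprop} are pointwise in $X_\Omega$. The global observables of the quantization, computed as in Section \ref{goingtoinfinity}, are quasi-isomorphic to
\[
\bigl(\Omega^{-*}(T^*B\fg_X)[[\hbar]],\; \hbar L_\pi + \hbar\{\log \hat{A}(B\fg_X), -\}\bigr).
\]
Passing to $S^1$-invariants introduces the generator $u$ of $H^*_{S^1}(\pt)$ and the de Rham differential, yielding the corollary for $B\fg_X$; then substituting the equivalence $T^*B\fg_X \simeq T^*X$ gives the stated complex with the class $\hat{A}_u(B\fg_X)$ in place of $\hat{A}_u(X)$.

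The main obstacle, and the heart of the proof, is the final identification $\hat{A}_u(B\fg_X) = \hat{A}_u(X)$, which is exactly the subtlety flagged in Remark \ref{whythecircleaction}. As constructed through jets, the classes $ch_k(T_{B\fg_X})$ land in cohomological degree $0$ (they are sections of $\Omega^{-*}(B\fg_X)$ built from the Atiyah class of the $\L8$-module $\fg_X$ over itself), whereas the classical $ch_k(T_X)$ sits in $\Omega^{2k}(X)$ and therefore in cohomological degree $2k$. The matching between them is forced to pick up a factor of $u^k$ in each degree, because the HKR-style isomorphism used to compare the two presentations trades the internal grading on $\Omega^{-*}(T^*B\fg_X)$ for the cyclic grading on $\Omega^{-*}(T^*X)[[u]]$. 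Making this precise requires showing that the Atiyah class of $\fg_X$ (computed in the sense of Section 6) corresponds, under the formal-exponential isomorphism $B\exp: B\fg_X \to X$, to the Atiyah class of the tangent bundle $T_X$ in the usual Dolbeault/Chern-Weil sense, after the degree shift is absorbed into $u$. This is the one place where smooth geometry genuinely enters and where the passage to negative cyclic homology is essential; it is best treated by a careful Chern-Weil calculation on the jet bundle, and once established the theorem follows immediately by substitution.
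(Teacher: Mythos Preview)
Your proposal is correct and follows essentially the same route as the paper: construct $\fg_X$ from jets via Koszul duality (the paper's Lemma \ref{tangentL8}), apply the Lie-theoretic quantization results pointwise over $X_\Omega$ (the paper's Proposition \ref{obsthm} and Theorem \ref{inftytermBg}), and then carry out the Chern--Weil comparison on the jet bundle to identify $ch_k(B\fg_X)$ with $u^k ch_k(X)$ (the paper's Section \ref{charclass} and Section \ref{sect:A_u}). The only point where the paper is more explicit than your sketch is the mechanism for that last identification: rather than an abstract HKR-type argument, the paper shows directly that the Atiyah class $\alpha_\sigma$ of the trivialized jet module $\fJ(E)$ has ``constant-coefficient'' part equal to the curvature of an honest connection $\nabla^E_\sigma$ on $E$, and then runs a zig-zag through the bicomplex $(\Omega^*_\fJ, d_{dR})$ to move the class down to row zero, picking up one power of $u$ at each step.
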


\begin{remark}
If we worked with a complex manifold instead of a smooth manifold, in the style of Costello's work on the Witten genus, we would recover the Todd genus instead and could avoid working with cyclic homology.
\end{remark}

We will show that this theorem follows from Theorem \ref{inftytermBg}. First, we show how to encode the smooth manifold $X$ as $B\fg_X$, where $\fg_X$ is an $\L8$ algebra, and hence $T^*[0]X$ as $T^*B\fg_X$. This result lets us apply our results from Part I. Then we explain how this QFT relates to the usual sigma model, known as the ``free particle in $X$." The only work that remains to prove the theorem is to show that $\hat{A}_u(X)$ actually arises by using $B\fg_X$. At the end, we explain how the theorem above relates to Theorem \ref{ManifoldThm} stated in the introduction.

\section{Koszul duality and formal geometry}

We would like to encode the smooth geometry of the manifold $X$ in the language of $\L8$ algebras, as this would allow us to apply Chern-Simons theory. We construct such an $\L8$ algebra from the perspective of dg manifolds.\footnote{The discussion from hereon will use the language of jets, $D$-modules, and dg manifolds quite heavily, so we encourage the reader to skim the appendices for our conventions.} It turns out that this construction fits nicely with the language of formal geometry. In fact, this perspective informed Costello's construction in \cite{Cos3}, from which we draw inspiration.

\subsection{Encoding a smooth manifold as an $\L8$ algebra}

Consider the canonical map of dg manifolds $X \to X_{dR}$ arising from the quotient map of commutative dg algebras $\Omega_X \to \cinf_X$. This map identifies points in $X$ that are infinitesimally close, so a fiber of the map essentially looks like an infinitesimal neighborhood of a point in $X$. Since the functions on an infinitesimal disk look like formal power series, we thus expect that the structure sheaf of $X$, as a space over $X_{dR}$, looks like the sheaf $\sJ$ of $\infty$-jets of smooth functions, or rather the de Rham complex of that $D_X$-module. The sheaf of jets looks huge as a sheaf of vector spaces but it is of manageable size as a $\cinf_X$-module, so we can apply Koszul duality to encode the de Rham complex of jets using an $\L8$ algebra.

The following lemma makes the heuristic picture above precise.\footnote{This result is a direct analogue of a lemma from \cite{Cos3}.}

\begin{lemma}\label{tangentL8}
There is a curved $\L8$ algebra $\fg_X$ over $\Omega_X$, with nilpotent ideal $\Omega^{> 0}_X$, canonical up to a contractible choice, such that
\begin{enumerate} 

\item $\fg_X \cong \cT_X[-1] \otimes_{C^\infty_X} \Omega^\#_X$ as an $\Omega^\#_X$ module;

\item $C^*(\fg_X) \cong dR(\sJ)$ as commutative $\Omega_X$ algebras;

\item $C^*(\fg_X) \simeq C^\infty_X$ as $\Omega_X$ modules.

\end{enumerate}
\end{lemma}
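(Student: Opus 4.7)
The plan is to proceed by Koszul duality, defining $\fg_X$ through its Chevalley-Eilenberg cochain algebra. A curved $\L8$ structure on the graded $\Omega^\#_X$-module $\fg_X := T_X[-1]\otimes_{C^\infty_X}\Omega^\#_X$ with curving in the nilpotent ideal $\Omega^{>0}_X$ is equivalent to specifying a degree $+1$, square-zero derivation on the completed symmetric algebra $C^\#(\fg_X) = \csym_{\Omega^\#_X}(\fg_X^\vee[-1]) \cong \Omega^\#_X\otimes_{C^\infty_X}\csym_{C^\infty_X}(T^*_X)$ whose constant term lies in $\Omega^{>0}_X$. So condition (1) is built into the definition of $\fg_X$, and the task is to put the correct differential on $C^\#(\fg_X)$ so that condition (2) holds; condition (3) will then drop out of a formal Poincar\'e lemma.

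The geometric input is a formal exponential map $\sigma$, namely an isomorphism of commutative $C^\infty_X$-algebras $\sigma\colon \csym_{C^\infty_X}(T^*_X)\xrightarrow{\sim}\sJ$ whose leading term is the identity on $T^*_X$. Such a $\sigma$ is produced from any torsion-free affine connection on $TX$ by following formal normal coordinates; the set of such maps is the affine space modelled on sections of $\bigoplus_{n\ge 2}\Hom(\mathrm{Sym}^n T_X,T_X)$, hence contractible. This is what supplies the ``canonical up to a contractible choice'' clause.

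Fix such a $\sigma$. The jet sheaf $\sJ$ carries its canonical flat Grothendieck connection $\nabla$ induced by the two projections $X\times X \rightrightarrows X$, and $dR(\sJ)$ is the commutative dga $(\Omega^\#_X\otimes_{C^\infty_X}\sJ,\,d_{dR}+\nabla)$. Transporting along $1\otimes\sigma^{-1}$ produces a degree $+1$ square-zero derivation $d_{\fg_X}$ of $\csym_{\Omega^\#_X}(T^*_X\otimes\Omega^\#_X)$; its dual Taylor coefficients $\ell_n\colon \Lambda^n\fg_X\to\fg_X[2-n]$ assemble into the $\L8$ operations, with the curving $\ell_0$ landing in $\Omega^{>0}_X$ because $\sigma$ fails to intertwine $\nabla$ with the trivial symmetric-algebra connection only to first order in $\Omega^1_X$. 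By construction $(C^\#(\fg_X),d_{\fg_X})\cong dR(\sJ)$ as commutative $\Omega_X$-algebras, giving (2). For (3), the tautological jet map $j\colon C^\infty_X\hookrightarrow\sJ$ is $\nabla$-flat, and since $\sJ$ is the cofree $D_X$-algebra on $C^\infty_X$, the formal Poincar\'e lemma identifies $dR(\sJ)$ with $C^\infty_X$ as $\Omega_X$-modules.

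The main obstacle is canonicity rather than any single computation: two choices $\sigma,\sigma'$ yield a priori different $\L8$ structures on $\fg_X$, and we need to exhibit a contractible space of compatibilities between them. The way I would handle this is to reformulate the entire construction in families over the affine sheaf of formal exponentials, promoting it to a simplicial sheaf whose $n$-simplices are polynomial $n$-parameter families of $\sigma$'s. Every step above --- transporting $\nabla$, extracting the derivation, reading off $\ell_n$ --- is manifestly functorial in $\sigma$, so the resulting simplicial sheaf of curved $\L8$ structures on $\fg_X$ is fibered over a contractible base with each fiber equivalent to $dR(\sJ)$, which delivers the desired uniqueness up to contractible choice.
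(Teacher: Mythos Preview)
Your proposal is correct and follows essentially the same strategy as the paper's proof: identify $\csym_{C^\infty_X}(\Omega^1_X)$ with $\sJ$ as a $C^\infty_X$-algebra via a choice of splitting, transport the canonical flat connection on $\sJ$ across this isomorphism, and then pass to the de Rham complex to obtain the Chevalley--Eilenberg differential on $C^\#(\fg_X)$. The differences are entirely in packaging. You phrase the choice as a ``formal exponential'' coming from a torsion-free connection and describe the contractible parameter space as the affine space of all such isomorphisms; the paper instead picks only a $C^\infty_X$-linear splitting of $F^1\sJ \to \Omega^1_X$ and lets the universal property of the symmetric algebra extend it to the full algebra isomorphism, deferring contractibility to the references \cite{Grady}, \cite{CSX}. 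Your simplicial-families argument for uniqueness is more explicit than what the paper does, but not needed---the paper is content to observe that the space of splittings is affine and hence contractible.
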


\begin{proof}
We need to show that we can equip $\csym_{\cinf_X}(\cT_X^\vee) \ot_{\cinf_X} \Omega_X$ with a degree $1$ derivation $d$ such that $d^2 = 0$ (this is the curved $\L8$ structure) and such that this Chevalley-Eilenberg complex is quasi-isomorphic to $C^\infty_X$ as an $\Omega_X$ module. In this process we will see the second property explicitly.

We start by working with $D_X$ modules and then use the de Rham functor to translate our constructions to $\Omega_X$ modules. Consider the sheaf $\sJ$ of infinite jets of smooth functions. Observe that there is a natural descending filtration on $\sJ$ by ``order of vanishing." To see this explicitly, note that the fiber of $J$ at a point $x$ is isomorphic (after picking local coordinates $x_1, \ldots, x_n$) to $\CC[[x_1, \ldots, x_n]]$, and we can filter this vector space by powers of the ideal $\mathfrak m = (x_1,\ldots,x_n)$. We define $F^k \sJ$ to be those sections of $\sJ$ which live in $\mathfrak m^k$ for every point. This filtration is not preserved by the flat connection, but the connection does send a section in $F^k \sJ$ to a section of $F^{k-1} \sJ \ot_{\cinf_X} \Omega^1_X$.

Observe that $F^1 \sJ / F^2 \sJ \cong \Omega^1_X$, because the first-order jets of a function encode its exterior derivative. Moreover, $F^k \sJ / F^{k+1} \sJ \cong \Sym^k(\Omega^1_x)$ for similar reasons. Pick a splitting of the map $F^1 \sJ \rightarrow \Omega^1_X$ as $\cinf_X$ modules; we denote the splitting by $\sigma$. (Note that there is a contractible space of such splittings, see the discussion below.) By the universal property of the symmetric algebra, we get a map of non unital $\cinf_X$ algebras that is, in fact, an isomorphism 
\[
 \mathrm{Sym}^{>0}_{\cinf_X}(\Omega^1_X) \xrightarrow{\cong} F^1 \sJ.
\]
Now both $\csym_{\cinf_X} (\Omega^1_x)$ and $\sJ$ are augmented $\cinf_X$ algebras with augmentations
\[
p: \csym_{\cinf_X} (\Omega^1_X) \to \mathrm{Sym}^0 = \cinf_X \text{ and } q: \sJ \to \sJ/ F^1 \sJ \cong \cinf_X .
\]
Further, $\mathrm{Sym}^{>0}_{\cinf_X}(\Omega^1_X) = \ker p$ and $F^1 \sJ = \ker q$, so we obtain an isomorphism of $\cinf_X$ algebras
\[
\csym_{\cinf_X} (\Omega^1_X) \xrightarrow{\cong_\sigma} \sJ
\]
by extending the previous isomorphism by the identity on $\mathrm{Sym}^0_{\cinf_X}$ and $\sJ / F^1 \sJ$.  The preceding discussion is just one instance of the equivalence of categories between commutative non unital $A$ algebras and commutative augmented $A$ algebras for $A$ any commutative algebra.

We then equip $\csym(\Omega^1_X)$ with the flat connection for $\sJ$, via the isomorphism, thus making it into a $D_X$ algebra. Applying the de Rham functor $dR$, we get an isomorphism of $\Omega_X$ algebras
\[
 \csym_{\cinf_X} (\Omega^1_X) \ot_{\cinf_X} \Omega_X \xrightarrow{\cong_\sigma} \sJ \otimes_{\cinf_X} \Omega_X .
\]
Recall that the symmetric algebra is compatible with base change, that is
\[
\csym_{\cinf_X} (\Omega^1_X ) \otimes_{\cinf_X} \Omega^\sharp_X = \csym_{\Omega^\sharp_X} (\Omega^1_X \otimes_{\cinf_X} \Omega^\sharp_X ) \cong \csym_{\Omega^\sharp_X} \left ( (\cT_X [-1] \otimes_{\cinf_X} \Omega^\sharp_X)^\vee [-1]\right ),
\]
where we dualize over $\Omega^\sharp_X$. Via the de Rham functor we have constructed a derivation on this completed symmetric algebra defining the $\L8$ structure over $\Omega_X$. 

Finally, the third property follows immediately from a standard argument that the cohomology of the de Rham complex of jets is concentrated in degree 0 (see \cite{CFT}). 
\end{proof}

That the space of splittings of the jet sequence
\[
\xymatrix{
0 \ar[r]& F^2 \sJ \ar[r] & F^1 \sJ \ar[r] & \Omega^1_X \ar[r] \ar@/_1pc/[l]_\sigma & 0 .}
\]
is non-empty and contractible is proved in \cite{Grady}, see also \cite{CSX}.  Further, in \cite{Grady} it is shown that the assigment $X \mapsto \fg_X$ is in a certain sense functorial.

\subsection{Relation to Gelfand-Kazhdan formal geometry}

The construction in the previous subsection can also be motivated via the approach to formal geometry going back to Gelfand, Kazhdan, and Fuchs (see \cite{GK}, \cite{GKF}, \cite{BR}, or the more recent \cite{BK}, \cite{BF}).  The idea of this approach is to build something over the formal disk and then use so-called {\it Harish-Chandra localization} to glue this construction over a manifold.  

Given an $n$-dimensional smooth manifold $X$, let $X^{coor}$ denote the infinite-dimensional manifold of maps from the parametrized formal $n$-disk into $X$. More explicitly, a point of $X^{coor}$ is given by taking a local diffeomorphism $\phi: U \subset \RR^n \to X$, where $U$ is an open neighborhood of the origin $0 \in \RR^n$, and then taking its $\infty$-jet (aka Taylor expansion) at $0$. We view two such representatives $\phi$, $\psi$ as equivalent if they have the same $\infty$-jet. By evaluating the jet at $0$, we have a projection map $\pi: X^{coor} \to X$. We now explain how $X^{coor}$ is a special kind of principal bundle.

Let $\sW$ be the Lie algebra of formal vector fields:
\[
\sW = \left \{ \sum_{j=1}^n v_j \frac{\partial}{\partial y_j} : v_j \in \RR [[ y_1 , \dotsc , y_n ]] \right \} .
\]
$\sW$ acts infinitesimally on the formal $n$-disk and the action restricted to the subalgebra of vector fields vanishing at the origin can be integrated to an action of the Lie group $G_0$ of formal coordinate transformations of $\RR^n$. The pair $(\sW, G_0)$ is known as a {\it Harish-Chandra pair}.

Observe that $X^{coor}$ has a natural action of this pair $(\sW, G_0)$, because the Lie algebra $\sW$ and the group $G_0$ both act compatibly on the formal $n$-disk. In fact, $\pi: X^{coor} \to X$ is a principal $G_0$-bundle, and the action of the pair makes this bundle into a {\it Harish-Chandra structure} (see \cite{BF} for a discussion of all the necessary conditions). There is an intermediate space that often appears in discussions of formal geometry. The subgroup $\mathrm{GL} (n, \RR) < G_0$ of linear diffeomorphisms acts on $X^{coor}$ freely with quotient denoted by $X^{aff}$. We summarize all of the above with the following sequence of fibrations
\[
\begin{xymatrix}{
\mathrm{GL}(n,\RR) \ar@{^{(}->}[r] & X^{coor} \ar@{->>}[d] \\ G_0 / \mathrm{GL}(n,\RR) \ar@{^{(}->}[r] & X^{aff} \ar@{->>}[d] \\ & X}
\end{xymatrix}
\]
The utility of $X^{aff}$ is that it is an {\em affine} bundle and hence easier to work with.

{\it Harish-Chandra localization} just amounts to performing an associated bundle construction.  That is, given a module $V$ for the pair $(\sW , G_0)$ -- i.e., a vector space with actions of $\sW$ and $G_0$ satisfying certain compatibility relations -- we can build a vector bundle $\sV \to X$ by defining  
\[
\sV := X^{coor} \times_{G_0} V \to X .
\]
For example, let $V = \CC [[x_1 , \dotsc , x_n]]$ be the functions on the formal disk.  Then $\sV = \sJ$ ---Harish-Chandra localization for the module $\CC[[x_1, \dotsc , x_n ]]$ recovers the jet bundle on $X$!  We can go further and use the compatible action of $\sW$ to equip the bundle $\sV$ with a flat connection. In the case $V = \CC[[x_1 , \dotsc , x_n]]$, this yields the standard flat connection on the jet bundle.

In \cite{Cos3}, Costello implicitly uses the following observation and we follow suit in this paper. One way to construct an object living over some space $Y$ is to do a Borel construction on a principal bundle of $Y$.  Harish-Chandra localization is an example of this approach: the Borel construction takes a module $V$ for $G_0$ and gives us a vector bundle $\sV$ over $X$, and if $V$ is actually a module for the Harish-Chandra pair $(\sW , G_0)$, then Harish-Chandra localization eqips the same bundle $\sV \to X$ with a flat connection.  The resulting bundle with flat connection should be the same as directly performing a construction over the quotient of $X^{coor}$ by the Harish-Chandra pair $(\sW, G_0)$; this quotient doesn't exist in manifolds, but it does exist as a dg manifold, namely $X_{dR}$.\footnote{One way to see this is to consider the action of the Lie algebra of vector fields on smooth functions and see that the Chevalley-Eilenberg complex $C^\ast (\cT_X , C^\infty_X)$ is quasi-isomorphic to the de Rham complex.} The upshot is that we can forgo all of the formal geometry constructions by working in dg manifolds and constructing our objects of interest directly over $X_{dR}$.

\subsection{A circle action on this $\L8$ algebra}\label{circleaction}

In what follows, we will consider the dg manifold of maps from $S^1_{\Omega}$ into $T^*[0]X$. Using our techniques from above, we know that this dg manifold, the derived loop space 
\[
\cL T^\ast [0]X \cong T[-1] T^\ast[0]X,
\]
has a description in terms of the $\L8$ algebra 
\[
\Omega^\ast(S^1) \otimes (\fg_X \oplus \fg_X^\vee [-2]), 
\]
which is quasi-isomorphic to the $\L8$ algebra
\[
\CC[\epsilon] \otimes (\fg_X \oplus \fg_X^\vee [-2]),
\]
where $\epsilon$ is a square zero parameter of degree 1.  More precisely, we have seen implicitly in section \ref{observables} that this $\L8$ algebra encodes the structure sheaf of the derived loop space $\cL T^\ast [0]X$. Now, by definition, the functions on this derived space $\sO(T[-1]T^\ast[0]X)$ are $\Omega^{-\ast}_{T^\ast B\fg}$, namely the complex with $\Omega^k_{T^\ast B \fg}$ in degree $-k$.  Note that there is no de Rham differential appearing in this complex, just the internal differential coming from the $\Omega^\ast_X$-algebra structure.  

We somehow need to restore the de Rham differential in order to see the $\hat{A}$ class. Recall from the remark \ref{whythecircleaction} that scalar Atiyah classes vanish in the smooth setting. The standard method is to take advantage of the action of $\CC[\epsilon]$ on $\cL T^\ast[0]X$; the de Rham differential corresponds to the $\L8$ algebra derivation $\partial/\partial \epsilon$. We prefer to think of this as an action of the dg manifold $B \GG_a = (pt, \CC[\epsilon])$. If one views $B\GG_a$ as an avatar of the circle, then this action is a version of ``rotating the loops."

These constructions are well-known (see appendix \ref{Mixedcomplexes}), usually referred to by the name of {\em mixed complexes} or {\em cyclic modules} (see \cite{BZN} and \cite{TV}). If we ask for the $B\GG_a$-invariant functions on $\cL T^\ast (X, \fg_X)$, we obtain the {\em negative cyclic homology} of $T^\ast (X, \fg_X)$. For a thorough discussion of these ideas in the language of derived geometry, see \cite{BZN} and \cite{TV}. We emphasize these circle actions here as they are crucial for actually recovering the $\hat{A}$-class in smooth geometry.

\section{Motivation for our action functional}\label{large volume limit}

In the remainder of this part, we will study Chern-Simons with the $\L8$ algebra $\fg_X$, but before embarking on that study, we want to discuss how this theory relates to other forms of quantum mechanics. On its face, this Chern-Simons action functional does not resemble the usual action functional for a free particle, but one does recover the usual algebra of observables, namely the ring of differential operators on the manifold $X$, so it would be nice to know how the two theories are related.\footnote{Proving this assertion about the observables is one of the main goals of a followup paper.} There is a natural construction that relates the two theories, and it consists of three steps. First, as described below, we'll take the {\em infinite-volume limit} of the action for the usual free particle. Second, we will apply the Batalin-Vilkovisky procedure to this action. Together, these steps yield the AKSZ action functional for maps from a 1-manifold into $T^\ast X$, with the standard symplectic form. In the final step, a form of Koszul duality then lets us re-express the AKSZ action as a version of Chern-Simons theory.\footnote{This procedure appears to work quite well for many nonlinear sigma models: the combination of the infinite-volume limit with the BV procedure yields an AKSZ theory. Rewriting the theory as a gauge theory is useful simply because it allows us to apply the toolkit developed by Costello \cite{Cos1}.} To summarize, our discussion here outlines a proof of the following theorem.

\begin{theorem}
Given a Riemannian manifold $(X,g)$, there is family of classical field theories parametrized by $\ell \in [0,1]$ such that for $\ell > 0$, the theory is equivalent to the sigma model with target $(M, g/\ell)$, and for $\ell = 0$ the theory is equivalent to the one-dimensional Chern-Simons theory with $\L8$ algebra $\fg_X$.
\end{theorem}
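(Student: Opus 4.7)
\medskip

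\noindent\textbf{Proof proposal.} The plan is to realize the interpolating family through the three-step procedure outlined just above the theorem: (i) introduce an auxiliary momentum field to rewrite the standard sigma model in first-order form, (ii) take the infinite-volume limit $\ell \to 0$ to obtain an AKSZ theory for maps into $T^*X$, and (iii) invoke Koszul duality, via Lemma \ref{tangentL8}, to re-express the AKSZ theory as 1-dimensional Chern-Simons with $\L8$ algebra $\fg_X \oplus \fg_X^\vee[-2]$. Thus the family is most naturally written with fields $(\gamma, p)$, where $\gamma \colon S^1 \to X$ and $p$ is a section of $\gamma^* T^*X$ valued in 1-forms on $S^1$, and with action
\[
S_\ell(\gamma, p) \;=\; \int_{S^1} \bigl\langle p, \dot\gamma \bigr\rangle \, dt \;-\; \frac{\ell}{2} \int_{S^1} g^{-1}(p,p) \, dt.
\]

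For $\ell > 0$, the field $p$ enters quadratically with nondegenerate covariance $\ell\, g^{-1}$, so completing the square and performing the Gaussian path integral in $p$ (equivalently, solving its equation of motion $p = \ell^{-1} g(\dot\gamma, -)$ and substituting back) recovers the standard free-particle action $\tfrac{1}{2\ell}\int g(\dot\gamma,\dot\gamma)\,dt$, which is the sigma model with metric $g/\ell$. This identification should be checked in the BV framework by producing an explicit quasi-isomorphism of the respective obstruction-deformation complexes (or, in Costello's language, by showing the two theories are related by a smooth family of parametrices, with $p$ playing the role of an auxiliary BV variable that is integrated out fiberwise). Taking $\ell \to 0$ kills the quadratic term in $p$ and leaves $S_0(\gamma, p) = \int_{S^1} (\gamma,p)^* \lambda$, where $\lambda$ is the Liouville 1-form on $T^*X$; this is precisely the AKSZ action associated to the symplectic target $T^*X$ on the 1-dimensional source $S^1$.

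In the last step one restricts to a formal neighborhood of constant maps, so the target effectively becomes $T^*[0]X$, and then applies Lemma \ref{tangentL8} to replace $X$ (as a dg manifold over $X_\Omega$) by $B\fg_X$. Since the cotangent construction commutes with this Koszul-dual description, one has $T^*[0]X \simeq T^*B\fg_X$ as dg manifolds over $X_\Omega$, under which the Liouville 1-form is carried to the canonical pairing on $\fg_X \oplus \fg_X^\vee[-2]$; the AKSZ action $\int (\gamma,p)^*\lambda$ is then identified with the 1-dimensional Chern-Simons action $S_{CS}$ for the $\L8$ algebra $\fg_X \oplus \fg_X^\vee[-2]$ constructed in Part I. Combining the three steps gives the desired family of classical BV theories parametrized by $\ell \in [0,1]$.

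The main obstacle is the rigorous implementation of step (i) in Costello's BV framework: one must verify that the first-order theory with action $S_\ell$ and the original sigma model are equivalent as classical BV theories for all $\ell > 0$, in a way that extends continuously to $\ell = 0$. This is a Hubbard-Stratonovich-type manipulation that is standard physically but requires care in the effective BV formalism, because integrating out $p$ involves the (nonlinear) inverse metric and one must show this operation is compatible with the RG flow and the classical master equation uniformly in $\ell$. By contrast, steps (ii) and (iii) are essentially formal: the $\ell \to 0$ limit kills a quadratic term, and the passage to $\fg_X$ is precisely the content of Lemma \ref{tangentL8} combined with the Koszul-duality philosophy developed in Part I.
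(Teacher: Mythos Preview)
Your proposal is correct and follows essentially the same three-step outline the paper gives: pass to the first-order (Hamiltonian) formulation with action $\int \langle p,\dot\gamma\rangle - \tfrac{\ell}{2}g^{-1}(p,p)$, identify the $\ell>0$ theories with the sigma model for metric $g/\ell$ and the $\ell=0$ limit with the AKSZ action $\int f^*\lambda$, then apply the BV procedure and Koszul duality (Lemma \ref{tangentL8}) to rewrite the limiting theory as one-dimensional Chern-Simons for $\fg_X$. The paper itself presents this only as an outline rather than a complete proof, and your identification of the main technical gap---making the Hubbard--Stratonovich equivalence for $\ell>0$ rigorous and uniform in Costello's BV framework---is apt; the paper does not address this point either, applying the BV procedure only after the $\ell\to 0$ limit has already been taken.
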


\subsection{Step 1: the infinite-volume limit}

Recall that the phrase ``free particle" refers to studying maps of an interval $I$ (or circle) into a Riemannian manifold $(X,g)$. The action $S_{fr}$ of a map $\phi: I \to X$ is
\[
S_{fr}(\phi) = \frac{1}{2} \int_I \phi^\ast g(\partial_t \phi, \partial_t \phi) \, dt,
\]
which is simply the integral of the kinetic energy of the particle over the path traveled. The critical locus of $S_{fr}$ is the space of geodesics in $X$.

There is another action functional with exactly the same critical locus. It arises by including another field that encodes the momentum of the particle (and so is sometimes called the ``first-order formulation" of the theory). Let $\phi \in \Maps(I,X)$ and $\psi \in \Gamma(I, \phi^\ast T^\ast X)$. Then
\[
S_{FO} (\phi, \psi) = \int_I \langle \partial_t \phi, \psi \rangle - \frac{1}{2} \phi^* (g^{-1}) (\psi, \psi) \, dt,
\]
where $\langle -, - \rangle$ denotes the canonical pairing between vector fields and covector fields and $g^{-1}$ denotes the metric on $T^\ast_X$ induced by the metric $g$. The Euler-Lagrange equations for $S_{FO}$ are that $\partial_t \phi = \psi^\vee$ (the vector field dual to $\psi$ using $\phi^\ast g$) and $\partial_t \psi= 0$, and so the critical locus of $S_{FO}$ is again the space of geodesics in $X$. Note that our space of fields has changed to the mapping space $\Maps(I, T^\ast X)$.

We now take the ``infinite-volume limit" of the first-order formulation. This means we scale the metric $g$ by a parameter $1/\ell$ and consider how the theory changes as $\ell \to 0$. Geometrically, making the metric larger corresponds to flattening out the local geometry of $X$. In particular, the volume grows toward infinity as $\ell \to 0$. Algebraically, it means that the second term in $S_{FO}$, depending quadratically on $\psi$, becomes less significant, since it is weighted by $\ell$. Notice that the first term, since it involves the canonical pairing, is unchanged by dilating the metric. Hence, as $\ell \to 0$, the first term comes to dominate, and the infinite-volume limit of the action functional is
\[
S_{IVL}(\phi, \psi) = \int_I \langle \partial_t \phi, \psi \rangle \, dt.
\]
The Euler-Lagrange equations are $\partial_t \phi= 0$ and $\partial_t \psi= 0$, so the critical locus is the space of {\em constant} maps into $T^\ast X$. This limiting behavior should be intuitively reasonable: if we fix an interval $I = [0,1]$ and look at any trajectory $\gamma_\ell$ satisfying the Euler-Lagrange equation for $S_{FO}$ with scale $\ell$, the path grows shorter with respect to the original metric $g$ as $\ell$ grows smaller. In the infinite volume limit, you only remember the point $\gamma(0)$ and its (co)tangent vector. For the infinite volume theory, the solutions are $T^*X$ now viewed as parametrizing the space of geodesics via their initial conditions.

There is another description of this action functional that is probably more familiar. Let $\lambda$ denote the Liouville 1-form on $T^\ast X$; that is, $d\lambda$ is the natural symplectic form on $T^\ast X$. In local coordinates, $\lambda$ would usually be expressed as $\sum p_i \, dq_i$. The action functional is then that
\[
S_{IVL}(f) = \int_{I} f^\ast \lambda,
\]
where $f \in \Maps(I, T^*X)$.

\begin{remark}
Taking a infinite-volume limit is a natural way to turn a theory that depends on the geometry of the target $X$ into a theory that depends only on the smooth topology of $X$. It drastically simplifies the physics. On the other hand, we might hope that we can express the observables for our family of theories, parametrized by $\ell$, in a power series in $1/\ell$, so that we know the physics for large but not infinite metric as a deformation of the topological field theory. This idea is the subject of work in progress joint with Si Li.
\end{remark}

\subsection{Step 2: the BV procedure}

Now we apply the BV procedure. We need to phrase the theory in a way where we can add ``antifields," which will describe the {\em derived} critical locus of our action functional $S_{LVL}$. 

We need to construct a version of the shifted cotangent bundle of $\sM := \Maps(S^1, T^\ast X)$. Observe that at a point $f \in \sM$, the tangent space is
\[
T_f \sM = \Gamma(S^1, f^\ast T(T^\ast X)).
\]
The space 
\[
\Gamma(S^1, T^\ast S^1 \ot f^\ast T^\ast (T^\ast X)) = \Omega^1(S^1,  f^\ast T^\ast (T^\ast X))
\] 
has a natural pairing with $T_f \sM$ by pairing the sections of the dual pullback bundles of $T^\ast X$ and then integrating over $S^1$. We will use it as the version of $T^\ast_f \sM$ appropriate to our purposes. Hence,
\[
T^\ast_f [-1] \sM := \Omega^1_{S^1}(f^\ast T^\ast (T^\ast X))[-1].
\]
Use the symplectic form on $T^\ast X$ to identify its cotangent bundle with its tangent bundle. Then we see
\[
T^\ast[-1] \sM \cong \Maps(S^1_{\Omega^\#}, T^\ast X),
\]
where $\Omega^\#$ denotes the differential forms as a graded algebra. There is a natural extension of $S_{LVL}$ to an action functional
\[
S(f) = \int_{S^1} \langle f, df \rangle
\]
for $f \in \Maps(S^1_{\Omega^\#}, T^\ast X)$, where the brackets denote the symplectic form on $T^\ast X$. Its critical locus is equivalent to equipping $T^\ast [-1]\sM$ with the differential that makes it the dg manifold
\[
\Maps(S^1_{dR}, T^\ast X).
\]
This is the derived critical locus of $S_{LVL}$.

Nots that we have recovered the AKSZ theory with source $S^1_{dR}$ and target $T^* X$. 

\subsection{Concentrating our attention on a neighborhood of the zero section}

So far, we have talked about the space $T^*X$, but our QFT uses $T^*B\fg_X$, which is equivalent to $T^*[0]X$. These spaces are not exactly the same: $T^*[0]X$ is the formal neighborhood of the zero section $X \hookrightarrow T^*X$. Notice, however, that the BV action above depends linearly on $\psi$, just as our discussion in section \ref{actionsection}. If we require our quantization to preserve this symmetry, we obtain a theory that encodes the same data as the theory with $T^*[0]X$. As usual, requiring equivariance under rescaling the cotangent fibers means that we can instead study the formal neighborhood of the zero section.

\section{Characteristic classes via formal geometry}\label{charclass}

The aim of this section is to explain how the characteristic classes of a complex vector bundle $\pi: E \to X$ can be expressed using the language of formal geometry. In essence, we develop a Chern-Weil construction of characteristic classes using Atiyah classes. To be precise, we apply the Atiyah class formalism to $J(E)$, the $\infty$-jet bundle of $E$, and then relate the Atiyah class of $J(E)$ to the curvature of a connection on $E$. Our goal is the following proposition.

\begin{prop}\label{chernX}
Let $\alpha_E$ denote the Atiyah class of $J(E)$, defined below (see section \ref{sec:AtVB}). The Chern class $ch_k(E) \in H^{2k}(X)$ is given by the cohomology class $\frac{1}{k!(-2\pi i)^k} [\Tr(\alpha_E^{\wedge k})]$, under the isomorphism in corollary \ref{cor:horizontal}.
\end{prop}

We will build up to this proposition in stages.\footnote{In the appendix on $D$-modules, we give more background on $D$-modules, jets, and other constructions that are used throughout the following section.} First, we will discuss the algebra of jets of smooth functions $\sJ$ and various useful constructions on it. Then we explain how to construct a connection on $J(E)$, the jets of the vector bundle $E$. Finally, we exploit the Atiyah class of this connection to recover the Chern-Weil construction of Chern classes. 

\subsection{The algebra of jets}

Let $\sJ$ denote the $\infty$-jet bundle of the trivial bundle on $X$. It is a commutative $D_X$-algebra, as can be seen locally by the natural product on Taylor series. By construction, it encodes the smooth geometry of the manifold $X$. In this paper we build everything over the dg manifold $X_{dR}$, so that all constructions automatically come equipped with a flat connection. Hence, in what follows, we work with the commutative $\Omega_X$-algebra $\fJ$, the de Rham complex $dR(\sJ)$ of the jets $\sJ$. 

By construction, the de Rham differential $d_{dR}: \fJ \to \Omega^1_\fJ$ commutes with the differentials $d$ on these $\Omega_X$-modules and by construction $(d_{dR})^2 = 0$, so we get a cochain complex of $\Omega_X$-modules
\[
\fJ \overset{d_{dR}}{\longrightarrow} \Omega^1_\fJ \overset{d_{dR}}{\longrightarrow} \Omega^2_\fJ \to \cdots \to \Omega^{\dim X}_\fJ.
\]
This double complex $(\Omega^*_\fJ, d_{dR})$ provides a description, using $\Omega_X$ modules, of the usual de Rham complex of $X$.\footnote{This construction probably seems tortuous, if not gratuitous, but it arises naturally from our approach to formal geometry.} The following proposition makes this interpretation precise.

\begin{prop}\label{prop:jetquasiiso}
As $\Omega_X$ modules, $\Omega^k_\fJ$ is isomorphic to $dR(J(\Omega^k_X))$.
\end{prop}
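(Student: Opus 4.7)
The plan is to factor the desired isomorphism through an identification of jets of $\Omega^k_X$ with Kähler differentials of $\sJ$ over $C^\infty_X$, performed \emph{before} applying the de Rham functor. Explicitly, I would first establish that as $D_X$-modules
\[
\Omega^k_{\sJ/C^\infty_X} \cong J(\Omega^k_X),
\]
and then apply $dR$ to both sides, using base change for Kähler differentials to identify $dR(\Omega^k_{\sJ/C^\infty_X})$ with $\Omega^k_{\fJ/\Omega_X} = \Omega^k_\fJ$.

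For the first step, the universal derivation $d_{\sJ/C^\infty_X}\colon \sJ \to \Omega^1_{\sJ/C^\infty_X}$ and the ``total derivative'' on jets (sending the jet of $f$ to the jet of $df$) together give, by the universal property of Kähler differentials, a comparison map $\Omega^1_{\sJ/C^\infty_X} \to J(\Omega^1_X)$. In local coordinates $x^1,\ldots,x^n$ near a point $p$, the fiber $\sJ_p \cong \RR[[y^1,\ldots,y^n]]$ with $y^i$ the jet of $x^i - x^i(p)$; then $\Omega^1_{\sJ/C^\infty_X}$ is freely generated over $\sJ$ by the $dy^i$, while $J(\Omega^1_X)_p$ is freely generated over $\sJ$ by the jets of the $dx^i$, and the comparison map sends $dy^i$ to the jet of $dx^i$. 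This is manifestly an isomorphism and a short coordinate computation verifies that it intertwines the Cartan connection on jets with the flat connection on $\Omega^1_{\sJ/C^\infty_X}$ induced from that on $\sJ$. Higher $k$ follows from $\Omega^k = \wedge^k \Omega^1$, together with the corresponding exterior-power description $J(\Omega^k_X) \cong \wedge^k_\sJ J(\Omega^1_X)$ that holds because $\Omega^1_X$ is a locally free $C^\infty_X$-module.

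For the second step, as a graded $\Omega_X^\#$-algebra $\fJ^\#$ is simply $\sJ \otimes_{C^\infty_X} \Omega_X^\#$, and flat base change for Kähler differentials gives an isomorphism of graded $\Omega_X^\#$-modules
\[
(\Omega^k_\fJ)^\# \;\cong\; \Omega^k_{\sJ/C^\infty_X} \otimes_{C^\infty_X} \Omega_X^\# \;=\; dR\bigl(\Omega^k_{\sJ/C^\infty_X}\bigr)^\#.
\]
Combined with the first step, this yields the claimed isomorphism at the level of underlying graded modules. The main obstacle will be matching the differentials: the differential on $\Omega^k_\fJ$ combines the internal jet differential and the de Rham differential on $\Omega_X$, twisted by the fact that the $\Omega_X$-algebra structure on $\fJ$ already encodes the flat connection on $\sJ$; one must verify that under the identification above it agrees with the twisted differential on $dR(J(\Omega^k_X))$ produced by the canonical connection on the jet bundle. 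This reduces to expanding both differentials in local coordinates using the explicit Cartan formulas for the connections on $\sJ$ and on $J(\Omega^k_X)$ and observing that the comparison map from the first step intertwines them by construction.
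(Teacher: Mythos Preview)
Your proposal is correct and follows essentially the same idea as the paper: produce a comparison map from the universal property of K\"ahler differentials (using that the jet of the exterior derivative is a derivation), then verify it is an isomorphism by a local coordinate check. The one organizational difference is that the paper applies the universal property directly at the $\Omega_X$-level --- the derivation $J(d)\colon \fJ \to dR(J(\Omega^1_X))$ is already a map of $\Omega_X$-modules, so the induced map $\theta\colon \Omega^1_\fJ \to dR(J(\Omega^1_X))$ is automatically compatible with the differentials. Your two-step route (first at the $D_X$-level, then base-change along $C^\infty_X \to \Omega^\#_X$) forces you to verify the differential compatibility by hand afterward, which you identify as ``the main obstacle.'' That obstacle simply disappears if you invoke the universal property over $\Omega_X$ from the start; otherwise the arguments are the same.
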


\begin{proof}
We explain the case $k = 1$, as the other cases follow straightforwardly. 

The exterior derivative $d: \cinf_X \to \Omega^1_X$ is a differential operator, so by proposition \ref{diff} in the appendix, we see that it induces a map of $D_X$ modules $J(d): \sJ \to J(\Omega^1_X)$. Hence, we obtain a map of $\Omega_X$ modules $J(d): \fJ \to dR(J(\Omega^1_X))$. By construction this map is a derivation, so the universal property of K\"ahler differentials insures that there is a natural map $\theta: \Omega^1_\fJ \to dR(J(\Omega^1_X))$ of $\Omega_X$ modules.

We need to show this map is an isomorphism, and it's easy to do this locally.
\end{proof}

\begin{cor}\label{cor:horizontal}
The horizontal sections of $\Omega^k_\fJ$ are precisely $\Omega^k_X$. Moreover, the map $J$ sending a smooth form to its infinite jet induces a quasi-isomorphism of complexes $J: \Omega^*_X \overset{\simeq}{\hookrightarrow} \Omega^*_{\fJ}$.
\end{cor}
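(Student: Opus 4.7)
The plan is to split the corollary into its two claims and to reduce each to the formal Poincar\'e lemma for the jet bundle.

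For the first claim, recall from Proposition \ref{prop:jetquasiiso} that $\Omega^k_\fJ \cong dR(J(\Omega^k_X))$ as $\Omega_X$-modules. In this identification the internal differential on $\Omega^k_\fJ$ is precisely the de Rham differential of the $D_X$-module $J(\Omega^k_X)$, which in turn is built out of the Grothendieck connection on the jet bundle. A horizontal section of $\Omega^k_\fJ$ is by definition a section killed by this internal differential, i.e.\ a flat section for the Grothendieck connection on $J(\Omega^k_X)$. A standard fact about the $\infty$-jet bundle of any smooth vector bundle $E$ is that its flat sections for the Grothendieck connection are exactly the image of $E$ under the infinite jet map $j^\infty\colon E \hookrightarrow J(E)$; indeed this is how the Grothendieck connection is characterized. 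Applied to $E = \Omega^k_X$, this identifies the horizontal sections of $\Omega^k_\fJ$ with $\Omega^k_X$ itself, embedded via $J=j^\infty$.

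For the quasi-isomorphism statement, view $\Omega^*_\fJ$ as a double complex with vertical differential given by the internal (Grothendieck) differential on each $\Omega^k_\fJ$ and horizontal differential given by $d_{dR}$. These commute by construction, so the total complex is well-defined. I would argue first that, column by column, each $\Omega^k_\fJ$ is acyclic away from degree $0$ with cohomology $\Omega^k_X$, with the inclusion $j^\infty\colon \Omega^k_X \hookrightarrow \Omega^k_\fJ$ realizing this quasi-isomorphism. This is the formal Poincar\'e lemma for jets: locally $J(\Omega^k_X)$ is the completed symmetric algebra on $\Omega^1_X$ (near the basepoint) tensored with $\Omega^k_X$, the Grothendieck connection restricts to the usual exterior derivative on the formal disk, and the formal Poincar\'e lemma gives the claim. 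Alternatively one invokes directly the fact that $\sJ$ is a resolution of $\cinf_X$ as a $D_X$-module (the content of the final remark in the proof of Lemma \ref{tangentL8}) and applies it after tensoring with the locally free module $\Omega^k_X$.

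With this vertical acyclicity in hand, I would run the spectral sequence of the double complex associated with the filtration by vertical degree. The $E_1$ page is concentrated in the row of horizontal sections, where it equals $\Omega^k_X$ in column $k$, and the induced horizontal differential $d_{dR}$ is exactly the usual de Rham differential $d$ on $\Omega^*_X$. Hence the total complex of $\Omega^*_\fJ$ has cohomology $H^*_{dR}(X)$, and the map $J\colon \Omega^*_X \hookrightarrow \Omega^*_\fJ$, which factors through the horizontal row, realizes this isomorphism on cohomology. The only real obstacle is the formal Poincar\'e lemma in step one; everything else is bookkeeping with the spectral sequence, but since the vertical quasi-isomorphism is known (and already appears implicitly in Lemma \ref{tangentL8}), the argument is short.
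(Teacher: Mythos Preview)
Your argument is correct. The paper states this corollary without proof, treating both assertions as immediate consequences of Proposition~\ref{prop:jetquasiiso} together with the standard fact that flat sections of $J(E)$ for the Grothendieck connection recover $E$; your proof sketch makes exactly these steps explicit, so there is nothing to compare against beyond noting that you have spelled out the intended reasoning. The paper does later prove a more general statement (Proposition~\ref{jetquasiiso}) via a \v{C}ech argument, which would give an alternative route, but your column-wise Poincar\'e lemma plus spectral sequence is the natural direct proof here.
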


\subsection{The Atiyah class of a vector bundle}\label{sec:AtVB}

Let $J(E)$ denote the $\infty$-jet bundle of $E$. Its sections consist of jets of smooth sections of $E$, and there is a canonical flat connection $\nabla_{J(E)}$ whose kernel is exactly the smooth sections $\sE$ of $E$. There is a natural filtration on $J(E)$ by the order of vanishing:
\[
J(E) = F^0 \supset F^1 \supset \cdots
\]
where $F^k$ consists of those sections of $J(E)$ whose $k$-jets are zero. Observe that there is a canonical isomorphism $J(E)/F^1 \overset{\cong}{\to} \sE$ as a $C^\infty_X$ module. Pick a splitting $\sigma: \sE \to J(E)$ for the canonical quotient $q: J(E) \to \sE$, as discussed in lemma \ref{splitting}.

Let $\fJ(E)$ denote the de Rham complex of $J(E)$. This isomorphism $i_\sigma$ also induces an isomorphism $\fJ(E) \cong \sE \ot_{\cinf_X} \fJ$. Hence $\fJ(E) \ot_\fJ \Omega^1_\fJ \cong \sE \ot_{\cinf_X} \Omega^1_\fJ$, and hence we obtain a natural connection (with respect to the splitting $\sigma$) on $\fJ(E)$:
\[
\nabla_\sigma: \sE \ot_{\cinf_X} \fJ \to \sE \ot_{\cinf_X} \Omega^1_\fJ,
\]
\[
s \ot j \mapsto s \ot d_{dR} j,
\]
where $d_{dR}$ is the de Rham differential on the commutative algebra $\fJ$. Moreover, by construction,  $\nabla_\sigma^2 = 0$, so this connection is {\em flat}! (The de Rham differential $d_{dR}$ on $\fJ$ is flat, and we immediately borrow this fact.) Thus we have a cochain complex of $\Omega^\#_X$-modules
\[
\fJ(E) \overset{\nabla_\sigma}{\longrightarrow} \fJ(E) \ot_{\fJ} \Omega^1_\fJ  \overset{\nabla_\sigma}{\longrightarrow} \fJ(E) \ot_{\fJ} \Omega^2_\fJ \to \cdots
\]
but there is no reason to expect this connection to be compatible with the $\Omega_X$-module structure of these sheaves. 

Let $\alpha_\sigma = \At(\nabla_\sigma)$ denote the Atiyah class for our connection $\nabla_\sigma$. It measures the failure of this connection to be compatible with the differentials defined on these $\Omega_X$-modules $\Omega^k_\fJ(\fJ(E))$.

\subsection{Proving the proposition}

We are now in a position to approach the main proposition. The basic strategy is to relate the Atiyah class $\alpha_\sigma$ to the usual Chern-Weil construction of the Chern classes.\footnote{Alternatively, one could verify the axioms of the Chern classes directly. We hope our approach illustrates the yoga of Gelfand-Kazhdan formal geometry.}

Recall the definition of the Chern character from Section \ref{cherndefn}. This definition obviously bears a close resemblance to the definition of the Chern character in terms of a connection on a vector bundle (simply replace the Atiyah class with the curvature). Hence, our strategy will be to relate $\alpha_\sigma$ to an actual connection on $E$ in such a way that the two definitions of Chern character will coincide. 

In more detail, the argument runs as follows. We show that our Chern character is expressed in terms of elements $\omega_k \in \Omega^*_{\fJ}$ that correspond to {\em closed} forms in $\Omega^{even}_X$ (see lemma \ref{closed} below). Hence, although each $\omega_k$ lives in some kind of jet bundle, it is determined by its ``constant coefficient" part (i.e., its projection onto the bundle ``$J/F^1$"), just the way that a smooth function determines its $\infty$-jet. We then show that this constant coefficient part corresponds to the curvature of a connection on $E$ arising naturally from our choice of splitting $\sigma$. This step will explain the relationship to the usual Chern-Weil construction.

By definition, $\alpha_\sigma$ is an element of cohomological degree 1 in  $\Omega^1_\fJ \ot_\fJ \End_\fJ( \fJ(E))$, so it lives in $\Omega^1_X \left(\Omega^1_\fJ \ot_\fJ \End_\fJ( \fJ(E)) \right)$. Note that $\alpha_\sigma^{\wedge k} \in \Omega^k_X \left(\Omega^k_\fJ \ot_\fJ \End_\fJ( \fJ(E)) \right)$. Hence, we find that the form $\omega_k := \Tr(\alpha_\sigma^{\wedge k})$ lives in $\Omega^k_X \left(\Omega^k_\fJ \right)$. 

\begin{lemma}\label{closed}
$\omega_k$ defines a closed form in $\Omega^{2k}_X$.
\end{lemma}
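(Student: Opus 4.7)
The plan is to recognize $\omega_k$ as (a scalar multiple of) the Chern class $ch_k(\nabla_\sigma)$ in the abstract framework of the previous section, applied with $R = \fJ$ and $M = \fJ(E)$, and then to translate the abstract closure statement of Corollary \ref{chernclosed} into a statement on $X$ using the jet-theoretic identifications from Proposition \ref{prop:jetquasiiso} and its corollary.

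First I would invoke Corollary \ref{chernclosed}: by construction the splitting $\sigma$ is chosen so that the induced connection satisfies $\nabla_\sigma^2 = 0$, so the corollary asserts that $\omega_k = k!\,(-2\pi i)^k\, ch_k(\nabla_\sigma)$ is closed under both the Kahler differential $d_{dR}: \Omega^k_\fJ \to \Omega^{k+1}_\fJ$ and the internal differential $d_{\Omega^k}$ on $\Omega^k_\fJ$ viewed as an $\Omega_X$-module.

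Next I would interpret these two closures using Proposition \ref{prop:jetquasiiso} and its corollary. Proposition \ref{prop:jetquasiiso} identifies $\Omega^k_\fJ$ with $dR(J(\Omega^k_X))$ as $\Omega_X$-modules, and the subsequent corollary says that horizontal sections of this complex (i.e., those killed by the flat connection on jets, which is $d_{\Omega^k}$) are precisely $\Omega^k_X$, embedded by the jet map $J$. Consequently, closure of $\omega_k$ under $d_{\Omega^k}$ means $\omega_k$ arises as the infinite jet of a form on $X$; combined with the residual $\Omega^k_X$ factor coming from the form degree of $\alpha_\sigma^{\wedge k}$, the wedge product delivers an honest $2k$-form on $X$. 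Closure under $d_{dR}$, read through the quasi-isomorphism $J: \Omega^*_X \hookrightarrow \Omega^*_\fJ$ provided by the same corollary, translates into closure under the ordinary de Rham differential on $X$. This establishes the lemma.

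The substantive work is already encoded in Corollary \ref{chernclosed}; the principal (though mild) obstacle is the bookkeeping needed to match the two differentials on $\Omega^*_\fJ$ with the horizontal and de Rham differentials on $dR(J(\Omega^*_X))$, and to verify that the composite of the wedge pairing $\Omega^k_X \otimes \Omega^k_X \to \Omega^{2k}_X$ with the horizontal-sections identification really yields the claimed $2k$-form on $X$, rather than a residue in lower degree.
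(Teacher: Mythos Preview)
Your approach is essentially the paper's: invoke Corollary~\ref{chernclosed} (using $\nabla_\sigma^2=0$) to obtain closure of $\omega_k$ under both $d_{dR}$ and the internal differential $d_{\Omega^k}$, and then read this through the identifications of Proposition~\ref{prop:jetquasiiso} and its corollary. The paper's own proof is in fact terser than yours---it simply records $d_{dR}\omega_k=0$ and horizontality, and concludes that $\omega_k$ is closed in the total complex of $(\Omega^*_\fJ, d_{dR})$, leaving the passage to an honest form on $X$ implicit via the quasi-isomorphism $J:\Omega^*_X\hookrightarrow\Omega^*_\fJ$.
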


\begin{proof}
The lemma follows from the useful facts about the Atiyah class that we proved earlier. In particular, we see that
\begin{enumerate}
\item $d_{dR} \omega_k = 0$ and
\item $\omega_k$ is a horizontal section.
\end{enumerate} 
Thus $\omega_k$ is a closed form in the total complex of the double complex $(\Omega^*_\fJ, d_{dR})$. 
\end{proof}

Thus, $ch(\nabla_\sigma) = \sum_k \frac{1}{(-2\pi i)^k k!} \omega_k$. 

\begin{prop}
The splitting $\sigma$ induces a connection $\nabla^E_\sigma$ on the bundle $E$.
\end{prop}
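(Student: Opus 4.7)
The plan is to define the connection $\nabla^E_\sigma$ directly using the canonical flat connection $\nabla_{J(E)}$ on $J(E)$ together with the splitting, and then verify the Leibniz rule. Concretely, recall that $q: J(E) \to \sE$ denotes the quotient $J(E) \to J(E)/F^1 \cong \sE$ and that $\sigma$ is a $C^\infty_X$-linear splitting of $q$. I would set
\[
\nabla^E_\sigma(s) := (1 \otimes q)\, \nabla_{J(E)}\bigl(\sigma(s)\bigr) \in \Omega^1_X \otimes_{C^\infty_X} \sE
\]
for any $s \in \sE$. This is visibly $\RR$-linear, so the only thing to check is the Leibniz rule.

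To verify the Leibniz rule, fix $f \in C^\infty_X$ and $s \in \sE$. Since $\sigma$ is $C^\infty_X$-linear, $\sigma(fs) = f\sigma(s)$. The canonical flat connection $\nabla_{J(E)}$ on the jet bundle satisfies the Leibniz rule with respect to the $C^\infty_X$-module structure, so
\[
\nabla_{J(E)}\bigl(f \sigma(s)\bigr) = df \otimes \sigma(s) + f \, \nabla_{J(E)}\sigma(s).
\]
Applying $1 \otimes q$ and using $q \circ \sigma = \mathrm{id}_\sE$ together with the $\Omega^1_X$-linearity of $1 \otimes q$ yields
\[
\nabla^E_\sigma(fs) = df \otimes s + f \, \nabla^E_\sigma(s),
\]
which is precisely the Leibniz rule. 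Hence $\nabla^E_\sigma$ is a connection on $E$.

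A cleaner conceptual repackaging, which I would mention as a remark, is to note that composing $\sigma$ with the natural projection $J(E) \twoheadrightarrow J^1(E)$ produces a $C^\infty_X$-linear splitting of the first jet sequence
\[
0 \to T^*_X \otimes \sE \to J^1(E) \to \sE \to 0,
\]
and such splittings are in canonical bijection with connections on $E$. I do not anticipate a serious obstacle here: the only subtle point is making sure that $\sigma$ really is $C^\infty_X$-linear (as constructed in Lemma \ref{splitting}, analogous to the $\sigma$ of Lemma \ref{tangentL8}), since otherwise the computation above would fail. Granted that linearity, the argument is a direct unwinding of definitions, and the resulting $\nabla^E_\sigma$ is exactly the connection whose Atiyah-class avatar $\alpha_\sigma$ will be matched against Chern--Weil curvature in the proof of Proposition \ref{chernX}.
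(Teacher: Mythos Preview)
Your proof is correct: the formula $\nabla^E_\sigma(s) = (1\otimes q)\,\nabla_{J(E)}(\sigma(s))$ does define a connection, and your verification of the Leibniz rule is clean. Your remark about the first jet sequence is the classical picture behind both arguments.

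The paper, however, takes a different route. Rather than using the \emph{canonical flat connection} $\nabla_{J(E)}$ on $J(E)$ together with the $C^\infty_X$-linear map $\sigma$, it uses the (non-$C^\infty_X$-linear) jet map $J:\sE\to J(E)$ together with the connection $\nabla_\sigma$ on $\fJ(E)$ \emph{over} $\fJ$ already constructed for the Atiyah class, setting $\nabla^E_\sigma = q\circ\nabla_\sigma\circ J$. In local coordinates both definitions unwind to $d - d_{dR}(S_1)$, so they agree. What each buys: your route is shorter and entirely classical, needing nothing beyond the $D_X$-module structure of $J(E)$; the paper's route is tailored to the next lemma, where the curvature of $\nabla^E_\sigma$ is identified with the constant-coefficient part of the Atiyah class $\alpha_\sigma = \At(\nabla_\sigma)$. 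Since $\nabla_\sigma$ is already the object whose Atiyah class is being studied, defining $\nabla^E_\sigma$ through $\nabla_\sigma$ makes that comparison a tautology of ``take lowest-order terms'', whereas with your definition one would still have to match $\nabla_{J(E)}\circ\sigma$ against $\nabla_\sigma\circ J$ before that lemma goes through.
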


\begin{proof}
There is a natural connection on $E$ arising from our splitting $\sigma$ as follows. It is a composition of three natural maps. First, there is an important map of sheaves\footnote{Notice this is {\em not} a map of $\cinf_X$-modules!} $J: \sE \to J(E) $ sending a smooth section $f$ to its $\infty$-jet. Second, we have the connection defined on $\fJ(E)$ by $\nabla_\sigma$. Finally, we have the quotient map $q: \fJ(E) \ot_\fJ \Omega^1_\fJ \to \sE \ot_{\cinf_X} \Omega^1_X$ that returns the ``constant coefficient" term (i.e., from the filtration by order of vanishing). In sum, there is a map $\nabla^E_\sigma: \sE \to \sE \ot \Omega^1_X$ given by $q \circ \nabla_\sigma \circ J$, and it defines a connection on the bundle $E$.
\end{proof}

We want to describe this connection explicitly in local coordinates so that it is clear how our construction relates  with Chern-Weil theory. Fix coordinates $x_1, \ldots, x_n$ on some small ball $U \subset X$ and fix a basis $e_1, \ldots, e_k$ for the fiber $E_0$. With respect to these choices, the splitting $i_\sigma: \sE \ot \sJ \to J(E)$ over $U$ yields a $\cinf_X$-linear map 
\[
S: \cinf_X(U) \ot_\RR \RR[[x_1,\ldots,x_n]] \ot_\RR E_0 \to \cinf_X(U) \ot_\RR \RR[[x_1,\ldots,x_n]] \ot_\RR E_0,
\] 
which we write as a sum of its homogeneous components $S = 1 + S_1 + S_2 + \cdots$. Since $i_\sigma$ is determined by its behavior on $\sE$, we only need to say how $S$ acts on $E_0$ to fully describe $S$. Hence, $S_k$ denotes the degree $k$ part of $S$ on $E_0$. More explicitly 
\[
S_k = \sum_{|\alpha| = k} s^i_{j, \alpha} x^\alpha (e_j \ot e_i^\vee), \text{ with } s^i_{j,\alpha} \in \cinf_X.
\]
In words, $S_k$ sends a constant section $e_i$ of $\sE|_U$ to a section of $J(E)$ that has degree $k$ in the formal variables $\{x_m\}$. Note that the coefficients $s^i_{j,\alpha}$ are smooth functions on $U$.

The connection $\nabla_\sigma$ on $\fJ(E)$ has the form $S \circ (1_E \ot d_{dR}) \circ S^{-1}$. The lowest order term of this map is $d_{dR} - d_{dR}(S_1)$, and this term gives the connection $\nabla^E_\sigma$.

\begin{lemma}
The curvature of $\nabla^E_\sigma$ on $E$ corresponds to the constant coefficient term of the Atiyah class of $\nabla_\sigma$. Explicitly, in local coordinates on the ball $U \subset M$, we find $q(\alpha_\sigma) = - d d_{dR}(S_1) + d_{dR}(S_1) \wedge d_{dR}(S_1)$. 
\end{lemma}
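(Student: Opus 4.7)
The plan is to verify the formula by two independent local computations: one for the curvature of $\nabla^E_\sigma$ directly and one for the constant-coefficient part $q(\alpha_\sigma)$ via Lemma \ref{atiyah}, each checked to match the asserted right-hand side.

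For the curvature side, I will use the local description $\nabla^E_\sigma = d_{dR} - d_{dR}(S_1)$ from the paragraph immediately preceding the lemma. Writing this as $d_{dR} + \omega$ with $\omega = -d_{dR}(S_1) \in \Omega^1(U) \otimes \End(E_0)$, the curvature $d_{dR}\omega + \omega \wedge \omega$ expands directly to $-d_{dR}\, d_{dR}(S_1) + d_{dR}(S_1) \wedge d_{dR}(S_1)$, matching the claimed right-hand side.

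For the Atiyah-class side, I will apply Lemma \ref{atiyah} in the local trivialization $\fJ(E) \cong \sE \otimes_{\cinf_X} \fJ$ determined by $\sigma$. In this trivialization the connection $\nabla_\sigma$ is literally $1 \otimes d_{dR}$, so the ``$B$'' of Lemma \ref{atiyah} vanishes and one obtains $\alpha_\sigma = d_{dR} A$, where the differential on $\fJ(E)$ pulled back through $S$ takes the form $d_{\mathrm{triv}} + A$. Expanding $S = 1 + S_1 + S_2 + \dotsb$ and $S^{-1} = 1 - S_1 + (S_1^2 - S_2) - \dotsb$, I will read off $A$ from the conjugation of the canonical flat jet connection by $S$; after applying $d_{dR}$ and the projection $q$, the leading contribution comes from differentiating the $-d_{dR}(S_1)$ portion of $A$ and yields $-d_{dR}\, d_{dR}(S_1)$, while the quadratic correction from the $-S_1$ term in $S^{-1}$ contributes $d_{dR}(S_1) \wedge d_{dR}(S_1)$, producing the same formula.

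The main obstacle will be the bookkeeping. One must carefully track the Koszul signs arising from $d_{dR}$ acting as a degree-one derivation of the completed symmetric algebra, distinguish the de Rham differential on $X$ from the formal $d_{dR}$ on $\fJ$, and verify that $S_k$ for $k \geq 2$ makes no contribution to $q(\alpha_\sigma)$. This last point reduces to the observation that $d_{dR}$ lowers the formal-variable degree by at most one while $q$ retains only the constant coefficient, so only $S_1$ and its products can survive the combined operations.
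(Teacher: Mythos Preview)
Your first computation (the curvature of $\nabla^E_\sigma$) is fine and matches the target expression immediately.

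For the Atiyah-class side, your approach is a genuine alternative to the paper's, and it is worth contrasting them. The paper applies Lemma~\ref{atiyah} in the \emph{canonical} local trivialization of $J(E)$, where it first checks that the differential on $\fJ(E)$ has no endomorphism-valued part (so $A=0$), and then the whole Atiyah class is carried by the connection one-form $B$ of $\nabla_\sigma = S\circ(1_E\otimes d_{dR})\circ S^{-1}$; reading off $B_0=-d_{dR}(S_1)$ and $B_1=d_{dR}(T_2)-S_1\,d_{dR}(S_1)$ and applying $q\circ d_{\fJ(V)}$ gives the formula. You instead pass to the $\sigma$-trivialization, where $B=0$ is automatic and all the content lives in $A=S^{-1}\,d_{\fJ}\,S-d_{\fJ}$. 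The two routes are dual under the change of frame by $S$, and both are valid. The paper's choice has the advantage that $B$ is literally the Taylor expansion of $\nabla_\sigma$, so the relation to $\nabla^E_\sigma$ (its order-zero truncation) is immediate; your choice makes $B=0$ trivially but forces you to expand the conjugated jet differential, which is a comparable amount of work.

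One point to tighten: your sentence ``the leading contribution comes from differentiating the $-d_{dR}(S_1)$ portion of $A$ and yields $-d_{dR}\, d_{dR}(S_1)$'' is not quite right as written. The object $A$ lives in degree~$1$ of $\End_{\fJ}(\fJ(E))$ (so its entries sit in $\Omega^1_X\otimes\sJ$), whereas $d_{dR}(S_1)$ lives in $\Omega^1_{\fJ}$; these are different spaces, and the two occurrences of ``$d_{dR}$'' in your displayed term must denote different operators (the formal K\"ahler differential on $\fJ$ versus the exterior derivative $d=d_\Omega$ on $X$) for the expression not to vanish. You acknowledge this distinction in your ``obstacles'' paragraph, so this is a notational slip rather than a conceptual one, but when you carry out the computation you will need to keep the two separated: the formal-degree-$1$ part of $A$ is $d_\Omega(S_1)-[d_J,S_2]+S_1\,[d_J,S_1]$, and it is $d_{dR}$ of this, followed by $q$, that must be shown to equal $-d_\Omega\,d_{dR}(S_1)+d_{dR}(S_1)\wedge d_{dR}(S_1)$. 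The $[d_J,S_2]$ term also survives to formal degree~$1$, so your claim that ``only $S_1$ and its products can survive'' needs the further observation that $d_{dR}[d_J,S_2]$ is symmetric in the two one-form slots and hence vanishes after antisymmetrization (or an equivalent cancellation).
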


Hence we know the elements $\omega_k$  can be identified, by taking $\infty$-jets, with the corresponding differential forms arising from the curvature of the connection $\nabla^E_\sigma$. In other words, they agree with the usual Chern-Weil construction.

\begin{proof}
We check this locally. The notational burden becomes heavy, so we describe the approach before the barrage of indices begins.

Recall the expression from lemma \ref{atiyah} for the Atiyah class of a free $R$-module $M$ in terms of a basis:
\[
\At(\nabla) = d_{dR} A - d_{\Omega^1 \ot \End M} B,
\]
where $d_M = d_R + A$ and $\nabla = d_{dR} + B$. We will show that, in our situation, the term $d_{dR} A = 0$ and then that $- q \circ d_{\Omega^1 \ot \End M} B$ will be precisely the curvature of $\nabla^E_\sigma$.

To see that $d_{dR} A = 0$, we need to describe $A$. On an open ball $U \subset X$, once we pick coordinates $\{x_1,\ldots, x_n\}$ on $U$, we get a trivialization of the sections of $J$ as $\cinf(U) \ot \RR [[x_1,\ldots,x_n]]$. The differential on $J$ is
\[
d_J(f\ot x^\alpha) = (df) \ot x^\alpha - \sum_k \alpha_k f dx_k \ot x^{\alpha - e_k}.
\]
Once we pick a trivialization $E|_U \cong U \times E_0$ and coordinates $\{x_1,\ldots, x_n\}$ on $U$, we get a trivialization of the sections of $J(E)$ as $\cinf(U) \ot \RR [[x_1,\ldots,x_n]] \ot E_0$. The differential on $J(E)$ has the form
\[
d_{J(E)}(f \ot x^\alpha \ot v) = d_J(f \ot x^\alpha) \ot v,
\]
so there is no ``connection 1-form" part of the differential (with respect to the basis we're using). Hence $A = 0$ and so $d_{dR} A = 0$.

Now it remains to compute the term $- q \circ d_{\Omega^1 \ot \End M} B$. As we just saw above, the differential on $\fJ(V)$, for any vector bundle $V$, has the form $d = d_{dR} - d_J$, where 
\[
d_{dR} (\omega \ot x^\alpha \ot v) = d\omega \ot x^\alpha \ot v
\] 
and 
\[
d_J (\omega \ot x^\alpha \ot v) = \sum_j \alpha_j dx_j \wedge \omega \ot x^{\alpha - e_j} \ot v.
\] 
Hence the term $d_{dR}$ is, in fact, simply the exterior derivative (i.e., the differential on $\Omega_X$). Writing an element $B$ of this jet bundle $J(V)$ in terms of its homogeneous components $B_k$, we see 
\[
q \circ d_{J(V)} (B_0 + B_1 + B_2 + \cdots) = d_{dR} B_0 - d_J B_1.
\]
We now apply this observation to the ``connection 1-form" $B \in \fJ(\End E)$ for our connection $\nabla_\sigma$.

We use notation from the proof of the previous proposition. Let $S^{-1} = 1 + T_1 + T_2 + \cdots$. Note that $T_1 = -S_1$. Hence, the low order terms of $\nabla_\sigma$ are
\[
(1 + S_1 + S_2 + \cdots) \circ d_{dR} \circ (1 - S_1 + T_2 + \cdots) 
\]
\[
= 
\underbrace{d_{dR} - d_{dR}(S_1)}_{\text{order 0}} +
\underbrace{d_{dR}(T_2) - S_1 \wedge d_{dR}(S_1)}_{\text{order 1}}
+ \cdots
\]
and so the connection 1-form is $B = - d_{dR} (S_1) + d_{dR} (T_2) - S_1 \wedge d_{dR} (S_1) + \cdots$.
Thus we see
\[
q (\At(\nabla_\sigma)) = q \circ d B = - d_{dR} d_{dR}(S_1) +  d_{dR} S_1 \wedge d_{dR} S_1.
\]
This is precisely the curvature of $\nabla^E_\sigma$. %UP TO A SIGN!
\end{proof}

\subsection{The characteristic class $\log(\hat{A}_u (X))$}\label{sect:A_u}

In encoding $X$ as $B\fg_X$, we use the formalism of Gelfand-Kazhdan formal geometry to construct $\fg_X$; essentially, we replace smooth functions $C^\infty_X$ by the de Rham complex of jets of smooth functions. As a result, our construction of the global observables involves a cochain complex quasi-isomorphic to (shifted) de Rham forms, and the characteristic classes $ch_{k}(B\fg_X)$ all manifestly have cohomological degree 0 in this construction. Thus the difficulty is in identifying $ch_{k}(B\fg_X)$ with the usual Chern classes $ch_k(X)$, and the negative cyclic homology surmounts this difficulty.

Just as we saw with $\Omega^{-\ast}_{T^\ast B\fg_X}$, the complex $\Omega^{-\ast}_{B \fg_X}$ has a $B\GG_a$ action (here we have no internal differential and view $d_{dR}$ as lowering degree by 1).  Now by construction we have a quasi-isomorphism 
\[
\Omega^{-\ast}_{B\fg_X} \simeq \bigoplus_k \Omega^k_\fJ [k].
\]
Hence, by the discussion in Section \ref{circleaction} and Proposition \ref{prop:jetquasiiso},  we have a quasi-isomorphism of complexes of $\Omega^\ast_X$-modules
\[
 (\Omega^{-\ast}_{B \fg_X})^{B \GG_a} \simeq dR (J(\Omega^{-\ast}_X [[u]], ud)).
\]

%As we argued above $(\Omega^{-\ast}(T^\ast B \fg_X), d \equiv 0, \epsilon = d_{dR} )$ is a $B\GG_a$ dg-module, so we can take homotopy invariants and obtain the complex $(\Omega^{-\ast} (T^\ast B\fg_X) [[u]], ud)$.  A function on $T^\ast B \fg_X$ is given by a jet of a function on $T^\ast [0] X$ that is the dg-manifold obtained by completing along the zero section i.e. $(X, \csym_{\cinf_X} (T_X))$.
%So we have an identification
%\[
%(\Omega^{-\ast} (T^\ast B \fg_X)[[u]], ud) \cong (\Omega^{-\ast} (T^\ast [0] X)[[u]], ud) .
%\]
% 

Recall that the characteristic class $ch_{2k} (\nabla_\sigma)$ lives in $\Omega^{2k}_X (\Omega^{2k}_\fJ)$.  We want to obtain a cohomologous class living in the bottom row of our double complex. As the double complex has acyclic columns, we want to use a zig-zag argument.  That is, by Lemma \ref{closed}, $ch_{2k} (\nabla_\sigma)$ is closed with respect to both the horizontal differential (in this case, the de Rham differential) and the vertical differential (the one coming from the jet bundle), and hence its cohomology class in the total complex is represented by  a class $\alpha_1$ of cohomological degree $2k-1$ in $\Omega^{2k+1}_\fJ$.  Continuing in this manner, we obtain a class $\alpha_{2k} \in H^0 (\Omega^{4k}_\fJ) \cong \Omega^{4k}_X$.  

Now we want to identify the image of the class $\alpha_{2k}$ in the complex $(\Omega^{-\ast}_{B \fg_X})^{B \GG_a}$.  From Proposition \ref{prop:jetquasiiso} we have that $\Omega^k_\fJ \cong dR (J(\Omega^k_X))$ as $\Omega^\ast_X$-modules.  Let $\widetilde{ch}_{2k} (\nabla_\sigma)$ denote the class
\[
\frac{1}{(2k)! (-2\pi i)^{2k}} \mathrm{Tr}(\At (\nabla_\sigma)^{2k}) \in \Omega^{2k}_X (\Omega^{2k}_\fJ) [2k] \subset \Omega^{-\ast}_{B \fg_X} .
\]
In order to enact the zig-zag argument (and hence produce a nontrivial cohomology class), we need the de Rham differential that is obtained on $\Omega^{-\ast}_{B \fg_X}$ by taking homotopy invariants with respect to the action of $B \GG_a$.  As we zig-zag down to row zero (that is, $\Omega^0_X (\Omega^{-\ast}_\fJ)$), we pick up a factor of $u$ at each step.  Therefore, if we denote the resulting class by $\widetilde{\alpha}_{2k}$, we have that
\[
\widetilde{ch}_{2k} (\nabla_\sigma) \simeq \widetilde{\alpha}_{2k} \simeq u^{2k} \alpha_{2k} \simeq u^{2k} ch_{2k} (X) \in \Omega^{0}_X (\Omega^{4k}_\fJ)[[u]][4k] \subset (\Omega^{-\ast}_{B \fg_X})^{B \GG_a} .
\]

 Following the presentation of Section \ref{sect:mainthm}, we define for any smooth manifold $X$ the class $\log (\hat{A}_u (X))$ to be 
\[
\log (\hat{A}_u (X)) \overset{def}{=} \sum_{k \ge 1} \frac{2 (2k-1)!}{(2 \pi i)^{2k}} u^{2k} \zeta(2k) ch_{2k} (X) \in (\Omega^{-\ast}_X [[u]], ud ).
\]
This is the usual logarithm of the $\hat{A}$ class  weighted by powers of $u$.   So far, we have argued that $\widetilde{ch}_{2k} (\nabla_\sigma) \simeq u^{2k} ch_{2k} (X) \in dR (J(\Omega^{-\ast}_X [[u]], ud))$.  We now show that 
\[
dR (J(\Omega^{-\ast}_X [[u]], ud)) \simeq (\Omega^{-\ast}_X [[u]], ud).
\]

We consider $dR (J (\Omega^{-\ast}_X [[u]], ud))$ and $(\Omega^{-\ast}_X [[u]] , ud)$ as differential complexes.  Here a differential complex is a complex of sheaves whose graded terms are $\cinf_X$ modules and whose differentials are differential operators. 

\begin{prop}\label{jetquasiiso}
Let $(\sE,d)$ be a differential complex. The natural map of differential complexes, sending a section to its $\infty$-jet, 
\begin{equation}\label{takeinfinitejets}
(\sE,d) \to dR (J (\sE,d))
\end{equation}
is a quasi-isomorphism.
\end{prop}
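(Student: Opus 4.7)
The plan is to compare $(\sE,d)$ with the total complex of the double complex $\Omega^p_X \otimes_{C^\infty_X} J(\sE^q)$ via a spectral sequence argument, reducing to the single vector bundle case.

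First, I would establish the proposition when $\sE = V$ is a single vector bundle concentrated in degree $0$. Here the claim is that the infinite-jet map $V \to dR(J(V))$ is a quasi-isomorphism, which is a classical ``Poincar\'e lemma for jets.'' Locally, one chooses coordinates to identify $J(V)$ with $V \otimes \RR[[x_1,\ldots,x_n]]$ equipped with its canonical flat connection, and one constructs an explicit contracting chain homotopy from the de Rham complex of $J(V)$ onto its horizontal sections, which are precisely the constant sections $V$. Globally, the statement is a consequence of the standard fact that for a $D_X$-module with a ``Poincar\'e-type'' property (such as a flat jet bundle), the de Rham complex resolves its horizontal sections. Concretely this is the same argument used implicitly in the proof of Lemma \ref{tangentL8}, which we have already invoked.

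Second, for a general differential complex $(\sE^\bullet, d)$, I would consider the double complex
\[
C^{p,q} = \Omega^p_X \otimes_{C^\infty_X} J(\sE^q),
\]
with horizontal differential induced by the canonical flat connection on jet bundles and vertical differential $J(d)$, which commutes with the horizontal one because $d$ is a differential operator (and hence induces a $D_X$-module map on jets, by Proposition 4 of the appendix on $D$-modules, used earlier in the proof of Proposition \ref{prop:jetquasiiso}). The total complex of $C^{\bullet,\bullet}$ is $dR(J(\sE,d))$, and placing $\sE^q$ in position $(0,q)$ via the infinite-jet map yields a morphism of complexes $(\sE^\bullet,d) \to dR(J(\sE,d))$, precisely the map (\ref{takeinfinitejets}).

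Third, I would show this morphism is a quasi-isomorphism by filtering by de Rham degree and running the resulting spectral sequence. The $E_1$-page computes, column by column, the cohomology of $dR(J(\sE^q))$ for each fixed $q$; by the single-bundle case this cohomology is concentrated in degree $0$ and equals $\sE^q$. Thus the $E_1$-page is precisely $(\sE^\bullet, d)$, and $E_2 = H^\ast(\sE,d)$, while the augmentation realizes this identification. Convergence is routine since for each total degree only finitely many $(p,q)$ contribute. The main obstacle is the Poincar\'e lemma for infinite jets in the first step; once it is in hand, the spectral sequence argument is formal. The subtlety there lies in handling the topology on jets when writing the contracting homotopy, since one must complete $\RR[[x_1,\ldots,x_n]]$ appropriately so that the homotopy operator converges.
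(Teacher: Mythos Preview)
Your argument is correct in substance but organized differently from the paper, and there is one slip in phrasing worth fixing.

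The paper's proof is a single sheaf-theoretic step: it records that (i) the jet map is a quasi-isomorphism on contractible opens (the Poincar\'e lemma for jets) and (ii) both sides are fine, then passes to a good cover and compares the two \v{C}ech double complexes via the spectral sequence filtered by \v{C}ech degree. No separate treatment of the single-bundle case is needed; the local Poincar\'e lemma is applied to the whole differential complex at once.

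Your route instead factors the problem: first the global single-bundle case, then a spectral sequence over the internal grading of $\sE^\bullet$. This is perfectly valid and more modular, but note that the global single-bundle statement you invoke in step one is not free: it still requires exactly the local-to-global argument (fineness plus \v{C}ech) that the paper uses. So in effect you are running the paper's argument inside your step one, and then adding a second (formal) spectral sequence on top. The paper's approach is slightly more economical; yours isolates the ``jet Poincar\'e lemma'' as a reusable black box.

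One correction: in your third step you say ``filtering by de Rham degree,'' but the computation you describe---obtaining $H^*(dR(J(\sE^q)))$ for each fixed $q$ on the $E_1$ page---requires filtering by the $\sE$-degree $q$ (so that the $E_0$ differential is the flat connection on jets). With the filtration as you stated it, the $E_0$ differential would be $J(d)$ instead. This is only a slip of the pen; once corrected, the comparison-of-spectral-sequences argument goes through as you say.
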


This proposition is a straightforward sheaf-theoretic argument that we now provide.

\begin{lemma}
\mbox{}
\begin{enumerate}
\item[(1)] Restricted to a contractible open, the map (\ref{takeinfinitejets}) is a quasi-isomorphism.
\item[(2)] As sheaves of graded vector spaces, $\sE$ and $dR(J(\sE))$ are fine.
\end{enumerate}
\end{lemma}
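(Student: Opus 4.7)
Part (2) is immediate: both $\sE$ (by definition of a differential complex) and $J(\sE)$ (via the source action on jets) are sheaves of $C^\infty_X$-modules termwise, and $dR(J(\sE)) = \Omega^*_X \otimes_{C^\infty_X} J(\sE)$ inherits this module structure. Both sheaves therefore admit partitions of unity and are fine.

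My strategy for (1) is to first establish the single-module version of the claim and then bootstrap to differential complexes by a spectral sequence. Fix a contractible coordinate chart $U \cong \RR^n$ with coordinates $x_1, \ldots, x_n$ on which $\sE$ trivializes termwise, and introduce formal jet variables $y_1, \ldots, y_n$. For a $C^\infty_X$-module $\sM$ trivialized as $\sM|_U \cong C^\infty(U) \otimes_\RR V$, in the trivialization $J(\sM)|_U \cong C^\infty(U) \otimes_\RR \RR[[y]] \otimes_\RR V$ the flat connection takes the explicit form $\nabla = d_X - \sum_k dx_k \otimes \partial/\partial y_k$ computed in the proof of Lemma \ref{tangentL8}. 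I conjugate by the formal flow
\[
\Phi \;=\; \exp\!\Big( \sum_i y_i\,\frac{\partial}{\partial x_i} \Big),
\]
which is a well-defined automorphism of $C^\infty(U) \otimes \RR[[y]]$ since only finitely many terms of its expansion contribute to each monomial in $y$, and which realises the $\infty$-jet map via $\Phi(f \otimes 1) = j^\infty f$. A direct commutator computation, using $[d_X, \Phi] = 0$ and the identity $[\partial/\partial y_i, \Phi] = \Phi \cdot \partial/\partial x_i$ (which follows from $[\partial/\partial y_i, \sum_j y_j \partial/\partial x_j] = \partial/\partial x_i$ together with $[\partial/\partial x_i, \sum_j y_j \partial/\partial x_j] = 0$), yields $\Phi^{-1}\nabla\Phi = -\sum_k dx_k\, \partial/\partial y_k$. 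The conjugated complex $(\Omega^*(U) \otimes_\RR \RR[[y]] \otimes_\RR V,\, -\sum_k dx_k \partial/\partial y_k)$ splits off the $V$-factor and the $C^\infty(U)$-factor; identifying $dx_k$ with $dy_k$ presents the remaining piece $(\Lambda^*[dx] \otimes \RR[[y]],\, -\sum_k dx_k \partial/\partial y_k)$ as the de Rham complex of the formal polydisk, which by the formal Poincar\'e lemma has cohomology $\RR$ concentrated in degree zero. Tensoring back yields $H^*(dR(J(\sM))(U)) = \sM(U)$ concentrated in de Rham degree zero, realised precisely by the $\infty$-jet inclusion.

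For a general differential complex $\sE$, I filter $(dR(J(\sE))(U),\, \nabla + J(d_\sE))$ by the internal $\sE$-degree. The $E_0$-differential is $\nabla$ acting termwise in $\sE$, so the previous single-module computation identifies the $E_1$-page with $\sE(U)$ concentrated in de Rham degree zero. The $E_1$-differential is induced by $J(d_\sE)$ and restricts on horizontal sections to $d_\sE$, so the spectral sequence collapses at $E_2$ to $H^*(\sE(U), d_\sE)$. Since the $\infty$-jet map is a filtration-preserving chain map inducing this identification on $E_1$, it is itself a quasi-isomorphism. The main obstacle I anticipate is the verification of the commutator identity for $\Phi$ and ensuring that the spectral sequence converges; the former is routine exponential commutator algebra on a formal power series ring, and the latter follows from $\sE$ being bounded below, which makes the filtration regular. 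Everything else then reduces cleanly to the formal Poincar\'e lemma.
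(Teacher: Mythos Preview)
Your proposal is correct and rests on the same underlying idea as the paper's proof---namely, that the de Rham complex of jets is governed by a formal Poincar\'e lemma and that horizontal sections of $J(\sE)$ are exactly smooth sections of $\sE$---but your execution is considerably more explicit. The paper simply asserts that one builds a contracting homotopy ``as in the standard proof of the Poincar\'e lemma'' and leaves it at that; you instead conjugate by the formal Taylor flow $\Phi=\exp(\sum_i y_i\,\partial/\partial x_i)$ to literally transform the jet connection into the formal de Rham differential, which makes the reduction transparent and also identifies the $\infty$-jet map with the obvious inclusion. Your spectral-sequence step to pass from a single $C^\infty_X$-module to a genuine differential complex $(\sE,d_\sE)$ is a point the paper's two-line proof does not address explicitly, and it is the clean way to do it. The only caveat is your convergence justification: you invoke ``$\sE$ bounded below,'' which is not part of the paper's definition of a differential complex, though it holds in every instance actually used (e.g.\ $(\Omega^{-*}_X[[u]],ud)$). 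For Part (2) your argument and the paper's are identical.
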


\begin{proof}
To prove the first claim, one constructs a contracting homotopy as in the standard proof of the Poincar\'e lemma. It is simply the assertion that the horizontal sections of jets come from smooth sections of $\sE$. The second claim just follows from the existence of partitions of unity because $\sE$ is a sheaf of $\cinf_X$-modules.
\end{proof}

\begin{proof}[Proof of Proposition \ref{jetquasiiso}]
Fix a good cover $\fU$ of the manifold $X$.  Since $\sE$ is fine we have a quasi-isomorphism $(\sE,d) \xrightarrow{\simeq} \check{C}(\fU , (\sE,d))$.  Similarly, we have a quasi-isomorphism
\[
dR (J (\sE,d)) \xrightarrow{\simeq} \check{C} (\fU , dR (J(\sE,d))).
\]
Therefore we have a commutative diagram
\[
\begin{xymatrix}{
(\sE,d) \ar[r] \ar[d]_{\simeq} & dR (J(\sE,d)) \ar[d]^{\simeq} \\ \check{C} (\fU , (\sE,d)) \ar[r] & \check{C} (\fU , dR (J(\sE,d))) }\end{xymatrix}
\]
where the vertical arrows are quasi-isomorphisms. The map of interest is the on the top row; we will show that the map of the bottom row is a quasi-isomorphism, which implies that the top row is a quasi-isomorphism.

Consider the spectral sequences associated to $\check{C} (\fU , (\sE,d))$ and $\check{C} (\fU , dR(J(\sE,d)))$ where we filter by \v{C}ech degree.  The map $(\sE,d) \to dR(J(\sE,d))$ induces a map of spectral sequences.  This map is a quasi-isomorphism on the $E_1$ page by part 1 of the preceding lemma. Hence, as the spectral sequences converge, it is an isomorphism on cohomology.
\end{proof}
\section{Recovering $\hat{A} (X)$}

In this section we prove a main theorem of this paper, which computes the effective action for one-dimensional Chern-Simons theory with values in the $\L8$ algebra encoding the smooth manifold $X$.

\begin{theorem}
\mbox{}
\begin{itemize}

\item[(1)] There exists a quantization of a nonlinear sigma model from the circle $S^1$ into $T^*[0] X$, where $X$ is a smooth manifold. The solutions to the Euler-Lagrange equations consist of constant maps into $T^* [0]X$. Only 1-loop Feynman diagrams appear in the quantization.

\item[(2)] The scale $\infty$ interaction term $I^{(1)} [\infty]$ encodes $\hat{A}(X)$ when restricted to the harmonic fields $\cH$.  More precisely, we have
\[
I^{(1)} [\infty] |_{\cH} \simeq \log ( \hat{A}_u (T_X)) \in \Omega^{-*}_X [[u]] .
\]
\end{itemize}
\end{theorem}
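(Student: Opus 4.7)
The plan is to reduce both parts of the theorem to results already established in the paper: part (E) follows by applying the Part I quantization machinery to the $L_\infty$ algebra $\fg_X$, and part (P) follows from Theorem \ref{inftytermBg} once we identify $\log(\hat{A}(T_{B\fg_X}))$ with $\log(\hat{A}_u(T_X))$ via the formal geometry of Section \ref{sect:A_u}.

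First I would set up the dictionary. By Lemma \ref{tangentL8}, the smooth manifold $X$ is encoded as the dg manifold $B\fg_X$, and hence $T^*[0]X \cong T^*B\fg_X$ as dg manifolds over $X_\Omega$. The nonlinear sigma model with target $T^*[0]X$ is therefore equivalent, via Koszul duality, to the one-dimensional Chern-Simons theory with values in the curved $L_\infty$ algebra $\fg_X \oplus \fg_X^\vee[-2]$, as explained in Section \ref{large volume limit}. Part (E) then follows directly: the quantization exists by Corollary \ref{1dvanish}, which produces the BV theory $\{I[L]\}$ from the naive quantization $\{I_{naive}[L]\}$ of $I_{CS}$ for any curved $L_\infty$ algebra of the required form; the restriction to one-loop diagrams is Proposition \ref{oneloop} applied to the $\GG_m$-action rescaling the cotangent fibers of $T^*B\fg_X$; and the Euler-Lagrange equations, being the Maurer-Cartan equations for $\fg_X \oplus \fg_X^\vee[-2]$-connections on $S^1$, cut out the constant maps into $T^*[0]X$ (since $\fg_X$ encodes constant sheaves of $T_X[-1]$).

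For part (P), I would apply Theorem \ref{inftytermBg} verbatim to $\fg = \fg_X$, yielding
\[
I^{(1)}[\infty]\big|_\cH = \log(\hat{A}(T_{B\fg_X})) \in \Omega^{-*}(B\fg_X).
\]
The remaining task is to identify this characteristic class of the dg manifold $B\fg_X$ with the usual $\hat{A}_u(T_X)$. This is exactly the content of Section \ref{sect:A_u}. Concretely, I would invoke the quasi-isomorphism of complexes of $\Omega^*_X$-modules
\[
(\Omega^{-*}_{B\fg_X})^{B\GG_a} \simeq dR(J(\Omega^{-*}_X[[u]], ud)),
\]
combined with Proposition \ref{jetquasiiso}, which gives a further quasi-isomorphism $dR(J(\Omega^{-*}_X[[u]], ud)) \simeq (\Omega^{-*}_X[[u]], ud)$. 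Under this identification, the zig-zag computation of Section \ref{sect:A_u} shows that $ch_{2k}(T_{B\fg_X}) \simeq u^{2k} ch_{2k}(X)$, so summing the contributions of Theorem \ref{inftytermBg} term-by-term yields exactly $\log(\hat{A}_u(T_X))$ by the definition of that class.

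The main obstacle is the identification $ch_{2k}(T_{B\fg_X}) \simeq u^{2k} ch_{2k}(X)$, rather than the Feynman diagrammatics, which are entirely handled by Part I. The difficulty is twofold: first, the characteristic classes $ch_k(T_{B\fg_X})$ naturally live in cohomological degree $0$ (as they must, to be visible in $I^{(1)}[\infty]$), whereas the classical Chern classes $ch_k(X)$ sit in degree $2k$, so one must use the $B\GG_a$-action to restore the de Rham differential and trade cohomological degree for powers of $u$; second, one must verify that the connection $\nabla_\sigma$ on $\fJ(T_X)$ constructed from the Gelfand-Kazhdan splitting $\sigma$ produces the same Chern-Weil form as a genuine connection on $T_X$, which is established by Proposition \ref{chernX} together with the zig-zag argument identifying the horizontal class in the double complex $(\Omega^*_\fJ, d_{dR})$ with its constant-coefficient representative. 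Once these two identifications are in place, the theorem assembles formally from Theorem \ref{inftytermBg}.
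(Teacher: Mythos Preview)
Your proposal is correct and follows essentially the same approach as the paper: part (E) is reduced to the Part I vanishing of obstructions (the paper phrases this via Proposition \ref{obsthm}, which is just Corollary \ref{obscpxcor} plus Proposition \ref{1dvanishprop} specialized to $\fg_X$), and part (P) is exactly Theorem \ref{inftytermBg} followed by the identification of Section \ref{sect:A_u}. Your write-up is in fact more explicit than the paper's terse proof about which ingredients are being invoked and why the $B\GG_a$-action is needed to match degrees.
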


Theorem \ref{ManifoldThm}, stated in the introduction, is a corollary of this result.

The existence part of the theorem above follows immediately from the following proposition.

\begin{prop}\label{obsthm} 
For one-dimensional Chern-Simons theory with an $\L8$ algebra encoding the smooth geometry of the manifold $X$, the obstruction group is
\[
H^1(X, \Omega^1_{cl} (X)) \oplus H^2 (X, \Omega^1_{cl} (X)) \cong H^2 (X , \RR) \oplus H^3 (X, \RR).
\]
Further, the obstruction to quantization vanishes.
\end{prop}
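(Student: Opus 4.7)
The plan is to specialize Corollary \ref{obscpxcor} to the curved $\L8$ algebra $\fg_X \oplus \fg_X^\vee[-2]$ over $X_\Omega$, identify the resulting sheaf of complexes on $X$ with the sheaf of closed $1$-forms on $X$, and then apply a classical Poincar\'e-lemma argument to pass to sheaf cohomology.

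First, by direct specialization of Corollary \ref{obscpxcor}, the $\RR$-invariant obstruction-deformation complex is quasi-isomorphic, as a sheaf of complexes on $X$, to
\[
\Omega^1_{cl}(T^*B\fg_X) \oplus \Omega^1_{cl}(T^*B\fg_X)[1],
\]
so the obstruction $O$, which lies in cohomological degree $1$, is controlled by $H^1(X, \Omega^1_{cl}(T^*B\fg_X)) \oplus H^2(X, \Omega^1_{cl}(T^*B\fg_X))$. Next I reduce from $T^*B\fg_X$ to $X$ in two stages. The projection $T^*B\fg_X \to B\fg_X$ is a weak equivalence of dg manifolds, with the zero section as homotopy inverse (equivalently, the $\GG_m$-rescaling of cotangent fibers contracts onto the zero section), so $\Omega^1_{cl}(T^*B\fg_X) \simeq \Omega^1_{cl}(B\fg_X)$. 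Since $\fg_X$ was constructed precisely so that $C^*(\fg_X) \simeq \cinf_X$ as $\Omega_X$-modules, a zig-zag argument like the one in Section \ref{sect:A_u}, combined with Proposition \ref{jetquasiiso}, then identifies $\Omega^1_{cl}(B\fg_X) \simeq \Omega^1_{cl}(X)$ as sheaves of complexes on $X$.

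Third, I compute the sheaf cohomology. The Poincar\'e lemma provides a short exact sequence of sheaves on $X$,
\[
0 \to \RR \to \cinf_X \xrightarrow{d} \Omega^1_{cl}(X) \to 0,
\]
and since $\cinf_X$ is fine, the associated long exact sequence collapses to isomorphisms $H^k(X, \Omega^1_{cl}(X)) \cong H^{k+1}(X, \RR)$ for $k \geq 1$, which yields the stated identification of obstruction groups.

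Finally, vanishing of $O$ itself is inherited from the local vanishing already proved in Section \ref{section2}: the decomposition of Proposition \ref{obsexpression} as a sum over wheels with a distinguished non-loop edge is Feynman-diagrammatic and hence applies sheaf-theoretically, and the analytic factor $O^{an}_n$ is independent of the choice of $\L8$ algebra and vanishes on the nose by the one-dimensional propagator computation immediately after Proposition \ref{1dvanishprop}. I expect the most delicate step to be the second: verifying carefully that the retraction from $T^*B\fg_X$ to $X$ does induce a quasi-isomorphism on the truncated de Rham complex $\Omega^1_{cl}$ rather than only on the full de Rham complex, since the truncation is not itself a Poincar\'e-lemma statement and one must run the zig-zag with care.
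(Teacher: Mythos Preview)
Your proposal is essentially correct and follows the same skeleton as the paper (apply Corollary \ref{obscpxcor}, then invoke one of the two vanishing results from Section 5), but with two differences worth noting.

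First, your initial stage through $T^*B\fg_X$ is unnecessary. The obstruction $O$ already lives in the $\GG_m$-invariant subcomplex $\sO_{loc}(\Omega^*(\RR)\otimes\fg_X[1])^\RR$, and Corollary \ref{obscpxcor} applied to $\fg_X$ (not to $\fg_X\oplus\fg_X^\vee[-2]$) directly gives $\Omega^1_{cl}(B\fg_X)\oplus\Omega^1_{cl}(B\fg_X)[1]$. So the retraction $T^*B\fg_X\to B\fg_X$, which you correctly flag as the most delicate step, never enters; you may simply delete that stage. The remaining identification $\Omega^1_{cl}(B\fg_X)\simeq\Omega^1_{cl}(X)$ via Propositions \ref{prop:jetquasiiso} and \ref{jetquasiiso}, and the Poincar\'e-lemma computation of $H^k(X,\Omega^1_{cl}(X))\cong H^{k+1}(X,\RR)$, are exactly what is implicit in the paper's one-line ``application of Corollary \ref{obscpxcor}'', and you spell them out more carefully than the paper does.

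Second, for vanishing you invoke the analytic factor $O^{an}_n=0$, whereas the paper instead cites Proposition \ref{1dvanishprop}, i.e., the vanishing of the total Lie factor $\sum_{\gamma,e}O^{\fg}_{\gamma,e}$ for degree reasons. Both arguments are valid (the paper proves both), and yours has the advantage of being independent of the target, while the paper's choice ties the vanishing to the characteristic-class interpretation that drives the rest of Part II.
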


\begin{proof}
This is just an application of Corollary \ref{obscpxcor}.  That the obstruction is zero follows exactly as in Proposition \ref{1dvanishprop}, i.e.,  from the vanishing of the total Lie factor, $\sum_{\gamma , e} O^{\fg}_{\gamma, e}$.  \end{proof}

\begin{proof}[Proof of Theorem]
Having proved the existence of a quantization, we need is to identify the scale $\infty$ interaction term. We know from above (Theorem \ref{inftytermBg}) that the scale $\infty$ interaction restricted to harmonic fields can be written
\[
I^{(1)} [\infty] \big|_{\cH} = \sum_{k \ge 1}  \left \{ \frac{ 2(2k-1)!}{(2 \pi)^{2k}}  \zeta(2k) ch_{2k} (\nabla_\sigma)  \right \}. 
\]
Here $ch_{2k} (\nabla_\sigma)$ lives $\Omega^{2k}_X (\Omega^{2k}_\fJ)$.  We proved in Section \ref{sect:A_u} that
\[
\sum_{k \ge 1}  \left \{ \frac{ 2(2k-1)!}{(2 \pi)^{2k}}  \zeta(2k) ch_{2k} (\nabla_\sigma)  \right \} 
\simeq \log (A_u (X)) \in \Omega^{-\ast}_X [[u]]. \qedhere
\]
\end{proof}

\newpage

\part{Appendices}
\appendix

\section{Notational and other conventions}

A tensor product $\ot$ without a subscript (usually) denotes $\ot_\RR$. We denote tensoring over another commutative ring $\sA$ by $\ot_\sA$. 

Given a free $R$-module $V$, we denote the dual by $V^\vee$.

We use $\Sym V$ to denote the symmetric algebra (i.e., the direct sum of symmetric powers) and $\csym V$ to denote the completed symmetric algebra (i.e., the direct product of symmetric powers).

Unless we are working with indices, $k$ typically denotes the ``base ring" over which live our $\L8$-algebras, commutative algebras, and so on. In practice, it might be short-hand for $\RR$, $\CC$, $\Omega_X$, or other things that are hopefully clear from context.

We always work with cochain complexes, so that differentials have degree +1. Likewise, we always employ the Koszul rule of signs.

Given a smooth manifold $X$, we denote the sheaf of smooth functions by $C^\infty_X$, of $n$-forms by $\Omega^n_X$, and of vector fields by $\cT_X$. We use $\Omega_X$ to denote the de Rham complex of $X$ as a sheaf of commutative dg algebras.

Given a commutative dg algebra $\sA$, we denote the underlying graded algebra by $\sA^\#$.

Given a vector bundle $\pi: E \to X$ with a flat connection $\nabla$, we denote by $dR(E)$ the associated de Rham complex $\Omega^*_X(E)$ with differential $\nabla$.

Our space of fields $\sE$ will always be sections of a $\ZZ$-graded vector bundle $E \to M$. Given a quadratic action $Q$ and a fiberwise (degree -1) symplectic pairing $\langle - , - \rangle_{loc} : E \otimes E \to \mathrm{Dens} (M)$,  let $D = [Q,Q^\ast]$ be the generalized Laplacian associated to our classical field theory and  for $t \in \RR_{>0}$, let $K_t \in \sE \otimes \sE$ denote the heat kernel for $D$. Our convention for kernels is that for any $\phi \in \sE$,
\[
\int_M \langle K_t (x,y) , \phi (y) \rangle_{loc} = (e^{-tD} \phi ) (x) .
\]
The associated BV Laplacian at scale $L$ is $\Delta_L$, while the scale $L$ BV bracket is denoted by $\{- , - \}_L$.

Given any functional $I \in \sO (\sE)$ we let $W (P_\epsilon^L , I)$ be the renormalization group flow operator which is expressed as a weighted sum of graph weights $W_\gamma (P_\epsilon^L , I)$.  The graph weight $W_{\gamma ,e} (P_\epsilon^L , \Phi, I)$ is given by equipping the edge $e \in \gamma$ by $\Phi \in \mathrm{Sym}^2 (\sE)$ and all remaining edges by $P_\epsilon^L$.

The obstruction to satisfying the QME at scale $L$ is denoted $O[L]$, while the limit as $L \to 0$ is denoted simply by $O$. We use the notation $O_{\gamma, e}$ to denote the contribution of a graph $\gamma$ with edge $e$ to the obstruction. 

Note that in both $W_{\gamma ,e}$ and $O_{\gamma , e}$, the edge $e$ is assumed {\em not} to be a loop.

\section{$\L8$ algebras and their cyclic versions}\label{app:L8}

An $\L8$ algebra is a homotopy coherent weakening of the idea of a Lie algebra, and there is an extensive literature on them. We will provide a minimal overview targeted at the less-conventional aspects that we use.

Let $R$ denote a commutative dg algebra with a nilpotent ideal $I \subset R$.

% Don't forget to include finiteness conditions eventually

\begin{definition}
A {\em curved $\L8$ algebra} over $R$ is a locally free, graded $R^\#$-module $L$ with a degree 1 derivation 
\[
d: \csym(L^\vee[-1]) \to \csym(L^\vee[-1])
\]
satisfying 
\begin{itemize} 

\item $d^2 = 0$; 

\item $d$ makes $\csym(L^\vee[-1])$ into a commutative dg algebra over $R$; 

\item modulo $I$, the derivation must preserve the ideal generated by $L^\vee[-1]$ inside $\csym(L^\vee[-1])$.

\end{itemize}

We call the commutative dg algebra $(\csym(L^\vee[-1]), d)$ the {\em Chevalley-Eilenberg complex} of the $\L8$ algebra $L$. 
\end{definition}

When we speak of Koszul duality, we mean the process of moving between an $\L8$ algebra and a commutative dg algebra.

\begin{remark}
The $n$-fold brackets of $L$ are obtained from $d$ as follows. A derivation is determined by its behavior on $L^\vee[-1]$, thanks to the Leibniz rule. Hence we may view $d$ as simply an $R$-linear map from $L^\vee[-1]$ to $\csym(L^\vee[-1])$. Consider the homogeneous components of $d$, namely the maps $d_n: L^\vee[-1] \to \Sym^n(L^\vee[-1])$. If we take the dual, we get maps
\[
\ell_n : \Sym^n(L^\vee[-1])^\vee \to (L^\vee[-1])^\vee,
\]
which we can consider as degree $0$ maps from $(\wedge^n L)[n-2]$ to $L$. These are the Lie brackets on $L$, and we sometimes call them the {\it Taylor coefficients} of the bracket. The higher Jacobi relations between the $\ell_n$ are encoded by the fact that $d^2 = 0$.
\end{remark}

\begin{remark}
A curious aspect of this definition is the curving, since the uncurved case is typically more familiar. Under Koszul duality, there is a natural ``geometric" source for curved $\L8$ algebras (modulo an issue of completion). Consider a map of commutative dg algebras $f: A \to B$, which we view as a map of derived schemes $\Spec B \to \Spec A$. This map makes $B$ an $A$-algebra and so we can find a semi-free resolution $\Sym_A(M)$ of $B$ as an $A$-algebra. This replacement $\Sym_A(M)$ expresses $B$ as a kind of $\L8$ algebra over $A$, namely $\fg_B = M^\vee[-1]$ (here the completion issue appears). Note that if $f$ factors through a quotient $A/I$ of $A$, however, then $\fg_B$ will be curved. This curving appears because $\Spec B$ really only lives over the subscheme $\Spec A/I \subset \Spec A$, and extending it over the rest of $\Spec A$ is obstructed.
\end{remark}

We say a bilinear pairing of degree $k$ $\langle -, - \rangle: L \ot L \to R[-k]$ is {\em nondegenerate} if the induced pairing on cohomology $H^*(\fg) \ot H^*(\fg) \to H^*(R)[-k]$ is perfect.

\begin{definition}
A {\em cyclic $\L8$ algebra} of degree $k$ consists of an $\L8$ algebra $L$ and a nondegenerate symmetric bilinear pairing $\langle -, - \rangle: L \ot L \to R[-k]$ such that
\[
\langle x_1, \ell_n(x_2, \ldots,x_{n+1}) \rangle = (-1)^{n + |x_{n+1}|(|x_1| + \cdots + |x_n|)}\langle x_{n+1}, \ell_n(x_1, \ldots,x_n) \rangle.
\]
\end{definition}

%IS THAT SIGN CORRECT? 

%EXPLAIN ROLE OF CE COMPLEX AS REPRSENTING INTERNAL HOM

\section{Complexes with a circle action}\label{Mixedcomplexes}

We define the category of complexes with a circle action to be the category of dg modules over $\CC[\epsilon]$, where $\epsilon$ is square zero of cohomological degree 1. This notion is equivalent to dg modules with a $B\GG_a$ module structure, where $B\GG_a$ denotes the dg group manifold $(\pt, \CC[\epsilon])$. Explicitly, an object is just a triple $(V^\ast , d, \epsilon )$ in which $(V^\ast, d)$ is a cochain complex and $\epsilon$ is a degree $-1$ cochain map.

If $V$ is a cochain complex with a $B\GG_a$ module structure, we want to compute the homotopy fixed points of the $B\GG_a$ action, namely $V^{B\GG_a}$ or, equivalently, $V^{h\CC[\epsilon]}$. In other words, we want to compute $\RR \Hom_{\CC[\epsilon]}(\CC, V)$. Note that this will be a module over
\[
\RR \Hom_{\CC[\epsilon]} (\CC, \CC) \simeq \CC [[u]], \; \deg u =2 .
\]
(To an algebraic topologist, this looks like a completed version of the cohomology of $BS^1$.) To do this, we resolve $\CC$ as a $\CC[\epsilon]$ module:
\[
\dotsb \xrightarrow{\cdot \epsilon} \CC[\epsilon] \xrightarrow{\cdot \epsilon} \CC[\epsilon] \xrightarrow{\cdot \epsilon} \CC [\epsilon] \simeq \CC .
 \]
Thus we can compute the homotopy fixed points as the total complex of a double complex.  Indeed, 
\[
V^{B\GG_a} = (V[[u]], d + u \epsilon) = \left ( \prod_{n\ge 0} V[2n] , d \text{ internal to } V \text{ and } \epsilon \text{ shifts between copies} \right).
\]
For a discussion of these ideas, we recommend \cite{TV} and \cite{BZN}.

\section{Differential graded manifolds and derived geometry}
\label{DGmanifolds}

In this paper we use a limited version of ``derived" geometry adequate to our tasks. In essence, we enhance smooth manifolds by allowing ``formal" directions, which allows us to work with certain kinds of derived quotients and derived intersections. For instance --- and we elaborate below --- we consider the space $X_{dR}$, whose structure sheaf is the de Rham complex of the smooth manifold $X$. Unfortunately, we lack the expertise to explain how this formalism fits inside the deeper formalisms recently developed by To{\"e}n-Vezzosi, Lurie, and others. To some extent, what we lose in generality is redeemed by how concrete and easy it is to work with dg manifolds.\footnote{After this paper was written, Costello introduced an approach to derived smooth geometry in his revision of \cite{Cos3}. In \cite{GGonL8}, we provide an introduction to this formalism and explain how it interacts with Costello's QFT formalism. In particular, we describe several versions of the derived loop space and explain their relations.} 

\begin{definition}
A {\em differential graded manifold} (dg manifold, for short) is ringed space $\cX = (X, \sO_\cX)$ where $X$ is a smooth manifold and $\sO_\cX$ is a sheaf on $X$ of commutative dg algebras over $\RR$ (or $\CC$) such that locally the underlying graded algebra of $\sO_\cX$ has the form $\cinf_X \ot \csym V$ for some topological vector space $V$ over $\RR$ (or $\CC$).
\end{definition}

There is a category of dg manifolds where the morphisms are pairs $(f, f^\#): \cX \rightarrow \cY$, with $f: X\rightarrow Y$ a map of smooth manifolds and $f^\#: f^{-1}\sO_\cY \rightarrow \sO_\cX$ a map of commutative dg algebras over $f^{-1} C^\infty_Y$. 

\subsection{Geometric Examples}

Many constructions from differential geometry and topology can be phrased elegantly using dg manifolds.

\begin{itemize}

\item Given a finite-rank $\ZZ$-graded vector bundle $E$ on a smooth manifold $X$, let $E^\vee$ denote the dual bundle and $\sE^\vee$ the sheaf of smooth sections of $E^\vee$. The dg manifold $(X, \csym_{C^\infty_X}(\sE^\vee))$ describes the formal neighborhood of $X$ inside the total space of $E$. For instance, in this paper we often work with the shifted cotangent bundle $T^*[k]X$, which is precisely the dg manifold $(X, \csym_{C^\infty_X}(\cT_X[-k]))$.

\item Let $f: X \rightarrow \RR$ be a smooth function on a smooth manifold $X$ of dimension $n$. Consider the cochain complex, denoted $\sO_{Crit(f)}$,
\[
\cdots \rightarrow 0 \rightarrow \wedge^n \cT_X [n] \overset{\iota_{df}}{\rightarrow} \cdots \overset{\iota_{df}}{\rightarrow} \cT_X[1]  \overset{\iota_{df}}{\rightarrow} C^\infty_X,
\]
where we simply contract the exterior derivative $df$ with vector fields. Observe that $H^0(\sO_{Crit(f)})$  consists of functions on the critical locus of $f$, in the usual sense. We call $dCrit(f) = (X, \sO_{Crit(f)})$ the {\em derived critical locus} of $f$.

\item Given two submanifolds $M, N$ of a smooth manifold $X$, the {\em derived intersection} $M \cap^d N$ is the dg manifold $(X, C^\infty_M \ot^\LL_{C^\infty_X} C^\infty_N)$. (The derived critical locus is the derived intersection inside $T^*X$ of the zero section and the graph of the 1-form $df$.)

\item For $X$ a smooth manifold, $X_{dR} = (X, \Omega_X)$ is a dg manifold that encodes the topology of $X$\footnote{There is another dg manifold $(\pt, \Omega^*(X))$ that knows the real homotopy type of $X$ but nothing more. By contrast, the module sheaves of $X_{dR}$ are essentially the $D$-modules on $X$, and hence $X_{dR}$ knows much more of the topology (via the cohomologically constructible sheaves) and not just homotopy of $X$.}, since we can view the de Rham complex as a resolution of the constant sheaf $\underline{\RR}_X$ on $X$.

\end{itemize}

\subsection{Main examples for this paper}

The de Rham space makes it easy to discuss certain geometric constructions.

\begin{lemma}
A vector bundle $\pi: E \to X_{dR}$ is a vector bundle $\pi_0: E_0 \to X$ with a flat connection $\nabla$.
\end{lemma}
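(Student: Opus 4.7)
My plan is to unpack the definition of a vector bundle on the dg manifold $X_\Omega = (X, \Omega_X)$ as a locally free sheaf of $\Omega_X$-modules of finite rank, and then show directly that such a datum is the same as a locally free sheaf of $C^\infty_X$-modules $\mathcal{E}_0$ of finite rank together with a flat connection $\nabla: \mathcal{E}_0 \to \Omega^1_X \otimes_{C^\infty_X} \mathcal{E}_0$.

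First, starting from $(\mathcal{E}_0, \nabla)$, I would form $\mathcal{E} := \Omega_X \otimes_{C^\infty_X} \mathcal{E}_0$ as a graded $\Omega^\#_X$-module and equip it with the differential $d_{\mathcal{E}} := d_{dR} \otimes 1 + \nabla$, where $\nabla$ is extended as an odd derivation along the de Rham differential via the Leibniz rule $\nabla(\omega \otimes s) = d_{dR}\omega \otimes s + (-1)^{|\omega|}\omega \wedge \nabla s$. The condition $d_{\mathcal{E}}^2 = 0$ is equivalent to $\nabla^2 = 0$, i.e., flatness, so this yields a locally free dg $\Omega_X$-module — a vector bundle over $X_\Omega$.

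In the opposite direction, given a locally free $\Omega_X$-module $\mathcal{E}$, I would set $\mathcal{E}_0 := \mathcal{E} \otimes_{\Omega_X} C^\infty_X$, using the quotient map $\Omega_X \to \Omega_X/\Omega^{\geq 1}_X = C^\infty_X$. Local freeness of $\mathcal{E}$ over $\Omega_X$ ensures $\mathcal{E}_0$ is locally free over $C^\infty_X$ of the same rank, and moreover that locally there is an isomorphism of graded $\Omega^\#_X$-modules $\mathcal{E} \cong \Omega_X \otimes_{C^\infty_X} \mathcal{E}_0$ obtained by lifting a local $C^\infty_X$-basis of $\mathcal{E}_0$ to $\mathcal{E}$. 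Under any such splitting, the differential $d_{\mathcal{E}}$ decomposes as $d_{dR} \otimes 1 + A$ for some operator $A$ of degree $+1$; comparing with the Leibniz rule for $d_{\mathcal{E}}$ as a derivation over $(\Omega_X, d_{dR})$ shows that $A$ restricts to a connection $\nabla: \mathcal{E}_0 \to \Omega^1_X \otimes_{C^\infty_X} \mathcal{E}_0$, and $d_{\mathcal{E}}^2 = 0$ forces $\nabla^2 = 0$.

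Finally, I would check that these two constructions are mutually inverse (the composite in either direction is the identity, on the nose up to the canonical isomorphism $\Omega_X \otimes_{C^\infty_X} (\mathcal{E} \otimes_{\Omega_X} C^\infty_X) \cong \mathcal{E}$ provided by the local splittings), and that morphisms match: a map of dg $\Omega_X$-modules $\mathcal{E} \to \mathcal{F}$ is the same as a $C^\infty_X$-linear map $\mathcal{E}_0 \to \mathcal{F}_0$ intertwining the connections, since any $\Omega^\#_X$-linear map is determined by its degree-zero component and compatibility with $d_{\mathcal{E}}$ reduces to compatibility with $\nabla$. The only mildly subtle point — and what I regard as the main obstacle — is establishing the local splitting $\mathcal{E} \cong \Omega_X \otimes_{C^\infty_X} \mathcal{E}_0$ of graded modules; this is where one uses that $\Omega^\#_X$ is, locally on $X$, a free graded-commutative algebra over $C^\infty_X$ and that $\mathcal{E}$ is locally free as an $\Omega^\#_X$-module, so that one can choose lifts of a $C^\infty_X$-basis of $\mathcal{E}_0$ without obstruction.
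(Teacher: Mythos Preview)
The paper does not prove this lemma; it is stated without proof in the appendix on dg manifolds, evidently as a well-known fact. (The later appendix on $\Omega$-modules does sketch the forward direction in one sentence: ``Equipping $\Omega^\#_X(E)$ with a differential is exactly the same data as a flat connection $\nabla$ on $E$.'') Your proposal supplies a correct and complete argument for both directions, and the approach you take --- identifying a vector bundle on $X_\Omega$ with a locally free dg $\Omega_X$-module, extracting $\mathcal{E}_0$ by reducing modulo $\Omega^{\geq 1}_X$, and reading off the connection from the degree-one component of the differential via the Leibniz rule --- is the standard one. The point you flag as the main subtlety (the local graded splitting $\mathcal{E} \cong \Omega^\#_X \otimes_{C^\infty_X} \mathcal{E}_0$) is correctly handled: it follows directly from local freeness over $\Omega^\#_X$.
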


This result suggests that there might be a dg manifold that acts as a classifying space for bundles with flat connection. For instance, given a finite-dimensional Lie algebra $\fg$, consider the dg manifold $B\fg := (\pt, C^*(\fg))$, whose structure sheaf is the Chevalley-Eilenberg cochain complex.

\begin{prop}
The space of flat connections on the trivial $G$-bundle over a smooth manifold $X$ is equivalent to the space of maps from $X_{dR}$ to $B\fg$. 
\end{prop}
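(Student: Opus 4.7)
The plan is to unpack what a map of dg manifolds $X_\Omega \to B\fg$ concretely is, reduce it via the universal property of the Chevalley–Eilenberg complex (as a free commutative graded algebra with differential) to a piece of linear data, and then recognise the resulting compatibility condition as the Maurer–Cartan equation.

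First I would spell out the morphism. Since $(B\fg)_0 = \pt$, the underlying smooth map $f_0 : X \to \pt$ is forced, and the datum $f^\#$ is a morphism of sheaves of commutative dgas from $f_0^{-1}\sO_{B\fg}$ (the constant sheaf on $X$ with stalk $C^*(\fg)$) to $\Omega_X$. Equivalently, because $C^*(\fg)$ is independent of the open and compatibility with restriction is automatic, this is just a morphism of commutative dgas $\phi : C^*(\fg) \to \Omega^*(X)$.

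Next, I would use that $C^*(\fg) = (\csym(\fg^\vee[-1]), d_{CE})$ is, as a graded commutative algebra, freely generated over $\RR$ by $\fg^\vee[-1]$. Thus a morphism of graded commutative algebras $\phi : C^\#(\fg) \to \Omega^*(X)$ is determined by a degree $0$ linear map $\fg^\vee[-1] \to \Omega^*(X)$, i.e.\ by a degree $1$ map $\fg^\vee \to \Omega^*(X)$, which is the same as an element $A \in \Omega^1(X) \otimes \fg$, the connection $1$-form on the trivial $G$-bundle. To have a morphism of dgas we further need $\phi \circ d_{CE} = d_{dR} \circ \phi$, and since both sides are derivations it suffices to check this on the generators $\fg^\vee[-1]$. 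Picking a basis $\{x_i\}$ of $\fg$ with dual basis $\{x^i\}$, so that $d_{CE} x^i = -\tfrac12 \sum_{j,k} c^i_{jk}\, x^j x^k$ for structure constants $c^i_{jk}$, and writing $A = \sum_i A^i \otimes x_i$ with $A^i := \phi(x^i[-1])$, the condition becomes
\[
dA^i + \tfrac12 \sum_{j,k} c^i_{jk}\, A^j \wedge A^k = 0 \qquad \text{for all } i,
\]
which is exactly the Maurer–Cartan equation $dA + \tfrac12 [A,A] = 0$, i.e.\ the statement that $A$ is a flat connection on the trivial $G$-bundle over $X$. Conversely, any $A$ satisfying Maurer–Cartan defines, by freeness of the symmetric algebra, a unique cdga map $\phi$, hence a unique morphism of dg manifolds $X_\Omega \to B\fg$.

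The only subtle point—which I expect to be the main (mild) obstacle—is the sheaf-theoretic bookkeeping: one must verify that sheaf maps $f_0^{-1}\sO_{B\fg} \to \Omega_X$ are the same as global cdga maps $C^*(\fg) \to \Omega^*(X)$, rather than something locally more flexible. This is immediate because $f_0^{-1}\sO_{B\fg}$ is a constant sheaf of cdgas on $X$ and $\Omega_X$ is a sheaf of cdgas, so a sheaf map is equivalent to a compatible family of cdga maps $C^*(\fg) \to \Omega^*(U)$ as $U$ ranges over opens; restricting to the one-point cover recovers the global map and, conversely, any global map restricts to opens. Once this is noted, the bijection above finishes the proof.
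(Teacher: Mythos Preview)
Your proof is correct and is the standard argument. The paper states this proposition without proof, so there is nothing to compare against; your unwinding of the definitions---that a map $X_\Omega \to B\fg$ is a cdga map $C^*(\fg) \to \Omega^*(X)$, that freeness of $C^\#(\fg)$ on $\fg^\vee[-1]$ reduces this to an element $A \in \Omega^1(X)\otimes\fg$, and that compatibility with differentials is the Maurer--Cartan equation---is exactly the intended content. One small remark: since the paper defines $C^*(\fg)$ using the \emph{completed} symmetric algebra $\csym(\fg^\vee[-1])$, you might note that for a finite-dimensional Lie algebra in degree $0$ this coincides with the uncompleted $\Sym$ (the graded pieces terminate at $\dim\fg$), so the free-algebra universal property applies without further ado.
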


We construct a {\em space} of maps, rather than merely a set, by enriching over simplicial sets in the standard way. We define $\Maps(\cX,\cY)$ to be the simplicial set whose $n$-simplices are pairs of a smooth map $f:X \to Y$ and a map of commutative dg algebras $f^\sharp: f^* \sO_\cY \to \sO_\cX \ot \Omega^*(\triangle^n)$. Throughout this paper, however, we will never explicitly use this notion of mapping space, instead working directly with algebras.

It is natural to consider as well {\em families} of $\L8$ algebras over a dg manifold. This description is just an alternative way to discuss a dg manifold. For instance, in this paper, we encode a smooth manifold $X$ as a dg manifold $B\fg_X = (X, C^*(\fg_X))$, where $\fg_X$ is a sheaf of curved $\L8$ algebras over the sheaf of commutative dg algebras $\Omega^*_X$. Hence the dg manifold $B\fg_X$ lives over $X_{dR}$. This is a central construction in the text.

Our final example is the {\em derived loop space} we use throughout the paper. Let $B\fg$ denote a sheaf of curved $\L8$ algebras over $X_{dR}$ (possibly the example above, the holomorphic version in \cite{Cos3}, or something else). Then the derived loop space $\cL B\fg$ is the dg manifold $(X, C^* (\Omega^*(S^1) \ot \fg ))$. There are other dg manifolds that might deserve the name ``derived loop space," but this version is the most relevant for our purpses. When $B\fg$ just lives over a point, this definition essentially coincides with the definition in derived algebraic geometry. More generally, our version plays nicely with the AKSZ construction.

\section{Differential operators, $D$-modules, and $\Omega$-modules}

In this paper we will make use of $D$-modules, jets, and modules over the de Rham complex, so we will provide a rapid overview of the simple technology that we need. We will use nothing deep or difficult in this paper; this appendix is merely a collection of definitions and examples. In fact, it just provides several different ways to talk about differential operators, but given their central role in geometry, this proliferation of language is perhaps not too surprising. 

\subsection{$D$-modules}

For $X$ a smooth manifold, let $D_X$ denote the ring of smooth differential operators on $X$. There are many ways to define this ring. For instance, $D_X$ is the subalgebra of $\End_\CC (C^\infty_X, C^\infty_X)$ generated by left multiplication by $C^\infty_X$ and by smooth vector fields $T_X$. Locally, every differential operator $P$ has the form 
\[
P = \sum_\alpha a_\alpha(x) \partial^\alpha,
\] 
where the $a_\alpha$ are smooth functions and $\partial^\alpha$ is the multinomial notation for a partial derivative.

A left $D_X$ module $M$ is simply a left module for this algebra. One natural source of left $D_X$ modules is given by smooth vector bundles with flat connections. Let $E$ be a smooth vector bundle over $X$ and let $\sE$ denote its smooth sections. If $\sE$ is a left $D_X$ module, then every vector field acts on $\sE$: we have $X \cdot s \in \sE$ for every vector field $X \in T_X$ and every smooth section $s \in \sE$. Equipping $\sE$ with an action of vector fields is equivalent to putting a connection $\nabla$ on $E$. Moreover, we have $[X, Y] \cdot s = X \cdot (Y \cdot s) - Y \cdot (X \cdot s)$ for all $X, Y \in T_X$ and $s \in \sE$. To satisfy the bracket relation, this connection $\nabla$ must be flat.

There is a forgetful functor $F: D_X-mod \rightarrow C^\infty_X-mod$, where we simply forget about how vector fields act on sections of the sheaf. As usual, there is a left adjoint to $F$ given by tensoring with $D_X$:
\[
D_X \ot_{C^\infty_X} -: M \mapsto D_X \ot_{C^\infty_X} M.
\]
Using the forgetful functor, we can equip the category of left $D_X$ modules with a symmetric monoidal product. Namely, we tensor over $C^\infty_X$ and equip $M \ot_{C^\infty_X} N$ with the natural $D_X$ structure
\[
X \cdot (m \ot n) = ( X \cdot m) \ot n + (-1)^{|m|} m \ot (X \cdot n),
\]
for any $X \in T_X$, $m \in M$, and $n \in N$. By construction, $C^\infty_X$ is the unit object in the symmetric monoidal category of left $D_X$ modules. We will write $M \ot N$ to denote $M \ot_{C^\infty_X} N$ unless there is a possibility of confusion.

\begin{remark}
Right $D_X$ modules also appear in this paper and throughout mathematics. For instance, distributions and the sheaf of densities $\Dens_X$ are naturally a right $D_X$ modules, since distributions and densities pair with functions to give numbers. Since we are working with smooth manifolds, however, it is easy to pass back and forth between left and right $D_X$ modules.
\end{remark}

\subsection{Jets}

There is another, beautiful way to relate vector bundles and $D_X$ modules, and we will use it extensively in our constructions. Given a finite rank vector bundle $E$ on $X$, the infinite jet bundle $J(E)$ is naturally a $D_X$ module, as follows. Recall that for a smooth function $f$, the $\infty$-jet of $f$ at a point $x \in X$ is its Taylor series (or, rather, the coordinate-independent object that corresponds to a Taylor series after giving local coordinates around $x$). We can likewise define the $\infty$-jet of a section $s$ of $E$ at a point $x$. The bundle $J(E)$ is the infinite-dimensional vector bundle whose fiber at a point $x$ is the space of $\infty$-jets of sections of $E$ at $x$. This bundle has a tautological connection, since knowing the Taylor series of a section at a point automatically tells us how to do infinitesimal parallel transport. Nonetheless, it is useful to give an explicit formula. Let $x$ be a point in $X$ and pick local coordinates $x_1, \ldots, x_n$ in a small open neighborhood $U$ of $x$. Pick a trivialization of $E$ over $U$ so that
\[
\Gamma(U, J(E)) \cong C^\infty(U) \ot_\RR \RR[[x_1, \ldots, x_n]] \ot_\RR E_x.
\]
We write a monomial $x_1^{a_1} \cdots x_n^{a_n}$ using multinomial notation: for $\alpha = (a_1, \ldots, a_n) \in \NN^n$, $x^\alpha$ denotes the obvious monomial. Hence, given a section $f \ot x^\alpha \ot e \in C^\infty(U) \ot_\RR \RR[[x_1, \ldots, x_n]] \ot_\RR E_x$ and vector field $\partial_j = \partial/\partial x_j$, the connection is
\[
\partial_j \cdot f \ot x^\alpha \ot e = (\partial_j f) \ot x^\alpha \ot e - f \ot (\alpha_j x^{\alpha - e_j}) \ot e.
\]
We are just applying the vector field in the natural way first to the function and then to the monomial. We leave it to the reader to verify that this defines a flat connection.

The following proposition gives a striking reason for the usefulness of jet bundles. Let $\Diff(\sE, \sF)$ denote the differential operators from the $C^\infty_X$-module  $\sE$ to the $C^\infty_X$-module $\sF$.

\begin{prop}\label{diff}
For vector bundles $E$ and $F$ on $X$, $\Diff(\sE,\sF) \cong \Hom_{D_X}(J(E), J(F))$.
\end{prop}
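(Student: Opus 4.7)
The plan is to construct explicit maps in both directions between $\Diff(\sE,\sF)$ and $\Hom_{D_X}(J(E), J(F))$ and verify they are mutually inverse. Given a differential operator $D: \sE \to \sF$, I define $J(D): J(E) \to J(F)$ by the rule $J(D)(j_x(s)) = j_x(Ds)$ for each smooth section $s \in \sE$, where $j_x$ denotes the $\infty$-jet at the point $x$. This is well-defined precisely because a differential operator depends on its input only through its $\infty$-jet, so all higher jets of $Ds$ are determined by the jet of $s$. Extending $C^\infty_X$-linearly to all of $J(E)$ produces a morphism of jet bundles that, by construction, intertwines the tautological flat connections, and is therefore $D_X$-linear.

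For the reverse direction, given a $D_X$-linear $\Phi: J(E) \to J(F)$, I extract a differential operator as follows. Because $\Phi$ is $D_X$-linear, it sends flat sections to flat sections; and the flat sections of $J(E)$ are precisely the image of the jet embedding $j_E: \sE \hookrightarrow J(E)$, as one verifies by solving the parallel transport equation using the explicit local formula for the connection on $J(E)$ recorded earlier in the appendix. Identifying $\sE$ and $\sF$ with their flat images, $\Phi$ therefore restricts to a map $D_\Phi: \sE \to \sF$. To see that $D_\Phi$ is a differential operator, note that $\Phi$ is in particular $C^\infty_X$-linear, so $\Phi(j_E(s))(x)$ depends only on $j_E(s)(x) = j_x(s)$, and hence so does $D_\Phi(s)(x)$; this is the standard pointwise characterization of a differential operator.

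It remains to verify that these two assignments are mutually inverse. The composite $D \mapsto J(D) \mapsto D_{J(D)}$ is the identity by direct inspection: $D_{J(D)}(s)$ is the unique section whose jet equals $J(D)(j_E(s)) = j_F(Ds)$, namely $Ds$. The other composite is where I anticipate the main technical obstacle: I must show that if $\Phi$ and $J(D_\Phi)$ are two $D_X$-linear maps $J(E) \to J(F)$ that agree on the image $j_E(\sE) \subset J(E)$, then they are equal on all of $J(E)$. The key point is that $\Phi$, being in particular $C^\infty_X$-linear, is determined by its fiberwise restrictions, and on each fiber $J(E)_x$ the set of vectors $\{j_x(s) : s \in \sE\}$ spans, by Borel's lemma --- every formal power series at $x$ is the Taylor series of some smooth section. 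Since both maps are fiberwise linear and agree on this spanning set, they coincide.
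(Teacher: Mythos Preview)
The paper does not actually supply a proof of this proposition; it only follows it with a short remark explaining why a differential operator depends on its input only through the $\infty$-jet. Your argument is therefore already substantially more detailed than anything in the text, and the overall strategy --- build $J(D)$ from $D$, recover $D_\Phi$ from $\Phi$ by restricting to flat sections, and check the two constructions are inverse using Borel's lemma --- is exactly the right one.

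There is one point where your reasoning is too quick, and it shows up twice. You invoke that a $C^\infty_X$-linear map $\Phi: J(E) \to J(F)$ is ``determined by its fiberwise restrictions,'' and later that ``$D_\Phi(s)(x)$ depends only on $j_x(s)$'' is ``the standard pointwise characterization of a differential operator.'' Both of these implicitly assume that $\Phi$ is a bundle map, i.e.\ continuous for the natural pro-finite-dimensional topology on $J(E)$. For finite-rank bundles, $C^\infty_X$-linearity of a map of sections automatically gives a bundle map, but $J(E)$ has infinite rank and this step is not free. Likewise, an operator whose value at $x$ depends only on the $\infty$-jet at $x$ is not obviously of locally finite order; you need either continuity of $q \circ \Phi: J(E) \to F$ in the pro-topology (so that it factors through some finite jet quotient $J^k(E)$), or an appeal to Peetre's theorem once you know $D_\Phi$ is support-non-increasing. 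Either route closes the gap, and the paper's own remark (point-supported distributions are finite combinations of derivatives of $\delta_x$) is essentially the Peetre argument in the scalar case. In short: your proof is correct once you either state that $\Hom_{D_X}$ means continuous $D_X$-module maps, or insert Peetre's theorem at the indicated step.
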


\begin{remark}
A differential operator $P$ is characterized by the fact that, for any point $x \in X$, the linear functional $\lambda: C^\infty(X) \rightarrow \RR, f \mapsto Pf(x)$ is purely local. It is a distribution with support at $x$, and hence $\lambda$ is a finite linear combination of the delta function $\delta_x$ and its derivatives $\partial^\alpha \delta_x$. But this means $\lambda$ depends only on the $\infty$-jet of a function $f$ at $x$.
\end{remark}

What makes this construction useful is that it allows one to translate questions about geometry into questions about $D_X$ modules. There is a rich literature explaining how to exploit this translation, and the usual name for this area of mathematics is (Gelfand-Kazhdan) {\em formal geometry}.

There is another way to construct the sheaf of sections of $J(E)$. Let $\sJ$ denote the sheaf of sections of $J$, the jet bundle for the trivial rank 1 bundle over $X$. Observe for any point $p \in X$,
\[
\sJ_p = \lim_{\leftarrow} \cinf_X/\mathfrak{m_p}^k,
\]
where $\mathfrak{m}_p$ denotes the maximal ideal of functions vanishing at $p$. This equips $\sJ$ with a canonical filtration by ``order of vanishing."  Now let $\sE$ denote the sheaf of smooth sections of $E$, which is a module over $\cinf_X$. Then the sheaf $J(E)$ (we conflate the bundle with its sheaf of sections) has stalk
\[
J(E)_p = \lim_{\leftarrow} \sE/\mathfrak{m_p}^k \sE,
\]
and hence also has a natural filtration by order of vanishing.  Moreover, this shows that $J(E)$ is a module over $\sJ$. We will use the following lemma repeatedly in our constructions.

\begin{lemma}\label{splitting}
A splitting $\sigma: \sE \to J(E)$ of the canonical quotient map $q: J(E) \to \sE$ induces an isomorphism $i_\sigma: J(E) \cong \sE \ot_{C^\infty_X} \sJ$ as $\sJ$-modules.
\end{lemma}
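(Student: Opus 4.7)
The plan is to define the map $i_\sigma: \sE \otimes_{\cinf_X} \sJ \to J(E)$ by the formula $s \otimes j \mapsto j \cdot \sigma(s)$, where the dot denotes the $\sJ$-module action on $J(E)$, and then to verify that this map is a $\sJ$-linear isomorphism. First I would check that $i_\sigma$ is well-defined on the tensor product over $\cinf_X$: for $f \in \cinf_X$, we have $i_\sigma(fs \otimes j) = j \cdot \sigma(fs) = j \cdot f\sigma(s) = (fj) \cdot \sigma(s) = i_\sigma(s \otimes fj)$, using the $\cinf_X$-linearity of $\sigma$ together with the embedding $\cinf_X \hookrightarrow \sJ$ (as constant jets) compatible with both module structures. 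The $\sJ$-linearity in the second argument is immediate from associativity of the action.

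To see that $i_\sigma$ is an isomorphism, I would work locally and use the filtration $F^\bullet$ by order of vanishing. Fix $p \in X$, pick local coordinates $x_1,\dots,x_n$ near $p$, and a local trivialization $e_1,\dots,e_k$ of $E$, so that locally $\sJ \cong \cinf_X[[x_1,\dots,x_n]]$ and $J(E) \cong \sJ \otimes_\RR E_p$ (this is the standard identification of $J(E)$ once one picks a flat frame). Both $\sE \otimes_{\cinf_X} \sJ$ and $J(E)$ inherit decreasing filtrations from $F^\bullet \sJ$, and both are complete with respect to these filtrations. The map $i_\sigma$ respects the filtrations since $i_\sigma(\sE \otimes F^m \sJ) \subseteq F^m J(E)$ by definition, and on the associated graded pieces $\mathrm{gr}^m \cong \sE \otimes_{\cinf_X} \Sym^m(\Omega^1_X)$ the map $\mathrm{gr}^m(i_\sigma)$ is the identity, because $\sigma$ is a splitting of the order-zero quotient $q: J(E) \to \sE$ (so at the $F^0/F^1$ level it is the identity, and the $\sJ$-action on higher-order parts is the tautological one on both sides). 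A standard completeness argument then promotes the isomorphism on each $\mathrm{gr}^m$ to an isomorphism of the filtered objects themselves.

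The main obstacle, such as it is, lies in checking that the filtration on $\sE \otimes_{\cinf_X} \sJ$ induced from $F^\bullet \sJ$ matches the order-of-vanishing filtration on $J(E)$ under $i_\sigma$; this is essentially bookkeeping and is transparent once one fixes a local trivialization of $E$, because $\sE$ is locally free over $\cinf_X$. No global obstructions appear since the splitting $\sigma$ is given as part of the data and the argument is filtration-theoretic and local. I would conclude by noting that the construction is manifestly functorial in $\sigma$ and that while the isomorphism $i_\sigma$ depends on $\sigma$, the space of such splittings is contractible (as invoked in the proof of Lemma \ref{tangentL8}), so the resulting trivialization is canonical up to contractible choice.
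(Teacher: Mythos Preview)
Your proposal is correct and follows essentially the same approach as the paper: both define the map $s \otimes j \mapsto j \cdot \sigma(s)$ and verify it is an isomorphism by working locally with a trivialization of $E$. Your filtration-and-completeness argument is simply a more explicit rendering of what the paper summarizes as ``by linear algebra over $\sJ$,'' observing that the $\sigma(e_i)$ reduce to $e_i$ modulo $F^1$ and hence form a $\sJ$-basis of $J(E)$.
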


\begin{proof}
Observe that $J(E)$ is a $\sJ$-module just as $\sE$ is a $C^\infty_X$-module. Thus we obtain a map
\[
\sJ \ot \sE \to J(E)
\]
\[
j \ot s \mapsto j \cdot \sigma(s).
\]
We need to show this map is an isomorphism of $\cinf_X$ modules. It is enough to check this locally, so notice that for any small ball $B \subset X$, if we pick coordinates $x_1, \ldots, x_n$ on $B$, we get trivializations
\[
\sE|_B \cong \cinf_X(B) \ot E_0,\; \sJ|_B \cong \cinf_X(B) \ot \RR[[x_1,\ldots,x_n]], \text{ and }
\]
\[
J(E)|_B \cong \cinf_X(B) \ot \RR[[x_1,\ldots,x_n]] \ot E_0,
\]
where $E_0$ denotes the fiber of $E$ over the point $0 \in B$. Let $\{e_i\}$ denote a basis for $E_0$; the ``constant" sections $\{1 \ot e_i\}$ in $\sE$ then form a frame for $\sE$ over $B$. Let $s_i = \sigma(e_i)$. Notice that under the map $J(E)/F^1 \to \sE$, $s_i$ goes to $e_i$, and so the $s_i$ are linearly independent in $J(E)$. By linear algebra over $\sJ$, one obtains that the map $i_\sigma$ is an isomorphism.
\end{proof}

\subsection{$\Omega$-modules}

Let $\Omega_X$ denote the de Rham complex of $X$ and $\Omega^\#_X$ the underlying graded algebra. An $\Omega_X$ module is a graded module $M^*$ over $\Omega^\#_X$ with a differential $\partial$ that satisfies
\[
\partial (\omega \cdot m) = (d\omega) \cdot m + (-1)^{|\omega|} \omega \cdot \partial m,
\]
where $\omega \in \Omega_X$ and $m \in M$. A natural source of examples is (again!) vector bundles with flat connection. Let $E$ be a vector bundle. Differential forms with values in $E$, $\Omega^\#_X(E)$, naturally form a graded module over $\Omega^\#_X$. Equipping $\Omega^\#_X(E)$ with a differential is exactly the same data as a flat connection $\nabla$ on $E$. We call it the {\em de Rham complex} of $(E,\nabla)$.

The category of $\Omega_X$ modules is symmetric monoidal in the obvious way. Given two $\Omega_X$ modules $M$ and $N$, then $M \ot_{\Omega_X} N$ is, as a graded module, the tensor product $M \ot_{\Omega_X^\#} N$ equipped with differential 
\[
\partial (m \ot n) = \partial_M m \ot n +(-1)^{|m|} m \ot \partial_N n.
\]
Of course, it is better to work with the derived tensor product in most situations.

Since $\Omega_X$ is commutative, there is a dg manifold $X_{dR} = (X, \Omega_X)$. It clearly captures the smooth topology of the manifold $X$. Many of our constructions in this paper involve $X_{dR}$. Moreover, many classical constructions in differential geometry (e.g., the Fr\"olicher-Nijenhuis bracket) appear most naturally as living on $X_{dR}$.

\subsection{The de Rham complex of a left $D$-module}

Earlier, we explained how a vector bundle with flat connection $(E, \nabla)$ is a left $D$-module and how to use the connection to make $\Omega^*(E)$ into an $\Omega_X$ module. We now extend this construction to all left $D$-modules. 

Let $M$ be a left $D$-module. The {\em de Rham complex} $dR(M)$ of $M$ consists of the graded $\cinf_X$-module $\Omega^\#_X \ot_{\cinf_X} M$ equipped with the differential
\[
d_M: \omega \ot m \mapsto d\omega \ot m + (-1)^{|\omega|} \sum_i dx_i \wedge \omega \ot \frac{\partial}{\partial x_i} m.
\]
By construction, $dR(M)$ is an $\Omega$-module.

%\subsection{The relationship between $D$-modules and $\Omega$-modules}

%$D_X$ modules and $\Omega_X$ modules are naturally related. For example, given a vector bundle $E$ with flat connection $\nabla$, the {\em de Rham complex} of $E$ is $\Omega^\#_X \ot_{C^\infty_X} E$ equipped with the differential
%\[
%\nabla (\omega \ot s) = d\omega \ot s + (-1)^{|\omega|} \omega \wedge \nabla s.
%\]
%This operation simply relates the instantiations of a vector bundle with flat connection as a $D_X$ module and $\Omega_X$ module. In general, there is a de Rham functor $DR: D_X-mod \rightarrow \Omega_X-mod$ that turns any $D_X$ module into a module over $\Omega_X$.

%Beilinson and Drinfeld explain in section 7.2. of  \cite{BD} that there is a natural adjunction between cochain complexes of $D_X$ modules and cochain complexes of $\Omega_X$ modules. Let $C(X, \Omega_X)$ denote the dg category of cochain complexes of $\Omega_X$ modules, and let $C(X, D_X)$ denote the dg category of cochain complexes of {\em right} $D_X$ modules. Then we have an adjunction
%\[
%D: C(X, \Omega_X) \leftrightarrows C(X, D_X) : \Omega
%\]
%where 
%\[
%D(F) = F \ot_{\Omega_X} DR_X
%\] 
%with $DR_X = \Omega_X \ot_{C^\infty_X} D_X$ the de Rham complex of $D_X$ as a left $D_X$ module, and where
%\[
%\Omega(M) = \hom_{D_X} (DR_X, M).
%\]
%This adjunction induces an equivalence of homotopy categories only if we use the correct notion of equivalence. Namely, we say two $\Omega_X$ modules are quasi-isomorphic when their images under $D$ are quasi-isomorphic.

\section{Feynman diagram computation: the proof of Proposition \ref{noctterms}}\label{diagrams}\label{nocttermproof}

Recall that a one-loop graph is called a {\it wheel} if it cannot be disconnected by the removal of a single edge.

\begin{center}
\begin{tikzpicture}[decoration={markings,
   mark=at position 1.5cm with {\arrow[black,line width=.8mm]{stealth}}}];
\draw[postaction=decorate, line width=.4mm] (-1.5,0) -- (0,2.6);
\draw[postaction=decorate, line width=.4mm] (0,2.6) -- (1.5,0);
\draw[postaction=decorate, line width=.4mm] (1.5,0) -- (-1.5,0);
\draw[postaction=decorate, line width=.4mm] (0,5.6) -- (0,2.6);
\draw[postaction=decorate, line width=.4mm] (-3.62,-2.12) -- (-1.5,0);
\draw[postaction=decorate, line width=.4mm] (3.62,-2.12) -- (1.5,0);
\draw[ball color=black]  (-1.5,0) circle (.2);
\draw[ball color=black]  (1.5,0) circle (.2);
\draw[ball color=black]  (0,2.6) circle (.2);
\draw (-3.2,-.9) node {$f_1$};
\draw (-.6,4.1) node {$f_2$};
\draw (3.2,-.9) node {$f_3$};
\draw (-1.7,1.3) node {$P_\epsilon^L$};
\draw (1.7,1.3) node {$P_\epsilon^L$};
\draw (0,-.4) node {$P_\epsilon^L$};
\draw (-2.1,0) node {$x_1$};
\draw (.6,2.6) node {$x_2$};
\draw (2.1,0) node {$x_3$};
\end{tikzpicture}
\\$\gamma_3$ with fields $f_1,f_2,f_3 \in C^\infty_c (\RR)$ .
\end{center}

\vspace{24pt}

Any one-loop graph is a wheel with trees attached.  As trees don't contribute any singularities (see chapter 2 section 5 of \cite{Cos1}), it is sufficient to prove that the $\epsilon \to 0$ limit exists for the analytic factor $W^{an}_\gamma (P_\epsilon^L I_{CS})$, where $\gamma$ is a wheel. Further, if the limit exists for trivalent wheels, then it exists for wheels with greater valency, since the higher valence vertices simply multiply the incoming functions and hence behave just like trivalent vertices.

Let $\gamma_n$ be a trivalent wheel with $n$ vertices and pick $f_1 , \dotsc , f_n \in C^\infty_c (\RR)$. We then have an explicit integral for $W^{an}_{\gamma_n}(P_\epsilon^L,I_{CS})$:
\begin{equation}\label{analweightequation}
W^{an}_{\gamma_n} (P_\epsilon^L, I_{CS}) (f_1 , \dotsc , f_n) = \int_{x_1 , \dotsc x_n \in \RR} \prod_{i=1}^n f_i (x_i) P_\epsilon^L (x_i , x_{i+1 \text{ mod }n}) \prod_{i=1}^n dx_i .
\end{equation}
The analytic piece of the propagator is given by
\[
P_\epsilon^L = \int_{\epsilon}^L \frac{d}{dx_1} K_t dt,
\]
with $K_t \in C^\infty (\RR \times \RR)$ given (up to a scalar) by
\[
K_t (x_1 , x_2) = t^{-1/2} e^{- \lvert x_1 - x_2 \rvert^2 / t} .
\]

We view the graph weight as a distribution on $\RR^{n}$ and from hereon replace $\prod f_i (x_i)$ by a generic test function (i.e. compact support) $\phi (x)$ on $\RR^{n}$.  Note that from step to step the actual test function may change e.g., as a result of an integration by parts, but for notational convenience (and because the resulting function will again be sufficiently nice) we continue to use the notation $\phi (x)$. Now the graph weight is given by the integral
\[
\lim_{\epsilon \to 0} \int_{\vec{t} \in [\epsilon , L]^n} \int_{\vec{x} \in \RR^{n}} \phi (\vec{x}) \prod_{i=1}^n t_i^{-1/2} \frac{d}{dx_i} e^{- \lvert x_i -x_{i+1 \text{ mod }n} \rvert^2 / t_i } d^n x d^n t ,
\]
where $\vec{t} = (t_1 , \dotsc , t_n)$ and $\vec{x} = (x_1 , \dotsc , x_n)$.  Note that integrand is symmetric in the $t_i$ so if the limit exists then the corresponding limit will exist for any permutation of the $t_i$.  Hence, it is sufficient to integrate the time variables over the $n$-simplex (as opposed to the $n$-cube) given by
\[ 
\epsilon \le t_1 \le t_2 \le \dotsb \le t_n \le L
\]
which we denote by $\Delta^n (\epsilon , L)$.

Note that, aside from $\phi$, the integrand is invariant under translation along the ``small diagonal." In other words, if we change all the $x_i$ by the same amount, the integrand is unchanged. Foliate $\RR^n$ by hyperplanes orthogonal to the small diagonal. Any test function $\phi$ can be approximated by a sum of products $\phi_d \phi_a$, where $\phi_a$ only depends on the anti-diagonal coordinates and $\phi_d$ depends on the diagonal.  As integration along the small diagonal is against a compactly supported function, it is sufficient to consider a test function $\phi$ which is only a function of the anti-diagonal coordinates and show the following is well defined

\begin{equation}\label{elimit}
\lim_{\epsilon \to 0} \int_{\vec{t} \in \Delta^n (\epsilon ,L)} \int_{\stackrel{\vec{x} \in \RR^n}{\sum x_i =0}} \phi (\vec{x}) \prod_{i=1}^n t_i^{-1/2} \frac{d}{dx_i} e^{-\lvert x_i -x_{i+1 \text{ mod }n} \rvert^2/t_i} d^n x d^n t.\end{equation}

We proceed (separately) to show this limit exists in the case $n \ge 2$ and $n=1$.

\subsection{The case $n=1$}
Let $\gamma_1$ be a one vertex wheel (i.e., the hangman's noose), then
\[
\lim_{\epsilon \to 0} W_{\gamma_1} (P_\epsilon^L,  I_{CS} )=0.
\]
Indeed, as there is just one vertex there is a $\frac{d}{dx} K_l (x,x)$ in the integrand, which clearly vanishes as the heat kernel reaches a maximum on the diagonal.

\subsection{The case $n \ge 2$}

We begin by a change of coordinates; let $u_i = (x_i -x_{i+1})$ for $i =1 , \dotsc , n-1$.  The integral in equation \ref{elimit} becomes
\[ 
\int_{\vec{t} \in \Delta^n (\epsilon, L)} \int_{\vec{u} \in \RR^{n-1}} \phi(\vec{u}) \left (  \prod_{i=1}^{n-1} t_i^{-3/2} u_i e^{-\lvert u_i \rvert^2/t_i}  \right ) \left ( t_n^{-3/2} \sum_{i=1}^{n-1} u_i e^{-\lvert \sum  u_i \rvert^2/t_n}  \right )d^{n-1} u d^n t.
\]
This integral is bounded, in absolute value, by
\[ 
\int_{\vec{t} \in \Delta^n (0, L)} \int_{\vec{u} \in \RR^{n-1}} \left (  \prod_{i=1}^{n-1} t_i^{-3/2} \lvert u_i \rvert e^{-\lvert u_i \rvert^2/t_i}  \right ) \left ( t_n^{-3/2} \sum_{i=1}^{n-1} \lvert u_i \rvert  \right ) )d^{n-1} u d^n t.
\]

Now let $v_i = t_i^{-1/2} u_i$ for $i =1 , \dotsc , n$.  Our absolute bound then becomes
\[
\int_{\vec{t} \in \Delta^n (0, L)} \int_{\vec{v} \in \RR^{n-1}}\left ( \prod_{i=1}^{n-1} t_i^{-1/2} \lvert v_i \rvert e^{-\lvert v_i \rvert^2}  \right ) \left ( t_n^{-3/2}  \sum_{i=1}^{n-1} t_i^{1/2} \lvert v_i \rvert  \right ) )d^{n-1} v d^n t.
\]
Using the fact that $t_i \le t_n$ for $i =1 , \dotsc , n-1$ we that the integral is bounded by
\[
\left ( \int_{\vec{t} \in \Delta^n (0, L)} \left( \prod_{i=1}^{n-1} t_i^{-1/2} dt_i \right) t_n^{-1} dt_n \right ) \left ( \int_{\vec{v} \in \RR^{n-1}} P( \lvert v_1 \rvert, \dotsc , \lvert v_{n-1} \rvert) e^{-\sum \lvert v_i \rvert^2} \prod dv_i \right),
\]
for $P(v)$ some polynomial in the variables $\lvert v_i \rvert$.  Note that the second term in parantheses is bounded since $e^{-x^2}$ decays faster than any polynomial in $x$ grows. Thus it suffices to show the first term in parantheses is also bounded. 

Observe that $\int_a^b t^{-1/2} dt = 2(b^{1/2} - a^{1/2})$ for $b > a > 0$. Hence we find
\[
0 \leq \int_{\vec{t} \in \Delta^n (0,t_n)} \prod_{i=1}^{n-1} t_i^{-1/2} dt_i \leq \left( \int_{0< t < t_n} t^{-1/2} dt \right)^{n-1} \leq 2^{n-1} t_n^{(n-1)/2}
\]
and so 
\[
\int_{\vec{t} \in \Delta^n (0,L)} \left( \prod_{i=1}^{n-1} t_i^{-1/2} dt_i \right) t_n^{-1} dt_n \leq \int_0^L 2^{n-1} t_n^{(n-3)/2} dt_n.
\]
When $n > 1$, this integral is clearly bounded.

\bibliographystyle{amsalpha}
\bibliography{chernsimons}

\end{document}